\tikzstyle{mybox} = [draw=black, very thick, rectangle, rounded corners, inner ysep=5pt, inner xsep=5pt]
\newtheorem{theo}{Theorem}[section]
\newtheorem{cor}[theo]{Corollary}
\newtheorem{prop}[theo]{Proposition}
\newtheorem{lem}[theo]{Lemma}
\theoremstyle{definition}
\newtheorem{defin}[theo]{Definition}
\newtheorem*{lem*}{Lemma}
\newtheorem{rem}[theo]{Remark}
\newtheorem*{cor*}{Corollary}
\newtheorem*{theo*}{Theorem}
\newcommand{\norm}[1]{\lVert#1\rVert}
\newcommand{\abs}[1]{\left|#1\right|}
\DeclareMathOperator*{\esssup}{ess\,sup}
\DeclareMathOperator*{\supp}{supp}
\def\Ex#1{[\![#1]\!]_h}
\def\Exx#1{[\![#1]\!]_{\bar h}}
\def\XXint#1#2#3{{\setbox0=\hbox{$#1{#2#3}{\int}$ }
\vcenter{\hbox{$#2#3$ }}\kern-.6\wd0}}
\def\XXiint#1#2#3{{\setbox0=\hbox{$#1{#2#3}{\iint}$ }
\vcenter{\hbox{$#2#3$ }}\kern-.55\wd0}}
\renewcommand{\d}{\:\:\!\!\mathrm{d}}
\newcommand{\N}{\ensuremath{\mathbb{N}}}
\newcommand{\R}{\ensuremath{\mathbb{R}}}
\renewcommand{\b}{\mathfrak{b}}
\numberwithin{equation}{section}
\patchcmd{\@setaddresses}{\indent}{\noindent}{}{}
\patchcmd{\@setaddresses}{\indent}{\noindent}{}{}
\patchcmd{\@setaddresses}{\indent}{\noindent}{}{}
\patchcmd{\@setaddresses}{\indent}{\noindent}{}{}
\pgfplotsset{compat=1.18}
\begin{document}
\renewcommand{\refname}{References}
\renewcommand{\abstractname}{Abstract}

\title[Existence, comparison principle and uniqueness]{Existence, comparison principle and uniqueness for fully nonlinear anisotropic evolution equations}
\date{\today}
\subjclass[2020]{35K61, 35B51, 35D30, 35K10, 35B65. }
\keywords{Doubly nonlinear parabolic equations, Anisotropic equations, Existence, Comparison principle, Uniqueness.}

\author[A. Nastasi, E. Pe\~na Ayala, M. Vestberg]{Antonella Nastasi, Emiliano Pe\~na Ayala, Matias Vestberg}

\address{Antonella Nastasi\\
Department of Engineering, University of Palermo, Viale delle Scienze, 90128, Palermo, Italy\\
 ORCID ID: 0000-0003-1589-2235}
\email{antonella.nastasi@unipa.it}

\address{Emiliano Pe\~na Ayala\\
Department of Mathematics, Uppsala University,
P.~O.~Box 480, 751 06, Uppsala, Sweden}
\email{emiliano.pena@math.uu.se}

\address{Matias Vestberg\\
Department of Mathematics, Uppsala University,
P.~O.~Box 480, 751 06, Uppsala, Sweden}
\email{matias.vestberg@math.uu.se}

\begin{abstract}
We prove the existence of solutions to the Cauchy-Dirichlet problem associated with a class of fully nonlinear anisotropic evolution equations. We prove a comparison principle and conclude the uniqueness of solutions. All results are obtained under a closeness assumption on the exponents which guarantees that a certain power of the solution has a gradient.
\end{abstract}
\maketitle

\setcounter{tocdepth}{1}

\begin{center}
\begin{minipage}{12cm}
  \small
  \tableofcontents
\end{minipage}
\end{center}

\vskip 0.5cm \noindent 
 
\section{Introduction}\label{sec: intro}
\noindent  This paper proves existence, uniqueness and a comparison principle for solutions to the Cauchy-Dirichlet problem associated with a class of fully nonlinear anisotropic evolution equations of the form
\begin{align}\label{eq:diffusion}
\partial_t u   - \sum^N_{j=1} \partial_j\big(a_j(x,t,u)|\partial_j u^{m_j}|^{p_j-2}\partial_j u^{m_j}\big) = f \quad \text{ in } \quad \Omega_T:=\Omega\times (0,T),
\end{align}
where we use the shorthand notation $\partial_j u^{m_j}:= \partial_j(u^{m_j})$. Here $\Omega \subset \R^N$ is open and bounded, $p_j \in (1,\infty)$ and 
\begin{align*}
m:= \min\{m_j\,|\, j\in \{1,\dots, N\}\} \geq 1.
\end{align*}
We assume that there is a constant $\Lambda>0$ such that the coefficient functions $a_j$ satisfy
\begin{align}\label{cond:unif_ellipt}
 \Lambda^{-1} \leq a_j(x,t,u) \leq \Lambda, \quad (x,t,u)\in \Omega\times(0,T)\times \R, \quad j \in \{1,\dots,N\}.
\end{align}
We also require that each coefficient function $a_j$ is Lipschitz continuous in $u$ for fixed $(x,t)$, that is, there is a constant $c>0$ such that
\begin{align}\label{cond:lip_cont}
 |a_j(x,t,u) - a_j(x,t,v)| \leq c|u - v|.
\end{align}
For the right-hand side we assume
\begin{align}\label{cond:f}
 f \in L^{\sigma \bar p^\prime}(\Omega_T; [0,\infty)), 
\end{align}
where $\bar p$ is defined as 
\begin{align}\label{def:overline_p}
    \bar p: = \bigg( \frac{1}{N} \sum_{j = 1}^N \frac{1}{p_j}\bigg)^{-1},
\end{align}
$\bar p^\prime$ is its corresponding Hölder conjugate exponent, and $\sigma$ satisfies
\begin{align}\label{cond:lower_bound_sigma}
    \sigma >  1 + \tfrac{N}{\bar p}.
\end{align}
\noindent In the isotropic case $p_j \equiv p$, the assumption regarding $f$ coincides with the assumption in \cite{BoeDiVe}.
We consider initial values of the form
\begin{align}\label{cond:u_0} 
    u_0 \in L^\infty(\Omega), \quad u_0 \geq 0. 
\end{align}

\noindent In order to prove existence, we need to impose the following condition which expresses the fact that the exponents $m_j$ are not allowed to differ too much from each other:
\begin{equation}\label{cond:m_j-closeness}
    m_j < p_j' m.
\end{equation}
 In fact, this condition is intimately tied to the very definition of solutions to the Cauchy-Dirichlet problem adopted in this work. Under the assumption \eqref{cond:m_j-closeness} it turns out that for any solution $u$ to \eqref{eq:diffusion}, the function $u^m$ has a spatial gradient. This observation motivates a formulation of the Cauchy-Dirichlet problem which is consistent with that of the special case $m_j \equiv m$, which was considered earlier for example in \cite{Ve}. For the precise definition we refer to Section \ref{sec:setting}. The existence of the gradient of $u^m$ is a surprising regularity result which is not apparent from the natural definition of weak solutions, in which one requires each function $u^{m_j}$ to have a weak derivative only in the $x_j$-coordinate direction. In connection to this result, we also prove the time-continuity of solutions on the time interval into a dual space and into $L^{m+1}_{\textnormal{loc}}(\Omega)$.

  We consider boundary values of the form
\begin{align}\label{cond:g}
    &g \in L^\infty(\Omega_T) \cap  L^{\bf p}(0,T; W^{1, {\bf p}}(\Omega)), \quad \partial_t g \in L^2(\Omega_T),
    \\
    \notag &g \geq \varepsilon_0 >0, \textnormal{ or } g\equiv 0,
\end{align}
and refer to Section \ref{sec:setting} for the definition of the anisotropic Sobolev Space $L^{\bf p}(0,T; W^{1, {\bf p}}(\Omega))$. The condition on the second row is a subtle point related to the choice of a test function which allows us to use weak compactness of Sobolev spaces as a part of the strategy for proving existence (see Lemma \ref{lem:grad_bdd_in_Lp} for details).  

To put our contribution into context we remark that previous works regarding the existence of solutions to the Cauchy-Dirichlet problem for doubly nonlinear anisotropic parabolic equations focused on the special case $m_j\equiv m$. This is the case for example in the work of Sango \cite{Sa} where existence for the model case with vanishing boundary condition and a fairly general right-hand side was proved via a semidiscretization method. Degtyarev and Tedeev in \cite{DeTe} prove the existence of solutions to the model case of the Cauchy problem in $\mathbb{R}^N \times (0, T)$, including an additional $u$-dependent term and a Radon measure as initial data. Also bounded spatial domains and vanishing boundary data on $\Omega \times (0, T)$ are considered in the same setting. Vestberg  treats the case of general structure conditions and boundary data in \cite{Ve} using Galerkin's method, again assuming $m_j\equiv m$.

For the anisotropic porous medium equation, i.e. allowing different $m_j$ but requiring $p_j \equiv 2$, a very singular solution was constructed by V\'azquez in \cite{Va}. However, to our knowledge existence of solutions to the Cauchy-Dirichlet problem in the fully doubly nonlinear case with both different $p_j$ and $m_j$ has not been established previously. Our aim in this article is to start to close this gap.

Our method for proving existence draws inspiration from B\"ogelein-Dietrich-Vestberg \cite{BoeDiVe} as well as Sturm \cite{St}, where the existence of solutions for certain doubly nonlinear isotropic problems is proved. The idea is to approximate our Cauchy-Dirichlet problem with a suitable sequence of anisotropic problems without double nonlinearity and to find a solution in the limit by establishing a sufficiently strong rate of convergence.

Another novelty of this study is the comparison principle (Theorem \ref{thm:comparison_principle}) for weak sub- and super-solutions to \eqref{eq:diffusion}. In simple words, the comparison principle states that a sub-solution $u$ and a super-solution $v$ which satisfy $u \leq v$ on the parabolic boundary of the domain, must have the same property in the whole domain. Despite its seeming simplicity, there are many open questions related to the comparison principle for doubly nonlinear equations; indeed only some special cases have been treated so far. The main difficulties are due to the lack of a weak time derivative and, in particular, at points where the solution is close to zero. Thus, the comparison principle is still far from being fully understood even for doubly nonlinear isotropic problems. 

In this article we prove a comparison principle for solutions to \eqref{eq:diffusion} inspired by the work B\"ogelein-Strunk \cite{BoeStru}, where a comparison principle for the isotropic model case of doubly nonlinear equations was obtained under the assumption that the larger
(super)solution has a positive lower bound on the lateral boundary. Our result is obtained under similar assumptions. The double nonlinearity and the anisotropy involving the exponents $m_j$ and $p_j$, as well as the dependence of the coefficients on the solution itself, make the generalization of the comparison principle to our setting nontrivial. Using the comparison principle, we are able to prove a uniqueness result (Theorem \ref{thm:uniqueness}) for solutions to \eqref{eq:diffusion}.

In this work we consider the range $m_j\geq 1$ for the exponents, but it may be possible to extend our results to the case where the smallest exponent satisfies $m<1$ by means of a change of variables. Namely, the corresponding equation for $v:= u^m$ has the property that the smallest exponent is exactly $1$, however with the additional difficulty of a new exponent $1/m$ appearing in the term with the time derivative. Extending the results of the current paper to this setting would therefore correspond to treating also the case $m<1$. Due to the already high level of technicality in the present work, we chose to leave any further investigation of this case to future publications.

Before concluding the introduction, some historical notes are in order. Doubly nonlinear isotropic equations were first introduced in the 1960s by Lions \cite{Lio} and Kalashnikov \cite{Ka}. The term \textit{doubly nonlinear} refers to the presence of nonlinearity in both the elliptic and parabolic components of the equations. Alternatively, as is the case for the equations we study, both nonlinearities can appear in the elliptic part. Doubly nonlinear equations have a wide range of applications in various physical contexts, including the flow of non-homogeneous non-Newtonian fluids and the simultaneous movement in surface channels of underground water.
Further applications can be found, for example, in the book by Antontsev-D\'iaz-Shmarev \cite{AnDiSh}. 

The existence theory for isotropic doubly nonlinear equations has a long history. Lions \cite{Lio} considered the model case for $p\geq 2$. Raviart \cite{Raviart} and later Bamberger \cite{Ba} studied the model case for the signed orthotropic equation under different assumptions on the source term. Alt-Luckhaus \cite{AlLu} first considered general structure conditions. Later works on existence for various classes of doubly nonlinear isotropic equations include Bernis \cite{Be}, Ivanov \cite{Iv}, Ishige \cite{Ish}, Ivanov-Mkrtychan-J\"ager \cite{IvMkJae} and Laptev \cite{La, La2, La3}.

Anisotropic elliptic problems were introduced in the 1980s by Giaquinta \cite{Gi} and Marcellini \cite{Ma}. Existence of solutions in $\R^n$ in the elliptic and parabolic case without double nonlinearity was proved by Bendahmane and Karlsen in \cite{BeKa}. They also allow advection and lower order terms. 

The literature on comparison principles for various diffusion equations is vast. The parabolic $p$-Laplace equation was treated by DiBenedetto \cite{DiBe}. The model case for anisotropic equations without double nonlinearity was considered by Ciani \cite{Cia}. For comparison principles for the porous medium equation, we refer to Avelin-Lukkari \cite{AvLu}, Kinnunen-Lindqvist-Lukkari\cite{KiLiLu} and Li-Peletier\cite{LiPe}.  

Comparison principles for isotropic doubly non-linear equations were obtained by Alt-Luckhaus \cite{AlLu}, Bamberger \cite{Ba} and D\'iaz \cite{Di}. In these papers, some additional hypotheses on the time derivative are required, which are quite restrictive and are generally difficult to verify. 
Ivanov \cite{Iv} and Ivanov-Mkrtychan-Jäger \cite{IvMkJae} avoid assumptions on the time derivative  and allow time-dependent boundary data, but require bounded and strictly positive sub- and super-solutions. For Trudinger's equation, a special case of doubly nonlinear diffusion, a comparison principle was established by Lindgren-Lindqvist \cite{LiLi} under the hypothesis that either the sub- or super-solution is strictly positive for $p\geq 2$, and with some additional assumption in the case $p\in (1,2)$.

The dependency on time of the boundary data is not assumed in the approach proposed by Otto in \cite{Ot}, which allowed him to prove a comparison principle for weak sub- and super-solutions with time independent lateral boundary data for a class of doubly nonlinear equations. His approach also does not require any extra regularity hypotheses on the sub- and super-solutions. 
Following a similar approach, Bögelein-Duzaar-Gianazza-Liao-Scheven \cite{BoeDuGiaLiSche} proved a comparison principle for nonnegative solutions of general doubly nonlinear isotropic equations involving time-independent vector fields, with one of the solutions vanishing on the lateral boundary. Similar techniques were later adopted by Vestberg \cite{Ve} to obtain a comparison principle under similar assumptions for doubly nonlinear anisotropic equations in the case $m_j \equiv m$. In the model case for anisotropic diffusion
without double nonlinearity, some comparison principles were proved in Ciani-Mosconi-Vespri \cite{CiaMoVe}.


The article is structured as follows. In Section \ref{sec:setting}, we present the general setting and state the main results: the existence theorem (Theorem \ref{thm:existence}), the comparison principle (Theorem \ref{thm:comparison_principle}) and the uniqueness result (Theorem \ref{thm:uniqueness}).  In Section \ref{sec:preliminaries}, we introduce basic tools needed for the arguments, such as Sobolev embeddings, mollifications in time, as well as various algebraic identities and estimates. We also prove a comparison principle for anisotropic equations without double nonlinearity, which is used later in the proof of the existence of solutions. In Section \ref{sec:existence_grad} we establish the existence of the weak gradient of $u^m$.
The proof of the existence of solutions is presented in Section \ref{sec:existence}. Finally, in Section \ref{sec:comparison} we prove the comparison principle (Theorem \ref{thm:comparison_principle}).

\vspace{2mm}
\noindent{\bf Acknowledgments.} This work was supported by the Wallenberg AI, Autonomous Systems and Software Program (WASP) funded by the Knut and Alice Wallenberg Foundation. A. Nastasi is a member of the Gruppo Nazionale per l’Analisi Matematica, la Probabilit\`{a} e le loro Applicazioni (GNAMPA) of the Istituto Nazionale di Alta Matematica (INdAM). Part of this material was obtained when M. Vestberg visited University of Palermo in March - April 2025. The visit was supported by the Centre of Advanced Studies of the University of Palermo.


\section{Setting and main results}\label{sec:setting}
In this section we define the concept of weak solutions to the Cauchy-Dirichlet problem on $\Omega_T := \Omega \times (0,T)$. We also present our main results. We start by presenting the anisotropic Sobolev spaces that appear in the definitions.
 Given a vector ${\bf p} = (p_1,\dots, p_N)$ with $p_j > 1$ we set 
\begin{align*}
W^{1, {\bf p}}(\Omega) &:= \{ v \in W^{1,1}(\Omega)\,|\, \partial_j v \in L^{p_j}(\Omega),\, j \in \{1,\dots, N\}\}.
\end{align*}
When considering solutions to boundary value problems, it is natural to work with the space 
\begin{align*}
 \overline W^{1, {\bf p}}_{\textnormal{o}}(\Omega) := \overline{C^\infty_{\textnormal{o}}(\Omega)} \subset W^{1, {\bf p}}(\Omega),
\end{align*}
i.e. the closure of $C^\infty_{\textnormal{o}}(\Omega)$ in $W^{1, {\bf p}}(\Omega)$ w.r.t. the norm $u\mapsto \norm{u}_{L^1(\Omega)} + \sum^N_{j=1} \norm{\partial_j u}_{L^{p_j}(\Omega)}$. 
As we are working with evolution equations we will also need the following space involving time:
\begin{align*}
 L^{\bf p}(0,T; W^{1, {\bf p}}(\Omega)) &:= \{ v \in L^1(0,T;W^{1,1}(\Omega)) \,|\, \partial_j v \in L^{p_j}(\Omega_T), \, j \in \{1,\dots, N\}\}.
\end{align*}
Aided by the inclusion $i : \overline W^{1, {\bf p}}_{\textnormal{o}}(\Omega) \hookrightarrow L^1(\Omega)$, the continuous linear maps $\partial_k: \overline W^{1, {\bf p}}_{\textnormal{o}}(\Omega) \to L^{p_k}(\Omega)$ and the space $E$  consisting of equivalence classes of all measurable functions $(0,T) \to \overline W^{1, {\bf p}}_{\textnormal{o}}(\Omega)$, we may now introduce the Banach space 
\begin{align*}
 L^{\bf p}(0,T; \overline W^{1, {\bf p}}_{\textnormal{o}}(\Omega)) = \{ u \in E \,|\, i \circ u \in L^1(0,T; L^1(\Omega)), \, \partial_k \circ u \in L^{p_k}(0, T; L^{p_k}(\Omega)),& 
 \\
  k \in \{1,\dots, N\} \}&,
\end{align*}
which represents functions taking the value zero on the lateral boundary $\partial \Omega \times (0,T)$. This is a rather abstract looking definition relying on the Bochner integral, but as one might expect, the space is also isomorphic to a space of functions defined on $\Omega_T$.
\begin{lem}\label{lem:isomorphic-spaces}
 The space $L^{\bf p}(0,T; \overline W^{1, {\bf p}}_{\textnormal{o}}(\Omega))$ is isometrically isomorphic to the closure of $C^\infty_{\textnormal{o}}(\Omega_T)$  in $\{ f \in L^1(\Omega_T)\,| \, \partial_k f \in L^{p_k}(\Omega_T)\}$ with respect to the norm
 \begin{align*}
  v \mapsto \norm{v}_{L^1(\Omega_T)} + \sum^N_{j=1} \norm{\partial_j v}_{L^{p_j}(\Omega_T)}.
 \end{align*}
\end{lem}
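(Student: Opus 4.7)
The plan is to construct an explicit isometric isomorphism $\Phi$ from the Bochner-type space $L^{\mathbf{p}}(0,T; \overline W^{1,\mathbf{p}}_{\textnormal{o}}(\Omega))$ into the target space $\{f\in L^1(\Omega_T) : \partial_k f \in L^{p_k}(\Omega_T)\}$ appearing in the lemma, and then to identify its image with the closure of $C^\infty_{\textnormal{o}}(\Omega_T)$ in the stated norm. First I would define $\Phi$ on a Bochner step function $u = \sum_{i=1}^{\ell} \chi_{E_i}\, v_i$, with the $E_i\subset (0,T)$ pairwise disjoint and measurable and each $v_i\in \overline W^{1,\mathbf{p}}_{\textnormal{o}}(\Omega)$, by
\[
\Phi(u)(x,t) := \sum_{i=1}^\ell \chi_{E_i}(t)\, v_i(x),
\]
which is plainly jointly measurable on $\Omega_T$. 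Fubini then gives $\|\Phi(u)\|_{L^1(\Omega_T)} = \int_0^T \|u(t)\|_{L^1(\Omega)}\,dt$, and testing against $\psi\in C^\infty_{\textnormal{o}}(\Omega_T)$ and swapping the order of integration shows that the weak derivative satisfies $\partial_j \Phi(u)(x,t) = \sum_i \chi_{E_i}(t)\,\partial_j v_i(x)$, whence $\|\partial_j\Phi(u)\|_{L^{p_j}(\Omega_T)}^{p_j} = \int_0^T \|\partial_j u(t)\|_{L^{p_j}(\Omega)}^{p_j}\,dt$. These identities make $\Phi$ a linear isometry at the step level.

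Next I would extend $\Phi$ by density. By strong measurability, every $u\in L^{\mathbf{p}}(0,T;\overline W^{1,\mathbf{p}}_{\textnormal{o}}(\Omega))$ is the a.e.\ pointwise limit in $\overline W^{1,\mathbf{p}}_{\textnormal{o}}(\Omega)$ of a sequence of step functions $u_n$ with $\|u_n(t)\|_{\overline W^{1,\mathbf{p}}_{\textnormal{o}}}\le 2\|u(t)\|_{\overline W^{1,\mathbf{p}}_{\textnormal{o}}}$, and a standard dominated-convergence argument applied separately to each of the components $i\circ u$ and $\partial_k\circ u$ then yields simultaneous convergence $u_n\to u$ in the $L^1$-component and in each $L^{p_k}$-component of the Bochner norm. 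The images $\Phi(u_n)$ are therefore Cauchy in the target norm, so $\Phi$ extends uniquely to a linear isometry on the whole Bochner space; its image is automatically closed.

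The last step is to show that the image equals $\overline{C^\infty_{\textnormal{o}}(\Omega_T)}$. One inclusion is straightforward: for $\phi\in C^\infty_{\textnormal{o}}(\Omega_T)$ the map $t\mapsto \phi(\cdot,t)$ is continuous from $[0,T]$ into $C^\infty_{\textnormal{o}}(\Omega)\subset \overline W^{1,\mathbf{p}}_{\textnormal{o}}(\Omega)$ and thus defines a strongly measurable $u_\phi$ with $\Phi(u_\phi)=\phi$, so by closedness the image contains $\overline{C^\infty_{\textnormal{o}}(\Omega_T)}$. For the reverse inclusion I would approximate a general $u$ by step functions $u_n = \sum_i \chi_{E_i^n}v_i^n$, then replace each $v_i^n$ by a $\psi_i^n\in C^\infty_{\textnormal{o}}(\Omega)$ close to it in $\overline W^{1,\mathbf{p}}_{\textnormal{o}}(\Omega)$ (possible by the very definition of this space), and each $\chi_{E_i^n}$ by an $\eta_i^n\in C^\infty_{\textnormal{o}}(0,T)$ close to it in $L^{\max_k p_k}(0,T)$. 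The resulting tensor-product sums $\sum_i \eta_i^n(t)\psi_i^n(x)$ lie in $C^\infty_{\textnormal{o}}(\Omega_T)$ and, by tuning the accuracy of $\psi_i^n$ and $\eta_i^n$ against the norms already accumulated, converge to $\Phi(u)$ in the target norm.

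The main obstacle will be this final simultaneous-approximation step, since the target norm involves the $N+1$ distinct Lebesgue exponents $1, p_1,\dots, p_N$ in different directions, and the errors made in replacing $v_i^n$ and $\chi_{E_i^n}$ by smooth objects must be controlled in all of them at once. Choosing a common partition of $(0,T)$ for the step approximation and tuning the spatial smoothing first (with $\overline W^{1,\mathbf{p}}_{\textnormal{o}}$-bounded $\psi_i^n$) before the temporal smoothing keeps the error terms under control. The remaining verifications are standard Fubini computations and density arguments.
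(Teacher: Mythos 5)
The paper does not actually prove this lemma; it simply refers to \cite{Ve}, so there is no in-paper argument to compare against. Your overall strategy (define the identification on Bochner step functions, check it is a component-wise isometry via Fubini, extend by density, and then identify the image with $\overline{C^\infty_{\textnormal{o}}(\Omega_T)}$ by a tensor-product approximation) is the standard and correct route for this kind of statement, and the final simultaneous-approximation step that you flag as the main obstacle is handled correctly: approximating $\chi_{E_i}$ in $L^{\max_k p_k}(0,T)$ after fixing $\psi_i$ does control all the exponents at once on the finite interval.

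There is, however, one step that fails as written: the dominated-convergence argument in your density step. From the standard simple-function approximation you only get the bound $\norm{u_n(t)}_{\overline W^{1,\mathbf{p}}_{\textnormal{o}}} \leq 2\norm{u(t)}_{\overline W^{1,\mathbf{p}}_{\textnormal{o}}}$, so for the $k$-th gradient component the best available majorant is $c\,\norm{u(t)}_{\overline W^{1,\mathbf{p}}_{\textnormal{o}}}^{p_k}$. But membership in $L^{\mathbf{p}}(0,T;\overline W^{1,\mathbf{p}}_{\textnormal{o}}(\Omega))$ only guarantees $t\mapsto\norm{u(t)}_{L^1(\Omega)}\in L^1(0,T)$ and $t\mapsto\norm{\partial_j u(t)}_{L^{p_j}(\Omega)}\in L^{p_j}(0,T)$ for each $j$ separately; since $p_k>1$ and the exponents may differ, $\norm{u(t)}_{\overline W^{1,\mathbf{p}}_{\textnormal{o}}}^{p_k}$ need not be integrable in $t$, so there is no valid dominating function for the $k$-th component. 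The fix is routine but should be stated: first replace $u$ by $u\,\chi_{\{t\,:\,\norm{u(t)}_{\overline W^{1,\mathbf{p}}_{\textnormal{o}}}\leq n\}}$, which converges to $u$ in the mixed norm (here dominated convergence \emph{is} legitimate, with majorants $\norm{u(t)}_{L^1(\Omega)}$ and $\norm{\partial_k u(t)}_{L^{p_k}(\Omega)}^{p_k}$), and then approximate the resulting bounded function by simple functions, for which the constant $2n$ on the finite-measure interval $(0,T)$ serves as dominating function in every component. With this modification the argument is complete.
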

\noindent For the proof we refer to \cite{Ve}.

\vspace{2mm}
Given the form of the equation, it seems natural to consider weak solutions of \eqref{eq:diffusion} residing in a set of the form:
\begin{align*}
 V^{\bf p, \bf m}_q := \{ u \in L^q(\Omega_T)\,|\, \partial_j u^{m_j} \in L^{p_j}(\Omega_T)\},
\end{align*}
where the exponent $q$ at least needs to satisfy
\begin{align*}
 q \geq \max  \{m_j\,|\, j\in \{1,\dots, N\}\},
\end{align*}
so that each function $u^{m_j}$ is integrable, which makes it meaningful to investigate the existence of weak derivatives of these functions. In principle, one could also consider local integrability instead.  
The set $V^{\bf p, \bf m}_q$ is closed under scalar multiplication, but not under addition, and hence it is not a vector space. It is however a complete separable metric space when endowed with the metric
\begin{align*}
 d(u,v) := \norm{u - v}_{L^q(\Omega_T)} 
 +
 \sum_{j = 1}^N \norm{\partial_j u^{m_j} - \partial_j v^{m_j}}_{L^{p_j}(\Omega_T)}.
\end{align*}
See Appendix \ref{app:completeness} for the proof of completeness. Separability can be deduced by embedding $V^{\bf p, \bf m}_q$ isometrically into a subset of $L^q(\Omega) \times L^{p_1}(\Omega_T) \times \dots \times L^{p_N}(\Omega_T)$ via the mapping $u \mapsto (u, \partial_1 u^{m_1}, \dots, \partial_N u^{m_N})$. 

We define $q_* := \max\big(\{m+1\} \cup \{m_j\,|\, j\in \{1,\dots, N\}\}\big)$ and focus on solutions belonging to the space
\begin{align*}
     V^{\bf p, \bf m} := V^{\bf p, \bf m}_{q_*},
\end{align*} 
since the additional $L^{m+1}$-integrability is natural for establishing the time-continuity of solutions satisfying our type of boundary condition on $\partial \Omega \times (0,T)$. For the details we refer the reader to the application of Lemma \ref{lem:time-cont} in Subsection \ref{sec:boundary_val_time_cont_initial_data}. Note however, that since we actually show the existence of bounded solutions, the precise choice of the integrability exponent is not of critical importance.

Although the definition of our function space only requires each function $u^{m_j}$ to have a weak partial derivative in the $j$th coordinate direction, in the parameter range \eqref{cond:m_j-closeness} it turns out that the function $u^m$ has a gradient and that $\partial_j u^m \in L^{p_j}_{\textnormal{loc}}(\Omega_T)$. For the proof of this fact we refer to Section \ref{sec:existence_grad}. The existence of the gradient of $u^m$ serves also as a motivation for our definition of solutions to the Cauchy-Dirichlet problem, and in fact we will prove the existence of solutions for which $\partial_j u^m$ is in $L^{p_j}(\Omega_T)$.

We use the following notion of weak solutions of the equation. 
\begin{defin}[Weak solutions]\label{def:weaksol}
A function $u \in V^{\bf p, \bf m}$ is a solution of \eqref{eq:diffusion} if 
\begin{align}\label{eq:weak_form}
&\iint_{\Omega_T} \sum^N_{j=1} a_j(x,t,u)|\partial_j u^{m_j}|^{p_j-2}\partial_j u^{m_j} \partial_j \varphi -  u\partial_t \varphi\d x\d t = \iint_{\Omega_T} f \varphi \d x \d t,
\end{align}
for all $\varphi \in C^\infty_{\textnormal{o}}(\Omega_T)$. 
\end{defin}
We use the following notion of sub- and super-solutions.
\begin{defin}[Sub (super) - weak solutions]\label{def:subsupersol}
   A function $u \in V^{\bf p, \bf m}$ is a sub (super)-solution of \eqref{eq:diffusion} if 
\begin{align}\label{eq:weak_formsupersub}
&\iint_{\Omega_T} \sum^N_{j=1} a_j(x,t,u)|\partial_j u^{m_j}|^{p_j-2}\partial_j u^{m_j} \partial_j \varphi -  u\partial_t \varphi\d x\d t \leq (\geq) \iint_{\Omega_T} f \varphi \d x \d t,
\end{align}
for all non-negative $\varphi \in C^\infty_{\textnormal{o}}(\Omega_T)$. 
\end{defin}
Note that a function is a solution iff it is both a sub- and super-solution. We are now ready to introduce the notion of a weak solution to the Cauchy-Dirichlet problem.

\begin{defin}\label{def:prob-CD}
  Let $f$, $g$ and $u_0$ satisfy \eqref{cond:f}, \eqref{cond:g} and \eqref{cond:u_0}, respectively. We say that $u \in V^{\bf p, \bf m}$ is a solution to the Cauchy-Dirichlet problem
 \begin{align}\label{prob:C-D}
   \left\{
\begin{array}{ll}
\partial_t u   - \sum^N_{j=1} \partial_j\big(a_j(x,t,u)|\partial_j u^{m_j}|^{p_j-2}\partial_j u^{m_j}\big) = f & \quad \text{in } \Omega_T, 
\\[5pt]
 u(x,0) = u_0(x)  & \quad \text{in } \Omega,
 \\
 u = g  & \quad \text{on } \partial \Omega \times (0,T),
\end{array}
\right.
 \end{align}
if the following conditions hold:
\begin{enumerate}
 \item $u$ is a solution to the PDE in the sense of Definition \ref{def:weaksol}.
 \item\label{u(t)-convg-to-u_0} $u\in C([0,T];L^{m+1}(\Omega))$ and $u(0) = u_0$. 
 \item $u^m \in g^m + L^{\bf p}(0,T;\overline W^{1,\bf p}_{\textnormal{o}}(\Omega))$.
\end{enumerate}
\end{defin}
Since $m\geq 1$, the boundedness of $g$ and the fact that $g$ is in $L^{\bf p}(0,T; W^{1, {\bf p}}(\Omega))$ together imply that also $g^m$ is in $L^{\bf p}(0,T; W^{1, {\bf p}}(\Omega))$. Thus, our notion of initial values is rather similar to the approach taken in \cite{Ve}.

\subsection{Existence}
For the Cauchy-Dirichlet problem we prove the following existence result. 
 \begin{theo}\label{thm:existence}
  Let $\Omega$ be open and bounded. Suppose that $a_i$ satisfy \eqref{cond:unif_ellipt} and \eqref{cond:lip_cont}. Let $u_0$ and $f$ and $g$ satisfy \eqref{cond:u_0}, \eqref{cond:f} and \eqref{cond:g} respectively. 
  Then there exists a solution $u \in L^\infty(\Omega_T)$ to the Cauchy-Dirichlet problem \eqref{prob:C-D} in the sense of Definition \ref{def:prob-CD}.
 \end{theo}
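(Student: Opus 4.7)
Following the outline of the introduction, I would approximate \eqref{prob:C-D} by a sequence of anisotropic parabolic problems \emph{without} double nonlinearity, in which the leading spatial operator is a standard $\bf p$-type anisotropic divergence acting on a single unknown, so that existence of the approximations follows from a Galerkin or Rothe scheme combined with the comparison principle for such equations stated in Section \ref{sec:preliminaries}. Concretely, using the heuristic substitution $v := u^m$, the original spatial flux becomes $(m_j/m)^{p_j-1}v^{(m_j-m)(p_j-1)/m}|\partial_j v|^{p_j-2}\partial_j v$. The approximation then freezes both the weight $v^{(m_j-m)(p_j-1)/m}$ and the $u$-dependent coefficient $a_j(x,t,v^{1/m})$ at a previous iterate or a suitable truncation, while regularizing the nonlinear time derivative $\partial_t v^{1/m}$, so that each linearized problem is genuinely anisotropic $\bf p$-parabolic in $v$ and can be posed in $g^m + L^{\bf p}(0,T;\overline W^{1,\bf p}_{\textnormal{o}}(\Omega))$.

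\textbf{Uniform estimates and compactness.} For the $n$th approximation $v_n$, I would test against $(v_n - g^m)\chi_{[0,\tau]}$, suitably mollified in time to bypass the absence of a weak time derivative, to derive the energy estimate
\begin{equation*}
\sup_{t\in(0,T)}\int_\Omega v_n^{(m+1)/m}(\cdot,t)\,\d x + \sum_{j=1}^N \iint_{\Omega_T} |\partial_j v_n|^{p_j}\,\d x\d t \leq C,
\end{equation*}
with $C$ independent of $n$. The integrability of $f$ from \eqref{cond:f}-\eqref{cond:lower_bound_sigma} together with the anisotropic Sobolev embedding should deliver a uniform $L^\infty(\Omega_T)$ bound on $u_n := v_n^{1/m}$ via a De Giorgi iteration, which is essential to control the Lipschitz coefficient $a_j(x,t,u_n)$ through \eqref{cond:lip_cont}. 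An Aubin-Lions compactness argument using time-continuity into a dual space (cf.\ Lemma \ref{lem:time-cont}) then extracts a subsequence with $u_n\to u$ a.e.\ and strongly in $L^q(\Omega_T)$ for a suitable $q$, while $\partial_j v_n \rightharpoonup \partial_j v$ weakly in $L^{p_j}(\Omega_T)$ for each $j$.

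\textbf{Strong gradient convergence and verification.} The crucial step is upgrading $\partial_j v_n \rightharpoonup \partial_j v$ to strong convergence, since the flux is nonlinear in the gradient. I would achieve this by a Minty-type monotonicity argument: subtracting the approximate and limit equations tested against $(v_n - v)\eta$ for a suitable non-negative cut-off $\eta$, and exploiting the strict monotonicity of $\xi\mapsto|\xi|^{p_j-2}\xi$ in each coordinate together with the $L^\infty$ bound on $u_n$ and \eqref{cond:lip_cont} to absorb the coefficient discrepancy. Once $\partial_j v_n \to \partial_j v$ strongly in $L^{p_j}_{\mathrm{loc}}$ is established, passage to the limit term by term yields the weak formulation of Definition \ref{def:weaksol} for $u := v^{1/m}$. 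Membership $u^m - g^m \in L^{\bf p}(0,T;\overline W^{1,\bf p}_{\textnormal{o}}(\Omega))$ is inherited from the approximate problems together with the existence of $\nabla u^m$ proved in Section \ref{sec:existence_grad}, and $u \in C([0,T];L^{m+1}(\Omega))$ with $u(0) = u_0$ follows from Lemma \ref{lem:time-cont} applied to the limit combined with weak convergence of the initial slices.

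\textbf{Main obstacle.} The heart of the argument is the strong convergence of $\partial_j v_n$ in each coordinate direction $j$, complicated by three features: the anisotropy (each $p_j$ must be handled separately, with possible degeneracy when $p_j\neq 2$); the solution dependence of $a_j$; and above all the space-time varying weight $v^{(m_j-m)(p_j-1)/m}$, which degenerates where $v$ is close to $0$. The closeness assumption \eqref{cond:m_j-closeness}, which is exactly what yields $\nabla u^m \in L^{p_j}_{\mathrm{loc}}$ in Section \ref{sec:existence_grad}, is also precisely the condition needed to keep the exponent $(m_j-m)(p_j-1)/m$ strictly below $1$ and thereby to close the Minty argument uniformly in $n$.
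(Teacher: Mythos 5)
Your outline matches the paper's announced six-step strategy, but two load-bearing steps are not carried out correctly, and the approximation scheme itself diverges from the paper's in a way that matters. First, the uniform gradient bound. Testing with $(v_n-g^m)$, i.e.\ with $u_n^m-g^m$, pairs the flux with $\partial_j u_n^m$ and produces the weighted energy $\iint u_n^{(m_j-m)(p_j-1)}|\partial_j u_n^m|^{p_j}$; since $(m_j-m)(p_j-1)\geq 0$, this weight \emph{vanishes} where $u_n$ is small, and with no uniform positive lower bound on $u_n$ you cannot discard it to reach $\sum_j\iint|\partial_j v_n|^{p_j}\leq C$. The paper's Lemma \ref{lem:grad_bdd_in_Lp} instead tests with $u_k^{\varepsilon}-(g+\tfrac1k)^{\varepsilon}$ for a \emph{small} $\varepsilon>0$: the resulting exponent is $(m_j-m)(p_j-1)-m+\varepsilon$, and \eqref{cond:m_j-closeness} is exactly the condition making this non-positive, so the weight is bounded \emph{below} near $u=0$. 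You correctly sense that \eqref{cond:m_j-closeness} is the crucial hypothesis, but you assign it the wrong mechanism (``keeps the exponent below $1$'' in the Minty step); its real job is in this energy estimate, and it is also the reason the hypothesis $g\geq\varepsilon_0$ or $g\equiv 0$ appears in \eqref{cond:g} (to control $(g+\tfrac1k)^{\varepsilon-1}$).

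Second, positivity and compactness. The paper's approximating problems keep $u$ (not $v=u^m$) as the unknown, truncate the weight $T_k(u)^{(m_j-1)(p_j-1)}$ as a function of the solution itself (so no frozen iterates and no fixed-point layer, and the time derivative stays linear), and crucially shift the data to $g+\tfrac1k$, $u_0+\tfrac1k$. Comparison with the constant $\tfrac1k$ then gives $u_k\geq\tfrac1k$, which justifies every chain-rule manipulation, lets the truncation be removed once the De Giorgi bound $u_k\leq L$ is in hand, and — together with a second application of the comparison principle between $u_k$ and $u_l$ — yields \emph{monotone} pointwise convergence and hence strong $L^q$ convergence without any Aubin--Lions argument (which would be delicate here, as the spatial regularity sits on $u^{m_j}$ rather than on $u$). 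Your scheme has no positivity mechanism, so $v_n^{1/m}$, the degenerate weight, and the coefficient $a_j(x,t,v_n^{1/m})$ are all problematic near $v_n=0$, and your regularized nonlinear time derivative $\partial_t v^{1/m}$ reintroduces a double nonlinearity in the parabolic term whose removal you do not address. Finally, the Minty step in the paper is not a simple subtraction tested against $(v_n-v)\eta$: it requires the integration-by-parts identities of Lemmas \ref{lem:complicated-initial-val} and \ref{lem:complicated-initial-val_for_u} to evaluate $\limsup I_1$, and the conclusion is pointwise a.e.\ convergence of $\partial_j u_k^{m_j}$ combined with Mazur's lemma rather than strong $L^{p_j}_{\mathrm{loc}}$ convergence.
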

The strategy of the proof can be summarized as follows:
\begin{enumerate}[label=\arabic*.]
    \item Introduce an approximating sequence of anisotropic PDEs without double nonlinearity for which the existence of solutions to the Cauchy-Dirichlet problem is known. 
    \item Conclude that the resulting sequence $(u_k)_{k = 1}^\infty$ of solutions satisfy a comparison principle, in order to obtain lower bounds on the whole domain $\Omega_T$ by comparison with suitable constant functions.
    \item  Prove a uniform upper bound for the solutions $u_k$ by means of a De Giorgi type iteration.
    \item Conclude using the upper and lower bounds that $u_k$ satisfies the original equation for large $k$.
    \item Deduce strong and pointwise convergence of the sequence $(u_k)_{k = 1}^\infty$ via the comparison principle, and weak convergence of spatial partial derivatives via energy estimates.
    \item Use Minty's trick to conclude pointwise convergence of the partial derivatives, in order to show that the limit function $u$ solves the original equation.
\end{enumerate}

\subsection{Comparison principle}
We prove a comparison principle for solutions of \eqref{eq:diffusion}. The techniques employed in the proof draw inspiration from the approach in \cite{BoeStru}, where a comparison principle for the model case for doubly nonlinear equations was proved under similar assumptions. However, since we deal with a more general class of fully nonlinear anisotropic evolution equations great care needs to be taken when adapting the strategy. Both the dependence of the coefficients on the solution itself and the presence of distinct coefficients $p_i$ and $m_i$ require some attention. Again, working in the parameter range \eqref{cond:m_j-closeness} makes it natural to consider solutions $u$ for which $u^m$ has a gradient.
\begin{defin}\label{def:boundary-value-ineq}
For functions $v^m$, $u^m$ in $L^{\bf p}(0,T; W^{1, {\bf p}}(\Omega))$ we say that $u\leq v$ on $\partial \Omega \times (0,T)$ if $(u^m - v^m)_+ \in L^{\bf p}(0,T; \overline{W}^{1, {\bf p}}_{\textnormal{o}}(\Omega))$. The definition can be extended to the case where $u$ or $v$ is replaced by a constant in the obvious way.
\end{defin}
\noindent Note that for functions $v^m$, $u^m$ in $L^{\bf p}(0,T; W^{1, {\bf p}}(\Omega))$ we have that $u\leq v$ in the sense of Definition \ref{def:boundary-value-ineq} if and only if $u^m = v^m + w + \psi$ for some $w$ in $L^{\bf p}(0,T; W^{1, {\bf p}}(\Omega))$ with $w\leq 0$ and some $\psi$ in $L^{\bf p}(0,T; \overline{W}^{1, {\bf p}}_{\textnormal{o}}(\Omega))$. We can now state the comparison principle.

\begin{theo}\label{thm:comparison_principle}
Let $u$ be a weak subsolution with right-hand side $f_u$ and let $v$ be a weak supersolution with right-hand side $f_v$ in the sense of Definition \ref{def:subsupersol}. Assume furthermore that $u,v \in C([0,T]; L^1(\Omega))$  and that $u^m, v^m \in L^{\bf p}(0,T; W^{1, {\bf p}}(\Omega))$. Suppose that $v\geq \varepsilon$ on $\partial \Omega \times (0,T)$ in the sense of Definition \ref{def:boundary-value-ineq} for some $\varepsilon > 0$. If $v \geq u$ on $\partial \Omega \times (0,T)$ in the sense of Definition \ref{def:boundary-value-ineq}, we have
\begin{align}\label{est:comp_principle_general}
\int_{\Omega} (u - v)_+ (x,t_2) \d x &\leq 
\iint_{\Omega \times[t_1,t_2]} \chi_{\{v<u \}\cup\{u=v=0\}}  (f_u\chi_{\{u>0\}} - f_v) \d x \d t
\\
&\quad+ \int_{\Omega} (u - v)_+ (x,t_1) \d x \notag
\end{align}
for every $0\leq t_1<t_2\leq T$. In particular, if $0\leq f_u \leq f_v$ and $u(0) \leq v(0)$ then
\begin{align}\label{u<v}
    u \leq v, \quad \textnormal{a.e. in $\Omega_T$.}
\end{align}
\end{theo}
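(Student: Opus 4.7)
The plan is to adapt the testing strategy of B\"ogelein-Strunk \cite{BoeStru} to our anisotropic doubly-nonlinear setting. The natural test function will be a Lipschitz truncation of the positive part of the difference of the $m$-th powers, namely
\[
\varphi = \zeta(t)\,\psi_\delta(u^m - v^m), \qquad \psi_\delta(s) := \min\bigl(\delta^{-1} s_+,\, 1\bigr),
\]
where $\zeta$ is a smooth cutoff in time approximating $\chi_{[t_1, t_2]}$. Because $v \geq \varepsilon > 0$ and $u \leq v$ on the lateral boundary, $(u^m - v^m)_+$ lies in $L^{\bf p}(0,T; \overline W^{1,{\bf p}}_{\textnormal{o}}(\Omega))$; composition with the Lipschitz function $\psi_\delta$ preserves this, so after a standard $C^\infty_{\textnormal{o}}$-regularization $\varphi$ will be admissible in Definitions~\ref{def:weaksol}--\ref{def:subsupersol}.

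First, I would subtract the supersolution inequality for $v$ from the subsolution inequality for $u$, obtaining
\[
\iint_{\Omega_T}\sum_{j=1}^N\bigl(\mathcal{A}_j(u) - \mathcal{A}_j(v)\bigr)\partial_j\varphi\,dx\,dt - \iint_{\Omega_T}(u-v)\partial_t\varphi\,dx\,dt \leq \iint_{\Omega_T}(f_u - f_v)\varphi\,dx\,dt,
\]
with $\mathcal{A}_j(w) := a_j(x,t,w)|\partial_j w^{m_j}|^{p_j - 2}\partial_j w^{m_j}$. Since neither $u$ nor $v$ has a weak time derivative, the term $-(u-v)\partial_t\varphi$ must first be treated via Steklov averaging (or a time mollification). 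Passing to the limit uses the hypothesis $u, v \in C([0,T]; L^1(\Omega))$ and the elementary identity $\{u^m > v^m\} = \{u > v\}$ (valid for $u, v \geq 0$ and $m \geq 1$); the outcome after sending $\delta\to 0$ is the boundary contribution $\int_\Omega (u-v)_+\,dx\bigl|_{t_1}^{t_2}$.

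For the flux term I would split
\begin{align*}
\mathcal{A}_j(u) - \mathcal{A}_j(v) &= a_j(x,t,u)\bigl[|\partial_j u^{m_j}|^{p_j-2}\partial_j u^{m_j} - |\partial_j v^{m_j}|^{p_j-2}\partial_j v^{m_j}\bigr] \\
&\quad + \bigl[a_j(x,t,u) - a_j(x,t,v)\bigr]\,|\partial_j v^{m_j}|^{p_j-2}\partial_j v^{m_j}.
\end{align*}
The first summand will be handled by the monotonicity of $z \mapsto |z|^{p_j-2}z$ together with the chain rule $\partial_j(u^m - v^m) = \tfrac{m}{m_j}(u^{m-m_j}\partial_j u^{m_j} - v^{m-m_j}\partial_j v^{m_j})$, valid on $\{u, v > 0\}$; since $\partial_j\psi_\delta$ is supported on $\{0 < u^m - v^m < \delta\}$, where $u$ and $v$ become uniformly close as $\delta\to 0$, this contribution is asymptotically non-negative and may be discarded. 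The second summand is estimated by \eqref{cond:lip_cont}, producing an extra factor $|u - v| = o(1)$ on the same support, which vanishes as $\delta \to 0$. The source term passes to $\iint (f_u - f_v)\chi_{\{v<u\}}\zeta\,dx\,dt$, while the residual contribution on the degenerate set $\{u = v = 0\}$ accounts for the extra $\chi_{\{u=v=0\}}$ term in \eqref{est:comp_principle_general}. Letting $\delta\to 0$ and then $\zeta\to \chi_{[t_1, t_2]}$ yields \eqref{est:comp_principle_general}, and \eqref{u<v} follows because the right-hand side is non-positive while the initial term vanishes under $u(0)\leq v(0)$.

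\textbf{Main obstacle.} The principal difficulty is reconciling the mismatch between the gradient of the test function, which involves $\partial_j(u^m - v^m)$, and the natural monotonicity structure of the equation, which lives at the level of $\partial_j u^{m_j}$ with $m \neq m_j$ in general. This forces one to use the chain rule, which is only valid on the positive set $\{u, v > 0\}$, combined with a careful localization argument exploiting that the support of $\partial_j\psi_\delta$ concentrates where $u$ and $v$ nearly coincide. The coefficient dependence on $u$ contributes an additional term that is manageable via the Lipschitz hypothesis \eqref{cond:lip_cont}, but the correct handling of the degenerate set $\{u = v = 0\}$ is what gives rise to the nonstandard right-hand side in \eqref{est:comp_principle_general}.
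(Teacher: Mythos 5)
Your overall strategy (Lipschitz truncation of the difference as test function, splitting the flux into a monotone part plus a coefficient part controlled by \eqref{cond:lip_cont}) is the right spirit, but as written the proposal has two genuine gaps, both of which the paper's proof is specifically structured to avoid.

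First, the degenerate set. You acknowledge that the chain rule identities relating $\partial_j(u^m-v^m)$ to $\partial_j u^{m_j}$, $\partial_j v^{m_j}$ only hold on $\{u,v>0\}$, and you then assert that the monotone part is ``asymptotically non-negative'' because $u$ and $v$ are close on $\supp \partial_j\psi_\delta$, with the residual on $\{u=v=0\}$ ``accounting for'' the extra characteristic function in \eqref{est:comp_principle_general}. This is not a proof: closeness of $u$ and $v$ does not control the prefactors $u^{m-m_j}$, $v^{m-m_j}$ near zero, and no mechanism is given for producing the terms $\chi_{\{u>0\}}$ and $\chi_{\{u=v=0\}}$. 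The paper resolves this with a two-step structure you are missing: Lemma \ref{lem:max_of_subsol_is_subsol} shows that $u_\kappa:=\max\{u,\kappa\}$ is again a subsolution, with the modified right-hand side $f_u\chi_{\{u>\kappa\}}$ (this is exactly where $\chi_{\{u>0\}}$ comes from); Proposition \ref{prop:comp_principle_special_case} then proves the comparison estimate under the extra hypothesis $u\geq\varepsilon$ a.e., where the uniform positive lower bound makes the chain rule valid everywhere, makes $s\mapsto s^{1-m_j}$ Lipschitz on the relevant range, and lets one work with $\hat v=\max\{v,\varepsilon/2\}$; finally one lets $\kappa\to0$, and $\chi_{\{v<u_\kappa\}}\to\chi_{\{v<u\}\cup\{u=v=0\}}$ produces the remaining characteristic function. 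Without some version of this truncation-from-below, the argument on $\{u=v=0\}$ does not close.

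Second, the parabolic term. Your test function is $\psi_\delta(u^m-v^m)$, but the time pairing is against $u-v$, so after Steklov averaging you face $\iint \partial_t[u-v]_{\bar h}\,H_\delta(u^m-v^m)$. Lemma \ref{lem:almost-heaviside-est} (the key tool $\partial_t[G_\delta(f)]_{\bar h}\leq \partial_t[f]_{\bar h}H_\delta(f)$) requires the argument of $H_\delta$ to be the \emph{same} function whose mollified time derivative appears; with the mismatch $u-v$ versus $u^m-v^m$ the integrand does not telescope into a time derivative, and since $u-v$ has no weak time derivative the identity $\{u^m>v^m\}=\{u>v\}$ alone does not rescue the limit. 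The paper sidesteps this by taking the truncation at the level of the functions themselves, $\phi=H_\delta(u-v)$ (admissible precisely because of the positive lower bound from the first step), so that Lemma \ref{lem:almost-heaviside-est} applies verbatim and the boundary terms $\int_\Omega G_\delta(u-v)\,\d x\to\int_\Omega(u-v)_+\,\d x$ appear cleanly. If you insist on testing with a function of $u^m-v^m$, you would need the heavier Otto/B\"ogelein--Strunk machinery involving the quantity $\b$ to handle the parabolic term, which your proposal does not supply.
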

 \subsection{Uniqueness} The comparison principle allows us to conclude the uniqueness of solutions to the Cauchy-Dirichlet problem in the cases where the boundary values are bounded from below by a positive constant. 
 \begin{theo}\label{thm:uniqueness}
 Let $f$, $u_0$ and $g$ satisfy the assumptions \eqref{cond:f}, \eqref{cond:u_0}  and \eqref{cond:g} respectively. Suppose furthermore that $g\geq \varepsilon$ for some $\varepsilon>0$. Then there exists exactly one solution $u$ in $C([0,T];L^{m+1}(\Omega)) \cap L^\infty(\Omega_T)$ with $u^m \in L^{\bf p}(0,T;W^{1, {\bf p}}(\Omega))$ to the Cauchy-Dirichlet problem \eqref{prob:C-D} in the sense of Definition \ref{def:prob-CD}. 
 \end{theo}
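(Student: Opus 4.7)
The plan is to derive Theorem \ref{thm:uniqueness} from Theorem \ref{thm:existence} together with two applications of the comparison principle, Theorem \ref{thm:comparison_principle}. Existence of a solution with the extra regularity demanded in Theorem \ref{thm:uniqueness} is essentially free from Theorem \ref{thm:existence}: the solution $u$ provided by it satisfies $u\in C([0,T];L^{m+1}(\Omega))\hookrightarrow C([0,T];L^1(\Omega))$ by Definition \ref{def:prob-CD}(2) and boundedness of $\Omega$, and $u^m\in g^m+L^{\bf p}(0,T;\overline W^{1,\bf p}_{\textnormal{o}}(\Omega))\subset L^{\bf p}(0,T;W^{1,\bf p}(\Omega))$ by Definition \ref{def:prob-CD}(3), using that $g^m$ itself lies in $L^{\bf p}(0,T;W^{1,\bf p}(\Omega))$ (because $g$ is bounded, nonnegative and $m\geq 1$).

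For the uniqueness part, let $u_1,u_2$ be two such solutions. Each is simultaneously a weak sub- and super-solution of \eqref{eq:diffusion} with right-hand side $f\geq 0$, each satisfies $u_i(0)=u_0$, and Definition \ref{def:prob-CD}(3) gives
\begin{align*}
u_i^m-g^m\in L^{\bf p}(0,T;\overline W^{1,\bf p}_{\textnormal{o}}(\Omega)), \qquad i=1,2.
\end{align*}
Subtracting and using that $\overline W^{1,\bf p}_{\textnormal{o}}(\Omega)$ is closed under positive parts (the lattice property, standard via truncation and approximation by $C^\infty_{\textnormal{o}}$ functions) shows that both $(u_1^m-u_2^m)_+$ and $(u_2^m-u_1^m)_+$ lie in $L^{\bf p}(0,T;\overline W^{1,\bf p}_{\textnormal{o}}(\Omega))$, i.e. $u_1\leq u_2$ and $u_2\leq u_1$ on $\partial\Omega\times(0,T)$ in the sense of Definition \ref{def:boundary-value-ineq}. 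Moreover, since $g\geq\varepsilon$ one has $g^m\geq\varepsilon^m$, so $u_i^m-\varepsilon^m=(g^m-\varepsilon^m)+(u_i^m-g^m)$ has nonnegative trace and the negative part $(u_i^m-\varepsilon^m)_-$ has zero trace, hence lies in $L^{\bf p}(0,T;\overline W^{1,\bf p}_{\textnormal{o}}(\Omega))$. This means $u_i\geq\varepsilon$ on $\partial\Omega\times(0,T)$ in the sense of Definition \ref{def:boundary-value-ineq}, for both $i=1,2$.

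With every boundary hypothesis verified, Theorem \ref{thm:comparison_principle} applied to the pair $(u,v)=(u_1,u_2)$ with common right-hand side $f$ and identical initial data yields $u_1\leq u_2$ almost everywhere in $\Omega_T$, and interchanging the roles of $u_1$ and $u_2$ produces the reverse inequality; hence $u_1=u_2$ a.e. in $\Omega_T$, which is the desired uniqueness. The only mildly delicate point is the verification of the three boundary inequalities in the sense of Definition \ref{def:boundary-value-ineq}; these reduce to the lattice property of $\overline W^{1,\bf p}_{\textnormal{o}}(\Omega)$ under taking positive and negative parts, together with the elementary observation that a function in $L^{\bf p}(0,T; W^{1,\bf p}(\Omega))$ whose trace on $\partial \Omega \times (0,T)$ vanishes belongs to $L^{\bf p}(0,T;\overline W^{1,\bf p}_{\textnormal{o}}(\Omega))$, both of which are standard in the anisotropic Sobolev setting.
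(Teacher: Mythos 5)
Your proposal is correct and follows essentially the same route as the paper: existence is read off from Theorem \ref{thm:existence}, and uniqueness follows by applying the comparison principle of Theorem \ref{thm:comparison_principle} to the two solutions with the roles of sub- and super-solution interchanged. The extra verifications you supply (the embedding $C([0,T];L^{m+1})\hookrightarrow C([0,T];L^1)$, the boundary inequalities in the sense of Definition \ref{def:boundary-value-ineq}, and $u_i\geq\varepsilon$ on the lateral boundary via $g\geq\varepsilon$) are exactly the details the paper leaves implicit, and they can be checked most directly from the remark following Definition \ref{def:boundary-value-ineq} by writing $u_1^m=u_2^m+0+(u_1^m-g^m)-(u_2^m-g^m)$ and $\varepsilon^m=u_i^m-(g^m-\varepsilon^m)-(u_i^m-g^m)$.
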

\begin{proof}{}
Existence, continuity in time into $L^{m + 1}(\Omega)$ and boundedness follow from Theorem \ref{thm:existence}. Uniqueness follows from Theorem \ref{thm:comparison_principle} since, for two solutions $u, v$ with the same initial and boundary conditions and right-hand side, we obtain \eqref{u<v} where the roles of $u$ and $v$ are interchangeable, and thus $u=v$.
\end{proof}

\begin{rem}
Due to the assumption in Theorem \ref{thm:comparison_principle} of continuity only into $L^1(\Omega)$, one can in fact formulate a somewhat stronger uniqueness result. Namely, there can only be one function $u$ in $V^{\bf p, \bf m}\cap C([0,T];L^1(\Omega))$ satisfying the conditions (1) and (3) of Definition \ref{def:prob-CD} and such that $u(0) = u_0$ as an element of $L^1(\Omega)$.
\end{rem}

\section{Preliminaries}\label{sec:preliminaries}
In this section we introduce some notation and various lemmas that will be used in the subsequent arguments. 
\subsection{Notation}
Throughout this work we denote
\begin{align*}
    A_j(x, t, u, \xi) := a_j(x, t, u) \lvert \xi_j \rvert^{p_j - 2}\xi_j, 
    & \quad (x, t, u, \xi) \in \Omega_T \times \mathbb{R} \times \mathbb{R}^N, 
    & \quad j\in\{1, \ldots, N\},
\end{align*}
and work with the vector valued map $A=(A_1,\dots, A_N)$, which allows us to express Equation \eqref{eq:diffusion} in divergence form:
\begin{align*}
    \partial_t u - \nabla \cdot A(x,t,u,\partial_1 u^{m_1}, \ldots, \partial_N u^{m_N}) = f.
\end{align*}
The only case where $A$ takes a different meaning is Subsection \ref{sec:comparison_p_laplace_type} which is self-contained and focuses on a different type of equations. For brevity, the vector $(x, t, u, \partial_1 u^{m_1}, \ldots, \partial_N u^{m_N})$ will occasionally be denoted by $(x, t, u, (\partial_j u^{m_j})_{j = 1}^N)$.

For $0 \leq \tau_1 < \tau_2 \leq T$ and $\delta \in \big(0, \tfrac12 (\tau_2 - \tau_1)\big)$ we introduce the trapezoidal function
\begin{align}\label{func:trapezoid}
\zeta_{\tau_1,\tau_2}^\delta(t)=
\begin{cases}
0 & \text{if } t<\tau_1, \\
\delta^{-1}(t-\tau_1) & \text{if } t\in [\tau_1, \tau_1+\delta], \\
1 & \text{if } t \in (\tau_1+\delta, \tau_2-\delta), \\
1 - \delta^{-1}(t-\tau_2+\delta) & \text{if } t \in [\tau_2-\delta, \tau_2], \\
0 & \text{if } t \geq \tau_2.
\end{cases}
\end{align}
 Given $\delta>0$ we define the Lipschitz function $H_\delta :\R\to \R$, 
\begin{align}\label{def:H_delta}
H_\delta(s) = \left\{
\begin{array}{ll}
0 & \quad \text{if } s \leq 0,
\\[5pt]
 \frac{s}{\delta}  &\quad \text{if } 0< s < \delta,
 \\
 1  & \quad \text{if }s \geq \delta.
\end{array}
\right.
\end{align}
We also denote
\begin{align}\label{def:G_delta}
    G_\delta (s) := \int_0^s H_\delta (\sigma) \d \sigma = \begin{cases}
    0 & \text{if } s \leq 0, \\
    \frac{s^2}{2 \delta} & \text{if } 0 < s < \delta, \\
    s - \frac{\delta}{2} & \text{if } \delta \leq s.
\end{cases}
\end{align}

Throughout this work, $c$ will denote a generic positive constant which may change value from line to line.

\subsection{Algebraic quantities and estimates}

\noindent The following result was proved in \cite{CiaVeVe}.
\begin{lem}\label{lem:elementary_real}
 Let $\gamma > 1$. For all $a, b \in \R$ we have 
 \begin{align}\label{est:exponent_inside}
  |a-b|^\gamma \leq c\big||a|^{\gamma-1} a - |b|^{\gamma-1}b\big|
 \end{align}
for a constant $c=c(\gamma)$. 
\end{lem}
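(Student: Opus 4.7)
The plan is to reduce the claim to two elementary estimates via a symmetry argument combined with a case analysis on the signs of $a$ and $b$. Setting $\Phi(s):=|s|^{\gamma-1}s$, the function $\Phi$ is odd, strictly increasing, and of class $C^1$ on $\R$, so the inequality becomes $|a-b|^\gamma \leq c\,|\Phi(a) - \Phi(b)|$ (equivalently, $\Phi^{-1}$ is $1/\gamma$-Hölder continuous). Since the stated inequality is symmetric in $(a,b)$ and invariant under the flip $(a,b)\mapsto(-a,-b)$ (using that $\Phi$ is odd), I may assume without loss of generality that $a\geq b$ and $a\geq 0$ after at most one swap and one sign change.

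In the same-sign case $b\geq 0$, I would introduce $t:=b/a\in[0,1]$ (the subcase $a=0$ forces $b=0$ and is trivial). The claim reduces to $(1-t)^\gamma \leq c\,(1 - t^\gamma)$, and I would in fact prove the stronger bound $(1-t)^\gamma + t^\gamma \leq 1$, which allows the constant $c=1$ in this case. To see this I would study $g(t):=1 - t^\gamma - (1-t)^\gamma$ on $[0,1]$: it vanishes at the endpoints, its derivative $g'(t)=\gamma\bigl((1-t)^{\gamma-1}-t^{\gamma-1}\bigr)$ has the unique zero $t=1/2$ in $(0,1)$ since $s\mapsto s^{\gamma-1}$ is injective on $[0,\infty)$ for $\gamma>1$, and the value $g(1/2)=1-2^{1-\gamma}\geq 0$ is nonnegative because $\gamma\geq 1$.

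In the opposite-sign case $b<0$, writing $b=-|b|$ gives $a-b=a+|b|$ and $\Phi(a)-\Phi(b)=a^\gamma+|b|^\gamma$, so the inequality becomes $(a+|b|)^\gamma \leq c\,(a^\gamma+|b|^\gamma)$. This is immediate from the convexity of $s\mapsto s^\gamma$ on $[0,\infty)$ and yields the constant $c=2^{\gamma-1}$.

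Putting the two cases together, $c=2^{\gamma-1}$ suffices. I expect the only genuinely non-trivial step to be the subadditivity-type inequality $(1-t)^\gamma + t^\gamma \leq 1$ for $\gamma \geq 1$; the remaining ingredients are just symmetry reductions and a routine use of convexity.
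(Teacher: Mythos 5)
Your proof is correct. The paper itself does not prove Lemma \ref{lem:elementary_real}; it only cites \cite{CiaVeVe}, so there is no in-text argument to compare against, and your self-contained proof is a welcome addition. All three steps check out: the symmetry reductions are valid (the inequality is invariant under swapping $a,b$ and under $(a,b)\mapsto(-a,-b)$ because $\Phi(s)=|s|^{\gamma-1}s$ is odd), the same-sign case correctly reduces to $(1-t)^\gamma\leq 1-t^\gamma$ on $[0,1]$, and the opposite-sign case follows from convexity with $c=2^{\gamma-1}$, which dominates the constant $1$ from the first case. One small simplification: the ``non-trivial'' inequality $(1-t)^\gamma+t^\gamma\leq 1$ for $t\in[0,1]$ and $\gamma\geq 1$ follows immediately from $t^\gamma\leq t$ and $(1-t)^\gamma\leq 1-t$, so the calculus argument with the critical point at $t=1/2$, while correct, is not needed.
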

\noindent Given $m > 0$ and $u, v \geq 0,$ we define 
\begin{align}
    \b[u, v] & := \tfrac{1}{m + 1} (u^{m + 1} - v^{m + 1}) - v^m(u - v) \label{eq:b_definition}  \\
    & = \tfrac{m}{m + 1} (v^{m + 1} - u ^{m + 1}) - u(v^m - u^m). \notag
\end{align}
Observe that this quantity is nonnegative. This can be seen directly by considering the relation between $\b$ and the convex differentiable function $u \mapsto \tfrac{1}{m+1}u^{m + 1}$, or as a consequence of \eqref{b:basic-est} below.
The following estimate for $\b$ which has been proved in \cite{BoeDuKoSc} will be useful in the proof of Lemma \ref{lem:energy_estimates_approx_probs}.
\begin{lem}\label{lemma:b_properties}
    Let $m \geq 1$ and $u, v \geq 0.$ Then, there exists a constant $c$ depending only on $m$ such that the following estimate holds true
    \begin{align}\label{b:basic-est}
        \tfrac{1}{c} \lvert v^\frac{m + 1}{2} - u^\frac{m+1}{2}\rvert^2 \leq \b [u, v] \leq c \lvert v^\frac{m + 1}{2} - u^\frac{m + 1}{2}\rvert^2.
    \end{align}
\end{lem}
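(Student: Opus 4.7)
The plan is to interpret $\b[u,v]$ as the Bregman divergence of the convex function $\Phi(s):=\tfrac{1}{m+1}s^{m+1}$ and then exploit homogeneity to reduce the two-sided estimate to a one-variable analysis. Since $\Phi'(s)=s^m$, the fundamental theorem of calculus yields
\[
\b[u,v] \;=\; \Phi(u)-\Phi(v)-\Phi'(v)(u-v) \;=\; \int_v^u (s^m-v^m)\,ds,
\]
which shows in passing that $\b[u,v]\ge 0$ with equality iff $u=v$. In parallel,
\[
u^{\frac{m+1}{2}}-v^{\frac{m+1}{2}} \;=\; \tfrac{m+1}{2}\int_v^u s^{\frac{m-1}{2}}\,ds.
\]
Both quantities are homogeneous of degree $m+1$ under the scaling $(u,v)\mapsto(\lambda u,\lambda v)$, which strongly suggests factoring out $v^{m+1}$ (assuming $v>0$) and working with the single parameter $r:=u/v\ge 0$.

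After this reduction it suffices to produce positive constants $c_1,c_2$ depending only on $m$ such that $c_1 G(r)\le F(r)\le c_2 G(r)$ for all $r\in[0,\infty)$, where
\[
F(r):=\tfrac{1}{m+1}(r^{m+1}-1)-(r-1),\qquad G(r):=\bigl(r^{\frac{m+1}{2}}-1\bigr)^{2}.
\]
Both $F$ and $G$ are continuous, nonnegative on $[0,\infty)$, and vanish to second order exactly at the common zero $r=1$, so the next step is a scalar analysis of $F/G$. A Taylor expansion around $r=1$ gives $F(r)=\tfrac{m}{2}(r-1)^2+O((r-1)^3)$ and $G(r)=\tfrac{(m+1)^2}{4}(r-1)^2+O((r-1)^3)$, so $F/G$ extends continuously to $r=1$ with the strictly positive value $\tfrac{2m}{(m+1)^2}$, using $m\ge 1$.

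To close the argument I would examine the remaining endpoints: $F(0)/G(0)=m/(m+1)$ by direct evaluation, and $F(r)/G(r)\to 1/(m+1)$ as $r\to\infty$ since $F(r)\sim r^{m+1}/(m+1)$ and $G(r)\sim r^{m+1}$. Continuity of $F/G$ on the compactification $[0,\infty]$ together with the positivity of its values at every point then yields the required two-sided bound, and transferring back through the factor $v^{m+1}$ gives the lemma. The degenerate cases where $u=0$ or $v=0$ are handled by direct computation and produce ratios $\tfrac{1}{m+1}$ and $\tfrac{m}{m+1}$ respectively, consistent with the scaling analysis.

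I expect the main delicate point to be the Taylor expansion at $r=1$: since both $F$ and $G$ vanish to second order, naive cancellation fails, and the positivity of the limit relies crucially on computing the $(r-1)^2$-coefficients exactly and on the condition $m>0$ (the hypothesis $m\ge 1$ is comfortably sufficient). Everything else reduces to elementary one-variable asymptotics.
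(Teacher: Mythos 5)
Your argument is correct. Note that the paper does not actually prove this lemma --- it only cites \cite{BoeDuKoSc} --- so there is no in-paper proof to match; judged on its own, your route (Bregman-divergence representation $\b[u,v]=\int_v^u(s^m-v^m)\,ds$, homogeneity of degree $m+1$, reduction to the single ratio $F/G$ on the compactified half-line) is sound: $F$ and $G$ vanish only at $r=1$, the quadratic Taylor coefficients $\tfrac m2$ and $\tfrac{(m+1)^2}{4}$ there are both nonzero, and the values/limits at $r=0$ and $r=\infty$ are positive and finite, so continuity on $[0,\infty]$ yields two-sided constants depending only on $m$. The proof in the cited reference is instead pointwise and constructive: it combines the same integral representation with elementary algebraic inequalities of the type $|a^\alpha-b^\alpha|\simeq |a-b|(a+b)^{\alpha-1}$ (in the spirit of Lemma \ref{lem:elementary_real}), which produces explicit constants, whereas your compactness argument is shorter but non-quantitative. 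Two small points to tidy up: the word ``respectively'' assigns the endpoint ratios backwards --- $u=0$ gives $\b[0,v]=\tfrac{m}{m+1}v^{m+1}$ and hence ratio $\tfrac{m}{m+1}$, while $v=0$ gives $\tfrac{1}{m+1}$ --- and you should say explicitly that $u=v=0$ reduces to the trivial inequality $0\le 0\le 0$, since the normalization by $v^{m+1}$ is unavailable there. Neither affects the validity of the proof.
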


We will employ the following Lemma of fast geometric convergence in the De Giorgi iteration argument. A proof can be found in \cite{Giu}.

\begin{lem}\label{lem:fast_geometric_convergence}
    Let $C > 0$, $b > 1$, $\delta > 0$ and let $(Y_j)_{j=0}^\infty$ be a sequence of positive real numbers such that
    \begin{align*}
        Y_{j+1} \leq C b^j Y_j^{1 + \delta}, \quad j\in\mathbb{N}.
    \end{align*}
     If 
     \begin{align*}
         Y_0 \leq C^{-\frac{1}{\delta}} b^{-\frac{1}{\delta^2}},
     \end{align*}
    then $Y_j$ converges to zero as $j \to \infty.$
\end{lem}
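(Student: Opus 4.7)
The plan is to prove by induction on $j$ that $Y_j \leq Y_0 r^j$ for a suitable $r \in (0,1)$ to be determined, which immediately implies $Y_j \to 0$. The natural ansatz is $r = b^{-1/\delta}$, since the factor $b^j$ on the right-hand side of the recursion has to be absorbed by the geometric decay of $Y_j^{1+\delta}$; a quick back-of-the-envelope calculation matching the $b$-exponents confirms that this is the only choice (up to a constant) that can work for all $j$.

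First I would check the base case $Y_0 \leq Y_0 \cdot r^0$, which is trivial. For the induction step, assuming $Y_j \leq Y_0 r^j$, I would insert this into the recursion to obtain
\begin{align*}
Y_{j+1} \leq C b^j Y_j^{1+\delta} \leq C b^j Y_0^{1+\delta} r^{j(1+\delta)} = C Y_0^{\delta} \cdot b^j r^{j\delta} \cdot Y_0 r^j.
\end{align*}
With the choice $r = b^{-1/\delta}$, the factor $b^j r^{j\delta}$ equals $1$, so the estimate collapses to $Y_{j+1} \leq C Y_0^{\delta} \cdot Y_0 r^j$. To close the induction I need $C Y_0^\delta \leq r = b^{-1/\delta}$, which rearranges to exactly the hypothesis $Y_0 \leq C^{-1/\delta} b^{-1/\delta^2}$. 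Thus $Y_{j+1} \leq Y_0 r^{j} \leq Y_0 r^{j+1}/r$; to obtain the clean bound $Y_{j+1} \leq Y_0 r^{j+1}$ one actually strengthens the induction slightly by noting that the above chain in fact gives $Y_{j+1} \leq C Y_0^\delta \cdot Y_0 r^j \leq Y_0 r^{j+1}$, which is the desired inequality.

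There is no serious obstacle here: the only subtle point is the bookkeeping with the two exponents $1/\delta$ and $1/\delta^2$, which is precisely why the hypothesis on $Y_0$ has its particular form. Once the correct $r$ is guessed, the argument is purely algebraic. I would simply record the conclusion that $Y_j \leq Y_0 b^{-j/\delta} \to 0$ as $j\to\infty$, which completes the proof.
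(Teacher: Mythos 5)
Your proof is correct: the induction $Y_j \leq Y_0 b^{-j/\delta}$ closes exactly because the hypothesis $Y_0 \leq C^{-1/\delta} b^{-1/\delta^2}$ is equivalent to $C Y_0^\delta \leq b^{-1/\delta}$, and this is precisely the standard argument from the reference \cite{Giu} that the paper cites instead of giving a proof. The only blemish is the sentence containing ``$Y_{j+1} \leq Y_0 r^{j} \leq Y_0 r^{j+1}/r$'', which is a detour you immediately retract; the clean chain $Y_{j+1}\leq C Y_0^\delta\, Y_0 r^j \leq r\, Y_0 r^j = Y_0 r^{j+1}$ that you state at the end is all that is needed.
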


\subsection{Time mollifications}
In this work we will use two different types of mollifications in time for functions defined on space-time cylinders. For $v\in L^1(\Omega_T)$ and $h\in (0,T)$, we recall that the \textit{Steklov average} of $v$ is defined as 
\begin{align}\label{def:Steklov_av}
    [v]_h (x,t) := \frac{1}{h} \int_{t}^{t+h} v(x, s) \d s, \quad (x,t) \in \Omega \times(0, T-h),
\end{align}
and its reversed analogue is defined as
\begin{align}\label{def:Steklov_av_rev}
    [v]_{\bar h} (x,t) := \frac{1}{h} \int_{t-h}^{t} v(x, s) \d s, \quad (x,t) \in \Omega \times(h, T).
\end{align}
We also recall the definition of the \textit{exponential time mollification}, utilized previously in \cite{KiLi}, \cite{BoeDuMa} and many other later papers, which for $h\in (0,T)$ and $v\in L^1(\Omega_T)$ is defined as
\begin{align}
\label{def:moll}
\Ex{v}(x,t) :=\frac{1}{h}\int^t_0 e^\frac{s-t}{h}v(x,s)\d s.
\end{align}
In this case, the reversed analogue takes the form
\begin{align}\label{def:mollrev}
\Exx{v}(x,t) :=\frac{1}{h}\int^T_t e^\frac{t-s}{h}v(x,s)\d s.
\end{align}
For details regarding the properties of the exponential mollification we refer to \cite[Lemma 2.2]{KiLi}, \cite[Lemma 2.2]{BoeDuMa} and \cite[Lemma 2.9]{St}. Some properties of the Steklov average were mentioned already in \cite{LaSoUr} and \cite{DiBe}.
The properties of the above mollifications that we will use have been collected for convenience into the following two lemmas.
\begin{lem}\label{lem:steklov_mol_properties}
    Let $v \in L^1(\Omega_T)$ and $p \in [1, \infty).$ Then, the Steklov average defined in \eqref{def:Steklov_av} satisfies the following properties:
    \begin{itemize}
        \item [(i)] If $v\in L^p(\Omega_T)$ then $[v]_h \in L^p(\Omega_T)$,
        $$ \norm{[v]_h}_{L^p(\Omega_T)}\leq \norm{v}_{L^p(\Omega_T)},$$
        and $[v]_h\to v$ in $L^p(\Omega_T)$.
        \item [(ii)] If $\partial_j v \in L^p(\Omega_T)$ in the weak sense for some $j$ then also $\partial_j [v]_h \in L^p(\Omega_T)$, 
        $$\partial_j [v]_h = [\partial_j v]_h,$$
        and $\partial_j [v]_h \to \partial_j v$ in $L^p(\Omega_T)$ as $h \to 0.$
        \item [(iii)] If $v \in C([0,T]; L^p(\Omega)),$ then for any $t \in (0, T)$, $[v]_h(\cdot, t) \to v(\cdot, t)$ in $L^p(\Omega).$
        \item [(iv)] If $v \in L^p(\Omega_T)$ then $\partial_t[v]_h \in L^p(\Omega \times (0, T-h))$, $\partial_t [v]_{\bar h} \in L^p(\Omega \times (h, T))$ and 
        \begin{align*}
            \partial_t [v]_h(x,t) = \frac{v(x, t+h) - v(x,t)}{h}, \quad \partial_t [v]_{\bar h} = \frac{v(x, t) - v(x,t-h)}{h}.
        \end{align*}
    \end{itemize}
\end{lem}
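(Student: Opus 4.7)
The plan is to handle the four items essentially independently, treating them in the order (i), (iv), (ii), (iii) since the difference quotient identity in (iv) will be useful conceptually. Throughout I extend $v$ by zero outside $\Omega_T$ so that the time averages are well defined even when $t+h$ slightly exceeds $T$; in the statement this is avoided by restricting the domain of $[v]_h$.

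For (i), the norm bound follows by writing $[v]_h(x,t) = \int_0^1 v(x, t + hs)\d s$ and applying Jensen's inequality to the convex function $|\cdot|^p$ to obtain $|[v]_h(x,t)|^p \leq \int_0^1 |v(x,t+hs)|^p \d s$, then integrating over $\Omega\times(0,T-h)$ and using Fubini together with the translation invariance of Lebesgue measure in the $t$ variable. For the convergence $[v]_h \to v$ in $L^p(\Omega_T)$, I would first establish the claim for a function $w \in C^\infty_{\textnormal{o}}(\Omega_T)$ by using uniform continuity of $w$, and then extend to a general $v\in L^p(\Omega_T)$ by density: given $\varepsilon > 0$, choose $w$ with $\|v - w\|_{L^p(\Omega_T)} \leq \varepsilon$, and use the norm bound to estimate $\|[v]_h - v\|_{L^p} \leq \|[v-w]_h\|_{L^p} + \|[w]_h - w\|_{L^p} + \|w - v\|_{L^p} \leq 2\varepsilon + \|[w]_h - w\|_{L^p}$, where the last term vanishes as $h\to 0$.

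For (iv), direct differentiation under the integral using the fundamental theorem of calculus (valid in the sense of Lebesgue points, or equivalently in the distributional sense for $L^p$ functions in the $t$-variable) yields
\begin{align*}
\partial_t [v]_h(x,t) = \partial_t \Bigl(\frac{1}{h}\int_t^{t+h} v(x,s)\d s\Bigr) = \frac{v(x,t+h) - v(x,t)}{h},
\end{align*}
and the $L^p$-integrability follows from the triangle inequality together with translation invariance of the Lebesgue measure. The analogous statement for $[v]_{\bar h}$ is identical. For (ii), I would test $[v]_h$ against an arbitrary $\varphi \in C^\infty_{\textnormal{o}}(\Omega \times (0,T-h))$ and apply Fubini:
\begin{align*}
\iint [v]_h \partial_j \varphi \d x \d t = \tfrac{1}{h} \int_0^{T-h}\int_t^{t+h}\!\! \int_\Omega v(x,s)\partial_j \varphi(x,t)\d x \d s \d t,
\end{align*}
after which the spatial weak derivative of $v$ may be moved onto $v$ inside the inner integral (justified by the fact that $\partial_j \varphi(\cdot, t)$ is a legitimate test function for each $t$). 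Re-applying Fubini yields $-\iint [\partial_j v]_h \varphi \d x \d t$, which identifies $\partial_j [v]_h$ with $[\partial_j v]_h$ almost everywhere. The $L^p$ convergence then follows from (i) applied to $\partial_j v$.

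For (iii), under the extra hypothesis $v \in C([0,T];L^p(\Omega))$, Minkowski's inequality for integrals gives
\begin{align*}
\norm{[v]_h(\cdot, t) - v(\cdot,t)}_{L^p(\Omega)} \leq \tfrac{1}{h}\int_t^{t+h} \norm{v(\cdot, s) - v(\cdot, t)}_{L^p(\Omega)}\d s,
\end{align*}
and the right-hand side tends to zero as $h\to 0$ by continuity of $s\mapsto v(\cdot,s)$ at $s=t$. None of the steps pose a substantial obstacle since the lemma is a standard package of Steklov-average facts; the only mildly delicate point is ensuring Fubini's theorem is applicable in (ii) and that the $t$-derivative computation in (iv) is understood in the appropriate almost-everywhere / distributional sense, both of which are routine given $v\in L^1(\Omega_T)$.
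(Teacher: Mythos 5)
Your proof is correct and is the standard argument for these facts; the paper itself does not prove this lemma but simply cites \cite{LaSoUr} and \cite{DiBe}, and your treatment (Jensen plus density for (i), Fubini for (ii), Minkowski's integral inequality for (iii), and the Lebesgue/absolute-continuity differentiation for (iv)) is exactly the argument those references rely on. The only point worth being explicit about, which you already flag, is the domain convention for $[v]_h$ near $t=T$, and your handling of it is fine.
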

\begin{lem}\label{lem:expmolproperties} 
Suppose that $v \in L^1(\Omega_T)$, and let $p\in[1,\infty)$. Then the mollification $\Ex{v}$ defined in \eqref{def:moll} has the following properties:
\begin{enumerate}[(i)]
\item\label{expmol1}
If $v\in L^p(\Omega_T)$ then $\Ex{v} \in L^p(\Omega_T)$,
$$
\norm{\Ex{v}}_{L^p(\Omega_T)}\leq \norm{v}_{L^p(\Omega_T)},
$$
and $\Ex{v}\to v$ in $L^p(\Omega_T)$. A similar estimate also holds with $\Exx{v}$ on the left-hand side.
\item\label{expmol2}
In the above situation, $\Ex{v}$ has a weak time derivative $\partial_t \Ex{v}$ on $\Omega_T$ given by
\begin{align*}
\partial_t \Ex{v}=\tfrac{1}{h}(v - \Ex{v}),
\end{align*}
whereas for $\Exx{v}$ we have
\begin{align*}
\partial_t \Exx{v}=\tfrac{1}{h}(\Exx{v} - v).
\end{align*}
\item\label{expmol3}
If $v$ has a weak partial derivative in space then so does $\Ex{v}$ and $\Exx{v}$ and 
\begin{align*}
 \partial_j \Ex{v} = \Ex{\partial_j v}, \hspace{5mm} \partial_j \Exx{v} = \Exx{\partial_j v}.
\end{align*}
\item\label{expmol4} If $v\in L^p(0,T;L^{p}(\Omega))$ then $\Ex{v}, \Exx{v} \in C([0,T];L^{p}(\Omega))$.
\end{enumerate}
\end{lem}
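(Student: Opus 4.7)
The plan is to establish each of the four properties directly from the definition \eqref{def:moll}, with $\Exx{v}$ handled by fully symmetric arguments. For (i), I would apply Minkowski's integral inequality to the nonnegative kernel $K_h(t,s) := \tfrac{1}{h} e^{(s-t)/h}\chi_{\{0 \leq s \leq t\}}$, whose $L^1$-mass in $s$ equals $1 - e^{-t/h} \leq 1$ uniformly in $t$; this immediately yields the norm bound $\norm{\Ex{v}}_{L^p(\Omega_T)} \leq \norm{v}_{L^p(\Omega_T)}$. For the convergence $\Ex{v}\to v$ in $L^p(\Omega_T)$, I would first verify it for $v\in C([0,T]; L^p(\Omega))$ by showing that the kernel concentrates near $s=t$ as $h\to 0$, and then extend to arbitrary $v \in L^p(\Omega_T)$ by a standard density argument combined with the uniform operator bound just established.

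For (ii), rewriting $\Ex{v}(x,t) = e^{-t/h}\cdot \tfrac{1}{h}\int_0^t e^{s/h} v(x,s)\,\d s$ and differentiating pointwise in $t$, one obtains the formula $\partial_t \Ex{v} = \tfrac{1}{h}(v - \Ex{v})$ at the level of smooth $v$ via the product rule. To promote this to the weak derivative for $v\in L^1(\Omega_T)$, I would test $-\iint \Ex{v}\,\partial_t \varphi\d x\d t$ against $\varphi\in C^\infty_{\textnormal{o}}(\Omega_T)$, apply Fubini in the resulting double integral over $(s,t)$, and evaluate the inner $t$-integral by integration by parts; the boundary terms vanish because $\varphi$ has compact support in $(0,T)$. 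For (iii), the same Fubini manipulation applied to $\iint \Ex{v}\,\partial_j \varphi\d x\d t$, combined with the definition of the weak spatial derivative of $v(\cdot,s)$ applied pointwise in $s$, yields $\partial_j \Ex{v} = \Ex{\partial_j v}$ directly.

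The step requiring the most care is (iv). Here I would invoke the identity from (ii) together with (i): since $v \in L^p(\Omega_T)$ implies $\Ex{v} \in L^p(\Omega_T)$ and hence $\partial_t \Ex{v} = \tfrac{1}{h}(v - \Ex{v}) \in L^p(\Omega_T)$, the map $t\mapsto \Ex{v}(\cdot,t)$ belongs to $W^{1,p}(0,T; L^p(\Omega))$. The standard one-dimensional Sobolev embedding $W^{1,p}(0,T; L^p(\Omega)) \hookrightarrow C([0,T]; L^p(\Omega))$ then yields the asserted continuity. I expect the main technical point to be ensuring that the continuous representative obtained from this embedding agrees with the integral formula \eqref{def:moll}, so that the trace values (in particular $\Ex{v}(\cdot, 0)=0$ and $\Exx{v}(\cdot,T)=0$) are consistent with the definition; this is verified by computing pointwise limits as $t \to 0^+$ and $t\to T^-$. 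Apart from this point, every step reduces to elementary manipulations, consistent with the treatments in the cited works.
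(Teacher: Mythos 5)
Your proof is correct. The paper does not actually prove this lemma --- it defers to \cite[Lemma 2.2]{KiLi}, \cite[Lemma 2.2]{BoeDuMa} and \cite[Lemma 2.9]{St} --- and your sketch reproduces the standard arguments found there; the only point worth tightening is in (i), where Minkowski's inequality by itself gives the slicewise bound $\norm{\Ex{v}(\cdot,t)}_{L^p(\Omega)}\leq \int_0^t K_h(t,s)\norm{v(\cdot,s)}_{L^p(\Omega)}\,\d s$, and one then needs that the kernel also satisfies $\int_s^T K_h(t,s)\,\d t\leq 1$ (i.e. a Schur-test/Jensen step) to pass to the full $L^p(\Omega_T)$ bound with constant $1$.
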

\noindent The following lemma will be used in the proofs of the comparison principles of Theorem \ref{thm:comparison_principle} and Theorem \ref{thm:comparison_non-doubly-nonlinear}. The proof can be found in [Lemma 3.1, \cite{BoeStru}].
\begin{lem}\label{lem:almost-heaviside-est}
  Let $\delta>0,$ $H_\delta$ and $G_\delta$ be defined as in \eqref{def:H_delta} and \eqref{def:G_delta}, and $f\in C(0,T; L^1(\Omega))$. Then, for any $0<h<T$ the following inequality holds 
  \begin{equation*}
      \partial_t [G_{\delta}(f)]_{\bar h} \leq \partial_t [f]_{\bar h} H_{\delta} (f) \quad \mbox{a.e. in $\Omega_T$.}
  \end{equation*}
\end{lem}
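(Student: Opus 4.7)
The plan is to reduce the inequality, via the explicit formula for the time derivative of reverse Steklov averages (Lemma \ref{lem:steklov_mol_properties}(iv)), to a purely pointwise statement that expresses nothing more than the convexity of $G_\delta$. Since $G_\delta$ is globally Lipschitz with constant $1$, my first step is to note that $G_\delta(f) \in C(0,T;L^1(\Omega))$, so Lemma \ref{lem:steklov_mol_properties}(iv) applies to both $f$ and $G_\delta(f)$, giving, for a.e.\ $(x,t)\in \Omega\times(h,T)$,
\begin{align*}
\partial_t[G_\delta(f)]_{\bar h}(x,t) &= \frac{G_\delta(f(x,t))-G_\delta(f(x,t-h))}{h}, \\
\partial_t[f]_{\bar h}(x,t) &= \frac{f(x,t)-f(x,t-h)}{h}.
\end{align*}

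Next I would observe that the desired inequality is equivalent to the pointwise claim
\begin{align*}
G_\delta(a)-G_\delta(b) \leq H_\delta(a)(a-b), \qquad a,b\in\mathbb{R},
\end{align*}
applied with $a=f(x,t)$ and $b=f(x,t-h)$. From the definition \eqref{def:H_delta} the map $H_\delta$ is manifestly non-decreasing, so $G_\delta$, being an antiderivative of $H_\delta$, is convex with $G_\delta'=H_\delta$. The standard supporting-line inequality at $a$ then gives $G_\delta(b)\geq G_\delta(a)+H_\delta(a)(b-a)$, which rearranges to the pointwise bound; dividing by $h>0$ finishes the argument.

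I do not expect a substantive obstacle here, since the reasoning is really just convexity combined with the finite-difference representation of $\partial_t[\,\cdot\,]_{\bar h}$. The only point that requires minor care is the measurability/integrability bookkeeping needed to invoke Lemma \ref{lem:steklov_mol_properties}(iv) for both $f$ and $G_\delta(f)$, but this is handled automatically by the Lipschitz continuity of $G_\delta$ and the hypothesis $f\in C(0,T;L^1(\Omega))$.
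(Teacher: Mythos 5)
Your argument is correct: the finite-difference formula of Lemma \ref{lem:steklov_mol_properties}(iv) together with the supporting-line inequality $G_\delta(b)\geq G_\delta(a)+H_\delta(a)(b-a)$ for the convex function $G_\delta$ (with $G_\delta'=H_\delta$) yields exactly the claimed bound on $\Omega\times(h,T)$. The paper does not reproduce a proof but cites \cite[Lemma 3.1]{BoeStru}, whose argument is this same convexity computation, so your route coincides with the intended one.
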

\subsection{Properties of functions and Sobolev Embeddings}
The following lemma shows that solutions to a Dirichlet-type problem,  with the vector field $A$ replaced by a general vector valued map $\mathcal{A}$ of sufficiently high integrability, also possess a time-continuity property.
\begin{lem}\label{lem:time-cont}
 Suppose that $u, g \in L^\infty(\Omega_T)$ are non-negative functions with
 \begin{align*}
  u^m &\in g^m + L^{\bf p}(0,T; \overline W^{1, {\bf p}}_{\textnormal{o}}(\Omega)).
  \end{align*}
  Suppose also that
  \begin{align*}
  \mathcal{A} &: \Omega_T \to \R^N, \quad \mathcal{A}_j \in L^{p_j'}(\Omega_T), \quad f \in L^{\bar p'}(\Omega_T),\quad  \partial_t g \in L^2(\Omega_T),
 \end{align*}
and that 
\begin{align}\label{weak:general}
&\iint_{\Omega_T} \mathcal{A}\cdot \nabla \varphi - u\partial_t \varphi \d x\d t = \iint_{\Omega_T} f \varphi \d x \d t,
\end{align}
holds for all $\varphi \in C^\infty_{\textnormal{o}}(\Omega_T)$. Then $u$ has a representative in $C([0,T];L^{m+1}(\Omega))$. 
\end{lem}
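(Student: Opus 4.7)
My plan is to reduce the statement to a quantitative smallness estimate for the Bregman-type functional
$t \mapsto \int_\Omega \b[u(\cdot,t), u(\cdot,s)]\, dx$
as $t\to s$. Once such control is in hand, Lemma \ref{lemma:b_properties} transfers it to $\norm{u^{(m+1)/2}(t) - u^{(m+1)/2}(s)}_{L^2(\Omega)}^2$, and Lemma \ref{lem:elementary_real} applied with $\gamma = (m+1)/2$ yields
$\int_\Omega |u(t) - u(s)|^{m+1}\, dx \leq c \int_\Omega |u^{(m+1)/2}(t) - u^{(m+1)/2}(s)|^2\, dx$,
so $L^2$ continuity of $u^{(m+1)/2}$ delivers $L^{m+1}$ continuity of $u$. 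A continuous representative on all of $[0,T]$ then follows from uniform continuity on a dense subset and completeness of $L^{m+1}(\Omega)$, with endpoint values recovered by taking limits.

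To control the Bregman functional I would work with the Steklov averaged equation. By a density argument the weak form \eqref{weak:general} extends to test functions in $L^{\bf p}(0,T;\overline W^{1,\bf p}_{\textnormal{o}}(\Omega))$ of sufficient time regularity, and Lemma \ref{lem:steklov_mol_properties} yields, for a.e. $\tau \in (0, T-h)$ and every $\psi \in \overline W^{1,\bf p}_{\textnormal{o}}(\Omega)$,
$$\int_\Omega \partial_\tau [u]_h\, \psi\, dx + \int_\Omega [\mathcal{A}]_h \cdot \nabla \psi\, dx = \int_\Omega [f]_h\, \psi\, dx.$$
As test function I would take $\psi(x, \tau) := \bigl(u^m(x, \tau) - g^m(x, \tau)\bigr) - \bigl(u^m(x, s) - g^m(x, s)\bigr)$, which for a.e. Lebesgue point $s$ and a.e. $\tau$ belongs to $\overline W^{1,\bf p}_{\textnormal{o}}(\Omega)$ by the boundary assumption on $u^m$. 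Integrating in $\tau$ over $(s + h, t)$, passing $h \to 0$, and reorganizing the time-derivative contribution by means of the algebraic identity \eqref{eq:b_definition} should produce
$$\int_\Omega \b[u(t), u(s)]\, dx = -\int_s^t\!\int_\Omega \mathcal{A} \cdot \nabla \psi\, dx\, d\tau + \int_s^t\!\int_\Omega f\, \psi\, dx\, d\tau + R(s,t),$$
where $R(s,t)$ is a remainder arising from the $g^m$-pieces, controlled via $\partial_t g \in L^2(\Omega_T)$ (so that $\partial_t g^m = m\, g^{m-1} \partial_t g \in L^2(\Omega_T)$ by boundedness of $g$), and tending to zero as $|t - s| \to 0$. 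The first two terms on the right are bounded through Hölder's inequality using $\mathcal{A}_j \in L^{p_j'}(\Omega_T)$, $\partial_j u^m, \partial_j g^m \in L^{p_j}(\Omega_T)$ and $f \in L^{\bar p'}(\Omega_T)$, and each factor involving $\psi$ vanishes as $t - s \to 0$ by absolute continuity of the Lebesgue integral.

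The main technical obstacle I expect is the rigorous passage $h \to 0$ in the time-derivative term $\int_s^t\!\int_\Omega \partial_\tau [u]_h\, \psi\, dx\, d\tau$. The heuristic identity ``$\partial_\tau u \cdot u^m = \partial_\tau\bigl(\tfrac{1}{m+1} u^{m+1}\bigr)$'' cannot be used directly at the Steklov level, because one pairs $\partial_\tau [u]_h$ with $u^m(\tau)$ rather than with $[u^m]_h(\tau)$. Following Alt--Luckhaus-type chain rule arguments, one has to commute the Steklov average with the nonlinear power $u \mapsto u^m$ up to an error controlled by $\norm{u}_{L^\infty(\Omega_T)}$, and exploit the monotonicity inequality $(a^m - b^m)(a - b) \geq 0$ to sign or absorb the commutator error. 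Once the displayed identity is established for almost every $0 < s < t < T$, the continuity of its right-hand side in $(s, t)$ upgrades the a.e. statement to a genuine continuous representative on all of $[0, T]$.
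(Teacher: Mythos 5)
Your algebraic reduction --- controlling $\int_\Omega\b[u(t),u(s)]\,\d x$ and converting this into $L^{m+1}$-continuity via Lemma \ref{lemma:b_properties} and Lemma \ref{lem:elementary_real} --- is in the spirit of the paper, and you correctly flag the Steklov/chain-rule commutator issue, which can indeed be handled by the one-sided convexity inequalities (as in the proof of Lemma \ref{lem:complicated-initial-val}; an upper bound for $\b\geq 0$ suffices here). The analytic core, however, diverges from the paper's proof: the paper never freezes a time slice $u^m(\cdot,s)$, but instead compares $u$ with $\big(\Exx{u^m-g^m}+g^m\big)^{1/m}$, i.e.\ with a time-mollified and hence time-continuous approximant, and shows that the $L^{m+1}(\Omega)$-distance to this approximant tends to zero \emph{uniformly in $\tau$} along a sequence $h_j\to 0$; continuity of $u$ then follows from uniform convergence of continuous functions.

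The genuine gap in your route is precisely where you invoke ``uniform continuity on a dense subset.'' With the frozen-slice test function $\psi=(u^m-g^m)(\cdot,\tau)-(u^m-g^m)(\cdot,s)$, the elliptic contribution contains the term $\int_s^t\int_\Omega\mathcal{A}(\cdot,\tau)\cdot\nabla(u^m-g^m)(\cdot,s)\,\d x\,\d\tau$, which H\"older's inequality bounds by $\sum_j\norm{\mathcal{A}_j}_{L^{p_j'}(\Omega\times(s,t))}\,|t-s|^{1/p_j}\,\norm{\partial_j(u^m-g^m)(\cdot,s)}_{L^{p_j}(\Omega)}$. The last factor is finite only for a.e.\ $s$ and is not uniformly bounded in $s$ (only its $p_j$-th power is integrable in time), so the modulus of continuity you produce depends on $s$. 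What your estimates actually yield is: for a.e.\ fixed $s$, $\norm{u(t)-u(s)}_{L^{m+1}(\Omega)}\to 0$ as $t\to s$ through a full-measure set. That is continuity of $u$ restricted to a full-measure subset of $[0,T]$, which does not imply the existence of a continuous representative on $[0,T]$ --- a map can be continuous off a null set and still admit no continuous extension. The natural repair is to replace $u^m(\cdot,s)$ by a time average of $u^m-g^m$ over a small window, whose slice-wise gradient norms are uniformly controlled; at that point you have rediscovered the mollification argument that the paper (following \cite{Ve}) actually uses.
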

\begin{proof}{}
Following the approach of \cite[Lemma 3.10]{Ve} we can verify that
\begin{align}\label{eq:amost-timecont}
    \iint_{\Omega_T} \b[u, (w+g^m)^\frac1m] \zeta'(t) \d x \d t &= \iint_{\Omega_T} \mathcal{A}\cdot \nabla (u^m - w - g^m)\zeta + f (w+ g^m - u^m)\zeta \d x \d t \notag
    \\
     &\quad + \iint_{\Omega_T}(u - (w + g^m)^\frac1m)\partial_t (w+g^m) \zeta \d x \d t,
\end{align}
with the choice $w=\Exx{u^m - g^m}$ and $\zeta \in C^\infty_{\textnormal{o}}((0,T); [0,\infty))$. Here we denote $s^\alpha = |s|^{\alpha - 1}s$ for any $\alpha > 0$ and $s \in \R$, so that the above expression makes sense regardless of the signs of the functions. The only nontrival part of the argument is the potentially weaker integrability assumption on $f$ in our case. However, since both $u$ and $g$ are bounded, this poses no problems.
Utilizing \eqref{eq:amost-timecont} as in the proof of \cite[Lemma 3.11]{Ve} we therefore end up with
\begin{align}\label{lim:basically_continuity}
    \lim_{j\to \infty} \sup_{\tau \in [0,T]\setminus N} \int_\Omega \big|u - \big([\![u^m - g^m]\!]_{\bar h_j} + g^m\big)^\frac{1}{m} \big|^{m+1} (x,\tau)\d x = 0,
\end{align}
for a suitable sequence $h_j\to 0$ and some set $N\subset[0,T]$ of measure zero. The integrability property of $\partial_t g$ and the fact that $g$ is bounded show that $g \in C([0,T];L^q(\Omega))$ for any $q \in [1,\infty)$. Similarly, Property (iv) of Lemma \ref{lem:expmolproperties} and the boundedness of $g^m$ and $u^m$ show that $[\![u^m - g^m]\!]_{\bar h_j} \in C([0,T];L^q(\Omega))$ for any $q \in [1,\infty)$. Together with the essentially uniform convergence in the $L^{m+1}(\Omega)$-norm established in \eqref{lim:basically_continuity}, we conclude that $u$ has a representative in $C([0,T];L^{m+1}(\Omega))$.
\end{proof}

The following embedding theorem is a version of \cite[Proposition 2.3]{DuMoVe} with $\alpha_i \equiv 1$, (appearing already in \cite{Tro}) adapted to the setting of this paper. Here we are able to consider a general bounded domain $\Omega$ as opposed to the rectangular sets considered in \cite{DuMoVe} by working with the smaller space $\overline{W}^{1, \mathbf{p}}_\textnormal{o}(\Omega)$ rather than $W^{1, \mathbf{p}}_\textnormal{o}(\Omega)$ which appears in \cite{DuMoVe}. In fact, our result follows from approximating any $u \in \overline{W}^{1, \mathbf{p}}_\textnormal{o}(\Omega)$ by smooth compactly supported functions which can trivially be extended to a rectangular domain. Applying \cite[Proposition 2.3]{DuMoVe} to the approximating sequence and passing to the limit we obtain \eqref{ineq:Sobolev_Troisi_product}.
\begin{lem}\label{lem:troisiii}
    Let $\Omega \subseteq \mathbb{R}^N$ be a bounded domain and assume that $\bar p < N$. There exists a constant $C(N, \mathbf{p}) > 0$ such that for all $u \in \overline{W}^{1, \mathbf{p}}_\textnormal{o}(\Omega)$,
    \begin{align}\label{ineq:Sobolev_Troisi_product}
        \lVert u \rVert_{L^{\bar p^*}(\Omega)} \leq C\prod_{j = 1}^N \lVert \partial_j u \rVert^\frac{1}{N}_{L^{p_j}(\Omega)}.
    \end{align}
\end{lem}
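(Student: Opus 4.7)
The plan is to reduce the inequality on a general bounded domain $\Omega$ to the known inequality on a rectangular domain, which is the content of \cite[Proposition 2.3]{DuMoVe}, by exploiting that our function space $\overline W^{1,\mathbf{p}}_{\textnormal{o}}(\Omega)$ is defined as the closure of $C^\infty_{\textnormal{o}}(\Omega)$. This is strictly easier than working with $W^{1,\mathbf{p}}_{\textnormal{o}}(\Omega)$, where an extension by zero to a rectangle need not preserve the anisotropic Sobolev regularity.

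First I would fix a bounded open rectangle $R\subset\R^N$ with $\overline\Omega\subset R$. For any $\varphi\in C^\infty_{\textnormal{o}}(\Omega)$, its extension by zero, which I still denote by $\varphi$, belongs to $C^\infty_{\textnormal{o}}(R)$, and its distributional partial derivatives in $R$ coincide with the extensions by zero of its derivatives in $\Omega$. Applying \cite[Proposition 2.3]{DuMoVe} (with $\alpha_i\equiv 1$) on the rectangle $R$ therefore yields
\begin{align*}
\|\varphi\|_{L^{\bar p^*}(\Omega)} = \|\varphi\|_{L^{\bar p^*}(R)} \leq C(N,\mathbf{p}) \prod_{j=1}^N \|\partial_j\varphi\|_{L^{p_j}(R)}^{1/N} = C(N,\mathbf{p}) \prod_{j=1}^N \|\partial_j\varphi\|_{L^{p_j}(\Omega)}^{1/N},
\end{align*}
so the claimed inequality \eqref{ineq:Sobolev_Troisi_product} holds on $C^\infty_{\textnormal{o}}(\Omega)$.

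Next I would pass from smooth compactly supported functions to an arbitrary $u\in \overline W^{1,\mathbf{p}}_{\textnormal{o}}(\Omega)$. By definition of this space, there exists a sequence $(\varphi_k)\subset C^\infty_{\textnormal{o}}(\Omega)$ with $\varphi_k\to u$ in $L^1(\Omega)$ and $\partial_j\varphi_k\to\partial_j u$ in $L^{p_j}(\Omega)$ for each $j$. In particular $\|\partial_j\varphi_k\|_{L^{p_j}(\Omega)}\to \|\partial_j u\|_{L^{p_j}(\Omega)}$, while the inequality just established shows that $(\varphi_k)$ is bounded in $L^{\bar p^*}(\Omega)$. Passing to a subsequence so that $\varphi_k\to u$ almost everywhere in $\Omega$ and invoking Fatou's lemma, I obtain
\begin{align*}
\|u\|_{L^{\bar p^*}(\Omega)} \leq \liminf_{k\to\infty} \|\varphi_k\|_{L^{\bar p^*}(\Omega)} \leq C(N,\mathbf{p})\liminf_{k\to\infty}\prod_{j=1}^N \|\partial_j\varphi_k\|_{L^{p_j}(\Omega)}^{1/N} = C(N,\mathbf{p})\prod_{j=1}^N \|\partial_j u\|_{L^{p_j}(\Omega)}^{1/N},
\end{align*}
which is \eqref{ineq:Sobolev_Troisi_product}.

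Since the content of the rectangular inequality is already delegated to \cite[Proposition 2.3]{DuMoVe}, I do not expect a serious obstacle in this proof; the one subtle point worth emphasizing is why the extension by zero works here but would not work for a generic element of $W^{1,\mathbf{p}}_{\textnormal{o}}(\Omega)$. The answer is precisely that we work with the closure of $C^\infty_{\textnormal{o}}(\Omega)$ rather than with the space of weakly differentiable functions with zero trace, so that the extension argument only needs to be carried out at the level of smooth compactly supported approximants, where it is trivial.
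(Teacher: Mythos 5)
Your proof is correct and follows essentially the same route as the paper, which also obtains \eqref{ineq:Sobolev_Troisi_product} by extending smooth compactly supported approximants by zero to a rectangle, applying \cite[Proposition 2.3]{DuMoVe} there, and passing to the limit using the density of $C^\infty_{\textnormal{o}}(\Omega)$ in $\overline{W}^{1,\mathbf{p}}_{\textnormal{o}}(\Omega)$. Your limit passage via an a.e.\ convergent subsequence and Fatou's lemma is a valid way to make the final step precise.
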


\begin{rem}\label{rem:on_Troisis_inequality}
    A careful analysis of the proof of \cite[Proposition 2.3]{DuMoVe} in the case $\alpha_i \equiv 1$ shows that the constant $C$ in \eqref{ineq:Sobolev_Troisi_product} can in fact be chosen as
\begin{align}\label{ineq:p_bar_constant}
    C = \Big[\prod_{j=1}^N (1 + \bar p^*/{p_j'})\Big]^\frac1N \leq 1 + \bar p^*.
\end{align}
Therefore, by combining H\"older's inequality, \eqref{ineq:Sobolev_Troisi_product} and Young's inequality we have in the case $\bar p < N$:
    \begin{align*}
        \lVert u \rVert_{L^{\bar p}(\Omega)}^{\bar p} \leq |\Omega|^{1-\frac{\bar p}{\bar p^*}}\Big( \int_\Omega \lvert u\rvert^{\bar p^*} \d x \Big)^\frac{\bar p}{\bar p^*} &\leq C(\bar p, N)|\Omega|^\frac{\bar p}{N} \prod_{j = 1}^N \Big[\int_\Omega \lvert \partial_j u \rvert^{p_j} \d x\Big]^\frac{\bar p}{N p_j}
        \\ 
        & \leq C(\bar p, N)|\Omega|^\frac{\bar p}{N} \sum_{j = 1}^N \int_\Omega \lvert \partial_j u \rvert^{p_j} \d x.
    \end{align*}
    Also in the range $\bar p \geq N$ we have a similar estimate. To see this, choose exponents $1 < q_j \leq p_j$ so that $\bar q = \tfrac{N\bar p}{N + \bar p}.$ Then $\bar q < \bar p$, $\bar q < N$ and $\bar q^* = \bar p.$ Hence, Lemma  \ref{lem:troisiii} and the observation made in \eqref{ineq:p_bar_constant} combined with H\"older's inequality and Young's inequality show that
    \begin{align*}
        \int_\Omega \lvert u \rvert^{\bar p} \d x 
        = \int_\Omega \abs{u}^{\bar q^*} \d x \leq (1+ \bar q^*)^{\bar q^*} \prod_{j = 1}^N \lVert \partial_j u \rVert_{L^{q_j}(\Omega)}^{{\bar q^*/N}} 
        &
        \leq (1+\bar p)^{\bar p}|\Omega|^\frac{\bar p}{N} \prod_{j =1}^N \lVert \partial_j u \rVert_{L^{p_j}(\Omega)}^{\bar p/N}
        \\
        &\leq C(\bar p) |\Omega|^\frac{\bar p}{N} \sum_{j = 1}^N \lVert \partial_j u \rVert_{L^{p_j}(\Omega)}^{p_j}. \notag
    \end{align*} 
    Thus, the functions $u \in \overline{W}^{1, \mathbf{p}}_\textnormal{o}(\Omega)$ satisfy an estimate of the form
\begin{align}\label{eq:Sobolev_Troisi_Inequality}
        \int_\Omega \lvert u \rvert^{\bar p} \d x \leq C(\bar p, N)|\Omega|^\frac{\bar p}{N} \sum_{j = 1}^N \int_\Omega \lvert \partial_j u \rvert^{p_j} \d x,
    \end{align}
    for any choice of exponents $p_j > 1$. 
\end{rem}

\subsection{Comparison principle for anisotropic parabolic $p$-Laplace equations}\label{sec:comparison_p_laplace_type}
In this subsection we prove a comparison principle for anisotropic equations \textit{without} double nonlinearity. The model case has been addressed already in \cite{Cia} (see also  \cite{CiaMoVe}). As we we did not find any source containing the proof in the case of general structure conditions, we present it here for completeness. We will use this comparison principle in the proof of the existence of solutions in Section \ref{sec:existence} in order to obtain a uniform lower bound for the solutions to \eqref{eq:approximative_problem}.

In this subsection we consider equations of the form
\begin{equation}\label{eq:basic_genera_anisotropic}
    \partial_t u - \nabla \cdot A(x, t, u, \nabla u) = f,
\end{equation}
and solutions to the Cauchy-Dirichlet problem 
\begin{center}
\begin{equation}\label{eq:basic_genera_anisotropic_problem}
    \begin{cases}
    \partial_t u - \nabla \cdot A(x, t, u, \nabla u) = f & \text{in} \;\; \Omega_T, \\
    u = g & \text{on} \;\; \partial\Omega \times (0,T), \\
    u(\cdot, 0) = u_0 & \text{in} \;\; \Omega \times \{0\},
    \end{cases}
\end{equation}
\end{center}
where $A = A(x,t,u,\xi)$ is a Carath\'eodory vector field satisfying the following structure conditions:
\begin{align}\label{cond:growth_A}
    \lvert A_j (x, t, u, \xi) \rvert  &\leq  \Lambda \Big( \sum_{k = 1}^N \lvert \xi_k \rvert^{p_k} + a(x, t) \Big)^{\frac{p_j - 1}{p_j}},
\\
\label{cond:coerc_A}
    A (x, t, u, \xi) \cdot \xi &\geq \Lambda^{-1} \sum_{j = 1}^N \lvert \xi_j \rvert^{p_j} - b(x, t),
\end{align}
where $a, b$ are non-negative functions in $L^1(\Omega_T)$ and $\Lambda > 0$. In order to prove a comparison principle we also need the following weak monotonicity condition:
\begin{align}\label{cond:monotonicity_A}
    ( A(x, t, u, \xi)-A(x, t, u, \eta) ) \cdot (\xi - \eta) \geq 0.
\end{align}
We also need to assume that each component $A_j$ is Lipschitz continuous in the $u$-variable in the sense that there exists a constant $L > 0$ such that for every $x, t, \xi$ and every $u, v,$ 
\begin{align}\label{cond:lip_cont_A}
    \lvert A_j(x, t, u, \xi) - A_j(x, t, v, \xi) \rvert \leq L \lvert u - v \rvert \Big( c(x, t) + \sum_{k = 1}^N 
 \lvert \xi_k \rvert^{p_k} \Big)^{\frac{p_j - 1}{p_j}},
\end{align}
where $c$ is an element of $L^1(\Omega_T)$.
In this subsection we consider source terms $f$ in $L^2(\Omega_T)$, boundary values $g \in L^\mathbf{p}( 0,T ; W^{1, \mathbf{p}}(\Omega))$ and initial values $u_0$ in $L^2(\Omega)$. 
For the sake of clarity we explicitly state the definition of weak solutions to \eqref{eq:basic_genera_anisotropic}.

\begin{defin}\label{def:weak_sol_coef_usual_anisotropic}
    A function $u \in L^\mathbf{p}( 0,T ; W^{1, \mathbf{p}}(\Omega)) \cap L^2(\Omega_T)$ is a weak solution to equation \eqref{eq:basic_genera_anisotropic} if and only if, for all $\varphi \in C^\infty_{\textnormal{o}}(\Omega_T)$,
    \begin{align*}\label{weak_solution_eq_A}
        \iint_{\Omega_T} A(x, t, u, \nabla u) \cdot \nabla\varphi \; \d x \d t - \iint_{\Omega_T} u\partial_t \varphi \; \d x \d t = \iint_{\Omega_T} f \varphi \; \; \d x \d t.
    \end{align*}
\end{defin}
As a consequence of the definition we have the following mollified weak formulation involving Steklov averages.
\begin{lem}
    If $u$ is a weak solution to equation \eqref{eq:basic_genera_anisotropic}, then for every $t_1, t_2 \in (0, T - h)$ such that $t_1 < t_2$ and any $\varphi \in L^{\bf p}(0,T; \overline W^{1, {\bf p}}_{\textnormal{o}}(\Omega)) \cap L^2(\Omega_T),$
    \begin{equation}\label{eq:weak_mollified_usual_anisotropic_Steklov}
            \int_{t_1}^{t_2}\int_\Omega \partial_t [u]_{h}  \varphi \; \d x \d t + [ A(\cdot, \cdot, u, \nabla u)]_{h} \cdot \nabla\varphi \; \d x \d t = \int_{t_1}^{t_2} \int_\Omega [f]_{h} \varphi \; \d x \d t.
        \end{equation}
    For every $t_1, t_2 \in (h, T)$ such that $t_1 < t_2$ and any $\varphi \in L^{\bf p}(0,T; \overline W^{1, {\bf p}}_{\textnormal{o}}(\Omega)) \cap L^2(\Omega_T),$
        \begin{equation}\label{eq:weak_mollified_usual_anisotropic}
            \int_{t_1}^{t_2}\int_\Omega \partial_t [u]_{\bar{h}} \varphi + [ A(\cdot, \cdot, u, \nabla u)]_{\bar{h}} \cdot \nabla\varphi \; \d x \d t = \int_{t_1}^{t_2} \int_\Omega [f]_{\bar{h}} \varphi \; \d x \d t.
        \end{equation}
\end{lem}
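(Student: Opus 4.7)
My plan is to derive both mollified identities by substituting a time-averaged version of a smooth test function into Definition \ref{def:weak_sol_coef_usual_anisotropic} and then enlarging the class of test functions by density. I will focus on \eqref{eq:weak_mollified_usual_anisotropic_Steklov}; the identity \eqref{eq:weak_mollified_usual_anisotropic} will follow by the symmetric argument.

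First, I would fix any $\varphi\in C^\infty_{\textnormal{o}}(\Omega\times(0,T-h))$ and set $\tilde\varphi := [\varphi]_{\bar h}$. Because $\varphi$ is supported in some $\Omega\times[a,b]\subset\Omega\times(0,T-h)$, the mollified function $\tilde\varphi$ is smooth and compactly supported in $\Omega\times(a,b+h)\subset\Omega_T$, so it is admissible in Definition \ref{def:weak_sol_coef_usual_anisotropic}. A direct Fubini computation gives the duality
\[
 \iint_{\Omega_T} g(x,t)\,[\varphi]_{\bar h}(x,t)\,\d x\,\d t = \iint_{\Omega\times(0,T-h)} [g]_h(x,t)\,\varphi(x,t)\,\d x\,\d t
\]
for every $g\in L^1(\Omega_T)$, which I will apply component-wise to the terms involving $A$ and $f$, replacing them with $[A]_h$ and $[f]_h$ acting on $\varphi$. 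For the parabolic term, property (iv) of Lemma \ref{lem:steklov_mol_properties} yields $\partial_t\tilde\varphi(x,t) = h^{-1}(\varphi(x,t)-\varphi(x,t-h))$, and a change of variables in the second piece rewrites $-\iint_{\Omega_T} u\,\partial_t\tilde\varphi\,\d x\,\d t$ as $\iint_{\Omega\times(0,T-h)}\partial_t[u]_h\,\varphi\,\d x\,\d t$. Combining these manipulations produces the desired identity for every $\varphi\in C^\infty_{\textnormal{o}}(\Omega\times(0,T-h))$.

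To upgrade the test function class, I note that the growth condition \eqref{cond:growth_A} together with $u\in L^{\bf p}$ forces $A_j\in L^{p_j'}(\Omega_T)$, and Lemma \ref{lem:steklov_mol_properties} then yields $[A_j]_h\in L^{p_j'}(\Omega\times(0,T-h))$ as well as $\partial_t[u]_h,[f]_h\in L^2(\Omega\times(0,T-h))$. Testing the identity above with products $\varphi(x,t)=\psi(x)\eta(t)$ where $\psi\in C^\infty_{\textnormal{o}}(\Omega)$ and $\eta\in C^\infty_{\textnormal{o}}(0,T-h)$, and applying the fundamental lemma of the calculus of variations, I obtain the pointwise-in-time statement: for a.e.\ $t\in(0,T-h)$ and every $\psi$ in a countable dense subset of $\overline W^{1,{\bf p}}_{\textnormal{o}}(\Omega)\cap L^2(\Omega)$,
\[
 \int_\Omega \partial_t[u]_h\,\psi + [A(\cdot,\cdot,u,\nabla u)]_h\cdot\nabla\psi\,\d x = \int_\Omega [f]_h\,\psi\,\d x.
\]
A continuity-and-density argument removes the dependence of the exceptional null set on $\psi$, so the identity holds for every $\psi$ in the full space. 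Taking $\psi=\varphi(\cdot,t)$ with $\varphi\in L^{\bf p}(0,T;\overline W^{1,{\bf p}}_{\textnormal{o}}(\Omega))\cap L^2(\Omega_T)$ and integrating over $(t_1,t_2)\subset(0,T-h)$ then produces \eqref{eq:weak_mollified_usual_anisotropic_Steklov}.

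For \eqref{eq:weak_mollified_usual_anisotropic} I will run the mirror computation, this time setting $\tilde\varphi := [\varphi]_h$ for $\varphi\in C^\infty_{\textnormal{o}}(\Omega\times(h,T))$, using the analogous duality $\iint g\,[\varphi]_h\,\d x\,\d t = \iint [g]_{\bar h}\,\varphi\,\d x\,\d t$ together with $\partial_t[\varphi]_h = h^{-1}(\varphi(\cdot,\cdot+h)-\varphi)$, and then repeating the density upgrade on $(h,T)$. The only genuine difficulty I anticipate is bookkeeping: keeping track of the time supports so that every mollified test function actually lies in $C^\infty_{\textnormal{o}}(\Omega_T)$, and carrying out the separability argument that turns the pointwise-in-time identity into one valid for all admissible $\psi$ off a single null set of times.
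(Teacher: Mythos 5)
Your argument is correct and is exactly the standard Steklov-averaging computation that the paper itself omits (the lemma is stated there as an immediate consequence of Definition \ref{def:weak_sol_coef_usual_anisotropic}, with no written proof). The key steps — admissibility of $[\varphi]_{\bar h}$ (resp.\ $[\varphi]_h$) as a test function thanks to the support bookkeeping, the Fubini duality $\iint g\,[\varphi]_{\bar h} = \iint [g]_h\,\varphi$, the difference-quotient identity from Lemma \ref{lem:steklov_mol_properties}(iv), and the density upgrade to time-localized test functions in $L^{\bf p}(0,T;\overline W^{1,{\bf p}}_{\textnormal{o}}(\Omega))\cap L^2(\Omega_T)$ using the $L^{p_j'}$-integrability of $[A_j]_h$ and the $L^2$-integrability of $\partial_t[u]_h$ and $[f]_h$ — are all in order.
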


We consider the following standard definition of solution to problem \eqref{eq:basic_genera_anisotropic_problem}.
\begin{defin}\label{def:weak_sol_coef_usual_anisotropic_problem}
    Let $u_0 \in L^2(\Omega)$ and $f \in L^2(\Omega_T)$. A function $u \in L^\mathbf{p}( 0,T ; W^{1, \mathbf{p}}(\Omega)) \cap C( [ 0, T ]; L^2(\Omega))$ is a solution to the Cauchy-Dirichlet problem \eqref{eq:basic_genera_anisotropic_problem} if and only if 
    \begin{itemize}
        \item [(1)] $u$ is a solution to equation \eqref{eq:basic_genera_anisotropic} in the sense of Definition \ref{def:weak_sol_coef_usual_anisotropic},
        \item [(2)] $u(t) \to u_0$ in $L^2(\Omega)$ as $t \to 0,$
        \item [(3)] $u \in g + L^{\bf p}(0,T; \overline W^{1, {\bf p}}_{\textnormal{o}}(\Omega)).$
    \end{itemize}
\end{defin}
We are now in conditions to state and prove the comparison principle for problem \eqref{eq:basic_genera_anisotropic_problem}. This result does not follow directly from the comparison principle in \cite{Ve}, since here we allow the vector field $A$ to be time-dependent.
\begin{theo}\label{thm:comparison_non-doubly-nonlinear}
    Let $A$ be a Carathéodory vector field that satisfies \eqref{cond:growth_A},  \eqref{cond:coerc_A} and \eqref{cond:lip_cont_A}. Let u be a solution to problem \eqref{eq:basic_genera_anisotropic_problem} with initial data $u_0$ and right-hand side $f_u \in L^2(\Omega_T),$ and boundary data $g_u \in L^{\bf p}(0,T;  W^{1, {\bf p}}(\Omega)).$ Let v be a solution to problem \eqref{eq:basic_genera_anisotropic_problem} with initial data $v_0 \leq u_0$ and right-hand side $f_v \leq f_u,$ and boundary data $g_v \in L^{\bf p}(0,T;  W^{1, {\bf p}}(\Omega))$ such that $g_v \leq g_u$ in $\Omega_T.$ Then $v \leq u$ in $\Omega_T.$
\end{theo}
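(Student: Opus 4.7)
The plan is to adapt the Steklov-averaging comparison argument from \cite{Ve} and \cite{BoeStru} to the present time-dependent vector field, using the Lipschitz-in-$u$ hypothesis \eqref{cond:lip_cont_A} to control the cross-term produced by the $u$-dependence of $A$. Setting $w := v - u$, the goal is to prove $w_+ \equiv 0$ in $\Omega_T$. Subtracting the Steklov-averaged weak formulations \eqref{eq:weak_mollified_usual_anisotropic} written for $v$ and for $u$, I will test the resulting identity with $\varphi = H_\delta([w]_{\bar h})\, \zeta_{\tau_1, \tau_2}^\nu(t)$, where $\zeta_{\tau_1, \tau_2}^\nu$ is the trapezoidal cut-off from \eqref{func:trapezoid}. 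This test function is admissible because $g_v \leq g_u$ combined with $u - g_u,\, v - g_v \in L^{\mathbf{p}}(0, T; \overline{W}_\textnormal{o}^{1, \mathbf{p}}(\Omega))$ forces the trace of $[w]_{\bar h}$ to be nonpositive, while $H_\delta$ vanishes on $(-\infty, 0]$ and is Lipschitz.

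For the parabolic contribution I use the chain-rule identity $\partial_t [w]_{\bar h}\, H_\delta([w]_{\bar h}) = \partial_t G_\delta([w]_{\bar h})$ to express it as a total time derivative. Sending first $\nu \to 0$ and then $h \to 0$, using Lemma \ref{lem:steklov_mol_properties}(iii) together with the continuity $u, v \in C([0, T]; L^2(\Omega))$, produces the slice expression $\int_\Omega G_\delta(w)(\tau_2)\d x - \int_\Omega G_\delta(w)(\tau_1)\d x$, where $\tau_1, \tau_2$ can be chosen as Lebesgue points throughout.

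For the diffusion term I perform the standard splitting
\begin{align*}
   A(x,t,v,\nabla v) - A(x,t,u,\nabla u) = \bigl[A(x,t,u,\nabla v) - A(x,t,u,\nabla u)\bigr] + \bigl[A(x,t,v,\nabla v) - A(x,t,u,\nabla v)\bigr].
\end{align*}
After contracting with $\nabla w$ and multiplying by the nonnegative factor $H_\delta'([w]_{\bar h})$, the monotonicity hypothesis \eqref{cond:monotonicity_A} makes the first summand non-negative, so it is discarded with the favorable sign. For the second summand, the Lipschitz estimate \eqref{cond:lip_cont_A} combined with the pointwise bound $|w|\, H_\delta'(w) \leq \chi_{\{0 < w < \delta\}}$ reduces the integrand to a sum of terms majorized by
\begin{align*}
    L\, \chi_{\{0 < w < \delta\}}\, |\partial_j w|\, \Bigl(c(x,t) + \sum_{k=1}^N |\partial_k v|^{p_k}\Bigr)^{(p_j-1)/p_j},
\end{align*}
which is integrable on $\Omega_T$ by H\"older's inequality thanks to $v, w \in L^{\mathbf{p}}(0, T; W^{1, \mathbf{p}}(\Omega))$ and tends to zero a.e.\ as $\delta \to 0$, so dominated convergence eliminates this contribution.

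Putting the pieces together and using that $G_\delta(w) \to w_+$ uniformly as $\delta \to 0$, the estimate collapses to
\begin{align*}
    \int_\Omega w_+(\tau_2) \d x \leq \int_\Omega w_+(\tau_1) \d x + \int_{\tau_1}^{\tau_2} \int_\Omega (f_v - f_u)\, \chi_{\{v > u\}} \d x \d t;
\end{align*}
the last integral is nonpositive by $f_v \leq f_u$, and sending $\tau_1 \to 0^+$ annihilates the first right-hand term via Definition \ref{def:weak_sol_coef_usual_anisotropic_problem}(2) together with $v_0 \leq u_0$. The main subtlety is the handling of the Lipschitz cross-term: one must exploit the shrinking set $\{0 < w < \delta\}$ together with the anisotropic integrability of $\nabla v$ and $\nabla w$ to eliminate it directly, avoiding the Gronwall-type closure that would otherwise arise from the factor $|w|$ in \eqref{cond:lip_cont_A}.
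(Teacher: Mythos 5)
Your proposal is correct and follows essentially the same route as the paper's proof: test the subtracted Steklov-averaged formulations with a truncated Heaviside of $v-u$, convert the parabolic term into $G_\delta$, discard the monotone part of the diffusion term, and kill the Lipschitz cross-term via the bound $|w|H_\delta'(w)\leq \chi_{\{0<w<\delta\}}$ and dominated convergence. The only differences are cosmetic — you apply $H_\delta$ to the mollified difference (an exact chain-rule identity) where the paper applies it to the unmollified one and invokes Lemma \ref{lem:almost-heaviside-est}, and your splitting of $A(v,\nabla v)-A(u,\nabla u)$ is the mirror image of the paper's, so your Lipschitz remainder carries $\nabla v$ instead of $\nabla u$.
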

\begin{proof}{}
    We recall the function $H_\delta$ introduced earlier in \eqref{def:H_delta} and use $H_\delta( v - u )$ as test function in the weak formulations \eqref{eq:weak_mollified_usual_anisotropic} satisfied by $u$ and $v$ to obtain, for every $t_1, t_2 \in (h, T),$
    \begin{align}\label{eq:first_two_comparison_principle_1}
        & \int_{t_1}^{t_2} \int_\Omega  H_\delta( v - u ) \partial_t [ v- u ]_{\bar{h}} \d x \d s  \\
        & + \int_{t_1}^{t_2} \int_\Omega \left[ A(\cdot, \cdot, v, \nabla v) - A(\cdot, \cdot, u, \nabla u)\right]_{\bar{h}} (x, s) \cdot \nabla (v-u) \tfrac1\delta \chi_{\{0<v-u<\delta\}} \; \d x \d s \notag \\
        \notag & = \int_{t_1}^{t_2} \int_\Omega [f_v - f_u]_{\bar{h}} H_\delta( v - u ) \; \d x \d s. 
    \end{align}
    Given that $v - u$ can be written as  $w + (g_v - g_u),$ for $w \in L^{\bf p}(0,T; \overline W^{1, {\bf p}}_{\textnormal{o}}(\Omega))$ and $g_v - g_u \leq 0 $ in $\Omega_T,$ an approximation argument justifies the choice of test function. 
    Using Lemma \ref{lem:almost-heaviside-est} with $f=v-u$, we can estimate the first integrand on the left-hand side of \eqref{eq:first_two_comparison_principle_1} as follows:
    \begin{align}
        \partial_t [G_\delta ( v- u )]_{\bar{h}}(x, t)         & \leq  H_\delta ( v - u )(x, t) \partial_t[ v - u ]_{\bar{h}} (x, t).\notag
    \end{align}
    We can therefore estimate the first integral on the left-hand side of \eqref{eq:first_two_comparison_principle_1} as
\begin{align*}
    \int_{t_1}^{t_2} \int_\Omega  H_\delta( v - u ) \partial_t [ v- u ]_{\bar{h}} \d x \d s &\geq \int_{t_1}^{t_2} \int_\Omega  \partial_t [G_\delta ( v- u )]_{\bar{h}} \d x \d t 
    \\
    &= \int_\Omega [G_\delta ( v- u )]_{\bar{h}}(x, t_2)\d x - \int_\Omega [G_\delta ( v- u )]_{\bar{h}}(x, t_1)\d x
    \\
    \xrightarrow[h\to 0]{}  & \quad \int_\Omega G_\delta ( v- u )(x, t_2)\d x - \int_\Omega G_\delta ( v- u )(x, t_1)\d x.
\end{align*}
Passing to the limit $h\to 0$ in the other terms of \eqref{eq:first_two_comparison_principle_1} poses no problem and we end up with
    \begin{align}\label{aimsir_go_dona}
    \int_\Omega G_\delta ( v- u )(x, t_2)\d x - \int_\Omega G_\delta ( v- u )(x, t_1)\d x
    \\
    \notag    + \int_{t_1}^{t_2} \int_\Omega \big(A(x,t, v, \nabla v) - A(x,t, u, \nabla u)\big) \cdot \nabla (v-u) \tfrac1\delta \chi_{\{0<v-u<\delta\}} \d x \d t
    \\
    \notag    \leq \int_{t_1}^{t_2} \int_\Omega (f_v - f_u )H_\delta( v - u )  \d x \d t.
    \end{align}
The integrand on the second row of \eqref{aimsir_go_dona} can be estimated using the monotonicity condition \eqref{cond:monotonicity_A} and the Lipschitz continuity \eqref{cond:lip_cont_A} as
\begin{align*}
    &\big(A(x,t, v, \nabla v) - A(x,t, u, \nabla u)\big) \cdot \nabla (v-u) \tfrac1\delta \chi_{\{0<v-u<\delta\}} 
    \\
    &\quad = \big(A(x,t, v, \nabla v) - A(x,t, v, \nabla u)\big) \cdot \nabla (v-u) \tfrac1\delta \chi_{\{0<v-u<\delta\}}
    \\
    &\quad \quad + \big(A(x,t, v, \nabla u) - A(x,t, u, \nabla u) \big) \cdot \nabla (v-u) \tfrac1\delta \chi_{\{0<v-u<\delta\}}
    \\
    & \quad \geq \big(A(x,t, v, \nabla u) - A(x,t, u, \nabla u) \big) \cdot \nabla (v-u) \tfrac1\delta \chi_{\{0<v-u<\delta\}}
    \\
    & \quad \geq -L\sum^N_{j=1}\Big(c(x,t)+ \sum^N_{k=1}|\partial_k u|^{p_k}\Big)^\frac{p_j-1}{p_j}|\partial_j v - \partial_j u|\chi_{\{0<v-u<\delta\}}.
\end{align*}
Combining the last estimate with \eqref{aimsir_go_dona} and noting that the integral on the right-hand side of \eqref{aimsir_go_dona} is non-positive since $f_u\geq f_v$, we end up with
\begin{align}
    &\int_\Omega G_\delta ( v- u )(x, t_2)\d x - \int_\Omega G_\delta ( v- u )(x, t_1)\d x
    \\
    \notag  & \quad  \leq L \sum^N_{j=1}\int^{t_2}_{t_1} \int_\Omega \Big(c(x,t)+ \sum^N_{k=1}|\partial_k u|^{p_k}\Big)^\frac{p_j-1}{p_j}|\partial_j v - \partial_j u|\chi_{\{0<v-u<\delta\}}\d x \d t.
    \end{align}
Passing to the limit $\delta \to 0$ the integral on the right-hand side vanishes due to the Dominated Convergence Theorem, and we end up with 
    \begin{align}\label{almost-comp-principle}
    \int_\Omega  ( v - u )_+(x, t_2) \d x \leq \int_\Omega ( v - u )_+(x, t_1). 
    \end{align}
    Finally, passing to the limit $t_1\to 0$ in \eqref{almost-comp-principle} is possible due to the time-continuity of $v-u$ and we end up with
    \begin{align*}
    \int_\Omega  ( v - u )_+(x, t_2) \d x \leq \int_\Omega ( v - u )_+(x, 0) = 0, 
    \end{align*}
    where the last equality follows from the fact that $v_0 \leq u_0$.
    This concludes the proof that $u \geq v$ in $\Omega_T.$
\end{proof}

\section{Existence of the gradient of $u^m$}\label{sec:existence_grad}
In this section we prove that for solutions $u$ in $V^{\bf p, \bf m}$ to equation \eqref{eq:diffusion} in the parameter range \eqref{cond:m_j-closeness}, the function $u^m$ in fact has a weak gradient satisfying $\partial_j u^m \in L^{p_j}_{\textnormal{loc}}(\Omega_T)$ for all $j \in \{1,\dots,N\}$. The argument is subtle as we do not assume any a priori time continuity for our solutions. In order to prove the existence of the gradient of $u^m$ we need a weak formulation involving the exponential time mollification. However, this formulation requires some notion of time continuity, and the standard method for proving the time continuity into an $L^p$-space, used for various doubly nonlinear equations for example in \cite{SiVe,St,CiaVeVe}, would in our setting require that we already knew that $u^m$ has a gradient. In order to avoid a circular reasoning, we will instead use the following lemma which establishes the continuity of $u$ on the time interval into a dual space.
\begin{lem}\label{lem:cont_into_dual_space1}
Let $V$ denote the closure of $C^\infty_{\textnormal{o}}(\Omega)$ in the space 
\begin{align*}
    \{v\in L^{(m+1)/m}(\Omega)\,|\, \partial_j v \in L^{p_j}(\Omega)\}.
\end{align*}
Let $u \in V^{\bf p, \bf m}$ be a weak solution to \eqref{eq:diffusion} in the sense of Definition \ref{def:weaksol}. Then $u$ has a representative in $C([0,T], V')$. Especially, for all $\tau_1, \tau_2 \in [0,T]$, 
\begin{align}\label{spatrizio}
    \norm{u(\tau_1) - u(\tau_2)}_{V'} \leq C |\tau_1 - \tau_2|^\gamma,
\end{align}
where $\gamma = (\max\{\max{\{p_i\}}, (m+1)/m\})^{-1}$. Here $u(t)$ is regarded as an element of the dual of $V$ by the standard embedding 
\begin{align}\label{dual_element}
    \langle u(t), v\rangle := \int_\Omega u(t) v\d x.
\end{align}
\end{lem}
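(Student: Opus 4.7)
The plan is to derive a time-difference identity for $u$ in the weak-dual sense. I test \eqref{eq:weak_form} against functions of the product form $\varphi(x,t)=\psi(x)\zeta^\delta_{\tau_1,\tau_2}(t)$, with $\psi\in C^\infty_{\textnormal{o}}(\Omega)$ and $\zeta^\delta_{\tau_1,\tau_2}$ the trapezoidal cutoff \eqref{func:trapezoid} (smoothly approximated if needed). Passing to the limit $\delta\to 0$ and invoking the Lebesgue differentiation theorem for the scalar function $t\mapsto \int_\Omega u(x,t)\psi(x)\d x \in L^1(0,T)$, I obtain, for a.e.\ pair $0\leq\tau_1<\tau_2\leq T$,
\begin{equation*}
\int_\Omega \big(u(\tau_2)-u(\tau_1)\big)\psi\d x = \iint_{\Omega\times(\tau_1,\tau_2)}\Big(-\sum_{j=1}^N A_j\,\partial_j\psi + f\psi\Big)\d x\d t.
\end{equation*}
To extend this identity to all $\psi\in V$ for a common set of full measure, I choose a countable dense family $\{\psi_k\}\subset V$, intersect the full-measure sets of admissible endpoints obtained for each $\psi_k$, and use that both sides depend continuously on $\psi$ in the $V$-norm.

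Next, each term on the right-hand side is estimated by H\"older's inequality, treating $\psi$ as time-independent. By \eqref{cond:unif_ellipt}, $|A_j|\leq\Lambda|\partial_j u^{m_j}|^{p_j-1}$ belongs to $L^{p_j'}(\Omega_T)$, giving
\begin{equation*}
\bigg|\iint_{\Omega\times(\tau_1,\tau_2)} A_j\,\partial_j\psi\d x\d t\bigg|\leq |\tau_2-\tau_1|^{1/p_j}\,\|A_j\|_{L^{p_j'}(\Omega_T)}\,\|\partial_j\psi\|_{L^{p_j}(\Omega)},
\end{equation*}
while for the source, H\"older's inequality with the conjugate pair $\big(m+1,(m+1)/m\big)$ yields
\begin{equation*}
\bigg|\iint_{\Omega\times(\tau_1,\tau_2)} f\psi\d x\d t\bigg|\leq |\tau_2-\tau_1|^{m/(m+1)}\,\|f\|_{L^{m+1}(\Omega_T)}\,\|\psi\|_{L^{(m+1)/m}(\Omega)},
\end{equation*}
the integrability $f\in L^{m+1}(\Omega_T)$ being inherited from \eqref{cond:f} via H\"older on the bounded set $\Omega_T$. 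Taking the supremum over $\psi\in V$ with $\|\psi\|_V\leq 1$ then produces \eqref{spatrizio} with
\begin{equation*}
\gamma = \min\big\{\tfrac{1}{p_1},\dots,\tfrac{1}{p_N},\tfrac{m}{m+1}\big\} = \big(\max\{p_N,(m+1)/m\}\big)^{-1}.
\end{equation*}

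Finally, since \eqref{spatrizio} is uniform on a dense subset of $[0,T]$ with values in the Banach space $V'$, the map $t\mapsto u(t)$ is uniformly H\"older continuous on that subset and therefore admits a unique continuous extension to all of $[0,T]$; this is the required representative. The main obstacle I anticipate is securing a \emph{single} null-set exception valid for every $\psi\in V$ simultaneously, which is settled by the countable dense family argument combined with the continuity of both sides of the identity in the $V$-norm. A related subtlety is that the H\"older bound requires controlling $\|\partial_j\psi\|_{L^{p_j}(\Omega)}$ and $\|\psi\|_{L^{(m+1)/m}(\Omega)}$ by the $V$-norm, which follows directly from the definition of $V$; after that, the value of $\gamma$ is read off as the worst of the two competing rates $1/p_j$ and $m/(m+1)$.
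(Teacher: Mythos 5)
Your proposal is correct and follows essentially the same route as the paper: test the weak formulation with a product $\psi(x)\zeta^\delta_{\tau_1,\tau_2}(t)$, estimate the elliptic and source terms by H\"older's inequality exploiting that $\psi$ is time-independent, obtain the H\"older bound in $V'$ on a full-measure set of times, and extend by completeness of $V'$. The only difference is technical: the paper works with the Steklov averages $[u]_\delta$, which are defined at \emph{every} time and so avoid the null-set issue, before passing to a pointwise a.e.\ limit, whereas you work with $u$ directly at Lebesgue points and recover a single exceptional null set via a countable dense family of test functions together with the continuity of both sides in the $V$-norm; both devices are standard and equally valid here.
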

\begin{proof}{}
We will show the estimate \eqref{spatrizio} for $\tau_1, \tau_2 \in [0,3T/4]$. A similar argument can be used to treat the partially overlapping interval $[T/4,T]$ which completes the proof. It is sufficient to show the estimate \eqref{spatrizio} for $|\tau_1 - \tau_2 |\leq 1$. Namely, this proves the continuity which implies that the left-hand side of \eqref{spatrizio} remains bounded for all $\tau_1, \tau_2$. Thus, in the case $|\tau_1 - \tau_2| \geq 1$ the estimate will hold by choosing $C$ sufficiently large. We thus let $0\leq \tau_1 < \tau_2 \leq 3T/4$ and $|\tau_1 - \tau_2|\leq 1$. For $\delta \in (0, T/4)$ we can consider the Steklov averages $[u]_\delta$, which are defined on $\Omega\times[0,3T/4]$. As in \eqref{dual_element}, also $[u]_\delta(t)$ can be interpreted as an element of $V'$. 

Taking $\varphi\in C^\infty_{\textnormal{o}}(\Omega)$
and considering the weak formulation with the test function 
\newline $\varphi(x) \eta^\delta_{\tau_1, \tau_2}(t)$, where $\eta^\delta_{\tau_1, \tau_2} = \zeta^\delta_{\tau_1,\tau_2 + \delta}$, we end up with
\begin{align}
   \langle [u]_\delta(\tau_1) - [u]_\delta(\tau_2), \varphi \rangle &=  \frac1\delta \int^{\tau_1+\delta}_{\tau_1}\int_\Omega u \varphi \d x \d t - \frac1\delta\int^{\tau_2 + \delta}_{\tau_2}\int_\Omega u\varphi\d x \d t \\&\notag= \iint_{\Omega_T} \sum^N_{j=1}a_j(x,t,u) |\partial_j u^{m_j}|^{p_j-2}\partial_j u^{m_j}\partial_j\varphi {\eta}^\delta_{\tau_1,\tau_2}(t)\d x \d t
    \\
    &\notag  \quad + \iint_{\Omega_T}f(x,t)\eta^\delta_{\tau_1,\tau_2}(t)\varphi(x)\d x \d t.
\end{align}
Using H\"older's inequality, the fact that $0 \leq \eta^\delta_{\tau_1,\tau_2} \leq \chi_{[\tau_1, \tau_2+ \delta]} $ and recalling that $\varphi$ is independent of time, we have
\begin{align*}
    |\langle [u]_\delta &(\tau_1) - [u]_\delta(\tau_2), \varphi \rangle| 
    \leq c \sum^N_{j=1} \Big(\int^{\tau_2+\delta}_{\tau_1} \int_\Omega |\partial_j u^{m_j}|^{p_j}\d x \d t \Big)^\frac{p_j-1}{p_j}\norm{\partial_j \varphi}_{L^{p_j}(\Omega)} |\tau_2 +\delta - \tau_1|^\frac{1}{p_j}
    \\
    &\qquad + \Big(\int^{\tau_2+\delta}_{\tau_1}  \int_\Omega |f|^{m+1}\d x \d t\Big)^\frac{1}{m+1}\norm{\varphi}_{L^{(m+1)/m}(\Omega)}|\tau_2 +\delta - \tau_1|^\frac{m}{m+1}
    \\
    &\leq \Big(c \sum^N_{j=1} \norm{\partial_j u^{m_j}}^{p_j-1}_{L^{p_j}(\Omega_T)} \norm{\partial_j \varphi}_{L^{p_j}(\Omega)} + \norm{f}_{L^{m+1}(\Omega_T)}\norm{\varphi}_{L^{(m+1)/m}(\Omega)}\Big)(|\tau_1 - \tau_2|^\gamma + \delta^\gamma)
    \\
    &\leq \Big(c \sum^N_{j=1} \norm{\partial_j u^{m_j}}^{p_j-1}_{L^{p_j}(\Omega_T)} + \norm{f}_{L^{m+1}(\Omega_T)}\Big)\norm{\varphi}_V (|\tau_1 - \tau_2|^\gamma + \delta^\gamma).
\end{align*}
The expression in the brackets of the last line is a constant independent of $\delta$. Moreover, by density, we may replace $\varphi$ by any element of $V$. This confirms that
\begin{align*}
    \norm{[u]_\delta(\tau_1) - [u]_\delta(\tau_2)}_{V'} \leq C(|\tau_1 - \tau_2|^\gamma +\delta^\gamma),
\end{align*}
for a constant $C$ independent of $\delta$. By the convergence properties of the Steklov average, we can conclude that $[u]_\delta$ converges to $u$ also in $L^q(0,T;V')$, and hence that for some sequence $\delta_j \to 0$ we have that $[u]_{\delta_j}$ converges pointwise to some limit function $w:[0,T]\setminus N \to V'$ where $N$ is a set of measure zero. By the pointwise convergence, $w$ satisfies
\begin{align}\label{hoeldcont}
    \norm{w(\tau_1) - w(\tau_2)}_{V'} \leq C|\tau_1 - \tau_2|^\gamma, 
\end{align}
for all $\tau_1, \tau_2 \in [0,T]\setminus N$. Since $V'$ is complete, $w$ has a unique extension satisfying \eqref{hoeldcont} for all $\tau_1, \tau_2$ in $[0,T]$. By the pointwise a.e. convergence, $w$ is a representative of $u$ as a map into $V'$. 
\end{proof}

Using the time continuity we can obtain the following mollified weak formulation.
\begin{lem}\label{lem:mollified_weak_forM_*ased_on_cont_into_dual_space}
For all $\varphi \in C^\infty_{\textnormal{o}}(\Omega_T)$ we have 
\begin{align}\label{mollified_weak_forM_*ased_on_cont_into_dual_space}
\iint_{\Omega_T}\sum^N_{j=1}\Ex{a(x,\cdot,u)|\partial_j u^{m_j}|^{p_j-2}\partial_j u^{m_j}}\partial_j \varphi + \partial_t \Ex{u} \varphi \d x \d t 
&= \langle u(0), \Exx{\varphi}(\cdot, 0)\rangle 
\\ 
\notag &\quad + \iint_{\Omega_T} \Ex{f} \varphi \d x \d t,
\end{align}
where $u(0)$ denotes the value of $u$ at time zero, when regarded as a map into $V'$. The brackets again denote dual pairing of $V'$ and $V$.
\end{lem}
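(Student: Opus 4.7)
The plan is to derive the identity by inserting the near-admissible test function $\Exx{\varphi}(x,t)\eta_\varepsilon(t)$ into the weak formulation of Definition \ref{def:weaksol} and then passing to the limit $\varepsilon \to 0$, where $\eta_\varepsilon \in C^\infty_{\textnormal{o}}((0,T))$ approximates $\chi_{(0,T)}$ with linear ramps on $[0,\varepsilon]$ and $[T-\varepsilon, T]$. For each fixed $\varepsilon$, since $\varphi \in C^\infty_{\textnormal{o}}(\Omega_T)$ the product $\Exx{\varphi}\eta_\varepsilon$ lies in $C^\infty_{\textnormal{o}}(\Omega_T)$ and is therefore admissible. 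Expanding $\partial_t(\Exx{\varphi}\eta_\varepsilon) = (\partial_t \Exx{\varphi})\eta_\varepsilon + \Exx{\varphi}\eta_\varepsilon'$ and denoting $A_j := a_j(x,t,u)|\partial_j u^{m_j}|^{p_j-2}\partial_j u^{m_j}$, the weak form becomes
\begin{align*}
  \iint_{\Omega_T} \Big(\sum_{j=1}^N A_j \partial_j \Exx{\varphi} - u\,\partial_t \Exx{\varphi}\Big)\eta_\varepsilon \,\d x\,\d t - \iint_{\Omega_T} u\,\Exx{\varphi}\,\eta_\varepsilon' \,\d x\,\d t = \iint_{\Omega_T} f\,\Exx{\varphi}\,\eta_\varepsilon \,\d x\,\d t.
\end{align*}

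As $\varepsilon \to 0$ the three integrals against $\eta_\varepsilon$ converge to their counterparts over $\Omega_T$ by dominated convergence, since $A_j \in L^{p_j'}(\Omega_T)$, $u \in L^{q_*}(\Omega_T)$, $f \in L^{\sigma\bar p'}(\Omega_T)$, and the factors $\partial_j \Exx{\varphi}$, $\partial_t \Exx{\varphi}$, $\Exx{\varphi}$ are bounded. The critical term is $\iint u\,\Exx{\varphi}\,\eta_\varepsilon' \,\d x\,\d t$, which for this choice of $\eta_\varepsilon$ equals $\tfrac1\varepsilon \int_0^\varepsilon \int_\Omega u\,\Exx{\varphi}\,\d x\,\d t - \tfrac1\varepsilon \int_{T-\varepsilon}^T \int_\Omega u\,\Exx{\varphi}\,\d x\,\d t$. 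The second average tends to zero because $\Exx{\varphi}(\cdot, T) = 0$ and $\Exx{\varphi}$ is continuous in $t$ into $V$; the first one, upon identifying for a.e.\ $t$ the Lebesgue integral $\int_\Omega u(t)\,\Exx{\varphi}(\cdot,t)\,\d x$ with the dual pairing $\langle u(t), \Exx{\varphi}(\cdot, t)\rangle$, converges to $\langle u(0), \Exx{\varphi}(\cdot, 0)\rangle$ by joint continuity: Lemma \ref{lem:cont_into_dual_space1} supplies $u(t)\to u(0)$ in $V'$ while smoothness of $\Exx{\varphi}$ in $t$ supplies $\Exx{\varphi}(\cdot, t) \to \Exx{\varphi}(\cdot, 0)$ in $V$. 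This yields
\begin{align*}
  \iint_{\Omega_T}\sum_{j=1}^N A_j \partial_j \Exx{\varphi} - u\,\partial_t \Exx{\varphi}\,\d x\,\d t = \iint_{\Omega_T} f\,\Exx{\varphi}\,\d x\,\d t + \langle u(0), \Exx{\varphi}(\cdot, 0)\rangle.
\end{align*}

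The final step is purely algebraic. A Fubini computation establishes the adjointness $\iint_{\Omega_T} F\,\Exx{\psi}\,\d x\,\d t = \iint_{\Omega_T} \Ex{F}\,\psi \,\d x\,\d t$ for any $F, \psi$ with matching integrability, which moves the reversed mollification off the test function and onto the coefficients, converting the spatial sum to $\iint \sum_j\Ex{A_j}\partial_j \varphi\,\d x\,\d t$ and the source term to $\iint \Ex{f}\varphi\,\d x\,\d t$. For the time-derivative term we combine $\partial_t \Exx{\varphi} = \tfrac{1}{h}(\Exx{\varphi} - \varphi)$ and $\partial_t \Ex{u} = \tfrac{1}{h}(u - \Ex{u})$ from Lemma \ref{lem:expmolproperties} with the same adjointness to get
\begin{align*}
  -\iint_{\Omega_T} u\,\partial_t\Exx{\varphi}\,\d x\,\d t = \tfrac{1}{h}\iint_{\Omega_T}(u-\Ex{u})\,\varphi\,\d x\,\d t = \iint_{\Omega_T} \partial_t\Ex{u}\,\varphi\,\d x\,\d t.
\end{align*}
Collecting these identities produces exactly \eqref{mollified_weak_forM_*ased_on_cont_into_dual_space}.

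The principal obstacle is the limit $\tfrac{1}{\varepsilon} \int_0^\varepsilon \int_\Omega u\,\Exx{\varphi}\,\d x\,\d t \to \langle u(0), \Exx{\varphi}(\cdot, 0)\rangle$ near the initial time. It is precisely here that Lemma \ref{lem:cont_into_dual_space1} is indispensable: without a representative of $u$ that is continuous into $V'$, the trace $u(0)$ would not even be well defined and the boundary term in the identity would carry no meaning.
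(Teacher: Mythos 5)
Your proof is correct and follows essentially the same route as the paper: the paper tests with $\Exx{\varphi}H_\delta(t)$ (your temporal cutoff near $t=T$ is harmless but redundant, since $\Exx{\varphi}$ already vanishes identically near $T$ when $\varphi$ is compactly supported), handles the boundary term $\tfrac1\delta\int_0^\delta\int_\Omega u\,\Exx{\varphi}$ via the $V'$-continuity of Lemma \ref{lem:cont_into_dual_space1} exactly as you do, and then moves the mollification using the same adjointness identity and Lemma \ref{lem:expmolproperties}\,\ref{expmol2}. No gaps.
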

\begin{proof}{}
We use the weak formulation \eqref{eq:weak_form} with the test function $\Exx{\varphi} H_\delta(t)$ where $H_\delta(t)$ is defined as in \eqref{def:H_delta}.

We can write the parabolic term as
\begin{align*}
    \iint_{\Omega_T}u \partial_t \big(\Exx{\varphi}  H_\delta(t) \big) \d x \d t = \iint_{\Omega_T}u \partial_t \Exx{\varphi}  H_\delta(t) \d x \d t + \frac1\delta \int^\delta_0 \int_\Omega u \Exx{\varphi} \d x \d t.
\end{align*}
In the first term on the right-hand side passing to the limit $\delta\to 0$ is straight-forward. In the second term we can add and remove a term and use the duality pairing between $V$ and $V'$ to obtain
\begin{align*}
    \frac1\delta \int^\delta_0 \int_\Omega u \Exx{\varphi}(x,t) \d x \d t &= \frac1\delta \int^\delta_0 \int_\Omega u (\Exx{\varphi}(x,t) - \Exx{\varphi}(x,0)) \d x \d t \\ &\quad+ \frac1\delta\int^\delta_0 \langle u(t), \Exx{\varphi}(\cdot,0)\rangle \d t.
\end{align*}
The first integral on the right-hand side vanishes in the limit $\delta\to 0$ due to the Lipschitz continuity of $\Exx{\varphi}$ and the Dominated Convergence Theorem. For the second term on the right-hand side, the time continuity established in Lemma \ref{lem:cont_into_dual_space1} implies that
\begin{align*}
    \frac1\delta \int^\delta_0 \langle u(t), \Exx{\varphi}(\cdot,0)\rangle \d t \xrightarrow[\delta\to 0]{} \langle u(0), \Exx{\varphi}(\cdot,0)\rangle.
\end{align*}
Thus, passing to the limit $\delta\to 0$ in the weak formulation we have
\begin{align*}
    \iint_{\Omega_T} \sum^N_{j=1} a_j(x,t,u)|\partial_j u^{m_j}|^{p_j-2}\partial_j u^{m_j}\Exx{\partial_j \varphi} - u \partial_t \Exx{\varphi} \d x \d t &= \langle u(0),\Exx{\varphi}(\cdot,0)\rangle \\
    & \quad + \iint_{\Omega_T}f \Exx{\varphi} \d x \d t,
\end{align*}
where we have also used Property $(iii)$ of Lemma \ref{lem:expmolproperties} for the exponential time-mollification. Moving the mollification from $\varphi$ over to the elliptic term and the right-hand side is standard, and to treat the term involving the time derivative we use Property $(ii)$ of Lemma \ref{lem:expmolproperties}, ending up with \eqref{mollified_weak_forM_*ased_on_cont_into_dual_space}.
\end{proof}
We are now ready to prove the existence of the gradient of $u^m$.
\begin{prop}\label{prop:existence of the gradient}
Let $u \in V^{\bf p, \bf m}$ be a weak solution to \eqref{eq:diffusion} in the sense of Definition \ref{def:weaksol} and suppose that the exponents $m_j$ satisfy \eqref{cond:m_j-closeness}. Then $u^m$ has a weak gradient and $\partial_j u^m \in L^{p_j}_{\textnormal{loc}}(\Omega_T)$ for all $j \in \{1,\dots, N\}$. 
\end{prop}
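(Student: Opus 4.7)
The plan is to show that for every $k \in \{1, \ldots, N\}$ the function $u^m$ admits a weak partial derivative in the $x_k$-direction given by
\[
\partial_k u^m \;=\; G_k\, \chi_{\{u > 0\}}, \qquad G_k \;:=\; \tfrac{m}{m_k}\, u^{m - m_k}\, \partial_k u^{m_k},
\]
lying in $L^{p_k}_{\textnormal{loc}}(\Omega_T)$. The strategy combines a truncation which enforces Sobolev regularity in every spatial direction, a Caccioppoli-type estimate, and a limit passage.

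The key observation is that for every $\varepsilon > 0$ the truncation $F_\varepsilon := (u^m - \varepsilon^m)_+$ admits, for each $k$, the representation $F_\varepsilon = G^{(k)}_\varepsilon(u^{m_k})$ with $G^{(k)}_\varepsilon(s) := (s^{m/m_k} - \varepsilon^m)_+$. Since $m/m_k \in (0, 1]$, the function $G^{(k)}_\varepsilon$ is globally Lipschitz on $[0, \infty)$ with Lipschitz constant $\tfrac{m}{m_k}\varepsilon^{m - m_k}$, and applying the standard chain rule for Sobolev functions in the representation corresponding to each $k$ yields
\[
\partial_k F_\varepsilon \;=\; (G^{(k)}_\varepsilon)'(u^{m_k})\, \partial_k u^{m_k} \;=\; G_k\, \chi_{\{u > \varepsilon\}} \in L^{p_k}(\Omega_T).
\]
Thus, for any cutoff $\eta \in C^\infty_{\textnormal{o}}(\Omega_T)$, the product $\varphi := \eta^{q} F_\varepsilon$ with $q := \max_i p_i$ is an admissible test function in the mollified weak formulation of Lemma~\ref{lem:mollified_weak_forM_*ased_on_cont_into_dual_space}.

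Next, I would derive a Caccioppoli-type estimate by testing the mollified formulation with $\varphi = \eta^{q} F_\varepsilon$ and passing $h \to 0$. The parabolic term is handled using the $V'$-continuity of $u$ furnished by Lemma~\ref{lem:cont_into_dual_space1}, yielding a nonnegative boundary contribution at $t = 0$; the elliptic principal part produces the coercive quantity
\[
\Lambda^{-1} \sum_{i=1}^N \tfrac{m}{m_i} \iint_{\Omega_T} \eta^{q}\, u^{m - m_i}\, |\partial_i u^{m_i}|^{p_i}\, \chi_{\{u > \varepsilon\}}\, \d x \d t.
\]
The remaining contributions---from $\partial_i \eta$, the source $f$, and the initial datum---are absorbed through H\"older and Young inequalities in which the precise role of \eqref{cond:m_j-closeness} is to ensure that the error weights $u^{(m_i - m)(p_i - 1)}$ remain within the integrability class available for $u$, together with the $L^\infty$-bounds of the cutoff. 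This produces a uniform-in-$\varepsilon$ estimate
\[
\sum_{i=1}^N \iint_{\Omega_T} \eta^{q}\, u^{m - m_i}\, |\partial_i u^{m_i}|^{p_i}\, \chi_{\{u > \varepsilon\}}\, \d x \d t \;\leq\; C.
\]

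Finally, passing $\varepsilon \downarrow 0$ and using monotone convergence on the sets $\{u > \varepsilon\} \uparrow \{u > 0\}$ yields $G_k\, \chi_{\{u > 0\}} \in L^{p_k}_{\textnormal{loc}}(\Omega_T)$ for every $k$. Dominated convergence with dominant $|G_k|\, \chi_{\{u > 0\}}$ then gives $\partial_k F_\varepsilon \to G_k\, \chi_{\{u > 0\}}$ in $L^{p_k}_{\textnormal{loc}}$, while $F_\varepsilon \to u^m$ in every $L^r_{\textnormal{loc}}$ by bounded convergence. Taking the limit $\varepsilon \to 0$ in the defining identity
\[
\iint_{\Omega_T} F_\varepsilon\, \partial_k \phi\, \d x \d t \;=\; -\iint_{\Omega_T} \partial_k F_\varepsilon\, \phi\, \d x \d t, \qquad \phi \in C^\infty_{\textnormal{o}}(\Omega_T),
\]
identifies $G_k\, \chi_{\{u > 0\}}$ as the weak partial derivative of $u^m$ in the $x_k$-direction. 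The chief obstacle is the Caccioppoli estimate: the parabolic term must be treated via the mollification framework of Lemmas~\ref{lem:cont_into_dual_space1} and~\ref{lem:mollified_weak_forM_*ased_on_cont_into_dual_space}, and the Young-type splitting must be balanced so that the emerging error weights remain compatible with the integrability condition \eqref{cond:m_j-closeness}.
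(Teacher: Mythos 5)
Your preliminary observation is correct and nice: for each $k$, writing $(u^m-\varepsilon^m)_+=G^{(k)}_\varepsilon(u^{m_k})$ with $G^{(k)}_\varepsilon(s)=(s^{m/m_k}-\varepsilon^m)_+$, which is Lipschitz since $m/m_k\le 1$, does show that the truncation has a full weak gradient with $\partial_k(u^m-\varepsilon^m)_+=\tfrac{m}{m_k}u^{m-m_k}\partial_k u^{m_k}\chi_{\{u>\varepsilon\}}\in L^{p_k}(\Omega_T)$. The gap is in the subsequent Caccioppoli estimate and the limit passage: testing with $\eta^q(u^m-\varepsilon^m)_+$ produces, as you compute, the coercive quantity $\iint \eta^q\, u^{m-m_i}|\partial_i u^{m_i}|^{p_i}\chi_{\{u>\varepsilon\}}$, but the quantity you actually need to bound is
\begin{equation*}
\iint \eta^q\,|G_i|^{p_i}\chi_{\{u>\varepsilon\}}\,\d x\d t \;=\; c\iint \eta^q\, u^{(m-m_i)p_i}|\partial_i u^{m_i}|^{p_i}\chi_{\{u>\varepsilon\}}\,\d x\d t .
\end{equation*}
Since $m-m_i\le 0$ and $p_i>1$, one has $(m-m_i)p_i< m-m_i$ whenever $m_i>m$, so on the set where $u$ is small the weight $u^{(m-m_i)p_i}$ is strictly more singular than the weight $u^{m-m_i}$ that your estimate controls; the ratio $u^{(m-m_i)(p_i-1)}$ blows up as $u\to 0$. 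Hence a uniform bound on your coercive quantity does not imply $G_k\chi_{\{u>0\}}\in L^{p_k}_{\textnormal{loc}}$, and the monotone-convergence step at the end is a non sequitur. The entire difficulty of the proposition lives exactly on the set where $u$ is small, which is the region your test function fails to see.

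The paper's proof fixes precisely this point by choosing the test function $\varphi=(\max\{\delta,u\})^{\varepsilon}\psi$ with a \emph{small positive power} $\varepsilon$, rather than a truncation of $u^m$. The gradient of $(\max\{\delta,u\})^\varepsilon$ carries the highly singular weight $u_\delta^{\varepsilon-m}\partial_j u_\delta^m$, so the coercive term becomes $c\,u_\delta^{(m_j-m)(p_j-1)-m+\varepsilon}|\partial_j u_\delta^m|^{p_j}\psi$; condition \eqref{cond:m_j-closeness} is exactly the statement that $(m_j-m)(p_j-1)-m<0$, so for $\varepsilon$ small the exponent is non-positive and the weight is bounded below by $1$ on $\{u\le 1\}$, which gives the desired $L^{p_j}_{\textnormal{loc}}$-bound on $\partial_j u_\delta^m$ there (the set $\{u>1\}$ being handled by a direct comparison with $|\partial_j u^{m_j}|^{p_j}$). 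If you want to salvage your approach you would have to replace $(u^m-\varepsilon^m)_+$ by a test function whose gradient carries a weight at least as singular as $u^{\varepsilon-m}$ near $\{u=0\}$; as it stands, your argument only yields the conclusion in the trivial case $m_j\equiv m$. A secondary, more minor issue is that the error term $\iint|\partial_i u^{m_i}|^{p_i-1}\,u^m\,|\partial_i\eta|\,\eta^{q-1}$ arising from the cutoff requires, after Young's inequality, an integrability of $u^{mp_i}$ that is not guaranteed by $u\in V^{\bf p,\bf m}$; the paper's small exponent $\varepsilon$ avoids this as well.
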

\begin{proof}{}
We denote $u_\delta := \max\{\delta, u\}$ and use the mollified formulation \eqref{mollified_weak_forM_*ased_on_cont_into_dual_space} with the test function 
\begin{align*}
    \varphi = (u_\delta)^\varepsilon \psi,
\end{align*}
where $\psi \in C^\infty_{\textnormal{o}}(\Omega_T;[0,1])$ and $\varepsilon \in (0,1)$ is a parameter which will be taken sufficiently small. This choice of the test function is justified since we can write
\begin{align*}
    (u_\delta)^\varepsilon = \max\{\delta^{m_j}, u^{m_j}\}^\frac{\varepsilon}{m_j},
\end{align*}
and $s \mapsto \max\{\delta^{m_j}, s\}^\frac{\varepsilon}{m_j}$ is a Lipschitz piecewise $C^1$-function, which allows us to use the Chain Rule for Sobolev functions. By a similar argument, we can deduce that $(u_\delta)^m$ also has a gradient. Denoting $(\Ex{u})_\delta := \max\{\delta, \Ex{u}\}$, we can treat the parabolic term as
\begin{align*}
    \partial_t \Ex{u} \varphi = \partial_t \Ex{u}({(\Ex{u}})_\delta)^\varepsilon\psi + \partial_t \Ex{u} \big((u_\delta)^\varepsilon - (({\Ex{u}})_\delta)^\varepsilon\big)\psi \geq \partial_t \Ex{u}({(\Ex{u}})_\delta)^\varepsilon\psi,
\end{align*}
where we obtain the last estimate by using Property $(ii)$ of Lemma \ref{lem:expmolproperties} and the fact that $s\mapsto \max\{\delta,s\}^\varepsilon$ is increasing to see that 
\begin{align*}
    \partial_t \Ex{u} \big((u_\delta)^\varepsilon - ({\Ex{u}}_\delta)^\varepsilon\big) = \tfrac1h (u - \Ex{u}) \big((u_\delta)^\varepsilon - (({\Ex{u}})_\delta)^\varepsilon\big) \geq 0.
\end{align*}
Thus, defining
\begin{align*}
    G_{\varepsilon,\delta}:\R\to \R, \quad G_{\varepsilon,\delta}(u) = \int^u_0 \max\{\delta,s\}^\varepsilon \d s,
\end{align*}
we can use the chain rule for Sobolev functions to conclude that
\begin{align}\label{est:parabolicterm1}
    \iint_{\Omega_T} \partial_t \Ex{u} \varphi \d x \d t &\geq \iint_{\Omega_T} \partial_t \Ex{u}(({\Ex{u}})_\delta)^\varepsilon\psi \d x \d t 
    \\
   \notag &= \iint_{\Omega_T} \partial_t\big( G_{\varepsilon,\delta}(\Ex{u})\big)\psi \d x \d t
    \\
   \notag & = -\iint_{\Omega_T}  G_{\varepsilon,\delta}(\Ex{u})\partial_t\psi \d x \d t
    \\
   \notag & \xrightarrow[h\to 0]{} -\iint_{\Omega_T}  G_{\varepsilon,\delta}(u)\partial_t\psi \d x \d t
   \\
  \notag  &\geq -\iint_{\Omega_T} G_{\varepsilon,\delta}(u)|\partial_t \psi| \d x \d t.
\end{align}
In the elliptic term we have directly the limit
\begin{align*}
&\iint_{\Omega_T}\sum^N_{j=1}\Ex{a(x,\cdot,u)|\partial_j u^{m_j}|^{p_j-2}\partial_j u^{m_j}}\partial_j \varphi \d x \d t \\
&\quad \xrightarrow[h\to 0]{} \iint_{\Omega_T}\sum^N_{j=1} a(x,t,u)|\partial_j u^{m_j}|^{p_j-2}\partial_j u^{m_j}\partial_j \varphi \d x \d t.
\end{align*}
The terms appearing on the last line can be split as follows
\begin{align}\label{fudbocs}
    a_j(x,t,u) |\partial_j u^{m_j}|^{p_j-2}\partial_j u^{m_j}\partial_j \varphi &= a_j(x,t,u) |\partial_j u^{m_j}|^{p_j-2}\partial_j u^{m_j}\partial_j (u_\delta)^\varepsilon \psi 
    \\
    \notag &\quad + a_j(x,t,u) |\partial_j u^{m_j}|^{p_j-2}\partial_j u^{m_j}\partial_j \psi (u_\delta)^\varepsilon.
\end{align}
Noting that $\partial_j (u_\delta)^\varepsilon$ vanishes a.e. on the set $\{u\leq \delta\}$, we can treat the first term on the right-hand side as
\begin{align}\label{est:ellipticterms1}
    a_j(x,t,u) |\partial_j u^{m_j}|^{p_j-2}\partial_j u^{m_j}\partial_j (u_\delta)^\varepsilon \psi
   & = a_j(x,t,u) |\partial_j u_\delta^{m_j}|^{p_j-2}\partial_j u_\delta^{m_j}\partial_j (u_\delta)^\varepsilon \psi 
    \\
 \notag  & = c a_j(x,t,u) u_\delta^{(m_j-m)(p_j-1) - m + \varepsilon}|\partial_j u_\delta^m|^{p_j}\psi
    \\
 \notag  & \geq c u_\delta^{(m_j-m)(p_j-1) - m + \varepsilon}|\partial_j u_\delta^m|^{p_j}\psi. 
\end{align}
The second term on the right-hand side of \eqref{fudbocs} can be estimated using Young's inequality as
\begin{align}\label{ellipticterms2}
    a_j(x,t,u) |\partial_j u^{m_j}|^{p_j-2}\partial_j u^{m_j}\partial_j \psi (u_\delta)^\varepsilon &\geq -c |\partial_j u^{m_j}|^{p_j-1}|\partial_j \psi|(u_\delta)^\varepsilon 
    \\
\notag    &\geq -c|\partial_j u^{m_j}|^{p_j} - c|\partial_j \psi|^{p_j}(u_\delta)^{\varepsilon p_j}.
\end{align}
We also note that
\begin{align*}
    |\langle u(0), \Exx{\varphi}(\cdot, 0)\rangle| \leq c\Big(\norm{\Exx{\varphi}(\cdot, 0)}_{L^{(m+1)/m}(\Omega)} + \sum^N_{j=1}\norm{\Exx{\partial_j \varphi}(\cdot, 0)}_{L^{p_j}(\Omega)} \Big) \xrightarrow[h\to 0]{} 0,
\end{align*}
where the convergence of the norms can be deduced from the expression of $\varphi$ and the Dominated Convergence Theorem, utilizing the fact that $\varphi=0$ for small values of $t$. \\
\noindent For the source term in \eqref{mollified_weak_forM_*ased_on_cont_into_dual_space} we have
\begin{align*}
    \iint_{\Omega_T} \Ex{f} \varphi \d x \d t \xrightarrow[h\to 0]{} \iint_{\Omega_T} f \varphi \d x \d t.
\end{align*}
Thus, taking into account \eqref{est:parabolicterm1}, \eqref{est:ellipticterms1} and \eqref{ellipticterms2}, after passing to the limit in \eqref{mollified_weak_forM_*ased_on_cont_into_dual_space}, we get
\begin{align*}
    &\iint_{\Omega_T} u_\delta^{(m_j-m)(p_j-1) - m + \varepsilon}|\partial_j u_\delta^m|^{p_j}\psi \d x \d t 
    \\
    &\quad \leq c \iint_{\Omega_T} |\partial_j u^{m_j}|^{p_j} + |\partial_j \psi|^{p_j}(u_\delta)^{\varepsilon p_j} +   G_{\varepsilon,\delta}(u)|\partial_t \psi| + f (u_\delta)^\varepsilon \psi \d x \d t.
\end{align*}
Since we consider $\delta \in (0,1)$, we have the estimates
\begin{align*}
    0\leq G_{\varepsilon,\delta}(u) \leq u^{\varepsilon+1} + 1, \quad u_\delta \leq u + 1,
\end{align*}
which then imply
\begin{align}\label{diesse}
    &\iint_{\Omega_T} u_\delta^{(m_j-m)(p_j-1) - m + \varepsilon}|\partial_j u_\delta^m|^{p_j}\psi \d x \d t 
    \\
   \notag &\quad \leq c \iint_{\Omega_T} |\partial_j u^{m_j}|^{p_j} + |\partial_j \psi|^{p_j}(u^{\varepsilon p_j} + 1) +   (u^{\varepsilon+1} + 1)|\partial_t \psi| + f (u^\varepsilon + 1) \psi \d x \d t.
\end{align}
Since we consider the parameters range \eqref{cond:m_j-closeness}, we have that 
\begin{align*}
    (m_j-m)(p_j-1) - m < 0,
\end{align*}
and thus we can take an $\varepsilon$ so small that the exponents of $u_\delta$ appearing on the left-hand side of \eqref{diesse} are non-positive. Hence, we can estimate 
\begin{align*}
    u_\delta^{(m_j-m)(p_j-1) - m + \varepsilon} \geq 1, \quad \textnormal{for } u \leq 1, 
\end{align*}
which allows us to conclude that
\begin{align}\label{est_u_delta_leq1}
    &\iint_{\Omega_T} |\partial_j u_\delta^m|^{p_j}\chi_{\{u\leq 1\}} \psi \d x \d t 
    \\
   \notag &\quad \leq c \iint_{\Omega_T} |\partial_j u^{m_j}|^{p_j} + |\partial_j \psi|^{p_j}(u^{\varepsilon p_j} + 1) +   (u^{\varepsilon+1} + 1)|\partial_t \psi| + f (u^\varepsilon + 1) \psi \d x \d t.
\end{align}
On the other hand, on the set $\{u>1\}$ we also have $u_\delta > 1$, so we can estimate
\begin{align}\label{est_u_delta_geq1}
    |\partial_j u_\delta^m|^{p_j} 
    = c u_\delta^{(m - m_j)p_j}|\partial_j u_\delta^{m_j}|^{p_j} 
 \leq c u_\delta^{(m - m_j)p_j}|\partial_j u^{m_j}|^{p_j} \leq c |\partial_j u^{m_j}|^{p_j}.
\end{align}
Combining \eqref{est_u_delta_leq1} and \eqref{est_u_delta_geq1}, 
we have
\begin{align*}
    &\iint_{\Omega_T}|\partial_j u_\delta^m|^{p_j}\psi \d x \d t \\
   \notag &\quad \leq c \iint_{\Omega_T} |\partial_j u^{m_j}|^{p_j} + |\partial_j \psi|^{p_j}(u^{\varepsilon p_j} + 1) +   (u^{\varepsilon+1} + 1)|\partial_t \psi| + f (u^\varepsilon + 1) \psi \d x \d t.
\end{align*}
Note that the right-hand side remains bounded independently of $\delta$. Since for any open subset $V:= U \times (t_1,t_2) \Subset \Omega_T$ we can choose a $\psi$ such that $\psi=1$ on $V$, the last estimate shows that $(\partial_j u_\delta^m)^\infty_{k=1}$ is a bounded sequence in $L^{p_j}(V)$. Thus a subsequence converges weakly to some limit function $v_j \in L^{p_j}(V)$. Since $u_\delta^m$ also converges to $u^m$ in $L^1(\Omega_T)$, we see that $v_j = \partial_j u^m$ on $V$. By considering an exhaustion $\Omega_T= \cup^\infty_{j=1} V_j$ of open subsets $V_j$ compactly contained in $\Omega_T$, we can thus confirm that $u^m$ has a weak gradient and that $\partial_j u^m \in L^{p_j}_{\textnormal{loc}}(\Omega_T)$.
\end{proof}
As a simple consequence of the previous proposition, we obtain the continuity for solutions of sufficiently high integrability as functions on the time interval into $L^{m+1}_{\textnormal{loc}}(\Omega)$.
\begin{cor}
Let $u \in V^{\bf p, \bf m}$ be a weak solution to \eqref{eq:diffusion} in the sense of Definition \ref{def:weaksol} and suppose that the exponents $m_j$ satisfy \eqref{cond:m_j-closeness}. Suppose also $u \in L^{m\max\{p_j\}}(\Omega_T)$. Then $u$ has a representative in $C([0,T];L^{m+1}_\textnormal{loc}(\Omega))$.
\end{cor}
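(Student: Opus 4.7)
The idea is to adapt the argument of Lemma \ref{lem:time-cont} to a local setting. That lemma establishes continuity into $L^{m+1}(\Omega)$ globally, but assumes both boundedness and prescribed boundary values for $u$. Here we have neither; instead the hypothesis $u \in L^{m\max\{p_j\}}(\Omega_T)$ will play the role of the $L^\infty$-bound, while working on a compactly contained subdomain removes the need for boundary information.

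Fix $U \Subset \Omega$ and pick a spatial cutoff $\eta \in C^\infty_{\textnormal{o}}(\Omega;[0,1])$ with $\eta \equiv 1$ on $U$. By Proposition \ref{prop:existence of the gradient}, $\partial_j u^m \in L^{p_j}(\supp(\eta) \times (0,T))$; combined with the hypothesis $u \in L^{m\max\{p_j\}}(\Omega_T)$ this gives $u^m\eta \in L^{\bf p}(0,T; \overline{W}^{1, \bf p}_\textnormal{o}(\Omega))$ and in particular $u^m\eta \in L^{q}(\Omega_T)$ for $q = \max\{p_j\}$.

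Starting from the mollified weak formulation established in Section \ref{sec:existence_grad}, I would then reproduce the steps leading to the $\b$-identity \eqref{eq:amost-timecont}, but with the prescribed boundary function $g^m$ there replaced by $u^m(1-\eta)$ (so that the difference $u^m - g^m$ of the original proof becomes $u^m\eta$ in the present setting). Concretely, this corresponds to testing against $(u^m\eta - \Exx{u^m\eta})\zeta(t)$ with $\zeta \in C^\infty_{\textnormal{o}}((0,T))$, suitably regularized. The extra terms generated by the derivatives $\partial_j\eta$ now take over the role of the previously controlled $g^m$-contributions; their integrability is ensured by products of the norms of $u^m$, $\partial_j u^m$ and $f$ on $\supp(\eta)\times(0,T)$, all finite by the standing hypotheses and by Proposition \ref{prop:existence of the gradient}. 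Applying Lemma \ref{lemma:b_properties} to convert $\b$-expressions into $L^{m+1}$-norms of differences of $\tfrac{m+1}{2}$-th powers, and mimicking the concluding argument of the proof of Lemma \ref{lem:time-cont}, I expect to arrive at
\begin{align*}
\lim_{j\to\infty} \sup_{\tau \in [0,T]\setminus N} \int_U \bigl| u(x,\tau) - [\![u^m\eta]\!]_{\bar h_j}^{1/m}(x,\tau)\bigr|^{m+1} \d x = 0,
\end{align*}
for a suitable sequence $h_j \to 0$ and a null set $N\subset[0,T]$. Property (iv) of Lemma \ref{lem:expmolproperties} guarantees $[\![u^m\eta]\!]_{\bar h_j} \in C([0,T]; L^{q}(\Omega))$, and combined with the displayed essentially uniform convergence and elementary inequalities relating $m$-th roots of non-negative functions (valid since $m\geq 1$), this yields a representative of $u$ in $C([0,T]; L^{m+1}(U))$. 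Since $U \Subset \Omega$ was arbitrary, we conclude $u \in C([0,T]; L^{m+1}_\textnormal{loc}(\Omega))$.

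The main technical difficulty is the bookkeeping of the cutoff contributions from $\partial_j\eta$, which replace the boundary flux handled in the original Lemma \ref{lem:time-cont}, and the verification that the integrability of $u$, together with that of $f$ and of $\partial_j u^m$ on $\supp(\eta)\times(0,T)$, is enough to let every limit pass exactly as the $L^\infty$-bound did in the global setting.
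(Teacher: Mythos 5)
Your overall strategy --- localize the time-continuity machinery of Lemma \ref{lem:time-cont} with a spatial cutoff, let the hypothesis $u\in L^{m\max\{p_j\}}(\Omega_T)$ substitute for the missing $L^\infty$-bound, and close the $f$-terms by duality --- is in substance the same route the paper takes: the paper simply delegates the computation by citing Lemmas 4.1--4.2 of \cite{CiaVeVe} applied to $u^m$ with $\alpha=1/m$, noting exactly the two points you identify (only the integrability of the vector field matters, and $f$ pairs with the test function via \eqref{eq:Sobolev_Troisi_Inequality}).

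There is, however, one step that fails as literally described. The identity \eqref{eq:amost-timecont} contains the term $\iint(u-(w+g^m)^{1/m})\,\partial_t(w+g^m)\,\zeta$, and its derivation requires $w+g^m$ to have a weak time derivative: $w$ has one because it is an exponential mollification, and $g^m$ has one because Lemma \ref{lem:time-cont} assumes $\partial_t g\in L^2(\Omega_T)$ and $g\in L^\infty$. If you "replace $g^m$ by $u^m(1-\eta)$" and reproduce those steps, the comparison function becomes $\Exx{u^m\eta}+u^m(1-\eta)$ and you would need $\partial_t\big(u^m(1-\eta)\big)=(1-\eta)\,\partial_t u^m$, which does not exist --- the absence of a time derivative of $u$ is the entire difficulty. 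The correct localization keeps the cutoff \emph{outside} the comparison: since $\eta$ is time-independent, your (correct) test function $\eta(u^m-\Exx{u^m})\zeta$ produces $\iint\eta\,\b[u,\Exx{u^m}^{1/m}]\,\zeta'$ together with a term $\iint\eta\,(u-\Exx{u^m}^{1/m})\,\partial_t\Exx{u^m}\,\zeta$ of favorable sign by Lemma \ref{lem:expmolproperties}\,\ref{expmol2}; no time derivative ever falls on $u^m$, and the only genuinely new contributions are the $\partial_j\eta$-terms from the elliptic part, exactly as you anticipate. Two smaller points: Proposition \ref{prop:existence of the gradient} only yields $\partial_j u^m\in L^{p_j}_{\textnormal{loc}}(\Omega_T)$, i.e.\ on sets compactly contained in time as well, so your claim that $\partial_j u^m\in L^{p_j}(\supp(\eta)\times(0,T))$, and hence continuity up to the endpoints $t=0$ and $t=T$, needs an additional argument; and for the $f$-term the relevant integrability is $u^m\eta\in L^{\bar p}(\Omega_T)$, which indeed follows either from $\bar p\le\max\{p_j\}$ or slicewise from \eqref{eq:Sobolev_Troisi_Inequality}.
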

\begin{proof}{}
One can apply the reasoning of the proofs of Lemma 4.1 and Lemma 4.2 in \cite{CiaVeVe} with the choice $\alpha=\tfrac1m$ to the function $u^m$. The difference in the form of the elliptic term does not affect the argument, as only the integrability properties of the vector field are relevant. Terms arising due to the right-hand side $f$ can be treated since $f\in L^{\bar p'}(\Omega_T)$ and the inequality \eqref{eq:Sobolev_Troisi_Inequality} guarantees that the appropriate test function lies in $L^{\bar p}(\Omega_T)$ so that we have dual exponents in these terms.
\end{proof}
 
\section{Existence of Solutions}\label{sec:existence}
In order to prove the existence we introduce a family of Cauchy-Dirichlet problems which approximate the original problem, and for which existence is already known. We will prove that a sequence $(u_k)_{k = 1}^\infty$ of solutions to the approximating problems converges to a solution to the original problem. 
\subsection{Approximating solutions and structure conditions}
For $k\in \N$ we introduce the truncation $ T_k:\R \to \R$,
\begin{align*}
 T_k(s) := \min \left\{ k, \max \left\{s, \tfrac{1}{k} \right\} \right\},
\end{align*}
and define a modified vector field $\hat{A}^k = (\hat{A}^k_1, \dots, \hat{A}^k_N)$ as 
\begin{align}\label{def:A^k}
    \hat{A}_j^k (x, t, u, \xi) & :=  \,  a_j(x, t, u) \lvert m_j T_k(u) ^{m_j - 1} \xi_j \rvert ^{p_j - 2} m_j T_k(u) ^{m_j - 1} \xi_j 
    \\
  \notag  & \hphantom{:} =  \, a_j(x, t, u) m_j^{p_j-1} T_k(u)^{(m_j - 1)(p_j - 1)}\lvert \xi_j\rvert^{p_j - 2}\xi_j
    \\
   \notag &\hphantom{:} =:  \hat{a}^k_j(x,t,u)|\xi_j|^{p_j -2}\xi_j.
\end{align}
It follows directly from the upper and lower bounds imposed on $a_j$ in \eqref{cond:unif_ellipt} that the vector field $\hat{A}^k$ satisfies the following structure conditions
\begin{align}
\label{cond:structure1_A_k}    \lvert \hat{A}_j^k (x, t, u, \xi) \rvert  &\leq b_k \lvert \xi_j \rvert^{p_j - 1},
\\
\label{cond:structure2_A_k}    \hat{A}^k (x, t, u, \xi) \cdot \xi &\geq c_k \sum_{j = 1}^N \lvert \xi_j \rvert^{p_j}, 
\end{align}
for some $k$-dependent constants $b_k$ and $c_k$.
Using the Lipschitz continuity condition \eqref{cond:lip_cont} and the boundedness of the coefficients $a_j$ together with the Lipschitz continuity and boundedness of the functions $T_k$, we obtain the Lipschitz continuity of the coefficients $\hat{a}^k_j$ with respect to the $u$-variable:
\begin{align}\label{cond:lipschitz_A_k}
 |\hat{a}^k_j(x,t,u) - \hat{a}^k_j(x,t,v)| \leq c^k_j |u - v|.
\end{align}

We also have the strict monotonicity property:
\begin{align}
   \notag (\hat{A}^k(x,t,u,\xi) - \hat{A}^k(x,t,u,\eta))\cdot (\xi - \eta) &= \sum^N_{i=1} \hat{a}^k_j(x,t,u)(|\xi_j|^{p_j -2}\xi_j - |\eta_j|^{p_j-2}\eta_j)(\xi_j - \eta_j) 
    \\
   \label{monotonicity-A_k} &> 0, \textnormal{ if } \xi \neq \eta. 
\end{align}
Due to the properties \eqref{cond:structure1_A_k}, \eqref{cond:structure2_A_k} and \eqref{monotonicity-A_k}, we can conclude that there is a solution $u_k$ in $L^{\bf p}(0,T; W^{1,\bf p}(\Omega)) \cap C([0,T]; L^2(\Omega))$ to the problem
\begin{center}
\begin{equation}\label{eq:approximative_problem}
    \begin{cases}
    \partial_t u_k - \nabla \cdot \hat{A}^k(x, t, u_k, \nabla u_k) = f & \text{in} \;\; \Omega_T, \\
    u_k = g + \frac{1}{k} & \text{on} \;\; \partial\Omega \times (0,T), \\
    u_k(\cdot, 0) = u_0 + \frac{1}{k} & \text{in} \;\; \Omega \times \{0\}.
\end{cases}
\end{equation}
\end{center}
In the range $\bar p \leq 2$ this follows directly by applying the existence result in \cite{Ve} with $\alpha=1$, as $f$ is in $L^{\bar p'}(\Omega_T)$ and $\bar p' \geq 2 = (\alpha + 1)/\alpha$. In the range $\bar p > 2$ one can use the existence result in \cite{Ve} with the right-hand side $f_l := \min\{l, f\}$ to find a sequence of solutions which in the limit $l\to \infty$ converge to a solution to \eqref{eq:approximative_problem}. For the reader's convenience, the argument, which shares many features with the proof of our main existence result, is summarized in Appendix \ref{app:Existence_approx_problems}.

The vector field $\hat{A}^k$ satisfies all the conditions for the comparison principle in Theorem \ref{thm:comparison_non-doubly-nonlinear}. Since the right-hand side, the initial values and the boundary values are nonnegative by assumptions \eqref{cond:f}, \eqref{cond:u_0} and \eqref{cond:g}, and since the constant functions $\tfrac1k$ are solutions to \eqref{eq:approximative_problem} with $f=0$ we can apply Theorem \ref{thm:comparison_non-doubly-nonlinear} with $v=\frac1k$, $u=u_k$ to obtain the following lemma.
\begin{lem}\label{lem:lower_bound_uk}
    Let $u_k$ be a solution to problem \eqref{eq:approximative_problem}. Then $u_k \geq \frac{1}{k}$ in $\Omega_T.$
\end{lem}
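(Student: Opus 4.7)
The plan is to deduce this directly from the comparison principle for anisotropic parabolic equations without double nonlinearity (Theorem \ref{thm:comparison_non-doubly-nonlinear}). The idea is to compare the solution $u_k$ with the constant function $v \equiv \tfrac{1}{k}$, which is a trivial solution of \eqref{eq:approximative_problem} with right-hand side $0$, initial datum $\tfrac{1}{k}$, and lateral boundary datum $\tfrac{1}{k}$ (all interpreted in the obvious weak sense).

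First, I would verify that the vector field $\hat{A}^k$ satisfies the hypotheses needed to invoke Theorem \ref{thm:comparison_non-doubly-nonlinear}. The growth and coercivity conditions \eqref{cond:growth_A} and \eqref{cond:coerc_A} are immediate from \eqref{cond:structure1_A_k} and \eqref{cond:structure2_A_k} (with $a\equiv b\equiv 0$). The Lipschitz continuity in the $u$-variable required by \eqref{cond:lip_cont_A} follows from \eqref{cond:lipschitz_A_k} since $|\xi_j|^{p_j-1}$ may be bounded by $\bigl(c(x,t)+\sum_k|\xi_k|^{p_k}\bigr)^{(p_j-1)/p_j}$ with $c\equiv 1$, say. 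The monotonicity condition \eqref{cond:monotonicity_A} follows from \eqref{monotonicity-A_k}. Thus $\hat{A}^k$ meets all structural requirements.

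Next, I would check the ordering of the data. With the identification $u \defeq u_k$ (solution with data $u_0+\tfrac{1}{k}$, $f_u=f$, $g_u=g+\tfrac{1}{k}$) and $v \defeq \tfrac{1}{k}$ (solution with data $\tfrac{1}{k}$, $f_v=0$, $g_v=\tfrac{1}{k}$), the assumptions \eqref{cond:f}, \eqref{cond:u_0}, \eqref{cond:g} give $f_v = 0 \leq f = f_u$, $v_0 = \tfrac{1}{k}\leq u_0 + \tfrac{1}{k} = u_k(0)$, and on the lateral boundary $g_v = \tfrac{1}{k} \leq g + \tfrac{1}{k} = g_u$. The boundary inequality holds in the sense used in Theorem \ref{thm:comparison_non-doubly-nonlinear}, since the difference $g_u - g_v = g \geq 0$ extends naturally and $u_k - v$ has the required $L^{\bf p}$–$\overline{W}^{1,\bf p}_{\textnormal{o}}$ decomposition coming from the Cauchy-Dirichlet formulation of \eqref{eq:approximative_problem}.

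Applying Theorem \ref{thm:comparison_non-doubly-nonlinear} then yields $v \leq u$ a.e.\ in $\Omega_T$, i.e.\ $\tfrac{1}{k}\leq u_k$, which is the desired conclusion. No real obstacle arises; the only point requiring care is ensuring the constant $\tfrac{1}{k}$ is admissible as a ``solution'' in the sense of Definition \ref{def:weak_sol_coef_usual_anisotropic_problem} so that Theorem \ref{thm:comparison_non-doubly-nonlinear} applies, which is immediate since a constant has vanishing spatial gradient and vanishing distributional time derivative.
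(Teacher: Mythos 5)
Your proposal is correct and follows essentially the same route as the paper: the authors also verify that $\hat{A}^k$ satisfies the hypotheses of Theorem \ref{thm:comparison_non-doubly-nonlinear} and then compare $u_k$ with the constant solution $v\equiv\tfrac1k$ (which solves \eqref{eq:approximative_problem} with $f=0$), using the nonnegativity of $f$, $u_0$ and $g$ to order the data. Your additional checks of the structure conditions \eqref{cond:growth_A}--\eqref{cond:lip_cont_A} are the right ones and are consistent with what the paper asserts.
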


\subsection{Global Boundedness for the Approximative Solutions}\label{sec:Global_Boundedness}
We now prove a global bound for the solutions to the approximative problems \eqref{eq:approximative_problem} that is independent of $k$. We will make extensive use of the quantity
\begin{align}\label{def:M_*}
    M_* := \max\{\norm{u_0}_{L^\infty(\Omega)}, \norm{g}_{L^\infty(\Omega_T)}\} + 1.
\end{align}
We begin by establishing an energy estimate.
\begin{lem}\label{lem:energy_estimates_approx_probs}
    Let $u_k$ be a weak solution to the problem \eqref{eq:approximative_problem} in the sense of Definition \ref{def:weak_sol_coef_usual_anisotropic}, and define $v_k := T_k \circ u_k = \min\{k, u_k\}$. Then $v_k$ satisfies
    \begin{align}\label{energy_est_approx_problms}
        \esssup_{\tau \in[0, T]} \int_\Omega (v_k^{\frac{m + 1}{2}} - M^{\frac{m + 1}{2}})^2_+ (x, \tau) \d x 
        & + \sum_{j = 1}^N \iint_{\Omega_T}  v_k ^{(m_j - m)(p_j - 1)} \lvert \partial_j (v_k^m - M^m)_+ \rvert^{p_j} \d x \d t \notag
        \\
        & \leq C \iint_{\Omega_T} \lvert f \rvert^{\bar p\prime} \chi_{[v_k > M]} \d x \d t,
    \end{align}
    for all $M\geq M_*$, with a constant $C$ which is independent of $k$ and $M$.
\end{lem}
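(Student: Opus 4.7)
My strategy is to insert the test function $\varphi = (v_k^m - M^m)_+ \zeta^\delta_{0,\tau}(t)$ into the Steklov-averaged weak formulation \eqref{eq:weak_mollified_usual_anisotropic_Steklov} satisfied by $u_k$, where $\zeta^\delta_{0,\tau}$ is the trapezoidal cutoff from \eqref{func:trapezoid} and $\tau \in (0,T)$. Admissibility is clear: $v_k = T_k(u_k)$ is bounded in $[\tfrac1k, k]$ and lies in $L^{\bf p}(0,T; W^{1,\bf p}(\Omega))$ by Lipschitz composition, so $v_k^m$ does too; on $\partial\Omega \times (0,T)$ one has $u_k = g + \tfrac1k \leq \norm{g}_{L^\infty(\Omega_T)} + 1 = M_* \leq M$, hence $v_k \leq M$ there and $(v_k^m - M^m)_+ \in L^{\bf p}(0,T; \overline{W}^{1,\bf p}_{\textnormal{o}}(\Omega))$. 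In the case $M > k$ the test function vanishes and the estimate is trivial, so I may assume $M \leq k$, which in particular yields $T_k(u_k) = v_k$ a.e.

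For the elliptic term, using $\partial_j v_k = \chi_{\{u_k < k\}} \partial_j u_k$ together with the identity $|\partial_j (v_k^m - M^m)_+|^{p_j} = m^{p_j} v_k^{(m-1)p_j}|\partial_j v_k|^{p_j} \chi_{\{v_k > M\}}$, a direct exponent count reveals that the power of $v_k$ collapses to $(m_j - m)(p_j - 1)$, so that
\begin{equation*}
    \sum_j \iint_{\Omega_T} \hat{A}^k_j(x,t,u_k,\nabla u_k) \partial_j \varphi \d x \d t \geq C \sum_j \iint_{\Omega_T} v_k^{(m_j - m)(p_j - 1)} |\partial_j (v_k^m - M^m)_+|^{p_j} \zeta^\delta_{0,\tau} \d x \d t,
\end{equation*}
where the ellipticity bound $a_j \geq \Lambda^{-1}$ has been used. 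For the parabolic term I introduce the primitive $B(s) := \int_0^s ((T_k(\sigma))^m - M^m)_+ \d\sigma$, whose piecewise formula combined with Lemma \ref{lemma:b_properties} applied to $(v_k, M)$ gives the pointwise bound $B(u_k) \geq c^{-1}(v_k^{(m+1)/2} - M^{(m+1)/2})_+^2$ (the contribution $(k^m - M^m)(s-k)$ on $\{u_k > k\}$ being nonnegative and thus discarded). Passing $h \to 0$ in the Steklov averaging and then $\delta \to 0$ in the cutoff (both allowed since $u_k \in C([0,T]; L^2(\Omega))$) converts the parabolic term into $\int_\Omega B(u_k)(x,\tau) \d x - \int_\Omega B(u_0 + \tfrac1k) \d x$, and the latter integral vanishes because $u_0 + \tfrac1k \leq M_* \leq M$.

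For the right-hand side I observe that $M \geq M_* \geq 1$ implies $v_k^{(m_j-m)(p_j-1)} \geq 1$ on the support $\{v_k > M\}$, so the unweighted gradient $\sum_j \iint |\partial_j(v_k^m - M^m)_+|^{p_j}$ is dominated by its weighted version appearing on the left. Applying the anisotropic Sobolev embedding \eqref{eq:Sobolev_Troisi_Inequality} slicewise in time to $(v_k^m - M^m)_+$ (which vanishes on $\partial \Omega$) and integrating, then combining H\"older's inequality in $L^{\bar p}$--$L^{\bar p'}$ with Young's inequality yields
\begin{equation*}
    \iint_{\Omega_T} f(v_k^m - M^m)_+ \zeta^\delta_{0,\tau} \d x \d t \leq \varepsilon \sum_j \iint_{\Omega_T} v_k^{(m_j-m)(p_j-1)} |\partial_j (v_k^m - M^m)_+|^{p_j} \d x \d t + C(\varepsilon) \iint_{\Omega_T} |f|^{\bar p'} \chi_{\{v_k > M\}} \d x \d t.
\end{equation*}
Absorbing the $\varepsilon$-term into the left-hand side and taking the essential supremum over $\tau \in [0,T]$ completes the estimate. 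The main technical obstacle I anticipate is the careful bookkeeping in the elliptic identification across the regions $\{u_k < k\}$ and $\{u_k \geq k\}$ and the justification of the chain-rule passage in the parabolic term when the test function involves the composition $(T_k(u_k))^m$; however, the boundedness of $v_k$ ensures all required integrability hypotheses, so the limiting argument is essentially routine.
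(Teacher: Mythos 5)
Your overall architecture coincides with the paper's: the same primitive $F(s)=\int_0^s (T_k(\sigma)^m-M^m)_+\d\sigma$, the same lower bound for the elliptic term via the exponent identity $(m_j-1)(p_j-1)+m-1=(m_j-m)(p_j-1)+(m-1)p_j$, the same comparison $F(u_k)\geq \b[v_k,M]\chi_{\{v_k>M\}}$ through Lemma \ref{lemma:b_properties}, and the same Young/Sobolev--Troisi absorption on the right-hand side using $v_k\geq 1$ on the set $\{v_k>M\}$.

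The one step that does not go through as written is the parabolic term. You test the forward Steklov formulation \eqref{eq:weak_mollified_usual_anisotropic_Steklov} with the \emph{unmollified} function $B'(u_k)=(v_k^m-M^m)_+$ and assert that $\iint \partial_t[u_k]_h\,B'(u_k)\,\zeta_{0,\tau}^\delta\,\d x\d t$ ``converts'' into $\int_\Omega B(u_k)(\cdot,\tau)\d x-\int_\Omega B(u_0+\tfrac1k)\d x$. But $\partial_t[u_k]_h\,B'(u_k)$ is not an exact time derivative, and the only available pointwise comparison is the convexity inequality $\partial_t[u_k]_h\,B'(u_k)=\tfrac1h\big(u_k(t+h)-u_k(t)\big)B'(u_k(t))\leq \partial_t[B(u_k)]_h$, which bounds the parabolic term from \emph{above}; since this term sits on the left-hand side and must be bounded from \emph{below} by $\int_\Omega F(u_k)(\cdot,\tau)\d x$, the inequality points the wrong way, so this is not the ``essentially routine'' passage you anticipate. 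The paper sidesteps the issue by composing the nonlinearity with the mollification, i.e.\ testing with $(T_k([u_k]_h)^m-M^m)_+\zeta_{0,\tau}^\delta$, so that $\partial_t[u_k]_h\,(T_k([u_k]_h)^m-M^m)_+=\partial_t F([u_k]_h)$ holds exactly by the chain rule and one integrates by parts in time with no inequality at all. (Alternatively, the reversed formulation \eqref{eq:weak_mollified_usual_anisotropic} makes the convexity inequality point the right way, in the spirit of Lemma \ref{lem:almost-heaviside-est}, at the cost of extra care near $t=0$.) With this modification of the test function the remainder of your argument is correct and matches the paper's proof.
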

\begin{proof}{}
    We recall the trapezoidal function defined in \eqref{func:trapezoid} and choose 
    \begin{align*}
       \varphi = (T_k([u_k]_h)^m - M^m)_+ \zeta_{0,\tau}^\delta, 
    \end{align*}
     as test function in the mollified weak formulation \eqref{eq:weak_mollified_usual_anisotropic_Steklov}. The test function vanishes on the lateral boundary for $M\geq M_*$ and lies in the correct Sobolev space by the Chain Rule for Sobolev functions since $s\mapsto (T_k(s)^m - M^m)_+$ is piecewise $C^1$ with bounded derivative. Our goal is to first pass to the limit $h \to 0$ and then take $\delta \to 0$.
    For the parabolic term we introduce the function
    \begin{align}
        F(s)  :=& \int_0^s ( T_k( \sigma )^m - M^m)_+ \d \sigma  \notag \\
        =& \begin{cases} 
            0 & \text{if } s < M, \\
            \b[s, M] & \text{if } s \in [M, k), \\
            \b[k, M] + (s - k)(k^m - M^m) & \text{if } s \geq k, 
        \end{cases} \label{prop:funct_F_def}
    \end{align}
    where, $\b$ is defined as in \eqref{eq:b_definition}. We observe that
    \begin{align*}
        \iint_{\Omega_{T - h}} \partial_t ( [u_k]_h ) \varphi \d x \d t & = \iint_{\Omega_{T-h}} \zeta_{0,\tau}^\delta \partial_t F( [u_k]_h ) \d x \d t \\
        & = - \iint_{\Omega_{T - h}} (\zeta_{0,\tau}^\delta)' F( [u_k]_h ) \d x \d t \\
        & \xrightarrow[h \to 0]{} - \iint_{\Omega} (\zeta_{0,\tau}^\delta)' F( u_k ) \d x \d t \\
        & = \frac{1}{\delta} \int_{\tau-\delta}^{\tau} \int_\Omega F( u_k ) \d x \d t - 
        \frac{1}{\delta} \int_0^{\delta} \int_\Omega F( u_k ) \d x \d t \\
        & \xrightarrow[\delta \to 0]{} \int_\Omega F(u_k) (x, \tau) \d x.
    \end{align*}
    In calculating the limit $\delta \to 0$ we utilize the fact that $F$ is Lipschitz and $u_k \in C([0, T]; L^2(\Omega))$. The second term on the penultimate row vanishes in the limit since $F(u_k)(\cdot, 0) = F(\tfrac{1}{k} + u_0(\cdot))$ and $\tfrac{1}{k} + \lVert u_0 \rVert_{L^\infty(\Omega)} \leq M.$ For the elliptic term, we note that 
    \begin{align*}
        \iint_{\Omega_{T - h}} [\hat{A}^k(x, t, u_k, \nabla u_k)]_h \cdot \nabla \varphi \d x \d t \xrightarrow[h \to 0]{} \iint_{\Omega_T} \zeta_{0,\tau}^\delta \hat{A}^k(x, t, u_k, \nabla u_k) \cdot \nabla (v_k^m - M^m)_+ \d x \d t.
    \end{align*}
     This follows since each component of $[\hat{A}^k]_h$ and the corresponding partial derivative of $\varphi$ converge as $h\to 0$ in $L^p$-spaces of matching H\"older exponents to the corresponding unmollified quantities. Passing to the limit $h\to 0$ on the right-hand side in the weak formulation can be justified in a similar manner.
     Thus, after passing to the limits $h\to 0$ and $\delta \to 0$ in all terms of the weak formulation we obtain for a.e. $\tau \in (0, T)$
    \begin{align}\label{gggr}
        \int_\Omega F( u_k )(x, \tau) \d x 
        + &\iint_{\Omega_\tau} \hat{A}^k(x, t, u_k, \nabla u_k) \cdot \nabla (v_k^m - M^m)_+ \d x \d t 
        \\
       \notag &= \iint_{\Omega_\tau} f(v_k^m - M^m)_+ \d x \d t.
    \end{align}
The Chain Rule for Sobolev functions shows that
   \begin{align*}
       \partial_j (v_k^m - M^m)_+ = \chi_{\{M < u_k < k\}} m u_k^{m - 1} \partial_j u_k = \chi_{\{M < u_k < k\}} m v_k^{m - 1} \partial_j u_k,
   \end{align*}
so that $\partial_j u_k = m^{-1} v_k^{1-m} \partial_j (v_k^m - M^m)_+$ on the set $\{M<u_k<k\}$. This observation and the definition of $v_k$ allows us to compute
\begin{align}\label{est:hat-A_k}
    &\hat{A}_j^k(x, t, u_k, \nabla u_k)\partial_j(v_k^m - M^m)_+ 
    \\
   \notag &\quad = a_j(x,t,u_k)\lvert m_j v_k^{m_j - 1} \partial_j u_k \rvert^{p_j - 2} m_j v_k^{m_j - 1}\partial_j u_k \partial_j (v_k^m - M^m)_+
    \\
   \notag &\quad \geq c v_k^{(m_j - m)(p_j - 1)} \lvert \partial_j (v_k^m - M^m)_+ \rvert^{p_j}.
\end{align}
    Since $v_k\leq k$ we can combine \eqref{prop:funct_F_def} and the left inequality in \eqref{b:basic-est} to conclude
    \begin{align}\label{est:b-F}
        c \big(v_k^{\frac{m + 1}{2}} - M^\frac{m + 1}{2}\big)_+^2 \leq \b[v_k, M] \chi_{\{v_k > M\}} = F(v_k) \leq F(u_k).
    \end{align}
    Using Young's inequality with $\varepsilon$ we have
    \begin{align}\label{eq:energy_est_v_k^beta}
       \notag \iint_{\Omega_\tau} f(v_k^m - M^m)_+ \d x \d &t  \leq  c_\varepsilon \iint_{\Omega_\tau} \abs{f}^{\bar p\prime} \chi_{\{v_k > M\}} \d x \d t 
         + \varepsilon \iint_{\Omega_\tau} (v_k^m - M^m)^{\bar p}_+ \d x \d t 
        \\
         &\leq  c_\varepsilon \iint_{\Omega_\tau} \abs{f}^{\bar p\prime} \chi_{\{v_k > M\}} \d x \d t 
        + c \varepsilon \sum_{j = 1}^N  \iint_{\Omega_\tau} \lvert \partial_j (v_k^m - M^m)_+\rvert^{p_j} \d x \d t,
    \end{align}
    where, in the last step we utilize the Sobolev-Troisi inequality \eqref{eq:Sobolev_Troisi_Inequality} slice-wise. Utilizing each estimate \eqref{est:hat-A_k}, \eqref{est:b-F} and \eqref{eq:energy_est_v_k^beta} to treat the corresponding term in \eqref{gggr} we obtain
\begin{align*}
    \int_\Omega \big(v_k^{\frac{m + 1}{2}} - M^\frac{m + 1}{2}\big)_+^2(x,\tau) \d x + \sum_{j = 1}^N \iint_{\Omega_\tau}  v_k ^{(m_j - m)(p_j - 1)} \lvert \partial_j (v_k^m - M^m)_+ \rvert^{p_j} \d x \d t 
    \\
    \leq c_\varepsilon \iint_{\Omega_\tau} \abs{f}^{\bar p\prime} \chi_{\{v_k > M\}} \d x \d t 
        + c \varepsilon \sum_{j = 1}^N  \iint_{\Omega_\tau} \lvert \partial_j (v_k^m - M^m)_+\rvert^{p_j} \d x \d t.
\end{align*}
Since $v_k \geq 1$ on the set where $(v_k^m - M^m)_+$ is nonzero, we can absorb the terms in the sum on the right-hand side into the corresponding terms on the left-hand side by choosing $\varepsilon$ sufficiently small. Taking the essential supremum over $\tau$ we recover \eqref{energy_est_approx_problms}.
\end{proof}

\begin{cor}\label{coro:uniform_Lp_bound_approx_seq}
    The sequence $(v_k^m)$ is bounded in $L^{\bar p}(\Omega_T).$
\end{cor}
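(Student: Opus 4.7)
The plan is to deduce the $L^{\bar p}(\Omega_T)$-bound directly from the energy estimate of Lemma \ref{lem:energy_estimates_approx_probs} combined with the Sobolev--Troisi inequality \eqref{eq:Sobolev_Troisi_Inequality} of Remark \ref{rem:on_Troisis_inequality}. Fix $M = M_*$ in \eqref{energy_est_approx_problms}. Since $m = \min_j m_j$, each exponent $(m_j - m)(p_j - 1)$ is nonnegative, and since $M_* \geq 1$, on the set $\{v_k > M_*\}$ (which contains the support of $\partial_j(v_k^m - M_*^m)_+$) we have $v_k^{(m_j - m)(p_j - 1)} \geq 1$. Hence the second term on the left of \eqref{energy_est_approx_problms} controls $\sum_j \|\partial_j (v_k^m - M_*^m)_+\|_{L^{p_j}(\Omega_T)}^{p_j}$ uniformly in $k$.

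Next I would verify that for a.e. $t \in (0,T)$, the function $x \mapsto (v_k^m - M_*^m)_+(x,t)$ belongs to $\overline{W}^{1,\mathbf{p}}_{\textnormal{o}}(\Omega)$. This is where the choice $M = M_*$ matters: on $\partial \Omega \times (0,T)$ we have $u_k = g + 1/k \leq \|g\|_{L^\infty(\Omega_T)} + 1 \leq M_*$, so $v_k = \min\{k, u_k\} \leq M_*$ there, and therefore $(v_k^m - M_*^m)_+$ vanishes on the lateral boundary. Combined with the weak differentiability provided by the energy estimate and the chain rule for Sobolev functions, this places $(v_k^m - M_*^m)_+(\cdot,t)$ in $\overline{W}^{1,\mathbf{p}}_{\textnormal{o}}(\Omega)$.

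Applying \eqref{eq:Sobolev_Troisi_Inequality} slicewise in time and integrating over $(0,T)$ then yields
\begin{align*}
\iint_{\Omega_T} \bigl((v_k^m - M_*^m)_+\bigr)^{\bar p} \d x \d t
\leq C(\Omega) \sum_{j=1}^N \iint_{\Omega_T} \lvert \partial_j (v_k^m - M_*^m)_+ \rvert^{p_j} \d x \d t
\leq C \iint_{\Omega_T} |f|^{\bar p'} \d x \d t,
\end{align*}
which is a bound independent of $k$. Finally, from $v_k^m \leq (v_k^m - M_*^m)_+ + M_*^m$ and the finiteness of $|\Omega_T|$ one concludes that $\|v_k^m\|_{L^{\bar p}(\Omega_T)}$ is bounded uniformly in $k$.

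There is no serious obstacle: the only subtle point is the verification of the boundary condition needed to apply the Sobolev--Troisi inequality in the space $\overline{W}^{1,\mathbf{p}}_{\textnormal{o}}(\Omega)$, and this is precisely arranged by taking $M = M_*$ in the energy estimate.
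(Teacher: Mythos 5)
Your proposal is correct and follows essentially the same route as the paper: fix $M=M_*$ in the energy estimate of Lemma \ref{lem:energy_estimates_approx_probs}, use that $v_k\geq 1$ (hence $v_k^{(m_j-m)(p_j-1)}\geq 1$) on the set where $(v_k^m-M_*^m)_+$ is nonzero to drop the weight, apply the Sobolev--Troisi inequality \eqref{eq:Sobolev_Troisi_Inequality} slicewise, and absorb the constant $M_*^m$ using the finiteness of $|\Omega_T|$. The only difference is cosmetic (the paper splits $v_k^m$ into positive and negative parts of $v_k^m-M_*^m$ rather than writing $v_k^m\leq (v_k^m-M_*^m)_++M_*^m$), and your explicit check of the lateral boundary condition is a point the paper handles already in the proof of the energy estimate.
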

\begin{proof}{}
    Note that
    \begin{align}\label{itsamee}
        \lVert v_k^m \rVert_{L^{\bar p}(\Omega_T)} & \leq \Big( \iint_{\Omega_T} (v_k^m - M_*^m)_+^{\bar p} \d x \d t + \iint_{\Omega_T} (v_k^m - M_*^m)_{-}^{\bar p} \d x \d t \Big)^\frac{1}{\bar p} + M_*^m \lvert \Omega_T \rvert^\frac{1}{\bar p} 
        \\
       \notag & \leq c \Big[ \iint_{\Omega_T} (v_k^m - M_*^m)_+^{\bar p} \d x \d t \Big]^\frac{1}{\bar p} + cM_*^m \lvert \Omega_T \rvert^\frac{1}{\bar p} 
        \\
        \notag &\leq c \Big[\sum_{j = 1}^N  \iint_{\Omega_\tau} \lvert \partial_j (v_k^m - M_*^m)_+\rvert^{p_j} \d x \d t \Big]^\frac{1}{\bar p} + c M_*^m \lvert \Omega_T \rvert^\frac{1}{\bar p},
    \end{align}
    where in the last step we use \eqref{eq:Sobolev_Troisi_Inequality}. Since $v_k \geq 1$ on the set where $(v_k^m - M_*^m)_+$ is nonzero, we can use the energy estimate \eqref{energy_est_approx_problms} to bound the sum on the last row of \eqref{itsamee}, to obtain
    \begin{align}\label{def:K_0}
        \lVert v_k^m \rVert_{L^{\bar p}(\Omega_T)}  
        & \leq c\Big( \iint_{\Omega_T} \abs{f}^{\bar p^\prime} \d x \d t \Big)^\frac{1}{\bar p} + cM_*^m \lvert \Omega_T \rvert^\frac{1}{\bar p} =: K_0,
    \end{align}
    where $K_0$ depends on $\Omega, N, \mathbf{p}, \mathbf{m}, T, f, g$ and $u_0$ but is independent of $k$.
\end{proof}


\begin{lem}\label{lemma:De_Giorgi_Approx_probs}
    Let $u_k$ be a weak solution to the problem \eqref{eq:approximative_problem} in the sense of Definition \ref{def:weak_sol_coef_usual_anisotropic}. There exists a constant $L$ depending upon $\Omega, N, \mathbf{p}, m, T, f, g$ and $u_0$ and independent  of $k$  such that, for sufficiently large $k,$ $u_k \leq L$ a.e. in $\Omega_T.$
\end{lem}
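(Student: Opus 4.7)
I would carry out a De Giorgi level-set iteration based on Lemma \ref{lem:fast_geometric_convergence}. Fix $L > 0$ to be chosen at the end and introduce the increasing levels $M_n := M_* + L(1-2^{-n})$, together with the super-level function $W_n := (v_k^{(m+1)/2} - M_n^{(m+1)/2})_+$ and the super-level measure $Y_n := |\{v_k > M_n\}|$. On $\{v_k > M_n\}$ we have $v_k \geq 1$, and the chain rule gives $\partial_j W_n = c_m v_k^{-(m-1)/2}\partial_j(v_k^m - M_n^m)_+$. Since the exponent $(m_j - m)(p_j - 1) + p_j(m-1)/2$ is nonnegative under $m_j \geq m \geq 1$, the weight $v_k^{(m_j - m)(p_j - 1)}$ appearing in Lemma \ref{lem:energy_estimates_approx_probs} dominates a constant on the level set, so that lemma rewrites as the clean Caccioppoli-type bound
\begin{align*}
    E_n := \esssup_{t}\|W_n(t)\|^2_{L^2(\Omega)} + \sum_{j=1}^N \iint_{\Omega_T}|\partial_j W_n|^{p_j} \d x \d t \leq C \iint_{\Omega_T}|f|^{\bar p'}\chi_{\{v_k > M_n\}} \d x \d t.
\end{align*}
H\"older's inequality with the source integrability \eqref{cond:f} then gives $E_n \leq C\|f\|_{L^{\sigma\bar p'}}^{\bar p'}\, Y_n^{1-1/\sigma}$.

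The gain in integrability needed to run the iteration comes from parabolic embedding. A slicewise application of the anisotropic Sobolev inequality (Lemma \ref{lem:troisiii}) combined with Young's inequality yields $\|W_n\|^{\bar p}_{L^{\bar p}(0,T;L^{\bar p^*}(\Omega))} \leq C E_n$; interpolating via Lyapunov with the $L^\infty_t L^2_x$ part of the energy produces
\begin{align*}
    \iint_{\Omega_T} W_n^q \d x \d t \leq C E_n^\alpha, \qquad q = \bar p \cdot \tfrac{N+2}{N}, \qquad \alpha = \tfrac{N+\bar p}{N}.
\end{align*}
(The case $\bar p \geq N$ is reduced to $\bar p < N$ by first replacing each $p_j$ with a smaller exponent $q_j \leq p_j$ as in Remark \ref{rem:on_Troisis_inequality}.) Combining this with the Chebyshev estimate $Y_{n+1} \leq (M_{n+1}^{(m+1)/2} - M_n^{(m+1)/2})^{-q}\iint_{\Omega_T} W_n^q \d x \d t$ and the lower bound $M_{n+1}^{(m+1)/2} - M_n^{(m+1)/2} \geq c M_*^{(m-1)/2} L\, 2^{-(n+1)}$, we arrive at
\begin{align*}
    Y_{n+1} \leq \frac{C\, 2^{q(n+1)}}{L^q}\, Y_n^{\alpha(1-1/\sigma)}.
\end{align*}
A direct calculation shows that $\alpha(1-1/\sigma) > 1$ is exactly equivalent to the assumption $\sigma > 1 + N/\bar p$, so setting $\delta := \alpha(1-1/\sigma) - 1 > 0$ and $b := 2^q$ places $(Y_n)$ in the framework of Lemma \ref{lem:fast_geometric_convergence}.

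Finally, Corollary \ref{coro:uniform_Lp_bound_approx_seq} gives $Y_0 \leq K_0^{\bar p}/M_*^{m\bar p}$, a quantity independent of both $k$ and $L$. Since the recursion constant $CL^{-q}$ decays to zero as $L \to \infty$, fixing $L$ sufficiently large (depending only on $\Omega, N, \mathbf{p}, m, T, f, g, u_0$) secures the smallness threshold $Y_0 \leq (CL^{-q})^{-1/\delta}b^{-1/\delta^2}$ of Lemma \ref{lem:fast_geometric_convergence}, so that $Y_n \to 0$ and hence $v_k \leq M_* + L$ almost everywhere. For every $k > M_* + L$ the truncation in the definition of $v_k$ is inactive, so $u_k = v_k \leq M_* + L$ a.e., proving the claim. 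The main technical obstacle is the first step, namely distilling the two-speed energy estimate of Lemma \ref{lem:energy_estimates_approx_probs} (with mismatched powers $(m+1)/2$ and $m$ and the weight $v_k^{(m_j-m)(p_j-1)}$) into a single clean inequality for $W_n$, and then checking that the parabolic interpolation produces precisely the exponent $\alpha = (N+\bar p)/N$ that aligns the recursion with the sharp source integrability condition \eqref{cond:lower_bound_sigma} to yield $\delta > 0$.
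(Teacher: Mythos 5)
Your argument follows essentially the same route as the paper: the energy estimate of Lemma \ref{lem:energy_estimates_approx_probs}, the anisotropic Sobolev inequality of Lemma \ref{lem:troisiii}, and the fast geometric convergence Lemma \ref{lem:fast_geometric_convergence}, with the starting level fixed via Corollary \ref{coro:uniform_Lp_bound_approx_seq}. The differences are cosmetic but worth noting: you iterate on the measures $Y_n=|\{v_k>M_n\}|$ with additive levels $M_n=M_*+L(1-2^{-n})$ and achieve smallness of the recursion constant by sending $L\to\infty$, whereas the paper iterates on the integral quantities $\iint (v_k^{(m+1)/2}-M_j^{(m+1)/2})_+^{2m\bar q/(m+1)}$ with multiplicative levels and achieves smallness through the factor $M^{-m\bar q(1+\delta)}$. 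Your reduction of the weighted gradient term to a clean Caccioppoli bound for $W_n$ (using $v_k\geq 1$ on the level set and $(m_j-m)(p_j-1)\geq 0$) is correct, as is the verification that $\alpha(1-1/\sigma)>1$ is equivalent to \eqref{cond:lower_bound_sigma} when $\alpha=(N+\bar p)/N$. In the case $\bar p<N$ your proof is complete.

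The gap is the parenthetical treatment of the case $\bar p\geq N$. If you simply replace each $p_j$ by $q_j\leq p_j$ with $\bar q<N$ and run the same interpolation, the gain exponent becomes $\alpha'=(N+\bar q)/N$ and the recursion closes only under $\sigma>1+N/\bar q$, which is \emph{strictly stronger} than the hypothesis $\sigma>1+N/\bar p$ since $\bar q<N\leq\bar p$; for instance, if $\bar p>N$ and $\sigma\in(1+N/\bar p,\,2]$, no admissible choice of $\bar q<N$ makes your iteration converge. The fix is not the crude substitution of Remark \ref{rem:on_Troisis_inequality} but the bookkeeping the paper carries out: when passing from the $L^{q_j}$-norms of $\partial_j(v_k^m-M_{n+1}^m)_+$ (needed for the Sobolev step) to the $L^{p_j}$-norms controlled by the energy estimate, H\"older's inequality produces the extra factors $|\{v_k>M_{n+1}\}|^{1-q_j/p_j}$, and retaining these raises the exponent of the level-set measure by $\big(\tfrac{1}{\bar q}-\tfrac{1}{\bar p}\big)\tfrac{N\bar q}{N+\mu}$, which is exactly what restores the sharp condition $\sigma>1+N/\bar p$. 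Without tracking these measure factors, your proof does not cover the full parameter range of the lemma.
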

\begin{proof}{}
    We first prove a uniform upper bound for the functions $(v_k)$. If $f\equiv 0$, Lemma \ref{lem:energy_estimates_approx_probs} implies that $v_k \leq M_*$ for all $k$. We may thus assume that $f\neq 0$.  We choose numbers $1< q_i \leq p_i$ so that $1 < \bar q < N$. If $\bar p < N$ we can take $ q_i:=  p_i$ for all $i$. Otherwise, this can be achieved by setting $q_i := p_i$ for $i \geq 2$ and taking $q_1$ as close to 1 as necessary. Let $M\geq M_*$ and define for every $j\in \mathbb{N}_0$:
    \begin{align*}
        M_j := M(2 - 2^{-j})^{\frac{2}{m + 1}}, 
        & \hspace{0.5em} Y_j := \iint_{\Omega_T} (v_k^\frac{m + 1}{2} - M_j^\frac{m + 1}{2})_+^\frac{2 m \bar q}{m + 1} \d x \d t
        & \textnormal{and} \hspace{0.5em} E_j := \Omega_T \cap \{v_k > M_j\}.
    \end{align*}
In the following calculations we denote $\mu:= \tfrac{m+1}{m}$ for convenience. Since $v_k$ is bounded from above and below by positive constants, two applications of the Chain Rule for Sobolev functions allow us to obtain the following estimate:
\begin{align}\label{De_Giorgi_derivative_estimate}
        \lvert \partial_i (v_k^{\frac{m + 1}{2}} - M_{j+1}^\frac{m + 1}{2})_+^\frac{2}{\mu} \rvert 
        & \leq \tfrac{2 m}{m + 1} (v_k^{\frac{m + 1}{2}} - M_{j+1}^\frac{m + 1}{2})_+^{\frac{2m}{m + 1} - 1} \lvert \partial_i (v_k^{\frac{m + 1}{2}} - M_{j+1}^\frac{m + 1}{2})_+ \rvert 
        \\
      \notag   & \leq \tfrac{2 m}{m + 1} v_k^\frac{m - 1}{2} \chi_{\{v_k > M_{j+1}\}}|\partial_i v_k^{\frac{m + 1}{2}}|
        \\
       \notag  & = \lvert \partial_i (v_k^m - M_{j+1}^m)_+ \rvert. 
    \end{align}
To estimate $Y_{j+1}$ we apply H\"older's inequality first over the whole domain, and then only in space. We estimate one of the resulting factors by its essential supremum over time and apply Lemma \ref{lem:troisiii} for the exponents $q_i$ in the other factor. The spatial derivatives can be further estimated upwards using  \eqref{De_Giorgi_derivative_estimate} and another application of H\"older's inequality, after which we end up with terms that can be bounded from above using the
energy estimate \eqref{energy_est_approx_problms} (since $v_k \geq 1$ on $E_{j+1}$). After this, another application of H\"older's inequality introduces the parameter $\sigma$ tied to the integrability condition \eqref{cond:f}. All in all the calculation takes the form: 
    \begin{align}\label{De_Giorgi_first_estimates}
        Y_{j + 1} 
        &\leq \lvert E_{j+1} \rvert ^\frac{\mu}{N + \mu} \Big( \iint_{\Omega_T} (v_k^\frac{m + 1}{2} - M_{j+1}^\frac{m + 1}{2})_+^{\frac{2\bar q}{N} + \frac{2\bar q}{\mu}} \d x \d t \Big)^\frac{N}{N + \mu}
        \\
        &  \leq \lvert E_{j+1} \rvert^\frac{\mu}{N + \mu} \Big(\int_0^T \Big[  \int_\Omega (v_k^\frac{m + 1}{2} - M_{j+1}^\frac{m + 1}{2})_+^2 \d x \Big]^\frac{\bar q}{N} \Big[\int_\Omega (v_k^\frac{m + 1}{2} - M_{j+1}^\frac{m + 1}{2})_+^\frac{2 \bar q^*}{\mu} \d x \Big]^\frac{\bar q}{\bar q^*} 
          \d t \Big)^\frac{N}{N + \mu} \nonumber
        \\
        & \leq \lvert E_{j+1} \rvert ^\frac{\mu}{N + \mu} \Big( \esssup\limits_{\tau \in (0, T)} \int_\Omega (v_k^\frac{m + 1}{2} - M_{j+1}^\frac{m + 1}{2})_+^2 (x, \tau) \d x\Big)^\frac{\bar q}{N + \mu} \nonumber
        \\
        &\quad  \times\Big(\int_0^T \Big[\int_{\Omega} (v_k^\frac{m + 1}{2} - M_{j+1}^\frac{m + 1}{2})_+^\frac{2 \bar q^*}{\mu}  \d x \Big]^\frac{\bar q}{\bar q^*} \d t \Big)^\frac{N}{N + \mu} \nonumber
        \\
        & \leq c \lvert E_{j+1} \rvert ^\frac{\mu}{N + \mu} \Big( \esssup\limits_{\tau \in (0, T)} \int_\Omega (v_k^\frac{m + 1}{2} - M_{j+1}^\frac{m + 1}{2})_+^2 (x, \tau) \d x\Big)^\frac{\bar q}{N + \mu} \nonumber
        \\
        & \qquad \times
        \Big( \int_0^T \prod_{i = 1}^N \Big[ \int_\Omega  \lvert \partial_i (v_k^\frac{m+1}{2} - M_{j+1}^\frac{m+1}{2})_+^\frac{2}{\mu}\rvert^{q_i} \d x \Big]^\frac{\bar q}{N q_i} \d t\Big)^\frac{N}{N + \mu} \nonumber 
        \\
        & \leq c \lvert E_{j+1} \rvert ^\frac{\mu}{N + \mu} \Big( \esssup\limits_{\tau \in (0, T)} \int_\Omega (v_k^\frac{m + 1}{2} - M_{j+1}^\frac{m + 1}{2})_+^2 (x, \tau) \d x\Big)^\frac{\bar q}{N + \mu} \nonumber
        \\
        & \qquad\times
        \Big( \int_0^T \prod_{i = 1}^N \Big[ \int_\Omega  \lvert \partial_i (v_k^m - M_{j+1}^m)_+\rvert^{q_i} \d x \Big]^\frac{\bar q}{N q_i} \d t\Big)^\frac{N}{N + \mu} \nonumber 
        \\
        & \leq c\lvert E_{j+1} \rvert ^\frac{\mu}{N + \mu} \Big( \esssup\limits_{\tau \in (0, T)} \int_\Omega (v_k^\frac{m + 1}{2} - M_{j+1}^\frac{m + 1}{2})_+^2 (x, \tau) \d x\Big)^\frac{\bar q}{N + \mu} \nonumber
        \\
        & \qquad\times
        \prod_{i = 1}^N \Big( \iint_{\Omega_T} \lvert \partial_i (v_k^m - M_{j+1}^m)_+\rvert^{q_i} \d x \d t \Big)^\frac{\bar q}{q_i(N + \mu)}  \nonumber 
        \\
        & \leq c\lvert E_{j+1} \rvert ^\frac{\mu}{N + \mu} \Big( \esssup\limits_{\tau \in (0, T)} \int_\Omega (v_k^\frac{m + 1}{2} - M_{j+1}^\frac{m + 1}{2})_+^2 (x, \tau) \d x\Big)^\frac{\bar q}{N + \mu} \nonumber
        \\
        & \qquad\times
        \prod_{i = 1}^N \Big( \lvert E_{j+1}\rvert^{1 - \frac{q_i}{p_i}} \Big( \iint_{\Omega_T} \lvert \partial_i (v_k^m - M_{j+1}^m)_+\rvert^{p_i} \d x \d t \Big)^\frac{q_i}{p_i} \Big)^\frac{\bar q}{q_i(N + \mu)}  \nonumber 
        \\
        & \leq c\lvert E_{j+1} \rvert ^{\frac{\mu}{N + \mu} + (\frac{1}{\bar q} - \frac{1}{\bar p})\frac{N\bar q}{N + \mu}} \Big( \iint_{E_{j+1}} \abs{f}^{\bar p^\prime} \d x \d t \Big)^{\frac{\bar q}{N + \mu}(1 + \frac{N}{\bar p})} 
        \nonumber 
        \\
       \notag  & \leq c \lvert E_{j+1} \rvert ^{\frac{\mu}{N + \mu} + (\frac{1}{\bar q} - \frac{1}{\bar p})\frac{N\bar q}{N + \mu} + (1 - \frac{1}{\sigma})\frac{N \bar q}{N + \mu}(\frac{1}{N} + \frac{1}{\bar p})}  \Big( \iint_{\Omega_T} \abs{f}^{\sigma \bar p^\prime} \d x \d t \Big)^{\frac{\bar q}{\sigma(N + \mu)}(1 + \frac{N}{\bar p})}
       \\
      \notag  & = c|E_{j+1}|^{1 + \delta} \Big( \iint_{\Omega_T} \abs{f}^{\sigma \bar p^\prime} \d x \d t \Big)^Q,
\end{align}
where we denote
\begin{align*}
    \delta :=  \tfrac{N \bar q}{N + \mu}(\tfrac{1}{N} - \tfrac{1}{\sigma}(\tfrac{1}{N} + \tfrac{1}{\bar p})), \quad Q := \tfrac{\bar q}{\sigma(N + \mu)}(1 + \tfrac{N}{\bar p}).
\end{align*}
Note that \eqref{cond:lower_bound_sigma} guarantees that $\delta > 0$. For the set $\lvert E_{j + 1} \rvert$ we have the following bound:
    \begin{align}\label{De_Giorgi_Estimate_A_{j+1}}
        \lvert E_{j + 1} \rvert & = \lvert E_{j + 1} \rvert M^{-m \bar q} 2^{(j + 1)\frac{2m \bar q}{m + 1}} (M_{j + 1}^{\frac{m + 1}{2}} - M_j^{\frac{m + 1}{2}})^\frac{2m \bar q}{m + 1}  \\
        & \leq M^{-m \bar q} 2^{(j + 1)\frac{2m \bar q}{m + 1}} \iint_{E_{ j + 1 } } (v_k^{\frac{m + 1}{2}} - M_j^{\frac{m + 1}{2}})_+^\frac{2m \bar q}{m + 1} \d x \d t \notag \\
        \notag & \leq M^{-m \bar q} 2^{(j + 1)\frac{2m \bar q}{m + 1}} Y_j. 
    \end{align}
    Combining \eqref{De_Giorgi_first_estimates} and \eqref{De_Giorgi_Estimate_A_{j+1}} we have
    \begin{align}\label{est:fastgeom}
       Y_{j+1} & \leq K M^{-m\bar q(1+\delta)} b^j Y_j^{1+\delta},
    \end{align}
where 
\begin{align*}
K:= c \Big( \iint_{\Omega_T} \abs{f}^{\sigma \bar p^\prime} \d x \d t \Big)^Q, \quad b = 2^\frac{2m\bar q(1+\delta)}{m+1} > 1.
\end{align*}
Thus, by Lemma \ref{lem:fast_geometric_convergence}, $(Y_j)_{j=0}^\infty$ converges to zero provided that 
\begin{align}\label{cond_Y_0_convg}
    Y_0 \leq K^{-\frac{1}{\delta}} M^{m\bar q \frac{(1+\delta)}{\delta}} b^{-\frac{1}{\delta^2}}.
\end{align}
The definition of $Y_0$ and \eqref{def:K_0} show that
\begin{align*}
    Y_0 \leq c \norm{v_k^m}_{L^{\bar p}(\Omega_T)}^{\bar q} \leq c K_0^{\bar q},
\end{align*}
so we conclude that \eqref{cond_Y_0_convg} is satisfied if 
\begin{align*}
    c K_0^{\bar q} \leq K^{-\frac{1}{\delta}} M^{m\bar q \frac{(1+\delta)}{\delta}} b^{-\frac{1}{\delta^2}},
\end{align*}
which can be rephrased as
\begin{align}\label{bound_lower_M_new}
    M \geq \gamma (K_0^{\bar q\delta} K)^\frac{1}{m \bar q(1+\delta)},
\end{align}
for some $\gamma$ depending on the data and $\Omega_T$ but independent of $k$. This bound, together with the fact that $M\geq M_*$ which was also required in our argument, are both satisfied if we set
\begin{align}
    M := \max\{M_*, \gamma (K_0^{\bar q\delta} K)^\frac{1}{m \bar q(1+\delta)}  \},
\end{align}
and the expression on the right-hand side is independent of $k$. With this choice of $M$, 
\begin{align*}
     \iint_{\Omega_T} (v_k^\frac{m + 1}{2} - 2M^\frac{m + 1}{2})_+^\frac{2 m \bar q}{m + 1} \d x \d t \leq Y_j \xrightarrow[j\to 0]{} 0,
 \end{align*}
which shows that $v_k \leq 2^\frac{2}{m+1}M =: L$ a.e. on $\Omega_T$. This is our uniform upper bound for the sequence $(v_k)$. Considering the definition of $v_k$ as a truncation of $u_k$ this implies that, for $k\geq L$, we also have that $u_k \leq L$.
\end{proof}

\begin{rem}\label{rem:A^k-rewritten}
Considering that $\tfrac1k \leq u_k \leq L$ we see that for sufficiently large $k$, the truncation $T_k$ appearing in the definition \eqref{def:A^k} of $\hat{A}^k$ can be removed, which shows that
\begin{align*}
    \hat{A}^k_j(x,t,u_k,\nabla u_k) &= a_j(x, t, u_k) \lvert m_j u_k^{m_j - 1} \partial_j u_k \rvert ^{p_j - 2} m_j u_k^{m_j - 1} \partial_j u_k 
    \\
    &= a_j(x,t,u_k)|\partial_j u_k^{m_j}|^{p_j-2}\partial_j u_k^{m_j},
\end{align*}
where in the last step we have used the Chain Rule. Thus, $u_k$ satisfies the original equation for sufficiently large $k$. That is, $u_k$ satisfies 
\begin{align}\label{eq:weak_form_u_k}
&\iint_{\Omega_T} \sum^N_{j=1} a_j(x,t,u_k)|\partial_j u_k^{m_j}|^{p_j-2}\partial_j u_k^{m_j} \partial_j \varphi -  u_k\partial_t \varphi\d x\d t = \iint_{\Omega_T} f \varphi \d x \d t.
\end{align}
\end{rem}

\subsection{Weak convergence, compactness and pointwise convergence}
\begin{lem}\label{lem:grad_bdd_in_Lp}
   For all $j \in \{1,\dots, N\}$ the sequence $(\partial_j u_k^m)^\infty_{k=1}$ is bounded in $L^{p_j}(\Omega_T)$. In particular, the sequences $(\partial_j u_k^{m_j})^\infty_{k=1}$ are also bounded in $L^{p_j}(\Omega_T)$.
\end{lem}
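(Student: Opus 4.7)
The plan is to establish the uniform bound on $\iint_{\Omega_T} |\partial_j u_k^m|^{p_j}$ by testing the weak formulation \eqref{eq:weak_form_u_k} (which is satisfied by $u_k$ for large $k$ thanks to Remark~\ref{rem:A^k-rewritten}) with a test function of the form $u_k^\varepsilon - \tilde g^\varepsilon$, where $\tilde g := g + \tfrac{1}{k}$ and $\varepsilon > 0$ is chosen small. By assumption~\eqref{cond:m_j-closeness} we have $(m_j - m)(p_j - 1) < m$ for every $j$, so we may pick
\[
\varepsilon \in \big(0,\, m - \max_{j} (m_j - m)(p_j - 1)\big),
\]
which makes the exponent $(m_j - m)(p_j - 1) - m + \varepsilon$ strictly negative for all $j$. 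Since $u_k \geq \tfrac{1}{k} > 0$ and $s \mapsto s^\varepsilon$ is smooth away from zero, $u_k^\varepsilon - \tilde g^\varepsilon$ lies in $L^{\bf p}(0, T; \overline W^{1, {\bf p}}_{\textnormal{o}}(\Omega))$. The time derivative is treated via Steklov averages: test with $\varphi = ([u_k]_h^\varepsilon - \tilde g^\varepsilon) \zeta^\delta_{0,\tau}$ in the mollified formulation \eqref{eq:weak_mollified_usual_anisotropic_Steklov} and let $h \to 0$ then $\delta \to 0$.

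After these limits, iterating the chain rule ($\partial_j u_k^{m_j} = m_j u_k^{m_j - 1} \partial_j u_k$ and $\partial_j u_k^m = m u_k^{m-1} \partial_j u_k$) turns the principal elliptic term into
\[
    \varepsilon \sum_{j=1}^N \iint_{\Omega_\tau} a_j\, \tfrac{m_j^{p_j - 1}}{m^{p_j}}\, u_k^{(m_j - m)(p_j - 1) - m + \varepsilon}\, |\partial_j u_k^m|^{p_j} \d x \d t.
\]
The uniform upper bound $u_k \leq L$ from Lemma~\ref{lemma:De_Giorgi_Approx_probs} combined with the negativity of the exponent imply $u_k^{(m_j - m)(p_j - 1) - m + \varepsilon} \geq L^{(m_j - m)(p_j - 1) - m + \varepsilon} > 0$, so, using also $a_j \geq \Lambda^{-1}$, this term dominates $c \varepsilon \sum_j \iint |\partial_j u_k^m|^{p_j}$ with $c$ independent of $k$. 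The cross elliptic term stemming from $\partial_j \tilde g^\varepsilon = \varepsilon \tilde g^{\varepsilon - 1} \partial_j g$ is controlled via Young's inequality, after the identity $|\partial_j u_k^{m_j}|^{p_j - 1} = (m_j/m)^{p_j - 1} u_k^{(m_j - m)(p_j - 1)} |\partial_j u_k^m|^{p_j - 1}$; the split is chosen so the absorbable piece carries precisely the $u_k$-weight of the principal term, while the residual is of the form $C u_k^{(m_j - \varepsilon)(p_j - 1)} \tilde g^{(\varepsilon - 1) p_j} |\partial_j g|^{p_j}$, which is uniformly integrable because $u_k \leq L$ and either $g \equiv 0$ (whence $\partial_j g \equiv 0$) or $\tilde g \geq \varepsilon_0 > 0$.

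The parabolic part, via $u_k \partial_t u_k^\varepsilon = \tfrac{\varepsilon}{\varepsilon + 1} \partial_t u_k^{\varepsilon + 1}$, reduces to $\tfrac{1}{\varepsilon + 1} \int_\Omega [u_k^{\varepsilon + 1}(\tau) - u_k^{\varepsilon + 1}(0)] \d x - \iint_{\Omega_\tau} u_k \partial_t \tilde g^\varepsilon \d x \d t$, both pieces bounded uniformly in $k$ by $L$, $\|u_0\|_\infty$, $\|g\|_\infty$, and $\|\partial_t g\|_{L^2(\Omega_T)}$. The source contribution $\iint f(u_k^\varepsilon - \tilde g^\varepsilon)$ is controlled by $C \|f\|_{L^1(\Omega_T)}$. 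Absorbing the Young residue into the principal term yields
\[
    \sum_{j=1}^N \iint_{\Omega_T} |\partial_j u_k^m|^{p_j} \d x \d t \leq C,
\]
with $C$ independent of $k$. The ``in particular'' assertion then follows from $|\partial_j u_k^{m_j}| = (m_j/m) u_k^{m_j - m} |\partial_j u_k^m| \leq (m_j/m) L^{m_j - m} |\partial_j u_k^m|$, valid since $m_j \geq m$ and $u_k \leq L$.

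The main obstacle is the rigorous implementation of the Steklov averaging with the composite test function $[u_k]_h^\varepsilon - \tilde g^\varepsilon$ and the attendant passages to the limit. Here strict positivity of $u_k$ (ensured for each fixed $k$ by $u_k \geq 1/k$) is essential for justifying the chain rule applied to $s \mapsto s^\varepsilon$, while the decomposition into the two cases of \eqref{cond:g} is what prevents the factor $\tilde g^{\varepsilon - 1}$ from spoiling the uniform-in-$k$ control of the cross term.
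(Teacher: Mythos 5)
Your proof is correct and follows essentially the same route as the paper: testing the equation with $u_k^\varepsilon - (g+\tfrac1k)^\varepsilon$ for $\varepsilon$ small enough that \eqref{cond:m_j-closeness} makes the exponent $(m_j-m)(p_j-1)-m+\varepsilon$ nonpositive, using $\tfrac1k\le u_k\le L$ and the case split in \eqref{cond:g} to control the cross and residual terms, and deducing the second claim from $\partial_j u_k^{m_j}=\tfrac{m_j}{m}u_k^{m_j-m}\partial_j u_k^m$. The only (immaterial) difference is that you regularize the time derivative with Steklov averages, where the paper uses the exponential mollification and drops a nonnegative term $I_b$; both are valid.
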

\begin{proof}{}
We use the mollified weak formulation \eqref{mollified_weak_forM_*ased_on_cont_into_dual_space} that involves the exponential mollification in time \eqref{def:moll} with the test function $\varphi = (u_k^{\varepsilon} - (g + \tfrac{1}{k})^{\varepsilon}) \zeta_{\tau_1,\tau_2}^\delta$  where $\zeta_{\tau_1,\tau_2}^\delta$ is as in \eqref{func:trapezoid}, $0<\tau_1 <\tau_2< T$ and $\varepsilon \in (0,1]$ is a parameter to be chosen later. This is an admissible test function since $u_k$ belongs to the correct anisotropic parabolic Sobolev space, and since $u_k$ attains the values of $g + \tfrac{1}{k}$ over the lateral boundary. Observe as well that the time continuity of $u_k$ allows us to interpret $u_k(0)$ as an element of $L^2(\Omega)$ in this case. We will perform estimates for each term in \eqref{mollified_weak_forM_*ased_on_cont_into_dual_space} and pass to the limit $h\to 0$.
For the first term on the right-hand side of \eqref{mollified_weak_forM_*ased_on_cont_into_dual_space} we have
\begin{align}\label{eq:vanishing_initial_time_term}
   \langle u_k(0), \Exx{\varphi}(\cdot, 0) \rangle = \int_\Omega u_k(x, 0) \Exx{\varphi}(x, 0) \d x \xrightarrow[h \to 0]{} 0,
\end{align}
which can be seen by the Dominated Convergence Theorem, using also the fact that $\varphi$ vanishes on $[0,\tau_1]$. For the source term on the right-hand side of \eqref{mollified_weak_forM_*ased_on_cont_into_dual_space} we have
\begin{align}\label{convg:source}
    \iint_{\Omega_T} \Ex{f}\varphi \d x \d t &\xrightarrow[h \to 0]{} \iint_{\Omega_T} f (u_k^{\varepsilon} - (g + \tfrac{1}{k})^{\varepsilon}) \zeta_{\tau_1,\tau_2}^\delta \d x \d t .
\end{align}
We treat the parabolic term in \eqref{mollified_weak_forM_*ased_on_cont_into_dual_space} as follows:
    \begin{align}\label{est:parabterm}
        \iint_{\Omega_T} \partial_t \Ex{u_{k}} \varphi \d x \d t  & = \iint_{\Omega_T}\partial_t \Ex{u_k} (\Ex{u_k}^{\varepsilon} - (g + \tfrac{1}{k})^{\varepsilon}) \zeta_{\tau_1,\tau_2}^\delta \d x \d t
        \\
       \notag & \quad + \iint_{\Omega_T} \partial_t \Ex{u_k} (u_k^\varepsilon - \Ex{u_k}^\varepsilon) \zeta_{\tau_1,\tau_2}^\delta \d x \d t
        \\
       \notag & = \iint_{\Omega_T}\partial_t \Ex{u_k} (\Ex{u_k}^{\varepsilon} - (g + \tfrac{1}{k})^{\varepsilon}) \zeta_{\tau_1,\tau_2}^\delta \d x \d t
        \\
       \notag & \quad + \iint_{\Omega_T} \tfrac{1}{h}(u_k - \Ex{u_k}) (u_k^\varepsilon - \Ex{u_k}^\varepsilon) \zeta_{\tau_1,\tau_2}^\delta \d x \d t
        \\
       \notag & =: I_a + I_b \geq I_a,
    \end{align}
    where the last estimate follows from the fact that $I_b\geq 0$ since $t \mapsto t^\varepsilon$ is increasing. We split $I_a$ into two integrals and use the Chain Rule and integration by parts as follows:  
    \begin{align}\label{convg:I_a}
        I_a &= \iint_{\Omega_T} \Ex{u_k}^{\varepsilon}\partial_t \Ex{u_k}\zeta_{\tau_1,\tau_2}^\delta \d x \d t  - \iint_{\Omega_T} \partial_t \Ex{u_k} (g + \tfrac{1}{k})^{\varepsilon} \zeta_{\tau_1,\tau_2}^\delta \d x \d t
        \\
      \notag  &= -\iint_{\Omega_T} \tfrac{1}{\varepsilon+1}\Ex{u_k}^{\varepsilon+1} (\zeta_{\tau_1,\tau_2}^\delta)'(t) \d x \d t + \iint_{\Omega_T} \Ex{u_k} \partial_t ((g + \tfrac{1}{k})^{\varepsilon} \zeta_{\tau_1,\tau_2}^\delta) \d x \d t
        \\
      \notag  &= \frac1\delta \int^{\tau_2}_{\tau_2-\delta} \int_\Omega \tfrac{1}{\varepsilon+1}\Ex{u_k}^{\varepsilon+1}\d x \d t 
        - \frac1\delta \int^{\tau_1 + \delta}_{\tau_1} \int_\Omega\tfrac{1}{\varepsilon+1}\Ex{u_k}^{\varepsilon+1}\d x \d t
        \\
      \notag  & \quad + \frac1\delta \int^{\tau_1 + \delta}_{\tau_1} \int_\Omega \Ex{u_k} (g + \tfrac{1}{k})^\varepsilon \d x \d t 
        - \frac1\delta \int^{\tau_2}_{\tau_2-\delta} \int_\Omega \Ex{u_k} (g + \tfrac{1}{k})^\varepsilon \d x \d t
        \\
      \notag  & \quad + \iint_{\Omega_T} \Ex{u_k} \partial_t ( g + \tfrac{1}{k})^\varepsilon \zeta_{\tau_1,\tau_2}^\delta \d x \d t 
        \\
       \notag &\xrightarrow[h\to 0]{} \frac1\delta \int^{\tau_2}_{\tau_2-\delta} \int_\Omega \tfrac{1}{\varepsilon+1}u_k^{\varepsilon+1}\d x \d t 
        - \frac1\delta \int^{\tau_1 + \delta}_{\tau_1} \int_\Omega \tfrac{1}{\varepsilon+1}u_k^{\varepsilon+1}\d x \d t
        \\
      \notag  & \qquad + \frac1\delta \int^{\tau_1 + \delta}_{\tau_1} \int_\Omega u_k (g + \tfrac{1}{k})^\varepsilon \d x \d t 
        - \frac1\delta \int^{\tau_2}_{\tau_2-\delta} \int_\Omega u_k (g + \tfrac{1}{k})^\varepsilon \d x \d t
        \\
       \notag & \qquad + \iint_{\Omega_T} u_k \partial_t ( g + \tfrac{1}{k})^\varepsilon \zeta_{\tau_1,\tau_2}^\delta \d x \d t.
    \end{align}   
    For the elliptic part we have, recalling Remark \ref{rem:A^k-rewritten},
    \begin{align}\label{convg:elliptic}
        &\iint_{\Omega_T} \Ex{A(x,t,u_k,\partial_1 u_k^{m_1},\dots, \partial_N u_k^{m_N})} \cdot \nabla \varphi \d x \d t 
        \\
       \notag &\xrightarrow[h \to 0]{} \iint_{\Omega_T} A(x,t,u_k,\partial_1 u_k^{m_1},\dots, \partial_N u_k^{m_N}) \cdot \nabla \big(u_k^{\varepsilon} - (g + \tfrac{1}{k})^{\varepsilon}\big) \zeta_{\tau_1,\tau_2}^\delta \d x \d t.
    \end{align}
Using the estimate \eqref{est:parabterm} in \eqref{mollified_weak_forM_*ased_on_cont_into_dual_space} and taking into account the limits \eqref{eq:vanishing_initial_time_term}, \eqref{convg:source}, \eqref{convg:I_a} and \eqref{convg:elliptic}  we end up with
\begin{align*}
    &\frac1\delta \int^{\tau_2}_{\tau_2-\delta} \int_\Omega \tfrac{1}{\varepsilon+1}u_k^{\varepsilon+1}\d x \d t 
         + \frac1\delta \int^{\tau_1 + \delta}_{\tau_1} \int_\Omega u_k (g + \tfrac{1}{k})^\varepsilon \d x \d t 
       + \iint_{\Omega_T} u_k \partial_t ( g + \tfrac{1}{k})^\varepsilon \zeta_{\tau_1,\tau_2}^\delta \d x \d t 
         \\
         &\quad + \iint_{\Omega_T} A(x,t,u_k,\partial_1 u_k^{m_1},\dots, \partial_N u_k^{m_N}) \cdot \nabla \big(u_k^{\varepsilon} - (g + \tfrac{1}{k})^{\varepsilon}\big) \zeta_{\tau_1,\tau_2}^\delta \d x \d t 
         \\
         &\leq \!\frac1\delta \!\hspace{-0.5mm}\int^{\tau_1 + \delta}_{\tau_1}\hspace{-2mm} \int_\Omega \tfrac{1}{\varepsilon+1}u_k^{\varepsilon+1}\d x \d t + \hspace{-0.5mm} \frac1\delta \int^{\tau_2}_{\tau_2-\delta} \int_\Omega u_k (g + \tfrac{1}{k})^\varepsilon \d x \d t +\iint_{\Omega_T} f (u_k^{\varepsilon} - (g + \tfrac{1}{k})^{\varepsilon}) \zeta_{\tau_1,\tau_2}^\delta \d x \d t. 
\end{align*}
We omit the first two terms on the left-hand side since they are non-negative, and note that all three terms on the right-hand side can be bounded by a constant $C_1$ independent of $k$ and $\delta$ since $g$ is bounded, $f$ is integrable and since the functions $u_k$ satisfy a uniform upper bound. Therefore, when passing to the limit $\delta \to 0$ we obtain:
\begin{align}\label{est:intermediate}
    \int^{\tau_2}_{\tau_1}\int_\Omega A(x,t,u_k,\partial_1 u_k^{m_1}, \dots, \partial_N u_k^{m_N}) \cdot \nabla u_k^{\varepsilon} \d x \d t 
   \leq C_1  -\int^{\tau_2}_{\tau_1}\int_\Omega u_k \partial_t ( g + \tfrac{1}{k})^\varepsilon \d x \d t &
   \\
  \notag + \int^{\tau_2}_{\tau_1}\int_\Omega A(x,t,u_k,\partial_1 u_k^{m_1}, \dots, \partial_N u_k^{m_N}) \cdot \nabla (g + \tfrac{1}{k})^{\varepsilon} \d x \d t.&
\end{align}
If $g\equiv 0$, the integrals on the right-hand side of \eqref{est:intermediate} vanish, so that the right-hand side is bounded by a constant. Otherwise, we are in the case $g\geq \varepsilon_0$ due to \eqref{cond:g}. For the first integral on the right-hand side of \eqref{est:intermediate} we then obtain the upper bound
\begin{align}\label{est:timeder-g}
    \Big|\int^{\tau_2}_{\tau_1}\int_\Omega u_k \partial_t ( g + \tfrac{1}{k})^\varepsilon \d x \d t\Big| \leq c\varepsilon_0^{\varepsilon-1}\iint_{\Omega_T}|\partial_t g| \d x \d t =: C_2,
\end{align}
due to the uniform upper bound for $(u_k)_{k = 1}^\infty$. Note that $C_2$ is finite due to the integrability of $\partial_t g$. The positive lower and upper bound for the function $u_k$ allows us to calculate using the Chain Rule:
\begin{align}\label{chainrule-consequences}
    \partial_j u_k^{\varepsilon} = \tfrac{\varepsilon}{m}u_k^{\varepsilon-m} \partial_j u_k^{m}, \qquad \partial_j u_k^{m_j} = \tfrac{m_j}{m} u_k^{m_j - m}\partial_j u_k^m.
\end{align}
Using these identities and the lower bound for the coefficients $a_j$ we can write
\begin{align}\label{elliptic-term-with-eps2}
\notag A(x,t,u_k,\partial_1 u_k^{m_1}, \dots, \partial_N u_k^{m_N}) \cdot \nabla u_k^{\varepsilon} &=\sum_{j = 1}^N a_j(x,t,u_k)|\partial_j u_k^{m_j}|^{p_j-2}\partial_j u_k^{m_j}  \partial_j u_k^{\varepsilon}
\\
 &\geq  c \varepsilon\sum_{j = 1}^N  u_k^{(m_j-m)(p_j-1) - m + \varepsilon}|\partial_j u_k^{m}|^{p_j}.
\end{align}
Condition \eqref{cond:m_j-closeness} guarantees that 
\begin{align*}
   (m_j-m)(p_j-1) - m < 0,
\end{align*}
so fixing a sufficiently small $\varepsilon \in (0,1]$ we can ensure that the exponents of $u_k$ appearing on the last line of \eqref{elliptic-term-with-eps2} are all non-positive. Therefore, using the uniform upper bound for the functions $u_k$, we obtain from \eqref{elliptic-term-with-eps2} the estimate
\begin{align}\label{est:elliptic-new}
    A(x,t,u_k,\partial_1 u_k^{m_1},\dots, \partial_N u_k^{m_N}) \cdot \nabla u_k^{\varepsilon} \geq \gamma \sum_{j=1}^N |\partial_j u_k^{m}|^{p_j}
\end{align}
for some $\gamma > 0$. For the integrand in the second integral on the right-hand side of \eqref{est:intermediate} we utilize the second identity in \eqref{chainrule-consequences}, the uniform upper bound for $(u_k)_{k = 1}^\infty$ and the uniform lower bound for $g$ appearing in \eqref{cond:g}, and Young's inequality to make the estimate 
\begin{align}\label{est:spaceder-g}
   \notag A(x,t,u_k,\partial_1 u_k^{m_1},\dots, \partial_N u_k^{m_N}) \cdot \nabla (g + \tfrac{1}{k})^{\varepsilon} &= \sum^N_{j=1} a_j(x,t,u_k)|\partial_j u_k^{m_j}|^{p_j-2}\partial_j u_k^{m_j} \varepsilon (g + \tfrac{1}{k})^{\varepsilon-1} \partial_j g
    \\
    &\leq c \varepsilon \varepsilon_0^{\varepsilon -1} \sum^N_{j=1} u_k^{(m_j-m)(p_j-1)}|\partial_j u_k^m|^{p_j-1}|\partial_j g|
    \\
    \notag & \leq c\sum^N_{j=1} |\partial_j u_k^m|^{p_j-1}|\partial_j g|
    \\
    \notag & \leq \frac{\gamma}{2}\sum^N_{j=1} |\partial_j u_k^m|^{p_j} + C_\gamma \sum^N_{j=1}|\partial_j g|^{p_j}.
\end{align}
Using the estimates \eqref{est:timeder-g}, \eqref{est:elliptic-new} and \eqref{est:spaceder-g} for the corresponding terms in \eqref{est:intermediate}, and taking $\tau_1 \to 0$, $\tau_2\to T$ we end up with 
\begin{align*}
    \frac{\gamma}{2} \iint_{\Omega_T} \sum_{j=1}^N |\partial_j u_k^{m}|^{p_j}\d x \d t \leq C_1 + C_2 + C_\gamma \iint_{\Omega_T} \sum_{j=1}^N |\partial_j g|^{p_j} \d x \d t,
\end{align*}
from which the first claim of the lemma follows since the right-hand is a finite constant independent of $k$ due to the integrability properties of the spatial derivatives of $g$ appearing in \eqref{cond:g}. The second claim follows from the first claim combined with the Chain Rule and the uniform upper bound for the functions $(u_k)_{k = 1}^\infty$.
\end{proof}

\begin{lem}\label{lem:monotone_conv_comp_princ}
The sequence $(u_k(x,t))^\infty_{k=1}$ is decreasing and convergent for a.e. $(x,t) \in \Omega_T$. The sequence $(u_k)^\infty_{k=1}$ converges in $L^q(\Omega_T)$ for all $1\leq q<\infty$.
\end{lem}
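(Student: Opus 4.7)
The plan is to apply the comparison principle of Theorem \ref{thm:comparison_principle} directly to the sequence $(u_k)$ in order to establish monotonicity for sufficiently large $k$, and then to deduce the stated pointwise and $L^q$ convergence by Dominated Convergence.

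First, I observe that by Lemma \ref{lemma:De_Giorgi_Approx_probs} there is a uniform upper bound $L$ (independent of $k$), and by Lemma \ref{lem:lower_bound_uk} we have $u_k \geq \tfrac{1}{k}$. Hence for every $k \geq L$, the truncation $T_k$ in the definition \eqref{def:A^k} of $\hat A^k$ is inactive along the argument $u_k$, so by Remark \ref{rem:A^k-rewritten} the function $u_k$ satisfies the original equation \eqref{eq:diffusion} in the sense of Definition \ref{def:weaksol}. Moreover, Lemma \ref{lem:grad_bdd_in_Lp} gives $u_k^m \in L^{\bf p}(0,T;W^{1,{\bf p}}(\Omega))$, and the combination of $u_k \in C([0,T];L^2(\Omega))$ with the uniform $L^\infty$-bound yields $u_k \in C([0,T];L^1(\Omega))$. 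Thus every hypothesis needed to invoke Theorem \ref{thm:comparison_principle} is in place.

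Fix $k \geq L$ and compare $u_{k+1}$ (as weak subsolution) with $u_k$ (as weak supersolution), both with right-hand side $f$. On the lateral boundary $u_k = g+\tfrac1k \geq \tfrac1k > 0$, so the positivity hypothesis on the supersolution on $\partial\Omega\times(0,T)$ is met with $\varepsilon = \tfrac1k$. Since $u_0+\tfrac{1}{k+1}\leq u_0+\tfrac1k$ and $g+\tfrac{1}{k+1}\leq g+\tfrac1k$, the ordering $u_{k+1}\leq u_k$ on the parabolic boundary (in the sense of Definition \ref{def:boundary-value-ineq}) follows by writing
\begin{equation*}
u_{k+1}^m - u_k^m = \bigl[u_{k+1}^m-(g+\tfrac{1}{k+1})^m\bigr] - \bigl[u_k^m-(g+\tfrac1k)^m\bigr] + \bigl[(g+\tfrac{1}{k+1})^m-(g+\tfrac1k)^m\bigr],
\end{equation*}
where the first two brackets lie in $L^{\bf p}(0,T;\overline W^{1,{\bf p}}_{\textnormal{o}}(\Omega))$ and the third is non-positive. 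Applying Theorem \ref{thm:comparison_principle} then gives $u_{k+1}\leq u_k$ a.e.\ in $\Omega_T$.

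Consequently the tail $(u_k)_{k\geq k_0}$ with any $k_0\geq L$ is a monotone decreasing sequence of non-negative functions, so it converges pointwise a.e.\ to a non-negative limit $u$. Since $0\leq u_k\leq L$ uniformly for such $k$, the Dominated Convergence Theorem yields $u_k\to u$ in $L^q(\Omega_T)$ for every $q\in[1,\infty)$. The main technical point is the applicability of the comparison principle to the approximating solutions, which hinges crucially on Lemma \ref{lemma:De_Giorgi_Approx_probs} together with Remark \ref{rem:A^k-rewritten}: without the uniform upper bound, $u_k$ and $u_{k+1}$ would a priori satisfy different PDEs, and no direct comparison between them would be available.
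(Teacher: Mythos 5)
Your argument is essentially correct, but it takes a genuinely different route from the paper. You invoke the doubly nonlinear comparison principle, Theorem \ref{thm:comparison_principle}, applied to consecutive terms $u_{k+1}\leq u_k$; the paper instead applies the \emph{non}-doubly-nonlinear comparison principle, Theorem \ref{thm:comparison_non-doubly-nonlinear}, after observing that for $l\geq k$ (both large) the functions $u_k$ and $u_l$ solve one and the same equation $\partial_t u-\nabla\cdot A(x,t,u,\nabla u)=f$ with the common truncated vector field built from $T(u)=\min\{L,\max\{u,\tfrac1l\}\}$, and compares them as solutions with ordered boundary data $g+\tfrac1k\geq g+\tfrac1l$ and ordered initial data. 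The paper's route is lighter: it only needs a result already proved in Section \ref{sec:comparison_p_laplace_type}, and the boundary ordering is read off directly from $u_k-(g+\tfrac1k)\in L^{\bf p}(0,T;\overline W^{1,\bf p}_{\textnormal{o}}(\Omega))$ without passing to the power $m$. Your route is logically admissible since the proof of Theorem \ref{thm:comparison_principle} in Section \ref{sec:comparison} is independent of the existence argument (so the forward reference creates no circularity), and you correctly verify its hypotheses ($u_k\in V^{\bf p,\bf m}$ via Remark \ref{rem:A^k-rewritten} and Lemma \ref{lem:grad_bdd_in_Lp}, time continuity, the lower bound $u_k\geq\tfrac1k$ on the lateral boundary). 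What your route buys is a uniform treatment of sub- and supersolutions of the original doubly nonlinear equation; what it costs is heavier machinery and an extra ingredient you assert without proof: the claim that $u_k^m-(g+\tfrac1k)^m$ and $u_{k+1}^m-(g+\tfrac{1}{k+1})^m$ lie in $L^{\bf p}(0,T;\overline W^{1,\bf p}_{\textnormal{o}}(\Omega))$ is exactly \eqref{space_of_u_kbeta}, which the paper only establishes later in Subsection \ref{sec:boundary_val_time_cont_initial_data} by a nontrivial approximation argument (truncation by $(g+\varphi_j)_+$, using $u_k\geq\tfrac1k$ and $m\geq1$). That step is provable at this stage of the paper, so this is a fillable omission rather than a fatal flaw, but it should be supplied rather than treated as obvious. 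Finally, note that both your argument and the paper's only give monotonicity of the tail $k\geq L$; this suffices for the convergence assertions.
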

\begin{proof}{} 
We will use the comparison principle from Theorem \ref{thm:comparison_non-doubly-nonlinear}. Suppose $l\geq k$. Then both $u_k$ and $u_l$ satisfy the PDE 
\begin{align*}
    \partial_t u - \nabla \cdot A(x,t,u,\nabla u) = f,
\end{align*}
with
\begin{align*}
    A_j (x, t, u, \xi)  &:=  \,  a_j(x, t, u) \lvert m_j T(u) ^{m_j - 1} \xi_j \rvert ^{p_j - 2} m_j T(u) ^{m_j - 1} \xi_j, 
    \\
    T(u) &:= \min \{ L, \max\{u, \tfrac1l\}\},
\end{align*}
where $L$ is the uniform upper bound for the functions $u_k$ obtained in Lemma \ref{lemma:De_Giorgi_Approx_probs}. Since $u_k(0) \geq u_l(0)$ and since $u_k = 1/k + g \geq 1/l + g = u_l$ on $\partial \Omega \times (0,T)$, we have, by the aforementioned comparison principle, that $u_k \geq u_l$ a.e. in $\Omega_T$. For every pair $(k,l)$ with $l\geq k$ we can thus exclude a set $N_{k,l}$ of measure zero outside of which $u_k \geq u_l$ and thus the convergence is monotone outside $\cup N_{k,l}$ which is a set of measure zero. The uniform upper bound and the pointwise convergence a.e. also imply convergence in every $L^q(\Omega_T)$ for $1\leq q < \infty$ via the Dominated Convergence Theorem.
\end{proof}

\begin{rem}\label{rem:weak-convg}
We have seen that $(u_k)_{k = 1}^\infty$ converges pointwise and in $L^q(\Omega_T)$ to a bounded limit function $u$. Due to Lemma \ref{lem:grad_bdd_in_Lp} we may pass to a subsequence so that $(\partial_j u_k^m)^\infty_{k=1}$ converges weakly in $L^{p_j}(\Omega_T)$, and we may conclude that the limit is $\partial_j u^m$. Similarly, due to Lemma \ref{lem:grad_bdd_in_Lp}, we may assume also that $(\partial_j u_k^{m_j})^\infty_{k=1}$ converges weakly in $L^{p_j}(\Omega_T)$ to $\partial_j u^{m_j}$. Finally, the structure conditions show that the functions 
\begin{align*}
    \mathcal{A}_j^k := A_j(\cdot,\cdot,u_k,\partial_1 u_k^{m_1},\dots, \partial_N u_k^{m_N})
\end{align*}
are bounded in $L^{p_j'}(\Omega_T)$ which means that we may assume that they converge weakly in $L^{p_j'}(\Omega_T)$ to some limit $\mathcal{A}_j \in L^{p_j'}(\Omega_T)$.  To sum up we have the following convergence results.
\begin{align}
    \partial_j u_k^m &\xrightarrow[k\to \infty]{} \partial_j u^m \textnormal{ weakly in } L^{p_j}(\Omega_T),
    \\
    \partial_j u_k^{m_j} &\xrightarrow[k\to \infty]{} \partial_j u^{m_j} \textnormal{ weakly in } L^{p_j}(\Omega_T),
    \\
    A_j(\cdot,\cdot, u_k, \partial_1 u_k^{m_1},\dots, \partial_N u_k^{m_N}) &\xrightarrow[k\to \infty]{} \mathcal{A}_j \textnormal{ weakly in } L^{p_j'}(\Omega_T). \label{convg:A_j-weak}
\end{align}
\end{rem}
Due to \eqref{convg:A_j-weak} and Lemma \ref{lem:monotone_conv_comp_princ} we can pass to the limit in \eqref{eq:weak_form_u_k} and conclude that $u$ satisfies the condition
\begin{align}\label{eq:weak_form_for_u_with_mathcalA}
    \iint_{\Omega_T}  \mathcal{A}\cdot \nabla \varphi - u \partial_t \varphi \d x \d t = \iint_{\Omega_T} f \varphi \d x \d t,
\end{align}
for all $\varphi\in C^\infty_{\textnormal{o}}(\Omega_T)$ where $\mathcal{A} := (\mathcal{A}_1, \ldots, \mathcal{A}_N),$ following the notation introduced in Remark \ref{rem:weak-convg}.

\subsection{Boundary values, time continuity and initial data}\label{sec:boundary_val_time_cont_initial_data} 
We proceed to prove that $u$ satisfies the boundary condition in the sense of Definition \ref{def:prob-CD}. Due to the boundary condition appearing in \eqref{eq:approximative_problem}, and the lower bound for $u_k$ established in Lemma \ref{lem:lower_bound_uk}, we can write
\begin{align*}
    \tfrac1k \leq u_k = \tfrac1k + g + w_k,
\end{align*}
where $w_k \in L^{\bf p}(0,T;\overline W^{1,\bf p}_{\textnormal{o}}(\Omega))$. From this estimate we also see that $w_k + g \geq 0$. Take a sequence $(\varphi_j)\subset C^\infty(\Omega\times [0,T])$ of functions with compact support in space such that $\varphi_j \to w_k$ in $L^{\bf p}(0,T;W^{1,\bf p}(\Omega))$. Since $w_k$ is bounded we may also assume that the sequence $(\varphi_j)$ is bounded in the $L^\infty$-norm. Note that we also have
\begin{align*}
    0 \leq (g + \varphi_j)_+\xrightarrow[j\to \infty]{} (g+w_k)_+ = g+w_k \quad \mbox{in $L^{\bf p}(0,T;W^{1,\bf p}(\Omega))$.}
\end{align*}
Thus, 
\begin{align}\label{massaro}
    \tfrac1k \leq \tfrac1k + (g + \varphi_j)_+\xrightarrow[j\to \infty]{} \tfrac1k + g + w_k = u_k \quad \mbox{in $L^{\bf p}(0,T;W^{1,\bf p}(\Omega))$.}
\end{align}
 Due to the lower bound in \eqref{massaro} the quantity on the right-hand side of the inequality can be raised to the power $m$. The fact that $m\geq 1$ and that $(\varphi_j)$ is uniformly bounded then shows that 
\begin{align}\label{strange-convergence}
    \big(\tfrac1k + (g + \varphi_j)_+\big)^m \xrightarrow[j\to \infty]{} u_k^m \quad \mbox{in $L^{\bf p}(0,T;W^{1,\bf p}(\Omega))$.}
\end{align}
Since the functions $\varphi_j$ are compactly supported in space we see that 
\begin{align}\label{asdf}
    \big(\tfrac1k + (g + \varphi_j)_+\big)^m \in (\tfrac1k + g)^m + L^{\bf p}(0,T;\overline W^{1,\bf p}_{\textnormal{o}}(\Omega)).
\end{align}
Since the set appearing in \eqref{asdf} is closed we have due to the limit \eqref{strange-convergence} that 
\begin{align}\label{space_of_u_kbeta}
    u_k^m \in ( \tfrac1k + g)^m + L^{\bf p}(0,T;\overline W^{1,\bf p}_{\textnormal{o}}(\Omega)).    
\end{align}
Thus we have
\begin{align*}
L^{\bf p}(0,T;\overline W^{1,\bf p}_{\textnormal{o}}(\Omega)) \ni    u_k^m - ( \tfrac1k + g)^m = [u_k^m - g^m] + [g^m - ( \tfrac1k + g)^m].
\end{align*}
The expression in the first square brackets on the right-hand side converges weakly to $u^m - g^m$ in $L^{\bf p}(0,T;W^{1,\bf p}(\Omega))$, see also Remark \ref{rem:weak-convg}. The expression in the second square brackets converges to zero strongly in the limit $k\to \infty$, so that the sum of the two expressions converges weakly to $u^m - g^m$. Since the weak and strong closure of convex sets coincide, we conclude that
\begin{align*}
    u^m - g^m \in L^{\bf p}(0,T;\overline W^{1,\bf p}_{\textnormal{o}}(\Omega)),
\end{align*}
which means that $u$ satisfies the boundary conditions in the sense of Definition \ref{def:prob-CD}.
Thus, we can apply Lemma \ref{lem:time-cont} and conclude that $u$ is in $C([0,T]; L^{m+1}(\Omega))$. 

We move on to address the initial data. Let $\zeta \in C^\infty([0,T];\R)$ be a smooth function which takes the value $1$ near $0$ and vanishing near $T$. Using \eqref{eq:weak_form_for_u_with_mathcalA} with a test function of the form $\varphi = H_\delta(t)\zeta(t) \psi(x)$ where $\psi \in C^\infty_{\textnormal{o}}(\Omega)$ and the function $H_\delta$ was defined in \eqref{def:H_delta}, we have
\begin{align*}
    \frac{1}{\delta} \int^\delta_0 \int_\Omega u \psi \d x \d t =& \iint_{\Omega_T}\mathcal{A}\cdot \nabla \psi(x) H_\delta(t)\zeta(t) - f H_\delta(t)\zeta(t) \psi(x) \d x \d t\\& - \iint_{\Omega_T} u H_\delta(t)\zeta'(t) \psi(x) \d x \d t.
\end{align*}
Passing to the limit $\delta \to 0$ we end up with
\begin{align}\label{initial_val-mystery1}
    \int_\Omega u(x,0) \psi(x) \d x = & \iint_{\Omega_T}\mathcal{A}\cdot \nabla \psi(x) \zeta(t) - f \zeta(t) \psi(x) \d x \d t
    \\
    \notag & - \iint_{\Omega_T} u \zeta'(t) \psi(x) \d x \d t.
\end{align}
Using the same test function in the equation satisfied by $u_k$ we have
\begin{align*}
    \int_\Omega (u_0 + \tfrac1k) \psi(x) \d x = & \iint_{\Omega_T}A(x,t,u_k, \partial_1 u_k^{m_1},\dots, \partial_N u_k^{m_N})\cdot \nabla \psi(x) \zeta(t) - f \zeta(t) \psi(x) \d x \d t\\& - \iint_{\Omega_T} u \zeta'(t) \psi(x) \d x \d t.
\end{align*}
Passing to the limit $k\to \infty$ we obtain
\begin{align}\label{initial_val-mystery2}
    \int_\Omega u_0 \psi(x) \d x = & \iint_{\Omega_T}\mathcal{A} \cdot \nabla \psi(x) \zeta(t) - f \zeta(t) \psi(x) \d x \d t\notag \\& - \iint_{\Omega_T} u \zeta'(t) \psi(x) \d x \d t.
\end{align}
Noting that the right-hand sides of \eqref{initial_val-mystery1} and \eqref{initial_val-mystery2} are identical, and taking into account that $\psi \in C^\infty_{\textnormal{o}}(\Omega)$ is arbitrary, we conclude that $u(\cdot, 0) = u_0$.


\begin{lem}\label{lem:complicated-initial-val}
The functions $u_k$ satisfy the condition 
\begin{align*}
 \tfrac{1}{m+1}&\int_\Omega |u_k(x,T)|^{m+1}\d x - \tfrac{1}{m+1}\int_\Omega |u_k(x,0)|^{m+1}\d x 
 \\
 &= \iint_{\Omega_T} f(x,t)(u_k^m - (g(x,t)+\tfrac1k)^m) - \mathcal{A}^k(x,t)\cdot \nabla (u_k^m - (g(x,t)+\tfrac1k)^m)\d x \d t  
 \\
  &\quad -m \iint_{\Omega_T} (g(x,t) + \tfrac1k)^{m-1}\partial_t g(x,t) u_k(x,t)\d x \d t 
    \\&
   \quad  + \int_\Omega (g(x,T) + \tfrac1k)^m u_k(x,T)\d x \d t - \int_\Omega (g(x,0) + \tfrac1k)^m u_k(x,0)\d x \d t, 
\end{align*}
where $\mathcal{A}^k(x, t) = A(x,t,u_k,\partial_1 u_k^{m_1}, \dots, \partial_N u_k^{m_N})$.
\end{lem}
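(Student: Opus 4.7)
The plan is to derive this identity by testing the equation for $u_k$ against $\varphi = u_k^m - (g+\tfrac1k)^m$ and processing the parabolic term by integration by parts in time, using Steklov averaging to make the computation rigorous. Formally, $\partial_t u_k \cdot u_k^m = \partial_t\bigl(\tfrac{u_k^{m+1}}{m+1}\bigr)$, while for $\partial_t u_k \cdot (g+\tfrac1k)^m$ integration by parts in time (together with $\partial_t(g+\tfrac1k)^m = m(g+\tfrac1k)^{m-1}\partial_t g$) produces boundary contributions plus an interior integral. Combining these formal calculations with the elliptic and source terms of the weak formulation and rearranging yields precisely the stated identity.

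For the rigorous argument I test the Steklov-averaged formulation \eqref{eq:weak_mollified_usual_anisotropic_Steklov} for $u_k$ on $(0, T-h)$ (recalling via Remark \ref{rem:A^k-rewritten} that $\hat A^k = \mathcal{A}^k$ on the relevant range) with $\varphi = [u_k]_h^m - [g+\tfrac{1}{k}]_h^m$. It is essential to average the boundary function as well: $[u_k]_h$ and $[g+\tfrac{1}{k}]_h$ share the same slice-wise trace on $\partial \Omega$ (namely $[g]_h + \tfrac1k$), whereas $[u_k]_h$ and $g+\tfrac{1}{k}$ do not. Admissibility of $\varphi$ then follows from the boundary condition of \eqref{eq:approximative_problem} (giving $u_k - (g+\tfrac{1}{k}) \in L^{\bf p}(0,T;\overline W^{1,\bf p}_{\textnormal{o}}(\Omega))$, a property preserved by Steklov averaging) together with the chain rule for the Lipschitz function $s \mapsto s^m$ on the positive bounded range provided by the bounds $\tfrac{1}{k} \leq u_k \leq L$ of Lemma \ref{lem:lower_bound_uk} and Lemma \ref{lemma:De_Giorgi_Approx_probs}.

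With this test function, the parabolic term splits as desired. Its $[u_k]_h^m$-part integrates via the chain rule in time to $\tfrac{1}{m+1}\int_\Omega [u_k]_h^{m+1}\big|_0^{T-h}\,\d x$, while its $[g+\tfrac{1}{k}]_h^m$-part, after integration by parts in time and using $\partial_t[g+\tfrac{1}{k}]_h = [\partial_t g]_h$ from Lemma \ref{lem:steklov_mol_properties} (ii), becomes
$$\int_\Omega [u_k]_h \,[g+\tfrac{1}{k}]_h^m\Big|_0^{T-h}\d x - m\int_0^{T-h}\int_\Omega [u_k]_h \,[g+\tfrac{1}{k}]_h^{m-1}\,[\partial_t g]_h \,\d x \d t.$$
Passing to the limit $h\to 0$ is then routine: the continuity $u_k \in C([0,T]; L^2(\Omega))$ combined with Lemma \ref{lem:steklov_mol_properties} (iii) handles the pointwise-in-time evaluations at $t=0$ and $t=T$, Lemma \ref{lem:steklov_mol_properties} (i)--(ii) deliver the required $L^q$-convergence of the Steklov averages $[u_k]_h$, $[g+\tfrac{1}{k}]_h$, $[\partial_t g]_h$, $[\mathcal{A}^k]_h$ and $[f]_h$, and the uniform $L^\infty$-bounds on $u_k$ and $g$ together with dominated convergence give passage to the limit inside the nonlinear compositions (e.g.\ $[u_k]_h^{m+1}$, $[u_k]_h[g+\tfrac{1}{k}]_h^{m-1}$, and $\nabla[u_k]_h^m = m[u_k]_h^{m-1}[\nabla u_k]_h \to m u_k^{m-1}\nabla u_k$). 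The main delicate point is precisely the choice of test function, particularly the use of $[g+\tfrac{1}{k}]_h^m$ rather than $(g+\tfrac{1}{k})^m$ so that the zero trace is maintained throughout; once this is in place, collecting the limits and rearranging yields the identity.
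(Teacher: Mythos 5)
Your proof is correct, but it takes a genuinely different route from the paper. The paper does not mollify the test function: it extends $u_k$, $\mathcal{A}^k$, $f$ by reflection to $\Omega\times(-T,2T)$, derives the two-time-slice identity \eqref{eq:with_v} for fixed spatial test functions, inserts the slice $v=(u_k^m-(g+\tfrac1k)^m)(\cdot,t)$ against the backward difference $u_k(t)-u_k(t-h)$, and handles the nonlinear parabolic term with the convexity inequality $\tfrac{1}{m+1}\big(u_k(t)^{m+1}-u_k(t-h)^{m+1}\big)\le u_k^m(t)\big(u_k(t)-u_k(t-h)\big)$; this yields only ``$\le$'', and the reverse inequality comes from the symmetric forward-difference argument, so the equality is assembled from two one-sided estimates. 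You instead Steklov-average the test function itself, so that $\partial_t[u_k]_h\,[u_k]_h^m=\partial_t\big(\tfrac{1}{m+1}[u_k]_h^{m+1}\big)$ is an exact chain rule and the identity drops out as an equality in a single pass, with no reflection and no convexity trick. What you pay for this is the admissibility of the $h$-dependent test function $[u_k]_h^m-[g+\tfrac1k]_h^m$ in $L^{\bf p}(0,T;\overline W^{1,\bf p}_{\textnormal{o}}(\Omega))$: you correctly identify that averaging $g+\tfrac1k$ as well is essential so that the averaged trace is preserved, but a complete write-up would have to replicate the approximation argument leading to \eqref{space_of_u_kbeta} for the mollified functions (writing $[u_k]_h=[g+\tfrac1k]_h+[w_k]_h$ with $[w_k]_h$ in the zero-trace space and exploiting the uniform positive lower and upper bounds). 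With that detail supplied, the limit passages you describe are routine and the rearrangement gives exactly the stated identity.
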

\begin{proof}{}
We extend $u_k$ to the time interval $[-T, 2T]$ by reflection, i.e. 
\begin{equation}
    \tilde{u}_k(t)= \begin{cases} u_k(-t)& \text{if } -T \leq t \leq 0,\\
    u_k(t) & \text{if } 0\leq t \leq T,\\
    u_k(2T-t) & \text{if } T \leq t \leq 2T.        
    \end{cases}
\end{equation} 
Similarly, consider extensions $\tilde{\mathcal{A}}^k$ and $\tilde f$ of the functions $\mathcal{A}^k$ and $f$ to the set $\Omega \times (-T,2T)$. In the following, we will denote the extended maps simply by $u_k$, $\mathcal{A}^k$ and $f$. Using the weak formulation \eqref{eq:weak_form_u_k} with a test function of the form $v\zeta^\delta_{t_1,t_2}$ where $v\in C^\infty_{\textnormal{o}}(\Omega)$ and $t_1 < t_2$ we have after and passing to the limit $\delta \to 0$:
\begin{equation}\label{eq:with_v}
  \int_{\Omega} (u_k(x,t_2)-u_k(x, t_1)) v(x) \d x = \int_{t_1}^{t_2}\int_{\Omega} (f(x,s)v(x) - \mathcal{A}^k(x,s) \cdot \nabla v(x)) \, \d x \d s.
\end{equation}
By approximation, the equation remains valid for $v$ in $L^2(\Omega)\cap \overline{W}^{1,\bf p}_{\textnormal{o}}(\Omega)$.
Let $t_1=t-h$ and $t_2=t$. We can, due to \eqref{space_of_u_kbeta}, choose $v= \big(u_k^m - (g+\tfrac1k)^m\big)(\cdot, t)$  and integrate over the time interval $[0,T]$ to obtain
\begin{align}\label{porky_burger} 
\iint_{\Omega_T} &(u_k(x,t)-u_k(x,t-h))(u_k^m(x,t) - (g(x,t)+\tfrac1k)^m)\, \d x \d t 
 \\
&= \int^T_0 \int^t_{t-h}\int_\Omega \big(f(x,s)(u_k^m(x,t) - (g(x,t)+\tfrac1k)^m)\nonumber \\& \quad-\mathcal{A}^k(x,s)\cdot (\nabla u_k^m(x,t) - \nabla (g(x,t)+\tfrac1k)^m)\big)\d x \d s \d t. \nonumber
\end{align}
By the convexity of the map $s\mapsto \tfrac{1}{m+1}s^{m+1}$, we have the pointwise estimate
\begin{equation*}
    \tfrac{1}{m+1}u_k(x,t)^{m+1} - \tfrac{1}{m+1}u_k(x,t-h)^{m+1}  \leq u_k^m(x,t)(u_k(x,t) - u_k(x,t-h)).
\end{equation*}
Using this estimate in \eqref{porky_burger} we have 
\begin{align}\label{monreale}
\tfrac{1}{m+1}&\int^T_{T-h} \int_\Omega u_k(x,t)^{m+1}\d x \d t - \tfrac{1}{m+1}\int^0_{-h} \int_\Omega u_k(x,t)^{m+1}\d x \d t
\\
\notag    &= \iint_{\Omega_T} \left(\tfrac{1}{m+1}u_k(x,t)^{m+1}(x,t) - \tfrac{1}{m+1}u_k(x,t-h)^{m+1}\right) \d x \d t 
    \\
\notag    &\leq \int^T_0 \int^t_{t-h}\int_\Omega \big(f(x,s)(u_k^m(x,t) - (g(x,t)+\tfrac1k)^m) \nonumber 
\\
\notag & \quad - \mathcal{A}^k(x,s)\cdot \nabla (u_k^m(x,t)-(g(x,t)+\tfrac1k)^m)\big)\d x \d s \d t \nonumber
    \\
\notag    & \quad + \iint_{\Omega_T}(g(x,t) + \tfrac1k)^m (u_k(x,t) - u_k(x,t-h)) \d x \d t.
\end{align}
We can now split the integral on the last row of \eqref{monreale} and perform a change of variables as in the second resulting integral as follows:
\begin{align*}
    &\iint_{\Omega_T}(g(x,t) + \tfrac1k)^m (u_k(x,t) - u_k(x,t-h)) \d x \d t 
    \\ &
    = \iint_{\Omega_T} (g(x,t) + \tfrac1k)^m u_k(x,t)\d x \d t - \int^{T-h}_{-h}\int_\Omega (g(x,t+h) + \tfrac1k)^m u_k(x,t)\d x \d t
    \\ &
    =\int_0^{T-h} \int_\Omega \left((g(x,t) + \tfrac1k)^m -(g(x,t+h) + \tfrac1k)^m\right) u_k(x,t)\d x \d t 
    \\ & \quad + \int^T_{T-h} \int_\Omega (g(x,t) + \tfrac1k)^m u_k(x,t)\d x \d t - \int^0_{-h}\int_\Omega (g(x,t+h) + \tfrac1k)^m u_k(x,t)\d x \d t.
\end{align*}
Dividing the last equation by $h$ we obtain
\begin{align*}
    & \frac1h \iint_{\Omega_T}(g(x,t) + \tfrac1k)^m (u_k(x,t) - u_k(x,t-h)) \d x \d t  
    \\ &
    = -\int^{T-h}_0 \int_\Omega \frac{\big((g(x,t+h) + \tfrac1k)^m - (g(x,t) + \tfrac1k)^m\big)}{h}u_k(x,t)\d x \d t
    \\ &
  \quad  + \frac1h \int^T_{T-h} \int_\Omega (g(x,t) + \tfrac1k)^m u_k(x,t)\d x \d t - \frac1h \int^0_{-h}\int_\Omega (g(x,t+h) + \tfrac1k)^m u_k(x,t)\d x \d t
    \\
    & \xrightarrow[h\to 0]{} -m \iint_{\Omega_T} (g(x,t) + \tfrac1k)^{m-1}\partial_t g(x,t) u_k(x,t)\d x \d t 
    \\&
    \quad + \int_\Omega (g(x,T) + \tfrac1k)^m u_k(x,T)\d x \d t - \int_\Omega (g(x,0) + \tfrac1k)^m u_k(x,0)\d x \d t.
\end{align*}
Thus, by dividing \eqref{monreale} by $h$ and passing to the limit  $h\to 0$ we end up with
\begin{align*}
    \tfrac{1}{m+1}&\int_\Omega u_k(x,T)^{m+1}\d x  - \tfrac{1}{m+1} \int_\Omega u_k(x,0)^{m+1}\d x 
    \\
    &\leq \int^T_0 \int_\Omega \big(f(x,t)(u_k^m(x,t) - (g(x,t)+\tfrac1k)^m)\\
    &\quad- \mathcal{A}^k(x,t) \cdot\nabla (u_k^m(x,t) - (g(x,t)+\tfrac1k)^m) \big)\d x \d t
    \\
    &\quad -m \iint_{\Omega_T} (g(x,t) + \tfrac1k)^{m-1}\partial_t g(x,t) u_k(x,t)\d x \d t 
    \\&
   \quad  + \int_\Omega (g(x,T) + \tfrac1k)^m u_k(x,T)\d x \d t - \int_\Omega (g(x,0) + \tfrac1k)^m u_k(x,0)\d x \d t.
\end{align*}
An estimate in the reverse direction can be obtained by analysing the quantity 
\begin{align*}
    \iint_{\Omega_T} &(u_k(x,t+h)-u_k(x,t))(u_k^m(x,t)- (g(x,t) + \tfrac1k)^m)\, \d x \d t
\end{align*}
in an analogous manner.
\end{proof}
We now prove an analogue of Lemma \ref{lem:complicated-initial-val} for $u$.
\begin{lem}\label{lem:complicated-initial-val_for_u}
    The function $u$ satisfies the condition
    \begin{align*}
 \tfrac{1}{m+1}&\int_\Omega |u(x,T)|^{m+1}\d x - \tfrac{1}{m+1}\int_\Omega |u_0|^{m+1}\d x 
 \\
 &= \iint_{\Omega_T} f (u^m - g^m)  - \mathcal{A} \cdot \nabla (u^m - g^m) \d x \d t -m \iint_{\Omega_T} g^{m-1}\partial_t g u \d x \d t 
    \\&
   \quad  + \int_\Omega g(x,T)^m u(x,T)\d x \d t - \int_\Omega g(x,0)^m u(x,0)\d x \d t.
\end{align*}
\end{lem}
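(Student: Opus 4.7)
The strategy is to reproduce the computation of Lemma \ref{lem:complicated-initial-val} verbatim, but applied directly to $u$ with $\mathcal{A}$ in place of $\mathcal{A}^k$ and $g$ in place of $g+\tfrac1k$, starting from the weak formulation \eqref{eq:weak_form_for_u_with_mathcalA}. Attempting instead to pass to the limit $k\to\infty$ in the identity of Lemma \ref{lem:complicated-initial-val} would force the convergence of $\iint_{\Omega_T}\mathcal{A}^k\cdot\nabla u_k^m\d x\d t$, in which both factors converge only weakly, so the limit is not directly available (in fact, producing this identity for $u$ is precisely one of the ingredients needed later to close Minty's argument). All the other hypotheses required to rerun the $u_k$-proof on $u$ are already in hand: $u\in L^\infty(\Omega_T)$ via Lemma \ref{lemma:De_Giorgi_Approx_probs} and the a.e. convergence, $u\in C([0,T];L^{m+1}(\Omega))$ via Lemma \ref{lem:time-cont}, $u^m-g^m\in L^{\bf p}(0,T;\overline W^{1,\bf p}_{\textnormal{o}}(\Omega))$ from Subsection \ref{sec:boundary_val_time_cont_initial_data}, and $\partial_t g\in L^2(\Omega_T)$ from \eqref{cond:g}.

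Concretely, I would first extend $u$, $\mathcal{A}$ and $f$ by reflection across $t=0$ and $t=T$ to $\Omega\times(-T,2T)$, test \eqref{eq:weak_form_for_u_with_mathcalA} with $v(x)\zeta^\delta_{t_1,t_2}(s)$ for $v\in L^2(\Omega)\cap\overline W^{1,\bf p}_{\textnormal{o}}(\Omega)$ (admissible by a density approximation), and pass $\delta\to 0$ using $u\in C([0,T];L^2(\Omega))$ to obtain the exact analogue of \eqref{eq:with_v} for $u$. Then I would take $v=(u^m-g^m)(\cdot,t)$ (admissible because $u$ is bounded and $u^m-g^m$ lies in the correct space), integrate in $t$ over $[0,T]$, and invoke the convexity inequality
\begin{equation*}
\tfrac{1}{m+1}u(x,t)^{m+1}-\tfrac{1}{m+1}u(x,t-h)^{m+1}\leq u^m(x,t)\bigl(u(x,t)-u(x,t-h)\bigr)
\end{equation*}
to reach the analogue of \eqref{monreale}. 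Splitting the $g^m$-piece exactly as in the proof of Lemma \ref{lem:complicated-initial-val} and taking $h\to 0$ produces one of the two inequalities; the reverse inequality comes from the symmetric computation based on the forward increment $u(x,t+h)-u(x,t)$, so equality is recovered.

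The routine but slightly delicate part is the $h\to 0$ limit. For the interior term, the difference quotient $h^{-1}(g(\cdot,t+h)^m-g(\cdot,t)^m)$ converges to $m g^{m-1}\partial_t g$ in $L^1(\Omega_T)$ via the chain rule (valid since $g\in L^\infty$ and $\partial_t g\in L^2(\Omega_T)$), and paired with the uniform $L^\infty$-bound on $u$ this produces the $g^{m-1}\partial_t g\,u$ integral. The four boundary-in-time averages at $t=0$ and $t=T$ converge to $\int_\Omega g(\cdot,s)^m u(\cdot,s)\d x$ at $s\in\{0,T\}$ by combining the continuity of $u$ in $L^{m+1}(\Omega)$ with the continuity of $g^m$ in $L^q(\Omega)$ for every $q<\infty$, which itself follows from $g\in L^\infty$ with $\partial_t g\in L^2(\Omega_T)$ exactly as at the end of the proof of Lemma \ref{lem:time-cont}. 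The main obstacle is really just this bookkeeping of the boundary-in-time terms; the algebraic manipulations themselves are identical to those already carried out in the proof of Lemma \ref{lem:complicated-initial-val}.
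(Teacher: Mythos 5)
Your proposal is correct and follows the same route as the paper: the paper's proof of this lemma is precisely to rerun the argument of Lemma \ref{lem:complicated-initial-val} with $u$, $g$ and $\mathcal{A}$ in place of $u_k$, $g+\tfrac1k$ and $\mathcal{A}^k$, starting from \eqref{eq:weak_form_for_u_with_mathcalA} and using the time continuity of $u$ and the identity $u(\cdot,0)=u_0$. Your remark that one cannot instead pass to the limit in the $u_k$-identity (because $\iint\mathcal{A}^k\cdot\nabla u_k^m$ is a product of two weakly converging factors) correctly identifies why the lemma must be proved directly for $u$.
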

\begin{proof}{}
The proof is similar to that of Lemma \ref{lem:complicated-initial-val}. We use \eqref{eq:weak_form_for_u_with_mathcalA} to obtain the equation
\begin{equation*}
  \int_{\Omega} (u(x,t_2)-u(x, t_1)) v(x) \d x = \int_{t_1}^{t_2}\int_{\Omega} (f(x,s)v(x) - \mathcal{A}(x,s) \cdot \nabla v(x)) \, \d x \d s,
\end{equation*}
which replaces \eqref{eq:with_v}. Here we can take $v=u^m(x,t) - g^m(x,t)$ and proceed as before. We thus replace $u_k$ by $u$, $g+\tfrac1k$ by $g$ and $\mathcal{A}^k$ by $\mathcal{A}$. In the argument we also need the previously established time continuity of $u$ as a map into $L^{m+1}(\Omega)$, see the text after \eqref{eq:weak_form_for_u_with_mathcalA}. Finally we also use the fact that $u(\cdot, 0) = u_0$.
\end{proof}

\vspace{3mm}
\subsection{Pointwise convergence of the gradient}\label{sec:Pointwise_convergence_of_the_gradient}
Using the the second identity of \eqref{chainrule-consequences} we can write 
\begin{align}\label{minty1}
  \notag  \big( &A(x,t,u_k, (\partial_j u_k^{m_j})^N_{j=1}) - A(x,t,u_k, (\partial_j u^{m_j})^N_{j=1})\big)\cdot (\nabla u_k^m - \nabla u^m)
 \\
  \notag &= \sum^N_{j=1} a_j(x,t,u_k)\tfrac{m}{m_j}u_k^{m - m_j}\big[|\partial_j u_k^{m_j}|^{p_j-2}\partial_j u_k^{m_j} - |\partial_j u^{m_j}|^{p_j-2}\partial_j u^{m_j}\big](\partial_j u_k^{m_j} - \partial_j u^{m_j})
   \\
\notag &\quad  + \sum^N_{j=1} a_j(x,t,u_k) \tfrac{m}{m_j}(u_k^{m -m_j} - u^{m - m_j})\big[|\partial_j u_k^{m_j}|^{p_j-2}\partial_j u_k^{m_j} - |\partial_j u^{m_j}|^{p_j-2}\partial_j u^{m_j}\big]\partial_j u^{m_j}
\\
 &=: h_k + i_0,
\end{align}
with the understanding that $i_0$ vanishes on the set where $u=0$, since $\partial_j u^{m_j}$ vanishes a.e. on this set. Thus the potentially negative exponent $m-m_j$ appearing on $u$ in this expression does not pose any problems. On the other hand, note that we can write
\begin{align}\label{minty2}
  \big( A&(x,t,u_k, (\partial_j u_k^{m_j})^N_{j=1}) - A(x,t,u_k, (\partial_j u^{m_j})^N_{j=1})\big)\cdot (\nabla u_k^m - \nabla u^m)\notag 
  \\ 
 & =
  A(x,t,u_k, (\partial_j u_k^{m_j})^N_{j=1}) \cdot \nabla (u_k^m - g^m) \notag 
 - A(x,t,u_k, (\partial_j u_k^{m_j})^N_{j=1})\cdot \nabla (u^m - g^m) \notag 
  \\
  & \quad - A(x,t,u_k, (\partial_j u^{m_j})^N_{j=1})\cdot  (\nabla u_k^m - \nabla u^m)\notag 
  \\
  &=: i_1-i_2 - i_3.
\end{align}
Combining \eqref{minty1} and \eqref{minty2} we can write
\begin{align}
    H_k& := \iint_{\Omega_T} h_k \d x \d t = \iint_{\Omega_T} i_1 \d x \d t - \iint_{\Omega_T}i_2\d x \d t - \iint_{\Omega_T}i_3 \d x \d t - \iint_{\Omega_T}i_0 \d x \d t
    \\
  \notag  &=: I_1 - I_2 - I_3 - I_0.
\end{align}
Note that $h_k \geq 0$. We have that
\begin{align}\label{i_0}
    i_0 
    =
    \sum^N_{j=1} 
    a_j(x,t,u_k) \tfrac{m}{m_j}
    &\Big[\Big(\frac{u}{u_k}\Big)^{m_j-m} - 1\Big] 
    \\
    & \times \big[|\partial_j u_k^{m_j}|^{p_j-2}\partial_j u_k^{m_j} - |\partial_j u^{m_j}|^{p_j-2}\partial_j u^{m_j}\big] u^{m-m_j}\partial_j u^{m_j}. \notag
\end{align}
By the weak convergence of $\partial_j u_k^m$ and the strong convergence of $u_k$, we can deduce that the Chain Rule is valid for the last factor:
\begin{align*}
    u^{m -m_j}\partial u^{m_j} = c \partial_j u^m \in L^{p_j}(\Omega_T).
\end{align*}
Due to the monotone pointwise convergence of $u_k$ established in Lemma \ref{lem:monotone_conv_comp_princ}  we have
\begin{align*}
    \frac{u}{u_k} \leq 1.
\end{align*}
Thus, H\"older's inequality and the Dominated Convergence Theorem allows us to calculate
\begin{align}\label{lim:I0}
    |I_0|  \leq  c\sum^N_{j=1} 
    \Big(
    \Big[ \iint_{\Omega_T} \Big|\Big(\frac{u}{u_k}\Big)^{m_j-m} - 1\Big|^{p_j}&|\partial_j u^m|^{p_j} \d x \d t\Big]^\frac{1}{p_j}
    \\
    &
    \times \Big[ \iint_{\Omega_T}\hspace{-0.7mm} |\partial_j u_k^{m_j}|^{p_j}+ |\partial_j u^{m_j}|^{p_j} \d x \d t\Big]^{\frac{p_j-1}{p_j}} \Big) \xrightarrow[k\to \infty]{} 0, \notag
\end{align}
where we also use Lemma \ref{lem:grad_bdd_in_Lp} to guarantee that the second integral has an upper bound independent of $k$. We further divide $I_3$ to see that
\begin{align}
I_3 &= \iint_{\Omega_T} \big(A(x,t,u_k, (\partial_j u^{m_j})^N_{j=1})-A(x,t,u, (\partial_j u^{m_j})^N_{j=1})\big) 
 (\nabla u_k^m - \nabla u^m) \d x \d t
\\
\notag &\quad + \iint_{\Omega_T} A(x,t,u, (\partial_j u^{m_j})^N_{j=1}) \cdot  (\nabla u_k^m - \nabla u^m) \d x \d t
\\
\notag &:= I_3^a + I_3^b.
\end{align}
To treat $I_3^a$ we use H\"older's inequality:
\begin{align}\label{est:I^a_3}
    |I^a_3| &\leq c\sum^N_{j=1}\Big[\iint_{\Omega_T}|a_j(x,t,u_k) - a_j(x,t,u)|^{p_j'}|\partial_j u^{m_j}|^{p_j}\d x \d t\Big]^\frac{p_j-1}{p_j}
    \\
\notag    & \hspace{1,3cm} \times \Big[\iint_{\Omega_T} |\partial_j u_k^m|^{p_j} + |\partial_j u^m|^{p_j}\d x \d t\Big]^\frac{1}{p_j}.
\end{align}
 The first integral on the right-hand side vanishes in the limit $k\to \infty$ due to the pointwise convergence of $u_k$ and the $u$-continuity and boundedness of $a_j$ and the second integral can again be bounded using Lemma \ref{lem:grad_bdd_in_Lp}. Thus we have confirmed that $I^a_3$ vanishes in the limit $k\to \infty$. By the weak convergence established in Remark \ref{rem:weak-convg} we also have that $I^b_3$ vanishes in the limit $k\to \infty$. Thus 
\begin{align}\label{lim:I3}
    \lim_{k\to 0}I_3 = 0.
\end{align} 
As for $I_2$, we have that
\begin{equation}\label{lim:I2}
   I_2=  \iint_{\Omega_T} i_2 \, \d x \d t \xrightarrow[k\to 0]{} \iint_{\Omega_T} \mathcal{A} \cdot \nabla (u^m - g^m) \, \d x \d t
\end{equation}
by weak convergence, see Remark \ref{rem:weak-convg}. In order to estimate $I_1$ we first divide this term as
\begin{align*}
    I_1 &= \iint_{\Omega_T} A(x,t,u_k, (\partial_j u_k^{m_j})^N_{j=1}) \cdot \nabla (u_k^m - (g+\tfrac1k)^m) \d x \d t
    \\
    &\quad + \iint_{\Omega_T} A(x,t,u_k,(\partial_j u_k^{m_j})^N_{j=1}) \cdot \nabla ((g+\tfrac1k)^m - g^m) \d x \d t
    \\
    &=: I_1^a + I_1^b.
\end{align*}
Note that $I_1^b$ vanishes in the limit $k\to \infty$ since, by H\"older's inequality,
\begin{align*}
    |I_1^b| \leq \sum^N_{j=1} \Big(\iint_{\Omega_T}|\mathcal{A}_k^j(x,t)|^{p_j'}\d x \d t\Big)^\frac{1}{p_j'}\Big(\iint_{\Omega_T}m^{p_j}|(g+\tfrac1k)^{m-1} - g^{m-1}|^{p_j}|\partial_j g|^{p_j} \d x \d t\Big)^\frac{1}{p_j},
\end{align*}
and the integrals involving $\mathcal{A}_k^j$ remain bounded as $k\to \infty$, whereas the other integrals vanish in the limit due to space of $g$ and the Dominated Convergence Theorem. In order to analyse $I^a_1$ we use Lemma \ref{lem:complicated-initial-val} to see that
\begin{align}\label{I_a-expr}
  \notag  I_1^a   = \iint_{\Omega_T} f(x,t)&(u_k^m -  (g(x,t)+\tfrac1k)^m)\d x \d t  
 -m \iint_{\Omega_T} (g(x,t) + \tfrac1k)^{m-1}\partial_t g(x,t) u_k(x,t)\d x \d t 
     \\
     & \quad  + \int_\Omega (g(x,T) + \tfrac1k)^m u_k(x,T)\d x \d t - \int_\Omega (g(x,0) + \tfrac1k)^m u_k(x,0)\d x \d t
  \\ \notag
  &\quad +\tfrac{1}{m+1}\int_\Omega |u_k(x,0)|^{m+1}\d x - \tfrac{1}{m+1}\int_\Omega |u_k(x,T)|^{m+1}\d x.
\end{align}
From the energy estimates obtained in Lemma \ref{lem:energy_estimates_approx_probs}, we see that the sequence $(u_k(\cdot,T))^\infty_{k=1}$ is bounded in $L^{m+1}(\Omega)$, therefore a subsequence converges weakly to some $w \in L^{m+1}(\Omega)$. Utilizing the equation for $u_k$ we can write, for any $\varphi \in C^\infty_{\textnormal{o}}(\Omega)$,
\begin{align*}
    \int_\Omega u_k(x,T) \varphi(x)\d x =& \int_\Omega (u_0 + \tfrac{1}{k})\varphi \d x - \iint_{\Omega_T} A(x,t,u_k, \partial_1 u_k^{m_1},\dots , \partial_N u_k^{m_N}) \cdot \nabla \varphi \d x \d t 
    \\
    &+ \iint_{\Omega_T} f(x,t)\varphi(x) \d x \d t.
\end{align*}
Passing to the limit in the previous equation we end up with
\begin{align*}
    \int_\Omega w \varphi \d x \d t = \int_\Omega u_0 \varphi \d x - \iint_{\Omega_T}\mathcal{A}\cdot \nabla \varphi + f\varphi \d x \d t = \int_\Omega u(x,T)\varphi \d x,
\end{align*}
due to the equation \eqref{eq:weak_form_for_u_with_mathcalA} satisfied by $u$ and the fact that $u(0) = u_0$. Since $\varphi$ is arbitrary, we have confirmed that $u(T) = w$. Then, from the weak convergence we have 
\begin{align*}
    \liminf_{k\to \infty} \int_\Omega u_k^{m +1}(x,T) \d x \geq \int_\Omega u(x,T)^{m +1} \d x.
\end{align*}
The convergence of $I_1^b$, the weak convergence of $u_k(\cdot, T)$ and the strong convergence of $u_k$ combined with \eqref{I_a-expr} show that
\begin{align}\label{limsup:I1}
    \limsup_{k\to \infty} I_1 &\leq - \tfrac{1}{m+1}\int_\Omega |u(x,T)|^{m+1}\d x + \tfrac{1}{m+1}\int_\Omega |u_0|^{m+1}\d x + \iint_{\Omega_T} f (u^m - g^m) \d x \d t
    \notag \\
  & \quad +\int_\Omega g(x,T)^m u(x,T)\d x \d t - \int_\Omega g(x,0)^m u(x,0)\d x \d t -m \iint_{\Omega_T} g^{m-1}\partial_t g u \d x \d t 
    \notag  \\
    &= \iint_{\Omega_T}\mathcal{A}\cdot \nabla (u^m - g^m) \d x \d t,
\end{align}
where in the last step we use Lemma \ref{lem:complicated-initial-val_for_u}.
Combining the estimates \eqref{lim:I0}, \eqref{limsup:I1}, \eqref{lim:I2}, \eqref{lim:I3} for $I_0$, $I_1$, $I_2$ and $I_3$ respectively, we have
\begin{align*}
    \limsup_{k\to\infty} H_k &\leq \limsup_{k\to \infty} I_1 - \lim_{k\to \infty} I_2 - \lim_{k\to \infty} I_3 - \lim_{k\to \infty} I_0 = 0.
\end{align*}
 By the non-negativity of $h_k$ this confirms that $h_k \to 0$ in $L^1(\Omega_T)$. Hence, a subsequence of $(h_k)$ convergences pointwise a.e.~to zero. The lower bound for $a_j$, the non-positivity of the exponents $m - m_j$ in \eqref{minty1} and the uniform upper bound for $(u_k)_{k = 1}^\infty$ show that for some $c>0$,
\begin{align*}
      c \sum^N_{j=1}\big[|\partial_j u_k^{m_j}|^{p_j-2}\partial_j u_k^{m_j} - |\partial_j u^{m_j}|^{p_j-2}\partial_j u^{m_j}\big](\partial_j u_k^{m_j} - \partial_j u^{m_j}) \leq h_k \xrightarrow[k\to \infty]{} 0,
\end{align*}
and hence $\partial_j u_k^{m_j}$ converges pointwise a.e.~to $\partial_j u^{m_j}$. This combined with the a.e. pointwise convergence of $u_k$ to $u,$ and the fact that $A$ is a Carathéodory vector field, imply that
\begin{align}\label{eq:pw_convergence_of_A_j}
    \mathcal{A}^k(x, t) =  A(x, t, u_k(x, t), (\partial_j u_k^{m_j}(x, t))_{j = 1}^N) \xrightarrow[k \to \infty]{} A(x, t, u(x, t), (\partial_j u^{m_j}(x, t))_{j = 1}^N), 
\end{align}
a.e.~in $\Omega_T.$ By definition $\mathcal{A}_j$ is the weak limit in $L^{p_j^\prime} (\Omega_T)$ of the coordinate functions $\mathcal{A}_j^k$ (see Remark \ref{rem:weak-convg}), so Mazur's Lemma guarantees that a sequence $(\beta^j_l)_{l=1}^\infty$ of convex combinations of $(\mathcal{A}_j^k)^\infty_{k=1}$ converges to $\mathcal{A}_j$ strongly in $L^{p_j^\prime}(\Omega_T)$. Passing to another subsequence we may assume that $(\beta^j_l)^\infty_{l=1}$ converges pointwise a.e.~to $\mathcal{A}_j$. However, by \eqref{eq:pw_convergence_of_A_j} the sequence $(\beta^j_l)^\infty_{j=1}$ also converges a.e.~to $A_j(\cdot, \cdot, u, (\partial_j u^{m_i})_{i = 1}^N)$, and thus $\mathcal{A}$ coincides with $A(\cdot, \cdot, u, (\partial_i u^{m_i})_{i = 1}^N)$. In light of \eqref{eq:weak_form_for_u_with_mathcalA} this confirms that $u$ satisfies the original equation.
\qed

\section{Comparison principle}\label{sec:comparison}
This section is devoted to the study of the  comparison principle for the doubly nonlinear equation \eqref{eq:diffusion}. 
We note that the sub- and super-solutions defined in Definition \ref{def:subsupersol} satisfy estimates involving the Steklov average.
\begin{lem}\label{lem:steklov_est_for_subsuper}
Let $u$ be a sub $($super$)$-solution. Then the following estimate is true:
\begin{align*}
    \int^{t_2}_{t_1}\int_{\Omega} \phi \partial_t [u]_{\bar h} + \big[A(x,\cdot,u,\partial_1 u^{m_1},\dots, \partial_N u^{m_N})\big]_{\bar h} \cdot \nabla \phi \,\d x \d t \leq (\geq)\int^{t_2}_{t_1}\int_{\Omega} [f]_{\bar h} \phi \d x \d t
\end{align*}
for all $\phi \in L^{\bf p}(0,T; \overline{W}^{1,{\bf p}}_{\textnormal{o}}(\Omega)) \cap L^\infty(\Omega_T)$ and all $0\leq t_1 < t_2 \leq T$.
\end{lem}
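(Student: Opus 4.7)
The plan is to start from the weak sub-/super-solution formulation in Definition \ref{def:subsupersol}, insert a carefully chosen Steklov-averaged test function, and exploit the Fubini-type duality
\[
\iint_{\Omega_T} g\,[\psi]_h\,\d x\d t \;=\; \iint_{\Omega_T} [g]_{\bar h}\,\psi\,\d x\d t,
\]
valid for $g \in L^1(\Omega_T)$ and $\psi \in L^\infty(\Omega_T)$ supported in $\Omega\times(h,T-h)$, to transfer the mollification from the test function onto $u$, the vector field $A$, and $f$.

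First I would extend the weak inequality by a standard density argument from $\varphi \in C^\infty_{\textnormal{o}}(\Omega_T)$ to non-negative $\varphi \in L^{\bf p}(0,T;\overline W^{1,{\bf p}}_{\textnormal{o}}(\Omega)) \cap L^\infty(\Omega_T)$ which are Lipschitz in time and compactly supported inside $\Omega\times(0,T)$. Then, taking $\phi \geq 0$ (as is implicit for sub-/super-solutions, since the defining inequality uses non-negative test functions) and fixing $h < t_1 < t_2 < T-h$, I would plug in the test function
\[
\varphi(x,s) := [\phi\,\eta^\varepsilon]_h(x,s),
\]
where $\eta^\varepsilon \in C^\infty_{\textnormal{o}}((t_1,t_2))$ is a non-negative smooth approximation of $\chi_{(t_1,t_2)}$. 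Its admissibility (non-negativity, vanishing outside $\Omega\times(0,T)$, spatial Sobolev regularity, and bounded time derivative) follows directly from Lemma \ref{lem:steklov_mol_properties}, since the Steklov average preserves the homogeneous Dirichlet class and commutes with spatial differentiation.

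The elliptic and source terms are then rewritten via the Fubini duality above, converting $[\cdot]_h$ acting on the test function into $[\cdot]_{\bar h}$ acting on $A_j$ and $f$. For the parabolic term I would use the explicit formula $\partial_s[\phi\eta^\varepsilon]_h(x,s) = h^{-1}\bigl(\phi\eta^\varepsilon(x,s+h)-\phi\eta^\varepsilon(x,s)\bigr)$, the substitution $r=s+h$, and the compact temporal support of $\phi\eta^\varepsilon$ to obtain the discrete integration-by-parts identity
\[
-\iint_{\Omega_T} u\,\partial_s\varphi\,\d x\d s \;=\; \iint_{\Omega_T} \phi\eta^\varepsilon\,\partial_s[u]_{\bar h}\,\d x\d s.
\]
Sending $\varepsilon \to 0$ by dominated convergence, together with the $L^p$-continuity of the Steklov averages (again Lemma \ref{lem:steklov_mol_properties}), produces the claimed inequality for $h < t_1 < t_2 < T-h$. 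The remaining endpoint cases $t_1 < h$ and $t_2 > T-h$ are handled via the natural convention that $[\cdot]_{\bar h}$ is extended by zero on $(0,h)$, and a limit in the time endpoints. The only (mild) technical obstacle is ensuring that the discrete integration by parts produces no temporal boundary contributions, which is precisely why the support of $\eta^\varepsilon$ must be kept strictly inside $(h,T-h)$.
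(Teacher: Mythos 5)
Your argument is correct and is precisely the standard Steklov-duality proof that the paper implicitly relies on (the lemma is stated there without proof): test with $[\phi\,\eta^\varepsilon]_h$, use the Fubini identity to shift the forward average onto $A$, $f$ and the discrete integration by parts onto $u$, then let $\varepsilon\to 0$. The only care points are ones you already flag and which are really imprecisions of the statement rather than of your proof: the inequality can only hold for non-negative $\phi$ (as in the defining Definition \ref{def:subsupersol}, and as used in Proposition \ref{prop:comp_principle_special_case}), and for $t_1<h$ the quantity $\partial_t[u]_{\bar h}$ requires an extension convention, so the clean statement is for $h\leq t_1<t_2\leq T$ — which suffices, since in all applications $h\to0$ is taken before the endpoints matter.
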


 \noindent The proof of the following result utilizes methods from \cite[Lemma A.1]{BoeDuLi}  and \cite[Lemma 2.4]{BoeStru}. 

\begin{lem}\label{lem:max_of_subsol_is_subsol}
Let $u$ be a sub-solution of \eqref{eq:diffusion} in the sense of Definition \ref{def:subsupersol}. Suppose also $u^m \in L^{\bf p}(0,T; W^{1, {\bf p}}(\Omega))$. Then for any $k>0$, the function $v:=\max\{u,k\}$ satisfies  
\begin{align}\label{chioggia}
\iint_{\Omega_T} \sum^N_{j=1} a_j(x,t,v)|\partial_j v^{m_j}|^{p_j-2}\partial_j v^{m_j} \partial_j \eta -  v\partial_t \eta \d x\d t \leq \iint_{\Omega_T} f\chi_{\{u>k\}} \eta \d x \d t,
\end{align}
for all non-negative $\eta \in C^\infty_{\textnormal{o}}(\Omega_T)$. That is, $v$ is a sub-solution to the original equation with the modified right-hand side $f\chi_{\{u>k\}}$.
\end{lem}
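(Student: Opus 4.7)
The plan is to insert a test function that localizes to the set $\{u > k\}$ into the Steklov-averaged weak subsolution inequality from Lemma \ref{lem:steklov_est_for_subsuper}. A natural choice is
\begin{align*}
\phi = \eta\, H_\delta(u - k),
\end{align*}
where $\eta \in C^\infty_{\textnormal{o}}(\Omega_T)$ is non-negative and $H_\delta$ is defined in \eqref{def:H_delta}. The admissibility of $\phi$ is a delicate point which hinges on the assumption $k > 0$: the map $s \mapsto H_\delta(s^{1/m_j} - k)$, extended by zero to $s \leq 0$, is globally Lipschitz, and the chain rule for Sobolev functions applied to $u^{m_j}$ yields
\begin{align*}
\partial_j H_\delta(u-k) = \tfrac{1}{\delta m_j}\,\chi_{\{k < u < k+\delta\}}\, u^{1-m_j}\, \partial_j u^{m_j} \in L^{p_j}(\Omega_T),
\end{align*}
placing $\phi$ in $L^{\bf p}(0,T; \overline W^{1, \bf p}_{\textnormal{o}}(\Omega)) \cap L^\infty(\Omega_T)$.

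For the parabolic term we apply Lemma \ref{lem:almost-heaviside-est} with $f = u - k$ (if necessary after a preliminary time-regularization of $u$ to meet the continuity hypothesis); since $\partial_t[u - k]_{\bar h} = \partial_t[u]_{\bar h}$, it gives
\begin{align*}
\eta\, H_\delta(u - k)\,\partial_t[u]_{\bar h}\;\geq\; \eta\,\partial_t[G_\delta(u - k)]_{\bar h},
\end{align*}
and an integration by parts in time (permissible since $\eta$ is compactly supported) converts the right-hand side into $-\iint \partial_t\eta\,[G_\delta(u - k)]_{\bar h}\,\d x\,\d t$. The elliptic term $\iint [A]_{\bar h}\cdot\nabla\phi\,\d x\,\d t$ splits via the product rule into a piece with $H_\delta(u - k)\nabla\eta$ and a piece with $\eta\,\nabla H_\delta(u - k)$. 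We then pass $h \to 0$, using the $L^{p_j'}$-convergence of $[A_j]_{\bar h}$ and the $L^1$-convergence of $[G_\delta(u - k)]_{\bar h}$ and $[f]_{\bar h}$.

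The crucial observation after the limit $h \to 0$ is that the second elliptic piece equals
\begin{align*}
\sum_{j=1}^N\iint_{\Omega_T} \tfrac{\eta}{\delta m_j}\,\chi_{\{k < u < k+\delta\}}\, a_j(x,t,u)\, u^{1-m_j}\,|\partial_j u^{m_j}|^{p_j}\,\d x\,\d t,
\end{align*}
which is pointwise non-negative by \eqref{cond:unif_ellipt}, so it can be discarded from the left-hand side of the subsolution inequality. Passing $\delta \to 0$ via the Dominated Convergence Theorem yields $G_\delta(u - k) \to (u - k)_+$ and $H_\delta(u - k) \to \chi_{\{u > k\}}$, both dominated by bounded functions on $\supp \eta$. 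After writing $(u - k)_+ = v - k$ and noting that $\iint k\,\partial_t\eta\,\d x\,\d t = 0$, together with the pointwise identity
\begin{align*}
a_j(x,t,u)\,|\partial_j u^{m_j}|^{p_j-2}\,\partial_j u^{m_j}\,\chi_{\{u > k\}} = a_j(x,t,v)\,|\partial_j v^{m_j}|^{p_j-2}\,\partial_j v^{m_j},
\end{align*}
the resulting inequality is precisely \eqref{chioggia}.

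The main obstacle is the admissibility of $\phi$, which genuinely relies on $k > 0$ via the chain-rule argument above, since $s \mapsto s^{1/m_j}$ fails to be Lipschitz at the origin and $u$ itself is not known to have spatial weak derivatives. A secondary technical point is arranging enough time regularity to invoke Lemma \ref{lem:almost-heaviside-est}; this can be handled by first working with $[u]_{\bar h}$, whose time continuity is automatic, and then passing to the limit. The remaining steps are standard and follow the spirit of \cite{BoeDuLi, BoeStru}.
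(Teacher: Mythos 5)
Your proposal is correct in substance but follows a genuinely different route from the paper. The paper tests the weak formulation \eqref{eq:weak_formsupersub} with $\eta\,\varphi_h$ where $\varphi_h=\frac{(\Exx{u^m}-k^m)_+}{(\Exx{u^m}-k^m)_++\mu}$, i.e.\ it builds the cutoff out of the \emph{exponentially time-mollified} $u^m$ and a rational approximation of the indicator; the term $(u-\Exx{u^m}^{1/m})\partial_t\varphi_h\,\eta$ is then discarded by a sign argument based on $\partial_t\Exx{u^m}=\tfrac1h(\Exx{u^m}-u^m)$, and the parabolic term is integrated by parts through the primitive $F_\varepsilon$, with the limits $h\to0$ and then $\mu\to0$ producing $v=\max\{u,k\}$ and $\chi_{\{u>k\}}$. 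You instead work from the Steklov formulation of Lemma \ref{lem:steklov_est_for_subsuper} with the unmollified cutoff $H_\delta(u-k)$ and invoke Lemma \ref{lem:almost-heaviside-est}; this is precisely the mechanism the paper itself uses later in Proposition \ref{prop:comp_principle_special_case}, so it is a perfectly natural alternative. Your admissibility discussion (Lipschitz composition with $u^{m_j}$, crucially using $k>0$), the sign and removal of the $\eta\,\nabla H_\delta(u-k)$ piece, and the final identity $a_j(x,t,u)|\partial_j u^{m_j}|^{p_j-2}\partial_j u^{m_j}\chi_{\{u>k\}}=a_j(x,t,v)|\partial_j v^{m_j}|^{p_j-2}\partial_j v^{m_j}$ all match what the paper's argument delivers. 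What the paper's route buys is that it never needs a weak time derivative of $u$ or any time continuity; what your route buys is brevity and uniformity with the proof of the comparison principle.

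The one point you should tighten is the appeal to Lemma \ref{lem:almost-heaviside-est}: as stated it requires $u-k\in C(0,T;L^1(\Omega))$, which is \emph{not} among the hypotheses of Lemma \ref{lem:max_of_subsol_is_subsol}, and your suggested fix of ``a preliminary time-regularization of $u$'' does not work as phrased, since a time-regularized $u$ is no longer a subsolution and Lemma \ref{lem:steklov_est_for_subsuper} would no longer apply to it. The gap is removable, but by a different observation: the inequality $\partial_t[G_\delta(w)]_{\bar h}\le H_\delta(w)\,\partial_t[w]_{\bar h}$ is a pointwise consequence of the convexity of $G_\delta$ together with the explicit difference-quotient formula for $\partial_t[\,\cdot\,]_{\bar h}$ in Lemma \ref{lem:steklov_mol_properties}(iv), and therefore holds a.e.\ on $\Omega\times(h,T)$ for any $w\in L^1(\Omega_T)$; since you only integrate the result against $\eta$ (compactly supported in time) and then use the $L^1$-convergence $[G_\delta(u-k)]_{\bar h}\to G_\delta(u-k)$, no time continuity is ever needed. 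With that replacement your argument is complete.
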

\begin{proof}{}
   We use the estimate \eqref{eq:weak_formsupersub} for sub-solutions with the test function
   \begin{align*}
   \varphi = \eta \varphi_h, \quad \varphi_h = \frac{(\Exx{u^m} - k^m)_+}{(\Exx{u^m} - k^m)_+ + \mu},
   \end{align*}
where $\Exx{\cdot}$ denotes the reversed exponential time mollification as defined in \eqref{def:mollrev} and $\mu>0$. In the parabolic term we can estimate
\begin{align}\label{Loacker}
    u \partial_t \varphi &=\Exx{u^m}^\frac1m\partial_t \varphi + \big(u - \Exx{u^m}^\frac1m\big) \partial_t \varphi \notag \\
    &= \Exx{u^m}^\frac1m\partial_t \varphi +  \big(u - \Exx{u^m}^\frac1m\big) \varphi_h \partial_t \eta + \big(u - \Exx{u^m}^\frac1m\big)\partial_t \varphi_h \eta 
    \notag\\
    &\leq \Exx{u^m}^\frac1m\partial_t \varphi +  \big(u - \Exx{u^m}^\frac1m\big) \varphi_h \partial_t \eta,
\end{align}
where in the last step we use Lemma \ref{lem:expmolproperties} \ref{expmol2} to calculate
\begin{align*}
    \partial_t \varphi_h = \frac{\mu\partial_t (\Exx{u^m} - k^m)_+}{\big( (\Exx{u^m} - k^m)_+ + \mu \big)^2} = \frac{\mu \chi_{\{\Exx{u^m} > k \}}\partial_t \Exx{u^m}}{\big( (\Exx{u^m} - k^m)_+ + \mu \big)^2} = \frac{\mu \chi_{\{\Exx{u^m} > k \}} \tfrac1h (\Exx{u^m} - u^m)}{\big( (\Exx{u^m} - k^m)_+ + \mu \big)^2},
\end{align*}
to see that the term omitted in the estimate \eqref{Loacker} is non-positive. Thus, we end up with   
\begin{align}\label{chelsea}
&\iint_{\Omega_T} \sum^N_{j=1} a_j(x,t,u)|\partial_j u^{m_j}|^{p_j-2}\partial_j u^{m_j} \partial_j \varphi -   \Exx{u^m}^\frac1m\partial_t \varphi -  \big(u - \Exx{u^m}^\frac1m\big) \varphi_h \partial_t \eta   \d x\d t 
\\
\notag &\quad \leq  \iint_{\Omega_T} f \varphi \d x \d t.
\end{align}
We will perform some estimates and eventually we will pass to the limit as $h\to 0$. In the second term on the left-hand a limit procedure and two integrations by parts show that
\begin{align*}
    \iint_{\Omega_T} \Exx{u^m}^\frac1m\partial_t \varphi \d x \d t & = \lim_{\varepsilon\to 0} \iint_{\Omega_T} (\Exx{u^m}+\varepsilon)^\frac1m\partial_t \varphi \d x \d t
    \\
    &= - \lim_{\varepsilon\to 0} \iint_{\Omega_T} \tfrac1m (\Exx{u^m} + \varepsilon)^{\frac1m-1} \partial_t \Exx{u^m}\varphi \d x \d t 
    \\
    &= - \lim_{\varepsilon\to 0}\iint_{\Omega_T} \tfrac1m (\Exx{u^m}+\varepsilon)^{\frac1m-1} \frac{(\Exx{u^m} - k^m)_+}{(\Exx{u^m} - k^m)_+ + \mu} \partial_t \Exx{u^m}   \eta \d x \d t
    \\
    &= - \lim_{\varepsilon\to 0} \iint_{\Omega_T} \partial_t F_\varepsilon(\Exx{u^m}, k, \mu)  \eta \d x \d t
    = \iint_{\Omega_T} F_0(\Exx{u^m}, k, \mu)  \partial_t \eta \d x \d t,
\end{align*}
where
\begin{align*}
    F_\varepsilon(\theta,k,\mu) := k+ \int_{k^{m}}^\theta \tfrac{1}{m} (s+\varepsilon)^{\frac{1}{m}-1} \frac{(s-k^{m})_+}{(s-k^{m})_+ + \mu} \d s, \quad \varepsilon \geq 0.
\end{align*}
For the last term on the left-hand side of \eqref{chelsea} we note that
\begin{align}
   \lim_{h\to 0} \iint_{\Omega_T} \big(u - \Exx{u^m}^\frac1m\big) \varphi_h \partial_t \eta   \d x\d t = 0.
\end{align}
Noting that
\begin{align*}
    \partial_j \varphi = \partial_j \eta \varphi_h + \eta \partial_j \varphi_h= \partial_j \eta \varphi_h + \eta \frac{\mu \chi_{\{\Exx{u^m} > k \}}\partial_j \Exx{u^m}}{\big( (\Exx{u^m} - k^m)_+ + \mu \big)^2},
\end{align*}
we can calculate
\begin{align}\label{nonono}
   &\iint_{\Omega_T} \sum^N_{j=1} a_j(x,t,u)|\partial_j u^{m_j}|^{p_j-2}\partial_j u^{m_j} \partial_j \varphi \d x \d t\notag 
   \\
   & = \iint_{\Omega_T} \sum^N_{j=1}  a_j(x,t,u)|\partial_j u^{m_j}|^{p_j-2}\partial_j u^{m_j} \partial_j \eta \varphi_h \d x \d t \notag 
   \\ 
   &\quad + \iint_{\Omega_T} \sum^N_{j=1} a_j(x,t,u)|\partial_j u^{m_j}|^{p_j-2}\partial_j u^{m_j}   \eta \frac{\mu \chi_{\{\Exx{u^m} > k \}}\partial_j \Exx{u^m}}{\big( (\Exx{u^m} - k^m)_+ + \mu \big)^2}\d x \d t 
   \notag 
   \\
   &\xrightarrow[h\to 0]{} \iint_{\Omega_T} \sum^N_{j=1}  a_j(x,t,u)|\partial_j u^{m_j}|^{p_j-2}\partial_j u^{m_j} \partial_j \eta \frac{(u^m - k^m)_+}{(u^m - k^m)_+ + \mu}  \d x \d t
   \notag
   \\
   &\quad\quad + \iint_{\Omega_T} \sum^N_{j=1} a_j(x,t,u)|\partial_j u^{m_j}|^{p_j-2}\partial_j u^{m_j}  \eta \frac{\mu \chi_{\{ u^m > k \}}\partial_j u^m}{\big( (u^m - k^m)_+ + \mu \big)^2}\d x \d t.
\end{align}
The last term is non-negative since, by the Chain Rule $\partial_j u^{m_j}\partial_j u^m \geq 0$.
Taking into account these limits and estimates we end up with
\begin{align*}
    &\iint_{\Omega_T} \sum^N_{j=1}  a_j(x,t,u)|\partial_j u^{m_j}|^{p_j-2}\partial_j u^{m_j} \partial_j \eta \frac{(u^m - k^m)_+}{(u^m - k^m)_+ + \mu} - F_0(u^m,k,\mu) \partial_t \eta \d x \d t  
    \\
    & \quad \leq \iint_{\Omega_T} f \eta \frac{(u^m - k^m)_+}{(u^m - k^m)_+ + \mu}  \d x \d t. 
\end{align*}
Finally, passing to the limit $\mu \to 0$ we end up with \eqref{chioggia}.
\end{proof}

The following result is a preliminary version of the comparison principle where we assume that the subsolution is bounded from below by a positive constant on the whole domain $\Omega_T$.
\begin{prop}\label{prop:comp_principle_special_case}
    Suppose that $u$ is a weak subsolution with right-hand side $f_u$ and that $v$ is a weak supersolution with right-hand side $f_v$ in the sense of Definition \ref{def:subsupersol}. Assume furthermore that $u,v \in C([0,T]; L^1(\Omega))$ and that $u^m$ and  $v^m$ belong to $L^{\bf p}(0,T; W^{1, {\bf p}}(\Omega))$.
Assume that 
\begin{equation}\label{cases}
    u \geq \varepsilon \quad \mbox{a.e. in $\Omega_T$}
\end{equation}
for some $\varepsilon>0$. If $v\geq u$ on $\partial \Omega \times (0,T)$ in the sense of Definition \ref{def:boundary-value-ineq}, we have
\begin{equation}\label{est:comp_principle_special_case}
\int_{\Omega} (u-v)_+ (x,t_2) \d x \leq \int_{\Omega} (u-v)_+ (x,t_1) \d x + \iint_{\Omega \times[t_1,t_2]\cap \{v<u\}}(f_u - f_v) \d x \d t,
\end{equation}
for every $0\leq t_1<t_2\leq T$.
\end{prop}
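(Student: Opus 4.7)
The natural test function, suggested by the formulation of the boundary condition in Definition \ref{def:boundary-value-ineq}, is
\begin{align*}
\phi = H_\delta(u^m - v^m)\,\zeta^\sigma_{t_1,t_2}(t),
\end{align*}
where $\zeta^\sigma_{t_1,t_2}$ is the trapezoidal cut-off from \eqref{func:trapezoid} and $H_\delta$ is the function from \eqref{def:H_delta}. Since $v^m \geq u^m$ on $\partial \Omega \times (0,T)$, the function $\phi$ vanishes there; combined with the Lipschitz character of $H_\delta$ and the fact that $(u^m - v^m)_+ \in L^{\bf p}(0,T;\overline{W}^{1,{\bf p}}_{\textnormal{o}}(\Omega))$, this ensures $\phi \in L^{\bf p}(0,T;\overline{W}^{1,{\bf p}}_{\textnormal{o}}(\Omega)) \cap L^\infty(\Omega_T)$, so it is admissible in the Steklov-averaged formulations of Lemma \ref{lem:steklov_est_for_subsuper}. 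Testing those inequalities against $\phi$ for $u$ and $v$ respectively and subtracting, the plan is to arrive at
\begin{align*}
\int_{t_1}^{t_2}\!\!\int_\Omega \phi\,\partial_t[u-v]_{\bar h}\d x \d t + \int_{t_1}^{t_2}\!\!\int_\Omega [\mathcal A^u - \mathcal A^v]_{\bar h}\cdot\nabla\phi\d x\d t \leq \int_{t_1}^{t_2}\!\!\int_\Omega [f_u-f_v]_{\bar h}\,\phi\d x\d t,
\end{align*}
with $\mathcal A^w_j := a_j(x,t,w)|\partial_j w^{m_j}|^{p_j-2}\partial_j w^{m_j}$, and then to pass to the limits $h\to 0$, $\sigma\to 0$, $\delta\to 0$ in this order to recover \eqref{est:comp_principle_special_case}.

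For the elliptic term I would use the splitting strategy of Theorem \ref{thm:comparison_non-doubly-nonlinear}, decomposing $\mathcal A^u_j - \mathcal A^v_j$ into the monotone part $a_j(u)\bigl(|\partial_j u^{m_j}|^{p_j-2}\partial_j u^{m_j} - |\partial_j v^{m_j}|^{p_j-2}\partial_j v^{m_j}\bigr)$ and the Lipschitz-in-$u$ remainder $(a_j(u)-a_j(v))|\partial_j v^{m_j}|^{p_j-2}\partial_j v^{m_j}$. Because $\nabla \phi$ is supported on $\{0<u^m-v^m<\delta\}$ and $u\ge \varepsilon$, for $\delta$ sufficiently small we have $v\ge \varepsilon/2$ on this set, so the chain rule $\partial_j w^m = (m/m_j)\,w^{m-m_j}\,\partial_j w^{m_j}$ is available for both $w = u$ and $w=v$. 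The monotone contribution then pairs with $\nabla\phi$ to give a non-negative quantity which may be discarded, while the factor $1/\delta$ carried by $H_\delta'$ in the Lipschitz remainder is absorbed by $|u-v| \lesssim \delta$ on the same set, leaving an integrand that vanishes a.e.\ as $\delta \to 0$ since $\partial_j u^m = \partial_j v^m$ a.e.\ on $\{u = v\}$.

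The main obstacle is the parabolic term, because Lemma \ref{lem:almost-heaviside-est} cannot be applied directly: the argument of $H_\delta$ is $u^m - v^m$ whereas $\partial_t[u-v]_{\bar h}$ is framed in terms of $u-v$. I would overcome this by introducing the auxiliary potential
\begin{align*}
\Phi_\delta(a,b) := \int_b^a H_\delta(s^m - b^m)\,\d s \text{ for }a\ge b\ge 0, \qquad \Phi_\delta(a,b) := 0 \text{ otherwise},
\end{align*}
which satisfies $\partial_a \Phi_\delta = H_\delta(a^m - b^m)$ and $\partial_b \Phi_\delta = -R_\delta(a,b)$, with $R_\delta(a,b) := m b^{m-1}\int_b^a H_\delta'(s^m - b^m)\,\d s$. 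A direct computation shows $R_\delta \to \chi_{\{a > b\}}$ a.e.\ as $\delta \to 0$ whenever $b$ stays bounded away from $0$. The monotonicity of $s\mapsto H_\delta(s^m - b^m)$ then yields a Steklov analogue of Lemma \ref{lem:almost-heaviside-est} of the form
\begin{align*}
H_\delta(u^m(t)-v^m(t))\,\partial_t[u-v]_{\bar h}(t) \ge \tfrac{1}{h}\bigl[\Phi_\delta(u,v)(t) - \Phi_\delta(u,v)(t-h)\bigr] - \mathcal E_{\delta,h}(t),
\end{align*}
with an error $\mathcal E_{\delta,h}$ dominated by $|v(t)-v(t-h)|\,|R_\delta - H_\delta(u^m - v^m)|$; the hypothesis $u\ge\varepsilon$ keeps $v\ge\varepsilon/2$ on the critical set, so $R_\delta - H_\delta(u^m-v^m) \to 0$ there and the error vanishes in $L^1(\Omega_T)$ as $h\to 0$ and $\delta \to 0$. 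Since $\Phi_\delta(u,v) \to (u-v)_+$ pointwise as $\delta \to 0$ and $u, v\in C([0,T];L^1(\Omega))$, after the three successive limits the parabolic term contributes $\int_\Omega(u-v)_+(t_2)\,\d x - \int_\Omega(u-v)_+(t_1)\,\d x$, the elliptic remainder disappears, and the right-hand side converges to $\iint_{\Omega\times[t_1,t_2]\cap\{v<u\}}(f_u-f_v)\,\d x\,\d t$, yielding \eqref{est:comp_principle_special_case}.
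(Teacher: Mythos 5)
Your choice of test function diverges from the paper's in a way that creates a genuine problem in the parabolic term. The paper takes $\phi = H_\delta(u-v)$ (argument $u-v$, with $\delta \leq \varepsilon/2$), precisely so that the argument of $H_\delta$ matches the quantity $\partial_t[u-v]_{\bar h}$ and Lemma \ref{lem:almost-heaviside-est} applies as a \emph{pointwise algebraic} inequality, with no error term and no time regularity needed beyond $C([0,T];L^1(\Omega))$. Your $\phi = H_\delta(u^m-v^m)$ breaks this matching, and the auxiliary potential $\Phi_\delta$ does not repair it: the one-variable convexity argument only controls $\Phi_\delta(u(t),v(t))-\Phi_\delta(u(t-h),v(t))$ by $H_\delta(u^m-v^m)(t)\,(u(t)-u(t-h))$, while the increment in the second slot produces, after restoring the $\tfrac1h$ carried by the Steklov difference quotient, an error of size $\tfrac1h\,|v(t)-v(t-h)|\,|R_\delta - H_\delta(u^m-v^m)|$. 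For fixed $\delta$, $|R_\delta - H_\delta|$ is not small, and $\tfrac1h|v(t)-v(t-h)|$ is not bounded in $L^1(\Omega_T)$ as $h\to 0$ under the sole assumption $v\in C([0,T];L^1(\Omega))$ (a weak time derivative of $v$ is exactly what sub/supersolutions lack). Since you must send $h\to 0$ before $\delta\to 0$ (otherwise $H_\delta'$ blows up in the elliptic term), the error cannot be dismissed, and this step of the proposal fails. The paper's strategy is to accept the mismatch in the \emph{elliptic} term instead, where the lower bounds $u\geq\varepsilon$ and $\hat v:=\max\{v,\varepsilon/2\}\geq \varepsilon/2$ make the chain-rule manipulations $\partial_j u - \partial_j\hat v = \tfrac1{m_j}u^{1-m_j}(\partial_j u^{m_j}-\partial_j\hat v^{m_j}) + \tfrac1{m_j}(u^{1-m_j}-\hat v^{1-m_j})\partial_j\hat v^{m_j}$ rigorous and the second piece controllable by Lipschitz continuity of $s\mapsto s^{1-m_j}$ on $[\varepsilon/2,\infty)$.

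There is a second, smaller gap in your elliptic term. You claim that the monotone part $a_j(u)\bigl(|\partial_j u^{m_j}|^{p_j-2}\partial_j u^{m_j}-|\partial_j v^{m_j}|^{p_j-2}\partial_j v^{m_j}\bigr)$ paired with $\partial_j\phi$ is non-negative. But $\partial_j(u^m-v^m)=\tfrac{m}{m_j}\bigl(u^{m-m_j}\partial_j u^{m_j}-v^{m-m_j}\partial_j v^{m_j}\bigr)$ carries \emph{different} weights on the two gradients whenever $m_j\neq m$, and $\bigl(|\xi|^{p-2}\xi-|\eta|^{p-2}\eta\bigr)\bigl(a\xi-b\eta\bigr)$ with $a\neq b$ has no sign. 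You need the further splitting $u^{m-m_j}(\partial_j u^{m_j}-\partial_j v^{m_j}) + (u^{m-m_j}-v^{m-m_j})\partial_j v^{m_j}$, discarding the first (genuinely monotone) piece and killing the second via the Lipschitz continuity of $s\mapsto s^{m-m_j}$ on $[\varepsilon/2,\infty)$, the bound $H_\delta'\cdot|u-v|\leq C_\varepsilon$ on the support of $H_\delta'$, and dominated convergence — this is exactly the decomposition the paper performs in \eqref{panna}. This part is fixable; the parabolic term is the step that sinks the proposal as written.
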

   
\begin{proof}{}
Let 
\begin{equation}\label{phi}
\phi = H_\delta(u-v), \quad \mbox{where $0<\delta \leq \min\{1, \tfrac{\varepsilon}{2}\}.$}
\end{equation}
We prove that $\phi$ is an admissible test function in the estimates of Lemma \ref{lem:steklov_est_for_subsuper}. Due to the lower bound for $u$ we can see by the Chain Rule and the function space of $u$ that $u$ itself has a gradient. We cannot draw the same conclusion for $v$, however $\hat v:= \max\{v,\tfrac{\varepsilon}{2}\}$ has a gradient due to the Chain Rule. Noting that in the range of the parameters $\delta$ and $\varepsilon$ we have 
\begin{align}\label{partial_jphi}
    \phi = H_\delta(u - v) &= H_\delta(u - \hat v),
   \nonumber \\
    \partial_j \phi &= \delta^{-1}\chi_{\{0< u-\hat v < \delta\}} \partial_j(u-\hat v),
\end{align}
we see that $\partial_j \phi$ is in $L^{p_j}(\Omega_T)$ as required. To see that we can also approximate $\phi$ by smooth functions with compact support in space, we recall that $u^m$ has the representation
\begin{align*}
    u^m = v^m + w + \psi,
\end{align*}
where $w$ in $L^{\bf p}(0,T; W^{1, {\bf p}}(\Omega))$, $w\leq 0$, and $\psi$ in $L^{\bf p}(0,T; \overline{W}^{1, {\bf p}}_{\textnormal{o}}(\Omega))$, see the comment after Definition \ref{def:boundary-value-ineq}. Thus we can pick a sequence $(\psi_k) \subset C^\infty(\Omega\times(0,T))$ with $\supp \psi_k \subset K_k \times [0,T]$ for some $K_k \Subset \Omega$ converging to $\psi$ in $L^{\bf p}(0,T; W^{1, {\bf p}}(\Omega))$. We define
\begin{align*}
    u_k &:= \max\{ (\tfrac{\varepsilon}{2})^m, v^m + w + \psi_k\}^\frac1m,
    \\
    \phi_k &:= H_\delta(u_k - \hat v).
\end{align*} 
Note that $\phi_k$ vanishes outside of $K_k\times[0,T]$. Thus, we can approximate $\phi_k$ by smooth functions that are compactly supported in space by convolution with standard mollifiers. To conclude that $\phi$ is a valid test function it is thus sufficient to show that $\phi$ can be approximated by $\phi_k$ in the norm of $L^{\bf p}(0,T; W^{1, {\bf p}}(\Omega))$. Due to the positive lower bound for $u_k$, we see that $u_k$ has a gradient and that $\partial_j u_k$ converges to $\partial_j u$ in $L^{p_j}(\Omega_T)$. By passing to a subsequence we may also assume that $u_k$ converges pointwise a.e. to $u$, and by the boundedness of $H_\delta$ we can then conclude the convergence of $\phi_k$ to $\phi$ in every $L^q(\Omega_T)$, $q< \infty$. Finally, we can write the partial derivatives as
\begin{align*}
    \partial_j \phi_k = \delta^{-1}\chi_{\{0< u_k - \hat v < \delta\}} \partial_j(u_k - \hat v).
\end{align*}
Comparing this with the expression for $\partial_j \phi$ in \eqref{partial_jphi} we can deduce the convergence in $L^{p_j}(\Omega_T)$ using the aforementioned convergences of $u_k$ and $\partial_j u_k$. 

Applying Lemma \ref{lem:steklov_est_for_subsuper} to $u$ and $v$ with the test function $\phi$ and subtracting the two estimates we have
\begin{align}\label{uv_ests_combined}
    &\int^{t_2}_{t_1}\int_{\Omega} \phi \partial_t [u-v]_{\bar h} + \big[A(x,\cdot,u,(\partial_j u^{m_j})^N_{j=1}) - A(x,\cdot,v,(\partial_j v^{m_j})^N_{j=1})\big]_{\bar h} \cdot \nabla \phi \,\d x \d t 
    \\
   \notag & \leq \int^{t_2}_{t_1}\int_{\Omega} [f_u - f_v]_{\bar h} \phi \d x \d t.
\end{align}
  By Lemma \ref{lem:almost-heaviside-est} with $f=u-v$, we have that 
  \begin{align*}
   \int^{t_2}_{t_1}\int_{\Omega} \phi \partial_t [u-v]_{\bar h}\d x \d t &= \int^{t_2}_{t_1}\int_{\Omega} \partial_t [u-v]_{\bar h} H_{\delta}(u-v) \d x \d t 
   \geq \int^{t_2}_{t_1}\int_{\Omega} \partial_t [G_{\delta}(u-v)]_{\bar h} \d x \d t
   \\
   &= \int_{\Omega\times\{t_2\}}[G_{\delta}(u-v)]_{\bar h} \d x  - \int_{\Omega\times\{t_1\}} [G_{\delta}(u-v)]_{\bar h} \d x,
  \end{align*}
  and combining this estimate with \eqref{uv_ests_combined} we end up with
  \begin{align*}
     \int_{\Omega\times\{t_2\}}[G_{\delta}(u-v)]_{\bar h} \d x &\leq     \int^{t_2}_{t_1}\int_{\Omega} \big[A(x,\cdot,v,(\partial_j v^{m_j})^N_{j=1})-A(x,\cdot,u,(\partial_j u^{m_j})^N_{j=1}) \big]_{\bar h} \cdot \nabla \phi \,\d x \d t \nonumber \\
     &\quad + \int^{t_2}_{t_1}\int_{\Omega} [f_u - f_v]_{\bar h} H_{\delta}(u-v) \d x \d t +\int_{\Omega\times\{t_1\}} [G_{\delta}(u-v)]_{\bar h} \d x .\nonumber 
  \end{align*}
Passing to the limit $h\to 0$ in the integrals involving $G_\delta$ is justified due to the time continuity of $u$ and $v$. In the other terms we can pass to the limit $h\to 0$ due to H\"older's inequality. Thus, we have
\begin{align}\label{est:scirocco}
     \int_{\Omega\times\{t_2\}} G_{\delta}(u-v) \d x &\leq     \int^{t_2}_{t_1}\int_{\Omega} \big( A(x,t,v,(\partial_j v^{m_j})^N_{j=1})-A(x,t,u,(\partial_j u^{m_j})^N_{j=1})\big) \cdot \nabla \phi \,\d x \d t \nonumber 
     \\
     &\quad + \int^{t_2}_{t_1}\int_{\Omega} (f_u - f_v) H_{\delta}(u-v) \d x \d t +\int_{\Omega\times\{t_1\}} G_{\delta}(u-v) \d x .
  \end{align}
We split the elliptic term as follows
\begin{align}\label{mangiatorella}
&\int^{t_2}_{t_1}\int_{\Omega} \left(A(x,t,v,(\partial_j v^{m_j})^N_{j=1})-A(x,t,u,(\partial_j u^{m_j})^N_{j=1})\right) \cdot \nabla \phi \,\d x \d t 
\\
&=   \int^{t_2}_{t_1}\int_{\Omega} \left(A(x,t,v,(\partial_j v^{m_j})^N_{j=1})- A(x,t,u,(\partial_j v^{m_j})^N_{j=1})\right) \cdot \nabla \phi \,\d x \d t 
\nonumber\\
\nonumber&\quad\quad +  \int^{t_2}_{t_1}\int_{\Omega} \left(A(x,t,u,(\partial_j v^{m_j})^N_{j=1})- A(x,t,u,(\partial_j u^{m_j})^N_{j=1})\right) \cdot \nabla \phi \,\d x \d t.
\end{align}
Using the precise expression for $\partial_j \phi$ established in \eqref{partial_jphi}, the expression for $A$ and the Lipschitz continuity \eqref{cond:lip_cont} of the coefficients $a_j$, we can estimate the first integral on the right hand side of \eqref{mangiatorella} as follows
\begin{align}\label{est:accardi}
  &\Big| \int^{t_2}_{t_1}\int_{\Omega} \left(A(x,t,v,(\partial_j v^{m_j})^N_{j=1})-A(x,\cdot,u,(\partial_j v^{m_j})^N_{j=1})\right) \cdot \nabla \phi  \,\d x \d t \Big|
  \\
  & \leq \sum_{j=1}^N \int^{t_2}_{t_1}\hspace{-1mm}\int_{\Omega}  |a_i(x,t,v) - a_i(x,t,u)||\partial_j v^{m_j}|^{p_j-1} 
  \chi_{\{0< u-\hat{v}<\delta\}}\tfrac1\delta |\partial_j u-\partial_j \hat{v}|\, \d x\d t \nonumber \\
  &\leq c \sum_{j=1}^N \int^{t_2}_{t_1}\int_{\Omega} |u-v||\partial_j v^{m_j}|^{p_j-1} 
  \chi_{\{0< u-\hat{v}<\delta\}}\tfrac1\delta |\partial_j u-\partial_j \hat{v}|\, \d x\d t
  \nonumber
  \\
  \nonumber & \leq c \sum_{j=1}^N \int^{t_2}_{t_1}\int_{\Omega}|\partial_j v^{m_j}|^{p_j-1} 
  \chi_{\{0< u-\hat{v}<\delta\}} |\partial_j u-\partial_j \hat{v}|\, \d x\d t, 
\end{align}
where we use the fact that $v=\hat v$ on the set where $u-\hat v < \delta$. The expression on the last line of \eqref{est:accardi} disappears in the limit $\delta \to 0$ due to the Dominated Convergence Theorem. 

\noindent The second integral on the right-hand side of \eqref{mangiatorella} can be written as
\begin{align}\label{fontalba}
    &\int^{t_2}_{t_1}\int_{\Omega} \left(A(x,t,u,(\partial_j v^{m_j})^N_{j=1})- A(x,t,u,(\partial_j u^{m_j})^N_{j=1})\right) \cdot \nabla \phi \,\d x \d t \\
    \nonumber &  =\sum_{j=1}^N \int^{t_2}_{t_1}\int_{\Omega} a_j(x,t,u)(|\partial_j v^{m_j}|^{p_j-2}\partial_j v^{m_j} - |\partial_j u^{m_j}|^{p_j-2}\partial_j u^{m_j})H_\delta'(u- \hat v) (\partial_j u - \partial_j \hat v) \d x \d t.
    \end{align}
By the Chain Rule, we have
\begin{align*}
    \partial_j u - \partial_j \hat v &= \tfrac{1}{m_j}(u^{1-m_j}\partial_j u^{m_j} - \hat{v}^{1-m_j}\partial_j \hat{v}^{m_j})
    \\
    &= \tfrac1{m_j} u^{1-m_j}(\partial_j u^{m_j} - \partial_j \hat{v}^{m_j}) + \tfrac1{m_j}(u^{1-m_j} - \hat{v}^{1-m_j}) \partial_j \hat{v}^{m_j}.
\end{align*}
Using this calculation we can split the right-hand side of  \eqref{fontalba} as follows
\begin{align}\label{panna}
    &\sum_{j=1}^N \int^{t_2}_{t_1}\int_{\Omega} a_j(x,t,u)(|\partial_j v^{m_j}|^{p_j-2}\partial_j v^{m_j} - |\partial_j u^{m_j}|^{p_j-2}\partial_j u^{m_j})
    \\ 
   \nonumber &\hspace{2cm}\times H_\delta'(u- \hat v)  \tfrac1{m_j} u^{1-m_j}(\partial_j u^{m_j} - \partial_j \hat{v}^{m_j})\d x \d t
    \\
    \nonumber & +\sum_{j=1}^N \int^{t_2}_{t_1}\int_{\Omega} a_j(x,t,u)(|\partial_j v^{m_j}|^{p_j-2}\partial_j v^{m_j} - |\partial_j u^{m_j}|^{p_j-2}\partial_j u^{m_j})
    \\ 
    \nonumber &\hspace{2cm}\times H_\delta'(u- \hat v) \tfrac1{m_j}(u^{1-m_j} - \hat{v}^{1-m_j}) \partial_j \hat{v}^{m_j} \d x \d t.
\end{align}
The first sum is nonpositive due to monotonicity and the fact that $v = \hat v$ on the set where $H_\delta'(u-\hat v)$ is nonzero. We show that the second sum converges to zero in the limit $\delta \to 0$. To see this, note that the map $s \mapsto s^{1-m_j}$ is Lipschitz on the interval $[\varepsilon/2, \infty)$ since $m_j \geq 1$. Since $u$ and $\hat v$ take values in this interval, we can estimate
\begin{align*}
    H_\delta'(u- \hat v) |u^{1-m_j} - \hat{v}^{1-m_j}| \leq c  H_\delta'(u- \hat v)|u-\hat v| \leq c.
\end{align*}
Thus, we can use the Dominated Convergence Theorem to see that the second sum in \eqref{panna} vanishes in the limit $\delta \to 0$. We have verified that the term involving $A$ on the right-hand side of \eqref{est:scirocco} can be written as a sum of a non-positive term and other terms that vanish in the limit $\delta \to 0$. Therefore, passing to the limit $\delta \to 0$, we end up with \eqref{est:comp_principle_special_case}.
\end{proof}
Using Proposition \ref{prop:comp_principle_special_case}, we can now prove the comparison principle of Theorem \ref{thm:comparison_principle}. Note that this is a more general result compared to Proposition \ref{prop:comp_principle_special_case} as we only require the supersolution to be  bounded from below on the lateral boundary by a positive constant.

\vspace{2mm}
\begin{proof}[Proof of Theorem \ref{thm:comparison_principle}]
Let $\kappa \in (0,\varepsilon)$ and set $u_\kappa = \max\{\kappa, u\}$. Due to Lemma \ref{lem:max_of_subsol_is_subsol} we can apply Proposition \ref{prop:comp_principle_special_case} to $u_\kappa$ and $v$ to conclude that 
\begin{equation*}
\int_{\Omega} (u_\kappa - v)_+ (x,t_2) \d x \leq \int_{\Omega} (u_\kappa - v)_+ (x,t_1) \d x + \iint_{\Omega \times[t_1,t_2]\cap \{v<u_\kappa \}}(f_u\chi_{\{u>\kappa\}} - f_v) \d x \d t.
\end{equation*}
Since $\chi_{\{ v < u_\kappa \}}$  converges to $\chi_{\{v<u \}\cup\{u=v=0\}}$ pointwise as $\kappa \to 0$, we use the Dominated Convergence Theorem to obtain \eqref{est:comp_principle_general}. 
Finally, if $u(0)\leq v(0)$ and $0\leq f_u \leq f_v$, we see that the last integral on the right-hand side of \eqref{est:comp_principle_general} vanishes, and the integrand in the first integral is non-positive. This leads to the estimate
\begin{align*}
    \int_{\Omega\times \{t\}} (u-v)_+ \d x \leq 0,
\end{align*}
for all $t \in (0,T)$, which confirms \eqref{u<v}.
\end{proof}

\appendix 
\section{Completeness}\label{app:completeness}
\begin{lem}\label{lem:Vmp-completeness}
The metric space $V^{\bf p,\bf m}_q$ is complete. 
\end{lem}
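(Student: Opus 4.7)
The plan is straightforward: we unpack the metric, extract limits from the Lebesgue-space completeness, and then verify that the limit of the weak derivatives is the weak derivative of the limit. Let $(u_k)_k$ be Cauchy in $V^{\bf p, \bf m}_q$. By the definition of $d$, the sequence $(u_k)_k$ is Cauchy in $L^q(\Omega_T)$ and, for each $j\in\{1,\dots,N\}$, the sequence $(\partial_j u_k^{m_j})_k$ is Cauchy in $L^{p_j}(\Omega_T)$. By completeness of the Lebesgue spaces, there exist $u\in L^q(\Omega_T)$ and $w_j \in L^{p_j}(\Omega_T)$ with $u_k \to u$ in $L^q(\Omega_T)$ and $\partial_j u_k^{m_j} \to w_j$ in $L^{p_j}(\Omega_T)$. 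The task then reduces to identifying $w_j$ with the weak derivative $\partial_j u^{m_j}$.

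The only technical point is to show that $u_k^{m_j} \to u^{m_j}$ in $L^1(\Omega_T)$, which is what allows us to pass to the limit inside the distributional derivative. Since $\Omega_T$ has finite measure and $q\geq m_j$, convergence in $L^q$ implies convergence in $L^{m_j}$. Using the convention $s^{m_j} = |s|^{m_j-1}s$ and the elementary bound $\bigl||a|^{m_j-1}a - |b|^{m_j-1}b\bigr| \leq m_j(|a|^{m_j-1} + |b|^{m_j-1})|a-b|$ together with H\"older's inequality with exponents $m_j/(m_j-1)$ and $m_j$, we obtain
\begin{align*}
\int_{\Omega_T} \bigl|u_k^{m_j} - u^{m_j}\bigr| \d x\d t \leq c\bigl(\norm{u_k}_{L^{m_j}(\Omega_T)}^{m_j-1} + \norm{u}_{L^{m_j}(\Omega_T)}^{m_j-1}\bigr)\norm{u_k - u}_{L^{m_j}(\Omega_T)}.
\end{align*}
The $L^{m_j}$-norms of $u_k$ are uniformly bounded as a consequence of the $L^q$-convergence, so the right-hand side tends to zero, establishing $u_k^{m_j} \to u^{m_j}$ in $L^1(\Omega_T)$.

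Now for any test function $\varphi \in C^\infty_{\textnormal{o}}(\Omega_T)$ we have, integrating by parts and passing to the limit,
\begin{align*}
\int_{\Omega_T} w_j \varphi \d x \d t = \lim_{k\to\infty} \int_{\Omega_T} \partial_j u_k^{m_j} \varphi \d x \d t = -\lim_{k\to\infty} \int_{\Omega_T} u_k^{m_j} \partial_j \varphi \d x \d t = -\int_{\Omega_T} u^{m_j} \partial_j \varphi \d x \d t,
\end{align*}
where the first limit uses the $L^{p_j}$-convergence paired with $\varphi \in L^{p_j'}(\Omega_T)$, and the last uses the $L^1$-convergence paired with $\partial_j \varphi \in L^\infty(\Omega_T)$. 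Hence $w_j = \partial_j u^{m_j}$ weakly, and since $w_j \in L^{p_j}(\Omega_T)$ we conclude $u \in V^{\bf p, \bf m}_q$ with $d(u_k, u) \to 0$. No real obstacle is expected; the single delicate point is the Lipschitz-type estimate for $s\mapsto |s|^{m_j-1}s$ on bounded $L^{m_j}$-sets used to upgrade $L^q$-convergence of $u_k$ to $L^1$-convergence of the nonlinear quantities $u_k^{m_j}$.
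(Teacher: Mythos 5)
Your proof is correct and follows essentially the same route as the paper's: extract the $L^q$ and $L^{p_j}$ limits, upgrade to $u_k^{m_j}\to u^{m_j}$ in $L^1(\Omega_T)$ via the Lipschitz-type bound for $s\mapsto|s|^{m_j-1}s$ together with H\"older's inequality and $q\geq m_j$, and identify the weak derivative by passing to the limit against test functions. The only difference is that the paper additionally treats the case $m_j<1$ (via the estimate $|a^{m_j}-b^{m_j}|\leq c|a-b|^{m_j}$ from Lemma \ref{lem:elementary_real}), which is vacuous under the standing assumption $m\geq 1$, so your argument covers all relevant cases.
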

\begin{proof}{}
 Let $(u_k)_{k = 1}^\infty$ be a Cauchy sequence in $V^{\bf p,\bf m}_q$. Then $(u_k)_{k = 1}^\infty$ is a Cauchy sequence in $L^q(\Omega)$ and $(\partial_j u_k^{m_j})_{k=1}^\infty$ is a Cauchy sequence in $L^{p_j}(\Omega)$. By the completeness of $L^p$-spaces, there is $u\in L^q(\Omega)$ and $v_j \in L^{p_j}(\Omega)$ for every $j \in \{1,\dots N\}$ such that 
 \begin{align*}
  u_k \rightarrow u \textnormal{ in } L^q(\Omega), \quad \partial_j u_k^{m_j} \rightarrow v_j \textnormal{ in } L^{p_j}(\Omega) \textnormal{ for all } j \in \{1,\dots, N\}.
 \end{align*}
It is now sufficient to show that $v_j = \partial_j u^{m_j}$. If $m_j \geq 1$ then 
\begin{align*}
 |u_k^{m_j} - u^{m_j}| \leq m_j(|u_k|^{m_j - 1} + |u|^{m_j-1})|u_k - u|,
\end{align*}
and we can use H\"older's inequality and the fact that $q\geq m_j$ to conclude that $u_k^{m_j} \to u^{m_j}$ in $L^1(\Omega_T)$. If $m_j < 1$ we can instead use the basic estimate \eqref{est:exponent_inside} with $\gamma= 1/m_j$ to see that
\begin{align*}
 |u_k^{m_j} - u^{m_j}| \leq c |u_k - u|^{m_j}.
\end{align*}
The fact that $q \geq m_j$ allows us again to use H\"older's inequality and conclude that $u_k^{m_j} \to u^{m_j}$ in $L^1(\Omega_T)$. Thus, for all $\varphi \in C^\infty_{\textnormal{o}}(\Omega)$,
\begin{align*}
 \iint_{\Omega_T} u^{m_j} \partial_j \varphi \d x \d t = \lim_{k\to\infty} \iint_{\Omega_T} u_k^{m_j} \partial_j \varphi \d x \d t = -  \lim_{k\to\infty} \iint_{\Omega_T} \partial_j u_k^{m_j}  \varphi \d x \d t = \iint_{\Omega_T} v_j \varphi \d x,
\end{align*}
which confirms that $v_j = \partial_j u^{m_j}$.
\end{proof}

\section{Existence of solutions to the approximating problem}\label{app:Existence_approx_problems} 
This appendix is devoted to the proof of Theorem \ref{thm:app_existence_general_f}, which confirms that \eqref{eq:approximative_problem} has a solution also in the range $\bar p > 2$. The theorem is formulated and proved in a slightly more general setting than required, and the existence of solutions to \eqref{eq:approximative_problem} follows by choosing $\hat g = g + \tfrac1k$, $\hat u_0 = u_0 + \tfrac1k$.
\begin{theo}\label{thm:app_existence_general_f}
 Suppose that 
 \begin{align*}
 \hat A_j(x,t,u,\xi) = a_j(x,t,u)|\xi_j|^{p_j - 2}\xi_j,
\end{align*}
with $0<c_1 \leq a_j \leq c_2$ and $|a_j(x,t,u) - a_j(x,t,v)| \leq c_3|u-v|$ for all $x,t,u,v$ and 
 \begin{align*}
\hat g &\in L^{\bf p}(0,T; W^{1, {\bf p}}(\Omega)) \cap L^2(\Omega_T;[0,\infty)), \quad \partial_t \hat g \in L^2(\Omega_T), 
\\
f &\in L^{\bar p'}(\Omega_T;[0,\infty)), \quad \hat u_0 \in L^2(\Omega;[0,\infty)).
\end{align*}
Then there exists a solution $u \in L^{\bf p}(0,T; W^{1, {\bf p}}(\Omega)) \cap C([0,T]; L^2(\Omega))$ to the problem 
\begin{equation}\label{problem_with_general_f}
    \begin{cases}
    \partial_t u - \nabla \cdot \hat{A}(x, t, u, \nabla u) = f & \textnormal{in} \;\; \Omega_T, \\
    u = \hat g & \textnormal{on} \;\; \partial\Omega \times (0,T), \\
    u(\cdot, 0) = \hat u_0 & \textnormal{in} \;\; \Omega \times \{0\}.
\end{cases}
\end{equation}
\end{theo}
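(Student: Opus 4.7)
The strategy is to approximate $f$ by the truncations $f_l := \min\{l, f\}$, $l \in \N$, which lie in $L^\infty(\Omega_T) \cap L^2(\Omega_T)$, apply the existence theorem from \cite{Ve} with parameter $\alpha = 1$ (so that the required source-term integrability $L^{(\alpha+1)/\alpha} = L^2$ is met by each $f_l$) to produce solutions $u_l \in L^{\mathbf{p}}(0,T; W^{1,\mathbf{p}}(\Omega)) \cap C([0,T]; L^2(\Omega))$ of \eqref{problem_with_general_f} with right-hand side $f_l$, initial datum $\hat u_0$ and boundary datum $\hat g$, and then pass to the limit $l \to \infty$. Since $0 \leq f_l \leq f_{l+1}$ and $\hat A$ satisfies the hypotheses of Theorem \ref{thm:comparison_non-doubly-nonlinear}, the comparison principle yields the monotonicity $u_l \leq u_{l+1}$ a.e.\ in $\Omega_T$.

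The key a priori estimate is a uniform $L^\infty$-bound for $(u_l)$, derived through a De Giorgi iteration parallel to, but much simpler than, the one in Lemma \ref{lemma:De_Giorgi_Approx_probs} due to the absence of double nonlinearity. Testing the weak formulation with $(u_l - M)_+$ for $M \geq M_* := \max\{\norm{\hat u_0}_{L^\infty(\Omega)}, \norm{\hat g}_{L^\infty(\Omega_T)}\} + 1$ and exploiting the coercivity $\hat A(x,t,u,\xi) \cdot \xi \geq c_1 \sum_j |\xi_j|^{p_j}$ inherited from the lower bound $a_j \geq c_1$, an application of Sobolev--Troisi (Lemma \ref{lem:troisiii}) followed by Young's inequality yields
\begin{align*}
\esssup_{\tau \in [0,T]} \int_\Omega (u_l - M)_+^2(x,\tau)\d x + \sum_{j=1}^N \iint_{\Omega_T} |\partial_j (u_l - M)_+|^{p_j}\d x\d t \leq C \iint_{\Omega_T} |f|^{\bar p'}\chi_{\{u_l > M\}}\d x\d t.
\end{align*}
Inserting this into the iteration driven by the supercritical integrability $\sigma > 1 + N/\bar p$ and invoking Lemma \ref{lem:fast_geometric_convergence} produces a level $L$, depending only on the data, for which $u_l \leq L$ a.e., uniformly in $l$. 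A subsequent test with $u_l - \hat g$, combined with this $L^\infty$-bound, the coercivity, and the integrability of $\partial_t \hat g$, delivers a uniform estimate $\sum_j \iint_{\Omega_T} |\partial_j u_l|^{p_j}\d x\d t \leq C$.

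Along a subsequence, these bounds provide a limit $u$ with $u_l \to u$ pointwise a.e.\ and strongly in every $L^q(\Omega_T)$, $q < \infty$ (by monotone convergence together with the uniform $L^\infty$-bound), while $\partial_j u_l \to \partial_j u$ weakly in $L^{p_j}(\Omega_T)$ and $\hat A_j(\cdot,\cdot,u_l,\nabla u_l) \to \mathcal{A}_j$ weakly in $L^{p_j'}(\Omega_T)$ for some limit field $\mathcal{A} = (\mathcal{A}_1, \dots, \mathcal{A}_N)$. Passing to the limit in the weak formulation, using $f_l \to f$ in $L^{\bar p'}(\Omega_T)$ by monotone convergence, shows that $u$ satisfies the equation with $\mathcal{A}$ in place of $\hat A(\cdot,\cdot,u,\nabla u)$. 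The identification $\mathcal{A} = \hat A(\cdot,\cdot,u,\nabla u)$ is then obtained by the Minty-type argument of Subsection \ref{sec:Pointwise_convergence_of_the_gradient}: analyzing
\begin{align*}
\iint_{\Omega_T}\big( \hat A(x,t,u_l,\nabla u_l) - \hat A(x,t,u_l,\nabla u) \big) \cdot (\nabla u_l - \nabla u)\d x\d t,
\end{align*}
which is nonnegative by strict monotonicity of $\xi \mapsto |\xi_j|^{p_j-2}\xi_j$, splitting the flux difference, exploiting the Lipschitz continuity of $\hat a_j$ in $u$, and closing the argument with an integrated testing identity analogous to Lemmas \ref{lem:complicated-initial-val}--\ref{lem:complicated-initial-val_for_u}.

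The boundary condition $u - \hat g \in L^\mathbf{p}(0,T; \overline W^{1,\mathbf{p}}_\textnormal{o}(\Omega))$ persists in the limit because this affine subspace is weakly closed, while the continuity $u \in C([0,T]; L^2(\Omega))$ together with $u(\cdot,0) = \hat u_0$ follow by adapting Subsection \ref{sec:boundary_val_time_cont_initial_data} and invoking Lemma \ref{lem:time-cont} with $m = 1$. The principal technical obstacle is that $f$ fails to be square-integrable in the regime $\bar p > 2$, so the $L^\infty$-bound on $u_l$ must be produced independently of $l$ by the inhomogeneous De Giorgi scheme relying solely on the exponent $\sigma > 1 + N/\bar p$ from \eqref{cond:lower_bound_sigma}; this is precisely what prevents a direct application of the result of \cite{Ve}.
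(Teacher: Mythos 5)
Your overall architecture --- truncating $f$ to $f_l=\min\{l,f\}$, solving the truncated problems via \cite{Ve}, using Theorem \ref{thm:comparison_non-doubly-nonlinear} for monotonicity, extracting weak limits, and closing with a Minty-type argument supported by Raviart-type identities --- coincides with the paper's proof. However, your key a priori estimate is not available under the hypotheses of the theorem as stated, and this is a genuine gap. Theorem \ref{thm:app_existence_general_f} assumes only $\hat u_0\in L^2(\Omega)$, $\hat g\in L^2(\Omega_T)$ and $f\in L^{\bar p'}(\Omega_T)$: the starting level $M_*:=\max\{\norm{\hat u_0}_{L^\infty(\Omega)},\norm{\hat g}_{L^\infty(\Omega_T)}\}+1$ of your De Giorgi iteration may be infinite, and the fast geometric convergence of Lemma \ref{lem:fast_geometric_convergence} needs the extra integrability $f\in L^{\sigma\bar p'}$ with $\sigma>1+N/\bar p$ from \eqref{cond:lower_bound_sigma}, which is an assumption on the data of the original doubly nonlinear problem and is \emph{not} assumed in the appendix theorem. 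Consequently the uniform $L^\infty$-bound you rely on --- and the ensuing strong convergence of $(u_l)$ in every $L^q(\Omega_T)$ --- cannot be produced in this generality.

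The paper's resolution of the obstacle ``$f\notin L^2$ when $\bar p>2$'' is different and does not pass through boundedness at all. In the energy estimate (Lemma \ref{lem:w_l-first-energy_est}) one tests with $(w_l-\hat g)\zeta$ and controls the source term by Young's inequality with the dual pair $(\bar p',\bar p)$,
\begin{align*}
\iint_{\Omega_T} f_l\,(w_l-\hat g)\,\zeta\d x\d t\leq c_\varepsilon\iint_{\Omega_T}|f|^{\bar p'}\zeta\d x\d t+c\,\varepsilon\iint_{\Omega_T}\sum^N_{j=1}|\partial_j(w_l-\hat g)|^{p_j}\zeta\d x\d t,
\end{align*}
where the second term arises from the slicewise Sobolev--Troisi inequality \eqref{eq:Sobolev_Troisi_Inequality} and is absorbed into the coercive term on the left. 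This yields uniform bounds on $\sup_\tau\norm{w_l(\tau)}_{L^2(\Omega)}$, on $\iint_{\Omega_T}|\partial_j w_l|^{p_j}\d x\d t$ and on $\norm{w_l-\hat g}_{L^{\bar p}(\Omega_T)}$, which is all the compactness and Minty steps require; the monotone pointwise limit is identified with the weak limit via Mazur's lemma, with no appeal to uniform boundedness, and the $L^{\bar p}$--$L^{\bar p'}$ duality also handles the source terms in the Raviart-type identities. If you wish to keep your De Giorgi route you must strengthen the hypotheses (bounded data and $f\in L^{\sigma\bar p'}$), which would prove only the special case needed in Section \ref{sec:existence}, not the theorem as stated; otherwise replace the boundedness step by the absorption argument above.
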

As in the main part of the article, the boundary condition by definition means that $u - \hat g$ belongs to $L^{\bf p}(0,T;\overline W^{1,\bf p}_{\textnormal{o}}(\Omega))$. If $\bar p' \geq 2$, the result follows directly from \cite[Theorem 2.5]{Ve} in the case $\alpha=1$. In the range $\bar p' < 2$ we define $f_l := \min\{l,f\}$ for $l\in \N$. By \cite[Theorem 2.5]{Ve} there is a solution $w_l$ in $L^{\bf p}(0,T; W^{1,\bf p}(\Omega)) \cap C([0,T]; L^2(\Omega))$ to the problem 
\begin{equation}\label{eq:really_approximative_problem}
    \begin{cases}
    \partial_t w_l - \nabla \cdot \hat{A}(x, t, w_l, \nabla w_l) = f_l & \text{in} \;\; \Omega_T, \\
    w_l = \hat g & \text{on} \;\; \partial\Omega \times (0,T), \\
    w_l(\cdot, 0) = \hat u_0 & \text{in} \;\; \Omega \times \{0\}.
\end{cases}
\end{equation}
Applying the comparison principle of Theorem \ref{thm:comparison_non-doubly-nonlinear} to $w_l$ and the zero-function we see that $w_l$ is non-negative. The strategy of the proof is to show that the functions $w_l$ converge to a solution to \eqref{problem_with_general_f}. For this reason we need the following uniform estimates.
\begin{lem}\label{lem:w_l-first-energy_est}
 The functions $w_l$ satisfy
 \begin{align}\label{est:w_l-first-energy}
  \iint_{\Omega_T} \sum^N_{j=1} |\partial_j w_l|^{p_j} \d x \d t + \sup_{\tau \in [0,T]}\int_{\Omega} w_l^2(x,\tau)\d x &\leq C_1(f,\hat g,\hat u_0,\Omega,T),
  \\
  \iint_{\Omega_T} |w_l - \hat g|^{\bar p} \d x \d t &\leq C_2(f,\hat g, \hat u_0,\Omega,T), \label{w_l--L-bar-p-bound}
 \end{align}
for constants $C_1$ and $C_2$ that are independent of $l$.
\end{lem}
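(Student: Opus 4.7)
The plan is to derive a slicewise energy inequality for $w_l$ and then absorb all $w_l$-dependent terms into the left-hand side, making essential use of the Sobolev--Troisi estimate \eqref{eq:Sobolev_Troisi_Inequality} to handle the fact that $f$ is only in $L^{\bar p'}$. Throughout, all implied constants may be assumed independent of $l$ since $0 \leq f_l \leq f$.

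First, I would test the weak formulation of \eqref{eq:really_approximative_problem} with $\varphi = (w_l - \hat g)\zeta^\delta_{0,\tau}$ for $\tau \in (0,T)$, justified by first passing through Steklov averages as in \eqref{eq:weak_mollified_usual_anisotropic} and then sending $h\to 0$ and $\delta\to 0$. This choice is admissible since $w_l - \hat g \in L^{\bf p}(0,T;\overline W^{1,\bf p}_\textnormal{o}(\Omega)) \cap L^2(\Omega_T)$. The parabolic term produces $\tfrac12 \int_\Omega w_l^2(\tau) - \tfrac12 \int_\Omega \hat u_0^2$ together with a coupling term $-\int_0^\tau \int_\Omega w_l\,\partial_t \hat g$, while the elliptic term is bounded below via coercivity by $c_1 \sum_j \int_0^\tau \int_\Omega |\partial_j w_l|^{p_j}$ minus a mixed term involving $\nabla \hat g$. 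Collecting these and rearranging, I arrive at an inequality of the form
\begin{align*}
    \tfrac12 \|w_l(\tau)\|_{L^2(\Omega)}^2 + c_1 \sum_{j=1}^N \int_0^\tau\!\!\int_\Omega |\partial_j w_l|^{p_j} \d x \d t &\leq \tfrac12 \|\hat u_0\|_{L^2(\Omega)}^2 + \Big|\int_0^\tau\!\!\int_\Omega w_l\,\partial_t \hat g\, \d x \d t\Big| \\
    &\quad + \Big|\iint_{\Omega_T} \hat A \cdot \nabla \hat g\, \d x \d t\Big| + \Big|\int_0^\tau\!\!\int_\Omega f_l(w_l - \hat g)\, \d x \d t\Big|.
\end{align*}

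Next, I would estimate each term on the right-hand side so that it can be absorbed. The term with $\partial_t \hat g$ is bounded, via Cauchy--Schwarz and Young, by $\epsilon \sup_{s\in[0,\tau]}\|w_l(s)\|_{L^2(\Omega)}^2 + C_\epsilon T\|\partial_t \hat g\|_{L^2(\Omega_T)}^2$. The mixed elliptic term is controlled, via the growth condition on $\hat A$ and Young's inequality with exponents $(p_j', p_j)$, by $\epsilon \sum_j \iint_{\Omega_T} |\partial_j w_l|^{p_j} + C_\epsilon \sum_j \iint_{\Omega_T}|\partial_j \hat g|^{p_j}$. The critical term is the one involving $f_l$: using $0\leq f_l \leq f$, H\"older's inequality in $\Omega_T$, and then Sobolev--Troisi \eqref{eq:Sobolev_Troisi_Inequality} applied slicewise to $w_l - \hat g \in \overline W^{1,\bf p}_\textnormal{o}(\Omega)$ followed by integration in time,
\begin{align*}
\Big|\int_0^\tau\!\!\int_\Omega f_l(w_l - \hat g)\, \d x \d t\Big|
\leq \|f\|_{L^{\bar p'}(\Omega_T)}\|w_l - \hat g\|_{L^{\bar p}(\Omega_T)}
\leq \epsilon \sum_{j=1}^N \iint_{\Omega_T} |\partial_j(w_l - \hat g)|^{p_j} \d x \d t + C_\epsilon,
\end{align*}
where the constant $C_\epsilon$ depends on $\|f\|_{L^{\bar p'}(\Omega_T)}$ and $\Omega$ but not on $l$, by one more application of Young with exponents $(\bar p, \bar p')$.

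Choosing $\epsilon$ sufficiently small, I absorb the $\sum_j \iint |\partial_j w_l|^{p_j}$ terms into the elliptic term on the left and, after taking the supremum over $\tau \in [0,T]$ of $\|w_l(\tau)\|_{L^2(\Omega)}^2$ on the left, absorb the $\epsilon \sup \|w_l\|_{L^2}^2$ term as well. This produces \eqref{est:w_l-first-energy} with a constant depending only on the data. Finally, \eqref{w_l--L-bar-p-bound} follows immediately by integrating the slicewise Sobolev--Troisi inequality $\|w_l(\cdot,t) - \hat g(\cdot,t)\|_{L^{\bar p}(\Omega)}^{\bar p} \leq C \sum_j \|\partial_j(w_l - \hat g)(\cdot,t)\|_{L^{p_j}(\Omega)}^{p_j}$ in time and using \eqref{est:w_l-first-energy} together with the assumed integrability of $\nabla \hat g$. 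The main subtlety is the $f$-term, where $f$ need only be in $L^{\bar p'}$; the anisotropic Sobolev embedding of $\overline W^{1,\bf p}_\textnormal{o}(\Omega)$ into $L^{\bar p}(\Omega)$ is exactly the ingredient that makes the absorption work.
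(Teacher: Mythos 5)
Your argument is essentially the paper's own proof: test with $(w_l-\hat g)$ times a cutoff, use coercivity and growth of $\hat A$ with Young's inequality for the elliptic terms, handle the $f_l$-term via H\"older/Young together with the slicewise Sobolev--Troisi inequality \eqref{eq:Sobolev_Troisi_Inequality}, absorb, and take the supremum in $\tau$; the second bound \eqref{w_l--L-bar-p-bound} then follows from Troisi exactly as you say. The only slip is bookkeeping: integrating $-\iint \partial_t w_l\,\hat g$ by parts in time produces, besides the coupling term $\iint w_l\,\partial_t\hat g$, the boundary terms $-\int_\Omega w_l(\tau)\hat g(\tau)\,\d x+\int_\Omega \hat u_0\,\hat g(0)\,\d x$, which are missing from your displayed inequality but are controlled by the same Young-plus-absorption step you already use (as in the paper, $\int_\Omega w_l(\tau)\hat g(\tau)\,\d x\le \tfrac14\int_\Omega w_l^2(\tau)\,\d x+c\int_\Omega\hat g^2(\tau)\,\d x$).
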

\begin{proof}{}
Testing the weak formulation of the equation in \eqref{eq:really_approximative_problem} mollified with the reversed Steklov-average with the test function $\varphi = (w_l - \hat g)\zeta,$ where $\zeta \in C^\infty_{\textnormal{o}}((0,T);[0,\infty)),$ we end up with
\begin{align}\label{ta_se_an_te_anois}
 \iint_{\Omega_T} \hat{A}(x, t, w_l, \nabla w_l) \cdot \nabla w_l \zeta - \frac{w_l^2}{2} \zeta' \d x \d t 
 \leq \iint_{\Omega_T} \hat{A}(x, t, w_l, \nabla w_l) \cdot \nabla \hat g \zeta \d x \d t 
 \\
 \notag - \iint_{\Omega_T}w_l (\partial_t \hat g \zeta + \hat g \zeta') \d x\d t + \iint_{\Omega_T} f_l (w_l - \hat g)\zeta \d x \d t,
\end{align}
where for the treatment of the parabolic term we have used the estimate $\partial_t [\tfrac{w_l^2}{2}]_{\bar h} \leq \partial_t [w_l]_{\bar h} w_l,$ which itself follows from $(iv)$ in Lemma \ref{lem:steklov_mol_properties} and the fact that $s \mapsto \tfrac{s^2}{2}$ is convex.
We treat the term involving $f_l$ using Young's inequality and \eqref{eq:Sobolev_Troisi_Inequality} as follows:
\begin{align}\label{btut}
 \iint_{\Omega_T} f_l (w_l - \hat g)\zeta \d x \d t &\leq \iint_{\Omega_T} c_\varepsilon |f|^{\bar p'}\zeta + \varepsilon |w_l - \hat g|^{\bar p}\zeta \d x \d t 
 \\
 \notag &\leq c_\varepsilon \iint_{\Omega_T}|f|^{\bar p'}\zeta \d x \d t + c\varepsilon \iint_{\Omega_T} \sum^N_{j=1} |\partial_j (w_l - \hat g)|^{p_j} \zeta \d x \d t,
\end{align}
where we also used the fact that $0\leq f_l \leq f$. We find a lower bound for the first term on the left-hand side of \eqref{ta_se_an_te_anois} side using the structure condition \eqref{cond:structure2_A_k} and an upper bound for first term on the right-hand side using \eqref{cond:structure1_A_k} and Young's inequality. Combining these estimates and \eqref{btut} we obtain
\begin{align*}
\iint_{\Omega_T} c\sum^N_{j=1} |\partial_j w_l|^{p_j} \zeta -\frac{w_l^2}{2} \zeta' \d x \d t &\leq c\iint_{\Omega_T} \sum^N_{j=1} |\partial_j \hat g|^{p_j}\zeta + |f|^{\bar p'}\zeta \d x \d t 
\\
&\quad - \iint_{\Omega_T} w_l\big(\partial_t \hat g \zeta + \hat g \zeta' \big) \d x \d t.
\end{align*}
Taking $\zeta = \zeta^\delta_{\tau_1, \tau}$ and passing to the limits $\delta \to 0$ and $\tau_1 \to 0$ we have 
\begin{align*}
 c \iint_{\Omega_\tau} \sum^N_{j=1} |\partial_j w_l|^{p_j} \d x \d t & + \frac12 \int_{\Omega} w_l^2(x,\tau)\d x \leq c\iint_{\Omega_\tau} \sum^N_{j=1} |\partial_j \hat g|^{p_j} + |f|^{\bar p'}\d x \d t 
 \\
 & + \frac12 \int_{\Omega} {\hat u_0}^2 \d x - \iint_{\Omega_\tau} w_l \partial_t \hat g \d x \d t + \int_{\Omega}w_l \hat g(x,\tau) \d x
 \\
 &\leq  c\iint_{\Omega_\tau} \sum^N_{j=1} |\partial_j \hat g|^{p_j} + |f|^{\bar p'} + |\partial_t \hat g|^2 \d x \d t  + \tfrac12\norm{\hat u_0}_{L^2(\Omega)}^2 
 \\
 &\quad + c \int_\Omega \hat g^2(x,\tau) \d x + \frac14\int_{\Omega} w_l^2(x,\tau)\d x 
 + \frac{1}{8T} \iint_{\Omega_\tau} w_l^2 \d x \d t,
\end{align*}
where in the last step we used Young's inequality on the terms containing products of $w_l$ and $\hat g$ or $\partial_t \hat g$. Taking the supremum over $\tau$ and noting that the last term can be bounded by 
\begin{align*}
 \frac{1}{8T} \iint_{\Omega_\tau} w_l^2 \d x \d t \leq \frac{1}{8T} \int^\tau_0 \sup_{s\in [0,\tau]} \int_\Omega w_l^2 (x,s)\d x \d t \leq \frac18 \sup_{s\in [0,T]} \int_\Omega w_l^2 (x,s)\d x,
\end{align*}
we finally end up with 
\begin{align*}
 \iint_{\Omega_T} \sum^N_{j=1} |\partial_j w_l|^{p_j} \d x \d t + \sup_{\tau \in [0,T]}\int_{\Omega} w_l^2(x,\tau)\d x &\leq c\iint_{\Omega_\tau} \sum^N_{j=1} |\partial_j \hat g|^{p_j} + |f|^{\bar p'} + |\partial_t \hat g|^2 \d x \d t 
 \\
 & \quad + c\norm{\hat u_0}_{L^2(\Omega)}^2  + c \sup_{\tau \in [0,T]}\int_{\Omega} {\hat g}^2(x,\tau)\d x,
\end{align*}
which is an estimate of the form of \eqref{est:w_l-first-energy}. To prove \eqref{w_l--L-bar-p-bound} note that, by \eqref{eq:Sobolev_Troisi_Inequality}, we have
\begin{align*}
 \iint_{\Omega_T} |w_l - \hat g|^{\bar p} \d x \d t 
 \leq c\iint_{\Omega_T} \sum^N_{j=1} |\partial_j(w_l - \hat g)|^{p_j} \d x \d t \leq c\iint_{\Omega_T} \sum^N_{j=1} |\partial_j w_l|^{p_j} + |\partial_j \hat g|^{p_j}  \d x \d t,
\end{align*}
and then combine the last estimate with \eqref{est:w_l-first-energy}.
\end{proof}
From \eqref{est:w_l-first-energy}, \eqref{w_l--L-bar-p-bound} and the structure condition \eqref{cond:structure1_A_k} we can conclude the following:
\begin{enumerate}
 \item The sequence $(w_l)^\infty_{l=1}$ is bounded in $L^2(\Omega_T)$  and thus a subsequence converges weakly in this space to a limit function which we denote by $u$.
 
 \item\label{item:weakL-pbar-convg} The sequence $(w_l - \hat g)^\infty_{l=1}$ is bounded in $L^{\bar p}(\Omega_T)$ and thus a subsequence converges weakly in this space to a limit which we may identify as $u - \hat g$. Thus, $u - \hat g \in L^{\bar p}(\Omega_T)$.
 
 \item Each sequence $(\partial_j w_l)^\infty_{l=1}$ is bounded $L^{p_j}(\Omega_T)$ so a subsequence converges weakly in $L^{p_j}(\Omega_T)$ to a function which we may identify as $\partial_j u$.
 
 \item\label{app_vfield_weak_convg} By the structure condition each sequence $(\hat{A}_j(\cdot, \cdot, w_l, \nabla w_l))^\infty_{l=1}$ is bounded in $L^{p_j'}(\Omega_T)$ so a subsequence converges weakly to some $\hat{\mathcal{A}}_j \in L^{p_j'}(\Omega_T)$. 
\end{enumerate}
Moreover, the comparison principle of Theorem \ref{thm:comparison_non-doubly-nonlinear} and the fact that $f_l \leq f_{l+1}$ imply that the sequence $(w_l)^\infty_{l=1}$ is pointwise increasing almost everywhere. Combining this fact with Mazur's Lemma we can also conclude that $(w_l)^\infty_{l=1}$ converges pointwise a.e. to $u$. 

Since the weak and strong closure coincide on vector subspaces we can conclude that 
\begin{align}\label{eq:app_bdry_condition_satisfied}
 u - \hat g \in L^{\bf p}(0,T;\overline W^{1,\bf p}_{\textnormal{o}}(\Omega)),
\end{align}
i.e.~$u$ satisfies the correct boundary condition. 
By passing to the limit in the equation satisfied by $w_l$ we see by the above convergences that $u$ satisfies the equation 
\begin{align}\label{eq:u_strange}
 \iint_{\Omega_T} \hat{\mathcal{A}} \cdot \nabla \varphi  - u \partial_t \varphi \d x \d t = \iint_{\Omega_T} f\varphi  \d x \d t,
\end{align}
for all $\varphi \in C^\infty_{\textnormal{o}}(\Omega_T)$. 

\begin{lem}\label{lem:app_time_cont}
 The function $u$ has a representative in $C([0,T];L^2(\Omega))$ and $u(0) = \hat u_0$.
\end{lem}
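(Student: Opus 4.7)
The plan is to follow the template of Lemma \ref{lem:time-cont}, specialized to the exponent $m=1$, so that $L^{m+1}(\Omega)$ becomes $L^2(\Omega)$ and the nonlinearity $\b[u,v]$ reduces to $\tfrac12(u-v)^2$. The key observation is that the $L^\infty$-bounds used in Lemma \ref{lem:time-cont} served only to guarantee integrability of a few cross terms, and in the present setting the weaker information $u,\hat g\in L^2(\Omega_T)$, $u-\hat g\in L^{\bar p}(\Omega_T)$ (item \ref{item:weakL-pbar-convg} above), $\hat{\mathcal{A}}_j\in L^{p_j'}(\Omega_T)$, $\partial_t\hat g\in L^2(\Omega_T)$, and $f\in L^{\bar p'}(\Omega_T)$ is already enough to close every estimate.

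\textbf{Step 1: continuity into $L^2(\Omega)$.} I would insert the test function $\varphi=\Exx{u-\hat g}\,\zeta$ with $\zeta\in C^\infty_{\textnormal{o}}((0,T);[0,\infty))$ into \eqref{eq:u_strange}, after approximation by smooth compactly supported functions (justified via \eqref{eq:app_bdry_condition_satisfied}). Integration by parts together with Property \eqref{expmol2} of Lemma \ref{lem:expmolproperties} rearranges the parabolic term into
\[
\iint_{\Omega_T}\tfrac12\bigl(u-\Exx{u-\hat g}-\hat g\bigr)^{2}\zeta'(t)\,\d x\d t,
\]
plus cross terms controlled by the stated H\"older dualities. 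Arguing as in the final portion of the proof of Lemma \ref{lem:time-cont}, this yields a sequence $\bar h_j\downarrow 0$ and a set $N\subset[0,T]$ of measure zero for which $\Exx[\bar h_j]{u-\hat g}+\hat g\to u$ uniformly on $[0,T]\setminus N$ in the $L^2(\Omega)$-norm. Since $\Exx[\bar h_j]{u-\hat g}\in C([0,T];L^2(\Omega))$ (by Lemma \ref{lem:expmolproperties}\,\eqref{expmol4}) and $\hat g\in C([0,T];L^2(\Omega))$ (from $\partial_t\hat g\in L^2$), the uniform $L^2$-limit $u$ admits a representative in $C([0,T];L^2(\Omega))$.

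\textbf{Step 2: identification of the initial datum.} Since each $w_l$ lies in $C([0,T];L^2(\Omega))$ with $w_l(0)=\hat u_0$, a routine integration by parts extends the weak formulation of \eqref{eq:really_approximative_problem} to test functions $\varphi\in C^\infty(\overline{\Omega_T})$ with $\supp\varphi\subset\Omega\times[0,T)$:
\[
\iint_{\Omega_T}\hat A(x,t,w_l,\nabla w_l)\cdot\nabla\varphi-w_l\,\partial_t\varphi\,\d x\d t=\iint_{\Omega_T}f_l\varphi\,\d x\d t+\int_\Omega \hat u_0\,\varphi(\cdot,0)\,\d x.
\]
Passing to the limit $l\to\infty$ via item \eqref{app_vfield_weak_convg}, the weak $L^2$-convergence of $w_l$ to $u$, and the monotone convergence $f_l\to f$ in $L^{\bar p'}(\Omega_T)$ yields the same identity with $\hat{\mathcal{A}}$, $u$ and $f$ in place of $\hat A(\cdot,\cdot,w_l,\nabla w_l)$, $w_l$ and $f_l$. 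On the other hand, the $L^2$-continuity proved in Step 1 combined with a direct integration by parts applied to \eqref{eq:u_strange} produces the same identity but with $\int_\Omega u(\cdot,0)\varphi(\cdot,0)\,\d x$ in place of $\int_\Omega \hat u_0\,\varphi(\cdot,0)\,\d x$. Comparing these identities and letting $\varphi(\cdot,0)$ range over $C^\infty_{\textnormal{o}}(\Omega)$ forces $u(0)=\hat u_0$.

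\textbf{Main obstacle.} The principal difficulty is bookkeeping rather than conceptual: one must carefully verify that every cross term produced by the mollification scheme in Step 1 remains integrable without the crutch of $L^\infty$-bounds, and that the admissibility of $\Exx{u-\hat g}\zeta$ as a test function can be recovered by smooth compactly supported approximations. The couplings $(L^{\bar p},L^{\bar p'})$ between $u-\hat g$ and $f$, $(L^{p_j},L^{p_j'})$ between $\partial_j(u-\hat g)$ and $\hat{\mathcal{A}}_j$, together with $u,\partial_t\hat g\in L^2(\Omega_T)$, are precisely the dualities that make this possible.
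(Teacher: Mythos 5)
Your proposal is correct and follows essentially the same route as the paper: the paper also derives the identity with $\tfrac12(u-\hat g-w)^2$ for $w=\Exx{u-\hat g}$ (the $\alpha=1$, i.e.\ $m=1$, specialization of the doubly nonlinear argument), notes that the pairing of $u-\hat g\in L^{\bar p}$ with $f\in L^{\bar p'}$ replaces the $L^\infty$-bounds, and concludes continuity from the essentially uniform $L^2$-approximation by $\hat g+\Exx{u-\hat g}$. For the initial datum the paper tests the equations for $w_l$ and $u$ with $H_\delta(t)\zeta(t)\psi(x)$ and compares the limits, which is the same mechanism as your integration-by-parts identity with a boundary term at $t=0$.
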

\begin{proof}{}
Using the weak formulation as in the proof of \cite[Lemma 3.10]{Ve} in the special case $\alpha=1$ we end up with
\begin{align*}
 \iint_{\Omega_T} \tfrac12 (u - \hat g - w)^2\zeta' \d x \d t &= \iint_{\Omega_T} \hat{\mathcal{A}}\cdot \nabla (u - \hat g - w) \zeta \d x \d t  + \iint_{\Omega_T}f ( w + \hat g - u) \zeta \d x \d t
 \\
 & \quad + \iint_{\Omega_T}(u - \hat g - w)\partial_t (w + \hat g) \zeta \d x \d t
\end{align*}
where the choice $w= \Exx{u - \hat g}$ is admissible. The only difference is the integrability of the source function $f$, but since $u -\hat g$ lies in $L^{\bar p}(\Omega_T)$ we still have dual exponents in the terms involving $f$. Thus, also the proof of the time continuity of \cite[Lemma 3.11]{Ve} works in the current setting: we are able to approximate $u$ essentially uniformly with the functions $\hat g + \Exx{u - \hat g}$ which belong to $C([0,T];L^2(\Omega))$ due to the properties of $\hat g$ and the exponential time mollification. In order to prove that $u(0) = \hat u_0$ we test the equations for $w_l$ and $u$ with $\varphi = H_\delta(t)\zeta(t) \psi(x)$ where $\psi \in C^\infty_{\textnormal{o}}(\Omega)$ and pass to the limit $l\to \infty$  as in Section \ref{sec:boundary_val_time_cont_initial_data}.
\end{proof}
It remains to show that $u$ satisfies the equation in \eqref{problem_with_general_f}. 
\begin{lem}
 The function $u$ satisfies the identity 
 \begin{align}\label{Raviart_app_u}
 \iint_{\Omega_T} \hat{\mathcal{A}} \cdot \nabla (u - \hat g) \d x \d t =  \tfrac12\norm{\hat u_0}_{L^2(\Omega)}^2  - \tfrac12 \norm{u(T)}_{L^2(\Omega)}^2
 + \iint_{\Omega_T} f(u-\hat g)\d x \d t 
 \\
 \notag - \int_\Omega \hat u_0 \hat g(0)\d x + \int_\Omega u \hat g(x,T)\d x - \iint_{\Omega_T} u \partial_t \hat g \d x \d t.
\end{align}
whereas $w_l$ satisfies
 \begin{align}\label{Raviart_app_w_l}
 &\iint_{\Omega_T} \hat{A}(x, t, w_l, \nabla w_l) \cdot \nabla (w_l - \hat g) \d x \d t =  \tfrac12\norm{\hat u_0}_{L^2(\Omega)}^2  - \tfrac12 \norm{w_l(T)}_{L^2(\Omega)}^2
 \\
  \notag &+ \iint_{\Omega_T} f_l(w_l - \hat g)\d x \d t 
  - \int_\Omega \hat u_0 \hat{g}(0)\d x + \int_\Omega w_l \hat{g}(x,T)\d x - \iint_{\Omega_T} w_l \partial_t \hat g \d x \d t.
\end{align}
\end{lem}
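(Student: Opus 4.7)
Both identities are energy-type identities obtained by using $w_l - \hat g$ (respectively $u - \hat g$) as a test function in the weak formulation of the equation. That $w_l - \hat g$ is an admissible test function follows from the construction of $w_l$ via \cite{Ve}, while for $u$ it follows from \eqref{eq:app_bdry_condition_satisfied} combined with item \eqref{item:weakL-pbar-convg}. The main obstacle is that neither $w_l$ nor $u$ is known a priori to possess a weak time derivative in a direct sense, and even if $w_l$ did, the test function involves $\hat g$ which is also not very regular in time. The resolution is to proceed exactly as in Lemma \ref{lem:complicated-initial-val} and Lemma \ref{lem:complicated-initial-val_for_u}, replacing the convex function $s\mapsto \tfrac{1}{m+1} s^{m+1}$ by $s\mapsto \tfrac{1}{2} s^2$.

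\textbf{Identity for $w_l$.} I would extend $w_l$, $\hat A(\cdot,\cdot,w_l,\nabla w_l)$ and $f_l$ to $\Omega \times (-T,2T)$ by reflection in time, then apply the identity \eqref{eq:with_v} (valid here since $w_l$ solves an equation without double nonlinearity) with $v = (w_l - \hat g)(\cdot, t)$ on $[t-h, t]$ and integrate $t$ over $(0,T)$. This produces an integrated identity parallel to \eqref{porky_burger} with left-hand side
\[
\iint_{\Omega_T}(w_l(x,t)-w_l(x,t-h))\big(w_l(x,t) - \hat g(x,t)\big)\, \d x \d t.
\]
The convexity inequality $\tfrac12 w_l(t)^2 - \tfrac12 w_l(t-h)^2 \leq w_l(t)\big(w_l(t) - w_l(t-h)\big)$ together with the change-of-variables argument used in Lemma \ref{lem:complicated-initial-val} to transfer the difference quotient from $w_l$ to $\hat g$ (which is possible thanks to $\partial_t \hat g \in L^2(\Omega_T)$) yields, after dividing by $h$ and sending $h\to 0$, the inequality corresponding to ``$\leq$'' in \eqref{Raviart_app_w_l}. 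The reverse inequality is obtained identically with the forward quotient $w_l(t+h)-w_l(t)$, so equality follows.

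\textbf{Identity for $u$.} The same strategy applies with $w_l$ replaced by $u$ and $f_l$ by $f$, based on the equation \eqref{eq:u_strange} and on the time continuity $u \in C([0,T];L^2(\Omega))$ established in Lemma \ref{lem:app_time_cont}. The analogue of \eqref{eq:with_v} is obtained by taking a tensor product test function $\psi(x) \zeta^\delta_{t_1,t_2}(t)$ in \eqref{eq:u_strange}, passing to the limit $\delta \to 0$, and extending to $v \in L^2(\Omega) \cap \overline{W}^{1,\bf p}_{\textnormal{o}}(\Omega)$ by approximation. The subsequent Steklov-difference argument is identical. All limit passages are justified by the pairing of dual exponents: $\hat{\mathcal{A}}_j \in L^{p_j'}(\Omega_T)$ is tested against $\partial_j(u - \hat g) \in L^{p_j}(\Omega_T)$ via Hölder, while the term $f(u-\hat g)$ is integrable since $f \in L^{\bar p'}(\Omega_T)$ and $u - \hat g \in L^{\bar p}(\Omega_T)$ by item \eqref{item:weakL-pbar-convg}. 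The subtlest point is again the treatment of the cross-term involving $\partial_t \hat g$, but this is handled exactly as in the $w_l$ case.
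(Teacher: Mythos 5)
Your proposal is correct and is in substance the same argument the paper relies on: the paper disposes of both identities by citing \cite[Lemma 3.13]{Ve} with $\alpha=1$ and only adds the remark that the $f$-terms are handled through the dual pairing $f\in L^{\bar p'}(\Omega_T)$, $u-\hat g\in L^{\bar p}(\Omega_T)$, which is exactly the duality point you make at the end. What you have done is reconstruct the content of that citation explicitly by specializing the Steklov-difference/convexity scheme of Lemmas \ref{lem:complicated-initial-val} and \ref{lem:complicated-initial-val_for_u} to the convex function $s\mapsto\tfrac12 s^2$, and all the ingredients you invoke (the $C([0,T];L^2(\Omega))$ continuity of $w_l$ and of $u$ from Lemma \ref{lem:app_time_cont}, $\partial_t\hat g\in L^2(\Omega_T)$, and the stated integrabilities) are indeed available.
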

\begin{proof}{}
 Both identities are essentially versions of \cite[Lemma 3.13]{Ve} in the case $\alpha=1$ except for the fact that the integrability assumption on $f$ is different. In the proof of \eqref{Raviart_app_u}, any terms involving $f$ can be treated using the fact that $u - \hat g$ is in $L^{\bar p}(\Omega_T)$ due to \eqref{item:weakL-pbar-convg}, and the fact that $f\in L^{\bar p'}(\Omega_T)$ by assumption, so that also in the current setting we have dual exponents in these terms. 
\end{proof}

\begin{lem}\label{lem:app_ptwise_convg_of_grad}
 For a suitable subsequence still labeled $(w_l)^\infty_{l=1}$ we have that $(\partial_j w_l)^\infty_{l=1}$ converges pointwise a.e. to $\partial_j u$ for every $j \in \{1, \dots , N\}$.
\end{lem}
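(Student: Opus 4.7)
The plan is to run a Minty-type monotonicity argument analogous to the one used in Subsection \ref{sec:Pointwise_convergence_of_the_gradient}, but significantly simpler here since there is no double nonlinearity and the dependence of $\hat A$ on the function variable is mild. Specifically, I would set
\begin{align*}
    H_l := \iint_{\Omega_T} \bigl(\hat A(x,t,w_l,\nabla w_l) - \hat A(x,t,w_l,\nabla u)\bigr)\cdot (\nabla w_l - \nabla u)\d x \d t,
\end{align*}
which is non-negative by the strict monotonicity of $\xi \mapsto |\xi_j|^{p_j-2}\xi_j$ combined with the positivity of $a_j$. The goal is to show $\limsup_{l\to \infty} H_l \leq 0$ so that $H_l \to 0$ in $L^1(\Omega_T)$, extract an a.e.\ convergent subsequence, and then invoke the elementary inequality giving pointwise convergence of each $\partial_j w_l$ to $\partial_j u$ (exactly as at the end of Subsection \ref{sec:Pointwise_convergence_of_the_gradient}).

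To bound $H_l$ I would write $H_l = I_1 - I_2 - I_3$, where
\begin{align*}
    I_1 &:= \iint_{\Omega_T}\hat A(x,t,w_l,\nabla w_l)\cdot \nabla(w_l - \hat g)\d x \d t,\\
    I_2 &:= \iint_{\Omega_T}\hat A(x,t,w_l,\nabla w_l)\cdot \nabla(u - \hat g)\d x \d t,\\
    I_3 &:= \iint_{\Omega_T}\hat A(x,t,w_l,\nabla u)\cdot \nabla(w_l - u)\d x \d t.
\end{align*}
For $I_1$ I would plug in the Raviart-type identity \eqref{Raviart_app_w_l} and pass to the $\limsup$ term by term. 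The terms $\int \hat u_0 \hat g(0)\d x$ and $\iint f_l(w_l-\hat g)\d x \d t$ converge to their analogues in \eqref{Raviart_app_u}: the first is trivial, and for the second I would use that $f_l\to f$ strongly in $L^{\bar p'}(\Omega_T)$ by the Dominated Convergence Theorem together with weak convergence of $w_l - \hat g$ in $L^{\bar p}(\Omega_T)$ (property \eqref{item:weakL-pbar-convg}). The terms $\int w_l \hat g(T)\d x$ and $\iint w_l \partial_t \hat g\d x\d t$ pass to the limit by weak convergence in $L^2$. The crucial term is $-\tfrac12\|w_l(T)\|^2_{L^2(\Omega)}$: since $w_l(T) \rightharpoonup u(T)$ weakly in $L^2(\Omega)$ (this can be verified by testing the PDE against a time-dependent spatial test function, analogously to the argument after \eqref{I_a-expr}), weak lower semicontinuity gives $\liminf \tfrac12\|w_l(T)\|^2 \geq \tfrac12\|u(T)\|^2$. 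Combining all of this with \eqref{Raviart_app_u} yields $\limsup_{l\to \infty} I_1 \leq \iint_{\Omega_T}\hat{\mathcal A}\cdot \nabla(u - \hat g)\d x \d t$.

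For $I_2$, the weak convergence of $\hat A_j(x,t,w_l,\nabla w_l)$ to $\hat{\mathcal A}_j$ in $L^{p_j'}(\Omega_T)$ (item \ref{app_vfield_weak_convg} in the list before the lemma) and the fixed $L^{p_j}$-function $\partial_j(u-\hat g)$ give $I_2 \to \iint \hat{\mathcal A}\cdot \nabla(u - \hat g)\d x\d t$. For $I_3$ I would note that $\hat A_j(x,t,w_l,\nabla u) = a_j(x,t,w_l)|\partial_j u|^{p_j-2}\partial_j u$ is dominated by $c|\partial_j u|^{p_j-1}\in L^{p_j'}(\Omega_T)$ and converges a.e.\ to $\hat A_j(x,t,u,\nabla u)$ because $a_j$ is continuous in $u$ and $w_l\to u$ a.e.; hence the Dominated Convergence Theorem yields strong $L^{p_j'}$-convergence, which together with the weak $L^{p_j}$-convergence of $\partial_j w_l - \partial_j u$ forces $I_3 \to 0$. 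Combining the three estimates gives $\limsup H_l \leq 0$, as desired.

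The main obstacle is the handling of the $\|w_l(T)\|^2_{L^2}$ term via weak lower semicontinuity, which requires identifying the weak $L^2$-limit of $w_l(T)$ as $u(T)$; I would treat this by testing the weak form \eqref{eq:u_strange} and the corresponding equation for $w_l$ against a product test function $\psi(x)\eta(t)$ with $\eta$ concentrating at $T$, using the time continuity from Lemma \ref{lem:app_time_cont}. Once $\limsup H_l \leq 0$ is established, the monotonicity bound
\begin{align*}
    c\sum_{j=1}^N \bigl(|\partial_j w_l|^{p_j-2}\partial_j w_l - |\partial_j u|^{p_j-2}\partial_j u\bigr)(\partial_j w_l - \partial_j u) \leq H_l \xrightarrow[l\to \infty]{} 0 \quad \text{a.e.~in } \Omega_T,
\end{align*}
along a suitable subsequence, delivers the pointwise a.e.\ convergence $\partial_j w_l \to \partial_j u$ for every $j$, completing the proof.
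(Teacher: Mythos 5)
Your proposal is correct and follows essentially the same route as the paper: the same Minty monotonicity quantity, the same three-term decomposition (your $I_1 - I_2 - I_3$ matches the paper's $i_1 + i_2 - i_3$ up to a sign convention on the middle term), the Raviart-type identities \eqref{Raviart_app_w_l} and \eqref{Raviart_app_u} for $I_1$, weak lower semicontinuity for $\|w_l(T)\|_{L^2}^2$ after identifying the weak limit as $u(T)$, and the monotonicity bound to pass from $L^1$-convergence of the integrand to pointwise convergence of the gradients. The only cosmetic remark is that in your final display you use $H_l$ (defined as an integral) for the pointwise integrand; keep the integrand and its integral notationally distinct as the paper does with $h_l$ versus $\iint h_l$.
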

\begin{proof}{}
We define
\begin{align*}
 &h_l:= \big(\hat A(x,t,w_l, \nabla w_l) - \hat A(x,t,w_l, \nabla u)\big)\cdot (\nabla w_l - \nabla u)
 \\
 &\quad = \sum^N_{j=1}a_j(x,t,w_l)(|\partial_j w_l|^{p_j -2}\partial_j w_l - |\partial_j u|^{p_j-2}\partial_j u)(\partial_j w_l - \partial_j u)
 \\
 &\quad \geq c \sum^N_{j=1} (|\partial_j w_l|^{p_j -2}\partial_j w_l - |\partial_j u|^{p_j-2}\partial_j u)(\partial_j w_l - \partial_j u).
\end{align*}
On the other hand
\begin{align*}
 \big(\hat A(x,t,w_l, \nabla w_l) - \hat A(x,t,w_l, \nabla u)\big)\cdot (\nabla w_l - \nabla u) 
  = \hat A(x,t,w_l, \nabla w_l)\cdot (\nabla w_l - \nabla \hat g) 
 \\
  + \hat A(x,t,w_l, \nabla w_l)\cdot ( \nabla \hat g - \nabla u) - \hat A(x,t,w_l, \nabla u)\cdot (\nabla w_l - \nabla u)
  \\
  =: i_1 + i_2 - i_3.
\end{align*}
We have
\begin{align*}
 I_3 := \iint_{\Omega_T} i_3 \d x \d t &=  \sum^N_{j=1}\iint_{\Omega_T}\big(a_j(x,t, w_l) - a_j(x,t,u)\big)|\partial_j u|^{p_j-2}\partial_j u(\partial_j w_l - \partial_j u) \d x \d t
 \\
 & \quad + \iint_{\Omega_T} \hat A(x,t,u, \nabla u)\cdot (\nabla w_l - \nabla u) \d x \d t.
\end{align*}
The integrals in the sum are seen to vanish in the limit $l\to \infty$ due to H\"older's inequality, the boundedness of the sequence $(\partial_j w_l)^\infty_{l=1}$ in $L^{p_j}(\Omega_T)$, the continuity of $a_j(x,t,u)$ with respect to $u$, the pointwise convergence of $w_l$ to $u$ and the Dominated Convergence Theorem. The integral on the last row vanishes by the weak convergence of the derivatives $(\partial_j w_l)^\infty_{l=1}$. 
Thus,
\begin{align}\label{app_lim_I_3}
 \lim_{l\to \infty} I_3 = 0. 
\end{align}
By \eqref{app_vfield_weak_convg} we have
\begin{align}\label{app_lim_I_2}
 I_2 :=  \iint_{\Omega_T} i_2 \d x \d t \xrightarrow[l\to \infty]{} \iint_{\Omega_T} \hat{\mathcal{A}} \cdot ( \nabla \hat g - \nabla u) \d x \d t.
\end{align}
We use \eqref{Raviart_app_w_l} to write
\begin{align}\label{app_I_1-expr}
 \notag I_1 := \iint_{\Omega_T} i_1 \d x \d t &=  \tfrac12\norm{\hat u_0}_{L^2(\Omega)}^2  - \tfrac12 \norm{w_l(T)}_{L^2(\Omega)}^2 + \iint_{\Omega_T} f_l(w_l - \hat g)\d x \d t 
  - \int_\Omega \hat u_0 \hat{g}(0)\d x 
  \\
  &\quad + \int_\Omega w_l \hat{g}(x,T)\d x - \iint_{\Omega_T} w_l \partial_t \hat g \d x \d t.
\end{align}
We can calculate 
\begin{align}\label{app_f-limit}
  \iint_{\Omega_T} f_l(w_l - \hat g)\d x \d t = &  \iint_{\Omega_T} f (w_l - \hat g)\d x \d t +  \iint_{\Omega_T} (f_l - f)(w_l - \hat g)\d x \d t
  \\
 \notag \xrightarrow[l\to \infty]{} &\iint_{\Omega_T} f (u - \hat g)\d x \d t,
\end{align}
where we use the weak convergence established in \eqref{item:weakL-pbar-convg} to treat the first integral on the right-hand side on the first line. The second integral on the right-hand side vanishes in the limit due to the uniform bound \eqref{w_l--L-bar-p-bound}, H\"older's inequality and the Dominated Convergence Theorem. Since $(w_l(T))^\infty_{l=1}$ is bounded in $L^2(\Omega)$ we may, after passing to a subsequence, assume that $(w_l(T))^\infty_{l=1}$ converges weakly in $L^2(\Omega)$ to a limit function which may be identified as $u(T)$ by testing the equations for $u$ and $w_l$ in a suitable manner, as done in the main existence proof of this paper. The weak convergence then implies that 
\begin{align}\label{app_lim_w_l(T)}
 \liminf_{l\to \infty} \norm{w_l(T)}_{L^2(\Omega)}^2 \geq \norm{u(T)}_{L^2(\Omega)}^2.
\end{align}
Using \eqref{app_f-limit} and \eqref{app_lim_w_l(T)} in \eqref{app_I_1-expr} we end up with
\begin{align}\label{app_limsup_I_1}
 \limsup_{l\to\infty} I_1 &\leq \tfrac12\norm{\hat u_0}_{L^2(\Omega)}^2  - \tfrac12 \norm{u(T)}_{L^2(\Omega)}^2
 + \iint_{\Omega_T} f(u-\hat g)\d x \d t 
 \\
 \notag &\quad - \int_\Omega \hat u_0 \hat g(0)\d x + \int_\Omega u \hat g(x,T)\d x - \iint_{\Omega_T} u \partial_t \hat g \d x \d t
 \\
 \notag &= \iint_{\Omega_T} \hat{\mathcal{A}} \cdot \nabla (u - \hat g) \d x \d t,
\end{align}
where in the last step we also used \eqref{Raviart_app_u}. Combining \eqref{app_lim_I_3}, \eqref{app_lim_I_2} and \eqref{app_limsup_I_1} we have 
\begin{align*}
 \limsup_{l\to \infty} \iint_{\Omega_T} h_l \d x \d t = 0.
\end{align*}
Since $h_l$ is non-negative this means that $(h_l)^\infty_{l=1}$ converges to zero in $L^1(\Omega_T)$ and by passing to a subsequence we may assume that $h_l$ converges pointwise a.e.~to zero. By the definition of $h_l$ this means that $(\partial_j w_l)_{l = 1}^\infty$ converges pointwise a.e.~to $\partial_j u$. 
\end{proof}

\vspace{2mm}
\noindent \textbf{Proof of Theorem \ref{thm:app_existence_general_f}.} 
The pointwise convergence of $(\partial_j w_l)^\infty_{l=1}$ to $\partial_j u$ established in Lemma \ref{lem:app_ptwise_convg_of_grad}, the pointwise convergence of $w_l$ to $u$ and the continuity of $a_j(x,t,u)$ w.r.t~$u$ show that $(\hat A(x,t,w_l,\nabla w_l))^\infty_{l=1}$ converges pointwise to $\hat{\mathcal{A}}$. Thus, a standard application of Mazur's lemma as in the proof of our main existence result shows that
\begin{align*}
 \hat{\mathcal{A}} = \hat{A}(x, t, u, \nabla u).
\end{align*}
Combined with \eqref{eq:u_strange} this confirms that $u$ satisfies the equation in  \eqref{problem_with_general_f}. By Lemma \ref{lem:app_time_cont} the initial condition in \eqref{problem_with_general_f} is satisfied and by \eqref{eq:app_bdry_condition_satisfied}, the boundary condition in \eqref{problem_with_general_f} is also satisfied.
\qed



\end{document}